\documentclass{amsart}
\usepackage{hyperref}
\usepackage{amscd, amssymb , amsmath,epsfig,subfig} %, fullpage}
\usepackage{dsfont}
\usepackage{mathrsfs}
\usepackage{epsfig,tikz-cd}
\usepackage{color}
\definecolor{brightlavender}{rgb}{0.75, 0.58, 0.89}
\definecolor{carnelian}{rgb}{0.7, 0.11, 0.11}
\definecolor{orange}{rgb}{1,0.5,0}%

\usepackage[all]{xy}
\usepackage{verbatim}
\usepackage{stmaryrd}
\usepackage{euscript}

\DeclareMathAlphabet{\matheusm}{U}{eus}{m}{n}
\newcommand{\co}{\colon}

\usepackage{tikz}
\newcommand{\any}{{\textendash}}
\newcommand{\skeq}{{\dot=}}
\newcommand{\ie}{i.e.\ }

\newcommand{\Proj}{{\operatorname{\mathsf{Proj}}}}
\newcommand{\tto}{\xrightarrow}
\newcommand{\chr}{{\mathsf{c}}}
\newcommand{\Rep}{{\mathsf{Rep}}}

\newcommand{\ptr}{\operatorname{ptr}}

\newcommand{\card}{\operatorname{card}}
\newcommand{\et}{\quad\text{and}\quad}

\newtheorem{Df}{Definition}[section]
\newtheorem{definition}[Df]{Definition}
\newtheorem{theorem}[Df]{Theorem}

\newtheorem{prop}[Df]{Proposition}
\newtheorem{proposition}[Df]{Proposition}
\newtheorem{lemma}[Df]{Lemma}

\newtheorem{remark}[Df]{Remark}

\newtheorem{corollary}[Df]{Corollary}

\newcommand{\rcoev}{\stackrel{\longrightarrow}{\operatorname{coev}}}
\newcommand{\rev}{\stackrel{\longrightarrow}{\operatorname{ev}}\!\!}
\newcommand{\lev}{\stackrel{\longleftarrow}{\operatorname{ev}}\!\!}
\newcommand{\lcoev}{\stackrel{\longleftarrow}{\operatorname{coev}}}
\newcommand{\bp}[1]{{\left(#1\right)}}

\newcommand{\bs}[1]{{\left\{#1\right\}}}
\newcommand{\qn}[1]{{\left\{#1\right\}}}
\newcommand{\ba}[1]{{\left\langle#1\right\rangle}}
\newcommand{\abs}[1]{{\left|#1\right|}}
\newcommand{\cat}{\mathscr{C}}

\newcommand{\catd}{\mathscr{D}}
\newcommand{\C}{\ensuremath{\mathbb{C}}}
\newcommand{\Z}{\ensuremath{\mathbb{Z}}}
\newcommand{\R}{\ensuremath{\mathbb{R}}}
\newcommand{\N}{\ensuremath{\mathbb{N}}}
\newcommand{\g}{\ensuremath{\mathfrak{g}}}

\newcommand{\End}{\operatorname{End}}
\newcommand{\Hom}{\operatorname{Hom}}

\newcommand{\unit}{\ensuremath{\mathds{1}}}
\newcommand{\Id}{\operatorname{Id}}

\newcommand{\qd}{\operatorname{\mathsf{d}}}

\newcommand{\Gr}{\ensuremath{\mathsf{G}}}
\newcommand{\F}{\ensuremath{\mathsf{F}}}
\newcommand{\kk}{{\mathbb K}}
\newcommand{\ve}{\varepsilon}
\newcommand{\wt}{\widetilde}
\newcommand{\wa}{\overrightarrow}
\newcommand{\wb}{\overline}
\newcommand{\ms}[1]{\mbox{\tiny\ensuremath{#1}}}
\newcommand{\vp}{\varphi}

\newcommand{\ideal}{\mathcal{I}}
\newcommand{\Jideal}{\mathcal{J}}

\newcommand{\mt}{{\operatorname{\mathsf{t}}}}
\newcommand{\mb}{{\operatorname{\mathsf{b}}}}
\newcommand{\TV}{{\operatorname{\mathsf{TV}}}}

\newcommand{\XX}{\ensuremath{\mathsf{X}}}
\newcommand{\Rib}{\operatorname{Rib}}
\newcommand{\Ladm}{{\mathcal{L}_{\ideal}}}
\newcommand{\Skein}{{\matheusm{S}}}
\newcommand{\Emb}{\mathrm{Emb}}
\newcommand{\Vect}{\mathrm{Vec}}
\newcommand{\Int}{\operatorname{Int}}
\newcommand{\Graph}{{T}}% the color will be removed. This is just to have a consistent notation for ribbon graphs.
\newcommand{\Groupoid}{\operatorname{Groupoid}}
\newcommand{\Group}{\operatorname{Group}}
\newcommand{\Set}{\operatorname{Set}}
\newcommand{\cob}{\operatorname{\textbf{Cob}}}
\newcommand{\cobnc}{\operatorname{\textbf{Cob}^{\!\mathsf{nc}}}}

\newcommand{\Sp}{\mathbb{S}}

\newcommand{\FF}{\mathcal{F}}
\newcommand{\handle}{{\mathsf M}}
\newcommand{\cyl}{{\mathsf C}}
\newcommand{\pathtoFig}{Fig/}
\usepackage[calc]{picture}\usepackage{settobox}
\newlength{\posx}
\newlength{\posy}
\newlength{\Imagew}
\newlength{\Imageh}
\newlength{\Textw}
\newlength{\Texth}
\newsavebox{\Image}
\newsavebox{\Text}

\newcommand{\epsh}[2]{
  \savebox{\Image}{\epsfig{figure=\pathtoFig#1,height=#2}}
  \settoboxwidth{\Imagew}{\Image}
  \settoboxheight{\Imageh}{\Image}
  \begin{array}{c} \hspace{-1.3mm}
    \raisebox{-4pt}{\usebox{\Image}}
    \hspace{-1.9mm}\end{array}
}

\newcommand{\putne}[3]{
  \savebox{\Text}{#3}
  \settoboxwidth{\Textw}{\Text}
  \settoboxheight{\Texth}{\Text}
  \setlength\posx{-\Imagew+0.4pt+0.01\Imagew*\real{#1}}
  \setlength\posy{-0.5\Imageh+2.5pt+0.01\Imageh*\real{#2}}
  \put(\posx,\posy){#3}
}
\newcommand{\pute}[3]{
  \savebox{\Text}{#3}
  \settoboxwidth{\Textw}{\Text}
  \settoboxheight{\Texth}{\Text}
  \setlength\posx{-\Imagew+0.4pt+0.01\Imagew*\real{#1}}
  \setlength\posy{-0.5\Imageh+2.5pt+0.01\Imageh*\real{#2}-0.5\Texth}
  \put(\posx,\posy){#3}
}
\newcommand{\putnw}[3]{
  \savebox{\Text}{#3}
  \settoboxwidth{\Textw}{\Text}
  \settoboxheight{\Texth}{\Text}
  \setlength\posx{-\Imagew+0.4pt+0.01\Imagew*\real{#1}-\Textw}
  \setlength\posy{-0.5\Imageh+2.5pt+0.01\Imageh*\real{#2}}
  \put(\posx,\posy){#3}
}
\newcommand{\putw}[3]{
  \savebox{\Text}{#3}
  \settoboxwidth{\Textw}{\Text}
  \settoboxheight{\Texth}{\Text}
  \setlength\posx{-\Imagew+0.4pt+0.01\Imagew*\real{#1}-\Textw}
  \setlength\posy{-0.5\Imageh+2.5pt+0.01\Imageh*\real{#2}-0.5\Texth}
  \put(\posx,\posy){#3}
}
\newcommand{\putc}[3]{
  \savebox{\Text}{#3}
  \settoboxwidth{\Textw}{\Text}
  \settoboxheight{\Texth}{\Text}
  \setlength\posx{-\Imagew+0.4pt+0.01\Imagew*\real{#1}-0.5\Textw}
  \setlength\posy{-0.5\Imageh+2.5pt+0.01\Imageh*\real{#2}-0.5\Texth}
  \put(\posx,\posy){#3}
}
\newcommand{\puts}[3]{
  \savebox{\Text}{#3}
  \settoboxwidth{\Textw}{\Text}
  \settoboxheight{\Texth}{\Text}
  \setlength\posx{-\Imagew+0.4pt+0.01\Imagew*\real{#1}-0.5\Textw}
  \setlength\posy{-0.5\Imageh+2.5pt+0.01\Imageh*\real{#2}-\Texth}
  \put(\posx,\posy){#3}
}
\newcommand{\putn}[3]{
  \savebox{\Text}{#3}
  \settoboxwidth{\Textw}{\Text}
  \settoboxheight{\Texth}{\Text}
  \setlength\posx{-\Imagew+0.4pt+0.01\Imagew*\real{#1}-0.5\Textw}
  \setlength\posy{-0.5\Imageh+2.5pt+0.01\Imageh*\real{#2}}
  \put(\posx,\posy){#3}
}
\newcommand{\putsw}[3]{
  \savebox{\Text}{#3}
  \settoboxwidth{\Textw}{\Text}
  \settoboxheight{\Texth}{\Text}
  \setlength\posx{-\Imagew+0.4pt+0.01\Imagew*\real{#1}-\Textw}
  \setlength\posy{-0.5\Imageh+2.5pt+0.01\Imageh*\real{#2}-\Texth}
  \put(\posx,\posy){#3}
}
\newcommand{\putse}[3]{
  \savebox{\Text}{#3}
  \settoboxwidth{\Textw}{\Text}
  \settoboxheight{\Texth}{\Text}
  \setlength\posx{-\Imagew+0.4pt+0.01\Imagew*\real{#1}}
  \setlength\posy{-0.5\Imageh+2.5pt+0.01\Imageh*\real{#2}-\Texth}
  \put(\posx,\posy){#3}
}

\newtheorem*{theorem*}{Theorem}
\begin{document}
\title{Graded spherical skein 2+1-$\Gr$-HQFT and modified Turaev-Viro invariants.}
\begin{abstract} For $\Gr$ a group, we present a $\Gr$-graded version
  of chromatic maps and skein modules and use them to define a
  2+1-$\Gr$-HQFT out of a $\Gr$-chromatic category.  The construction
  applies to the representations of unrestricted quantum groups at root
  of unity and recovers the modified Turaev-Viro 3-dimensional
  invariants.
\end{abstract}

\author[F. Costantino]{Francesco Costantino}
\address{Institut de Math\'ematiques de Toulouse\\
118 route de Narbonne\\
 Toulouse F-31062}
\email{francesco.costantino@math.univ-toulouse.fr}

\author[N. Geer]{Nathan Geer}
\address{Mathematics \& Statistics\\
  Utah State University \\
  Logan, Utah 84322, USA}
\email{nathan.geer@gmail.com}

\author[B. Ha\"{\i}oun]{Benjamin Ha\"{\i}oun}
\address{Institut de Math\'ematiques de Toulouse\\
118 route de Narbonne\\
Toulouse F-31062}
\email{benjamin.haioun@ens-lyon.fr}

\author[B. Patureau-Mirand]{Bertrand Patureau-Mirand} \address{Univ
  Bretagne Sud, CNRS UMR 6205, LMBA, F-56000 Vannes, France}
\email{bertrand.patureau@univ-ubs.fr}

\maketitle
\setcounter{tocdepth}{3}
\tableofcontents

%**************************************************\\
%To cite: \cite{MW24b}\\
%**************************************************

% Do we need spherical ?  is a left trace enough (with a left chromatic
% map) ?  More complicated to deal with index 1-2 cancellation.
\section*{Introduction}
Topological quantum field theories (TQFTs) were axiomatized by Atiyah
\cite{Ati88} as symmetric monoidal functors from the category of
$(n+1)$–dimensional cobordisms to the category of vector spaces. In
dimension three, a fundamental example is the Turaev–Viro TQFT, which
is constructed from semisimple 
%\BPm{added} 
spherical tensor categories
\cite{TV92,BW99}. This theory provides a state-sum model for
3-manifold invariants and has been extensively studied in various
contexts.  Over the past two decades, substantial progress has been
made in extending the Turaev–Viro type theories to non-semisimple
spherical type categories, see for example: \cite{AG20, CGPT20, GP13,
  GPT11}.

A related line of research concerns skein theoretic methods in quantum
topology. Skein modules provide algebraic models for spaces generated
by embedded graphs in a manifold, subject to local relations. One of
the most influential examples is the Kauffman skein algebra of a
surface, introduced independently by Przytycki \cite{Prz91,Prz99} and
Turaev \cite{Tur91}. Its definition uses relations derived from the
Jones polynomial, or equivalently the Kauffman bracket. Although this
construction is combinatorial in nature, it encodes deep geometric and
algebraic information and has been extensively studied; see for
instance \cite{BW11,BFK99, DKS25,FKL19,GJS23}.

More recently, skein-theoretic techniques have been adapted to the
non-semisimple framework and lead to the construction of TQFTs, see
\cite{GKP22,CGHP23,CGP23,CGPV23,BH26,Hai25,MSWY26}. Admissible skein modules have also been re-interpreted as the images of so-called ansular functors in \cite{MW24b}. One of the main
tools of this theory is modified traces on ideals in pivotal
categories. In \cite{CGPV23} admissible skein modules associated with
such ideals were introduced and used to construct ``non-compact''
$(2+1)$-dimensional TQFTs (i.e. TQFTs defined on the category of cobordisms such that, for each connected component, the incoming boundary is nonempty).  The purpose of this article is to
generalize those results to a $\Gr$-graded setting where $\Gr$ is a
group, not necessarily abelian. 

Before discussing this in more detail, let us recall that another natural extension of TQFTs arises from homotopical
refinements. Turaev introduced the notion of a homotopy quantum field
theory (HQFT), in which manifolds and cobordisms are equipped with
maps to a fixed target space, see \cite{Tur00, Tur10,TV12,TV14, TV17, SV23}. % \NGm{I added this reference.  If ok please remove this comment.}
 When the target is an Eilenberg–MacLane
space $K(\Gr,1)$, this structure amounts to decorating manifolds with
(non-commutative) 
%\BPm{added} 
$\Gr$-valued cohomology classes, leading to
the notion of a $\Gr$-HQFT. Such theories are closely related to
$\Gr$-graded tensor categories and provide a natural framework for
quantum invariants with homotopical data.

\subsection*{Results}
The main objective of this paper is to develop a graded version of the
skein-theoretic constructions associated to non-semisimple
$\Gr$-graded categories
% quantum field theories\BPm{sounds strange to  me}
 and to use it to produce $(2+1)$-dimensional $\Gr$-HQFTs (see Definition \ref{def:GHQFT}). Our
results can be organized into three main themes.

First, we formulate a $\Gr$-graded analogue of admissible skein
modules. This is a generalization of the main result of \cite{DST24}
to non-semisimple and non-finite cases, using rather different
techniques. For a $\Gr$-graded pivotal category equipped with an
ideal, we define skein modules generated by ribbon graphs whose
colorings respect the grading data. The resulting graded skein modules
carry natural actions of mapping class groups compatible with the
grading.  It should be noted that these graded admissible skein
modules first appeared in \cite{CGP23} and are generalizations of the
admissible skein modules introduced in \cite{CGPV23}.  However,
referees suggested that admissible skein modules should be presented
with their corresponding topological applications, so we reorganized
our presentation of these results and we will not publish \cite{CGP23}
but include its results in this paper and \cite{CGPV23} with their
corresponding topological
applications. %\cite{CGP23}\BH{That's the same ref, do you mean to put a ref where this is used for topological applications?}.
In particular, we include the $\Gr$-graded results here in full detail
since that work will not be published elsewhere.  Roughly speaking
$\Gr$-graded skein modules are defined for ``decorated surfaces'',
i.e. $3$-uples $(\Sigma,\rho,Y)$ where $Y\subset \Sigma$ is a non
empty set of base points and $\rho:\pi_1(\Sigma,Y)\to \Gr$ is a
morphism of groupoids (see Definition \ref{def:Greps}).
%\BP{Maybe replace the following with:\\ This data is equivalent to a locally flat $\Gr$-bundle $E\tto\pi \Sigma$ equipped with a trivialisation $\pi^{-1}(Y)\tto\sim Y\times\Gr$.  We describe in Theorem \ref{P:SkeinRepCobe} how gauge equivalences and changes of the trivialisation act on these skein modules.}\\
%\BP{**********}\\
This data is equivalent to a locally flat $\Gr$-bundle $E\tto\pi \Sigma$ equipped with a trivialisation $\pi^{-1}(Y)\tto\sim Y\times\Gr$.  We describe in Theorem \ref{P:SkeinRepCobe} the non trivial action of  gauge equivalences and changes of the trivialisation on these skein modules:

\begin{theorem*}[Partial restatement of Theorem \ref{P:SkeinRepCobe}]
For any equivalence between $\widetilde{\Sigma}=(\Sigma,\rho,Y)$ and $\widetilde{\Sigma}'=(\Sigma,\rho',Y')$ (as in Definition \ref{def:equivalence}) there exists a canonical isomorphism between $\Skein(\widetilde{\Sigma})$ and $\Skein(\widetilde{\Sigma}')$.%\FCm{Reformulated here and used environment. Do you agree about the "canonically" ? Commented the previous version here}\BPm{No, I don't think it is canonical.  It depends of the choice of the extension in $\Rep_\Gr(\Sigma,Y\cup Y')$ which might not be unique.  For example, it is not if $\Sigma$ is a torus or a sphere.  Without canonical, the Theorem does not say much more than ``they have the same dimension''...}
\end{theorem*}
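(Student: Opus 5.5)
We reduce the statement to two elementary kinds of equivalence and handle each by an explicit map on ribbon graphs. An equivalence between $\widetilde{\Sigma}=(\Sigma,\rho,Y)$ and $\widetilde{\Sigma}'=(\Sigma,\rho',Y')$ (Definition \ref{def:equivalence}) factors through an intermediate decorated surface $(\Sigma,\rho'',Y\cup Y')$, where $\rho''\in\Rep_\Gr(\Sigma,Y\cup Y')$ restricts to $\rho$ on $\pi_1(\Sigma,Y)$ and to $\rho'$ on $\pi_1(\Sigma,Y')$. It therefore suffices to build isomorphisms of two types. In type (a) we enlarge the base point set, $\Skein(\Sigma,\rho|_{\pi_1(\Sigma,Y)},Y)\to\Skein(\Sigma,\rho'',Y\cup Y')$. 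In type (b) we apply a gauge transformation at base points, $\rho\mapsto g\cdot\rho$ with $g\colon Y\to\Gr$, acting on morphisms $\gamma\colon y\to y'$ by $g_{y'}\rho(\gamma)g_y^{-1}$. The isomorphism of the theorem is the composite of these, and it depends on the chosen extension $\rho''$. This is the sense of ``canonical'': it is canonical given the equivalence data, and in general there is no choice-free isomorphism.

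For type (b), recall that a $\Gr$-graded admissible ribbon graph in $(\Sigma,\rho,Y)$ has a coloring in which each edge color is homogeneous. Its degree is determined by $\rho$ evaluated on a path from a base point to the edge, crossing the graph in a prescribed way. Changing $\rho$ by the gauge $g$ conjugates these degrees. So, near each base point $y$, we insert a small coupon, or an insertion of a fixed invertible homogeneous object of degree $g_y$, on the strands passing through a disk around $y$. More simply, we re-grade through the graded pivotal structure using the identification $\cat_h\simeq\cat_{g h g^{-1}}$, which is available whenever the category comes with a crossed $\Gr$-action. If the paper's conventions are purely pivotal with no braiding, we instead use the fact that base points act as ``punctures'' where the degree jumps. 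Gauging then amounts to isotoping the jump locus, and the map becomes the identity on underlying graphs with relabelled degrees.

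We then check two things. Skein relations are local and happen in disks away from $Y$, so they are preserved. The map also respects admissibility, since it does not alter the projective colors. The inverse is the map for $g^{-1}$.

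For type (a), adding a base point $y'$ in the interior of $\Sigma$ is harmless, because the degree around $y'$ is fixed by $\rho''$ on paths from $Y$. The map is the identity on graphs, after isotoping them off $y'$. Injectivity and surjectivity follow because any skein relation or isotopy crossing $y'$ can be pushed off $y'$ using general position in dimension $2$. This requires knowing that crossing a base point in a graph isotopy does not change the skein class, and that is exactly what the compatibility of $\rho''$ with $\rho$ guarantees.

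The main obstacle is showing that the composite is well defined on skein classes and independent of auxiliary choices: paths to base points, and the disks used in the gauge step. These should be handled by showing that two choices differ by an isotopy combined with a local relation, using that homotopic paths give equal $\rho''$-values. We then verify the equivariance needed for the rest of Theorem \ref{P:SkeinRepCobe}.
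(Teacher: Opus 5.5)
Your reduction to gauge and restriction moves matches the paper's (Propositions \ref{P:JR}, \ref{P:gauge-map}, \ref{P:restr-iso}). However, neither elementary isomorphism is actually constructed, and the arguments you sketch would fail.

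\textbf{Gauge step.} Replacing $\rho$ by $\vp.\rho$ changes the graded component $h\in H_1(\Sigma\setminus Y;\Gr)$ selected in Definition \ref{D:sk-dec-surface}. So a map that is ``the identity on underlying graphs'' does not land in the right space. Your alternatives are not available either: a general sharp $\Gr$-graded pivotal category need not have invertible objects of degree $g$, nor a crossed structure identifying $\cat_h$ with $\cat_{ghg^{-1}}$.

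The missing idea is to change the graph near $p$. Using a projective edge and the morphism $e_{V,Q}$ of Lemma \ref{L:V-appear}, one creates next to $p$ an edge colored by a non-null $V\in\cat_g$ (Figure \ref{fig:V-appear}). One then isotopes that edge across $p$ in $\Sigma$, not in $\Sigma\setminus Y$.

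Independence of the chosen edge and gauge path is exactly where the quotient by transparency relations enters. After fusing two candidate strands, the two results differ by passing the degree-$1$ strand $V\otimes {V'}^*$ through $p$; see \eqref{eq:fus-trans}. Your proposal never uses the transparency relations, and without them this map is not well defined.

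\textbf{Restriction step.} The general-position claim is false. A one-parameter family of graphs in a surface generically sweeps across a point. When an edge of degree $g\neq 1$ crosses the new base point $y'$, the degree of the graph in $H_1(\Sigma\setminus(Y\cup\{y'\});\Gr)$ changes. Compatibility of $\rho''$ with $\rho$ does nothing to prevent this; only degree-$1$ strands cross freely.

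Likewise, let $\Graph'$ be obtained by pushing $\Graph$ off $y'$. On a path $\gamma\colon p_0\leadsto y'$ it has $h=[\gamma\cap\Graph']_\Gr$, which has no reason to equal $g=\rho''(\gamma)$. So ``the identity on graphs'' does not even map into $\Skein(\Sigma,Y\cup\{y'\},\rho'')$.

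The paper's inverse is $\Graph\mapsto((g^{-1}h)^{\delta_{y'}})_*(\Graph')$: push off, then correct by a gauge map at $y'$. The two possible push-offs differ by a gauge map, which is what makes this well defined, and an isotopy sweeping an edge across $y'$ is absorbed the same way. So the restriction isomorphism genuinely depends on the gauge isomorphism and its well-definedness.

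\textbf{Factorization.} A single $\rho''$ restricting to both $\rho$ and $\rho'$ need not exist. This fails when $Y\cap Y'\neq\emptyset$ and the equivalence has non-trivial holonomy along some $y\times[0,1]$. The correct normal form is $J_\vp\circ R(Y,Y',\rho'')$ with $\rho''|_{Y'}=\vp^{-1}.\rho'$ (Proposition \ref{P:JR}). Its uniqueness, or compatibility with the relations of Lemma \ref{L:rel_eq}, is what makes the isomorphism a function of the equivalence rather than of auxiliary choices.
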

%By the above theorem we observe that the group of self-equivalences acts on the skein modules in a non-trivial way. 
%\noindent\BP{**********}\\
%The main result of this part is Proposition \ref{P:SkeinRepCobe} which shows that the graded admissible skein modules of decorated surfaces of Definition \ref{D:sk-dec-surface} only depend on the conjugacy class of the $\Gr$ decoration $\rho\in\Hom(\pi_1(\Sigma),\Gr)$.

Second, we introduce graded versions of the algebraic structures
required for the skein-theoretic TQFT construction. In particular, we define a notion of chromatic maps in each grading component and use them to formulate the concept of a \emph{$\Gr$-chromatic category}. This is a $\Gr$-finite spherical category equipped with a non-degenerate modified trace on the ideal of projective objects
together with a chromatic map in every degree $g\in \Gr$, see Definitions \ref{D:chr} and \ref{def:newdef}. 
A central structural result is the following:
\begin{theorem*}[Proposition \ref{chrom-cat1}]
If $\cat$ is a $\Gr$-finite spherical category with modified trace $\mt$, then $\cat$ is a $\Gr$-graded chromatic category if and only if $\cat_1$ is a chromatic category.
\end{theorem*}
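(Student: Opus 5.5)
We need to show that if $\cat_1$ admits a chromatic map then every degree $g$ admits one; the converse is trivial, since a $\Gr$-chromatic category by definition has a chromatic map in degree $1$.

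Recall the shape of the definitions as in \cite{CGPV23}. For a projective generator $G$ of $\cat_g$, a chromatic map is an endomorphism $\chr_g\in\End(G\otimes G^*)$, or equivalently of $G^*\otimes G$. It satisfies the identity obtained by "encircling" a strand colored by any projective $P$ of degree $g$ (or of any degree, depending on the convention). The identity says that the partial trace over $G$ of the composite of $\chr_g$ with the strand $P$ equals $\Id_P$.

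The plan is to transport $\chr_1$ to degree $g$ by tensoring with a projective generator of $\cat_g$. Fix $G_1$ a projective generator of $\cat_1$ with chromatic map $\chr_1$, and an object $V_g\in\cat_g$; we can take $V_g$ projective using $\Gr$-finiteness, or simply nonzero. Set $G_g:=G_1\otimes V_g$. This is projective, since projectives form an ideal, and of degree $g$.

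For $G_g$ to be a generator we need every projective $P$ of degree $g$ to be a retract of $G_1\otimes V_g\otimes X$ for suitable $X$. This holds because $P\otimes V_g^*$ has degree $1$, is projective, and hence is a retract of a sum of copies of $G_1$. Then $P$ is a retract of $P\otimes V_g^*\otimes V_g$, using that $V_g$ is projective so that $V_g^*\otimes V_g$ contains $\unit$ as a retract up to a modified-dimension factor. If that fails, I would instead take $G_g$ to be the direct sum of the finitely many projective indecomposables of degree $g$, which is available from $\Gr$-finiteness.

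Then define $\chr_g$ on $G_g\otimes G_g^*$ by a diagram. Apply $\chr_1$ on the $G_1$ strands, and close the $V_g$ strands through a map built from the modified trace. Concretely, use $\Id$ on $V_g$ composed with the normalization $\mt_{V_g}(\Id)^{-1}$, or use a partial-trace projector onto $\unit$. The chromatic identity for an encircled $P\in\cat_g$ then reduces to the chromatic identity of $\chr_1$ applied to the degree-$1$ projective object $P\otimes V_g^*$, followed by undoing the $V_g$ bending.

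The main obstacle is this normalization: making the $V_g$ loops contribute exactly the identity rather than a scalar. This is where non-degeneracy of $\mt$ enters, to guarantee some morphism $\unit\to V_g\otimes V_g^*$ pairs to $1$. I would first try the simpler route via the alternative characterization of chromatic maps in terms of $\mt$, namely $\sum$ of dual bases for the pairing $\Hom(\unit,G\otimes P)\otimes \Hom(P, G)$. That characterization is gradewise, so existence in degree $g$ follows from non-degeneracy of $\mt$ restricted to $\cat_g$. Finally, I would check that the choice of generator does not matter, since chromatic maps transport along retractions of generators.
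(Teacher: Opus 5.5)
Your overall shape (tensor with a degree-$g$ object, apply the degree-$1$ identity to a degree-$1$ object, then undo the extra strand) is the right one. However, the step you yourself flag as the main obstacle is left open, and none of your three proposed fixes works.

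Closing the $V_g$ strand by $\Id_{V_g}$ produces, as a partial trace, the pivotal dimension $\lev_{V_g}\circ\rcoev_{V_g}\in\End_\cat(\unit)=\kk$, not the modified dimension. For projective $V_g$ this scalar vanishes as soon as $\unit$ is not projective, since otherwise $\unit$ would be a retract of the projective object $V_g^*\otimes V_g$. So no rescaling, in particular not by $\mt_{V_g}(\Id_{V_g})^{-1}$, turns this closure into an identity. For the same reason there is no projector onto $\unit$ inside $V_g\otimes V_g^*$. Choosing $V_g$ non-projective does not help in general: in generic degrees of the example of Subsection~\ref{SS:exemple} every object is projective.

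This also invalidates your justification that $\unit$ is a retract of $V_g^*\otimes V_g$. The generator claim for $G_1\otimes V_g$ is nevertheless true: $\Id_P\otimes\lev_{V_g}$ is an epimorphism onto the projective $P$ and hence splits. Your last fallback, a ``gradewise'' characterization whose existence would follow from non-degeneracy of $\mt$ alone, is not available. It would also make the hypothesis on $\cat_1$ pointless.

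The missing idea is built into Definition~\ref{D:chr}, whose shape you misremember. A chromatic map is \emph{based} on an auxiliary projective $P$, i.e.\ $\chr_P\in\End_\cat(W_g\otimes P)$, and \eqref{eq:chrP}, which involves $\Lambda_{V\otimes W_g^*}$, must hold for every $V\in\cat_g$. The base is exactly what allows one to close a strand. Lemma~\ref{L:V-appear} provides $e'_{W,P}\in\End_\cat(W\otimes P)$ with $\ptr^L_W(e'_{W,P})=\Id_P$. It exists because an evaluation morphism tensored with $\Id_P$ is an epimorphism onto the projective object $P$ and therefore splits, and it has no analogue with $\unit$ in place of $P$.

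The paper then argues as follows. $W_g\otimes W_g^*$ is a projective generator of degree $1$, so Lemma~\ref{L:chr-based} transports the given degree-$1$ chromatic map to this generator with base $W_g\otimes P$. Lemma~\ref{L:chr-degree} then shows that $(\Id\otimes e'_{W_g,P})\circ\chr_{W_g\otimes P}$ is a degree-$g$ chromatic map for $W_g\otimes W_g^*\otimes W_g$ based on $P$. Indeed, for $V\in\cat_g$ one applies the degree-$1$ identity to $V\otimes W_g^*\in\cat_1$, and what remains is $\ptr^L_{W_g}(e'_{W_g,P})=\Id_P$. Your ``undo the $V_g$ bending'' step is precisely this use of $e'$, and it only works because of the base $P$.
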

In particular if $\cat$ is a tensor category in the sense of \cite{EGNO15} and $\Gr$-finite spherical then it is a $\Gr$-graded chromatic category. 
%A central structural theorem  shows that the graded condition is controlled entirely by the degree-one component: a $\Gr$-finite spherical category endowed with a modified trace is $\Gr$-chromatic precisely when its degree-one component is a chromatic category, see Corollary \ref{chrom-cat1}.

Third, and most importantly, we use these structures to construct
$(2+1)$-dimensional homotopy quantum field theories:
\begin{theorem*}[Theorem \ref{T:main}]
  For any $\Gr$-chromatic category $\cat$ the $\Gr$-graded skein
  modules $\Skein$ are the vector spaces of a finite dimensional $\Gr$-HQFT defined on the category of non-compact $\Gr$-decorated
  cobordisms. Furthermore the linear maps associated to cobordisms are
  obtained by composing explicit linear maps associated to handle
  decompositions.
\end{theorem*}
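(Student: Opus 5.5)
The plan follows the strategy of \cite{CGPV23}, adapted to the graded setting. The functor is defined on generators of the non-compact $\Gr$-decorated cobordism category coming from handle decompositions, and then shown to be invariant under handle moves.

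First, to each decorated surface $\widetilde\Sigma=(\Sigma,\rho,Y)$ we assign the graded admissible skein module $\Skein(\widetilde\Sigma)$. Changes of base points and gauge transformations are handled by the isomorphisms of Theorem \ref{P:SkeinRepCobe}. Finite dimensionality comes from $\Gr$-finiteness: since the incoming boundary is nonempty, a surface with a marked projective point has its skein module spanned by graphs in a neighbourhood of a spine. Each such module is a finite sum of Hom spaces between projectives in fixed degrees, and these Hom spaces are finite dimensional.

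Next, we present every non-compact decorated cobordism $M\co\Sigma_-\to\Sigma_+$ as a composition of elementary pieces:
\begin{itemize}
\item cylinders with a diffeomorphism, which act through the mapping class action on skeins compatible with $\rho$;
\item index-$1$ handles, which act by the natural inclusion map adding an arc;
\item index-$2$ handles attached along a curve $\gamma$, which act by encircling the skein with a $\rho(\gamma)$-graded chromatic morphism $\chr_{\rho(\gamma)}$ (Definition \ref{D:chr}) placed on the attaching annulus.
\end{itemize}
Non-compactness lets us avoid index-$0$ handles, and by duality index-$3$ handles. The $\Gr$-decoration on the cobordism, a groupoid map on $\pi_1(M,Y_-\cup Y_+)$, fixes the degree $\rho(\gamma)$ of each $2$-handle. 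This is exactly why a chromatic map is needed in every degree, which Proposition \ref{chrom-cat1} supplies.

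Then we check invariance under the graded Cerf moves: isotopies of attaching maps, handle slides, and cancellation of $1$–$2$ handle pairs and of $2$–$3$ pairs (the latter by duality), together with changes of the decoration by gauge. Handle slides follow from the chromatic property: encircling a projective of degree $g$ by $\chr_g$ lets a strand pass over the $2$-handle, and this holds uniformly in all degrees. Cancellation of $1$–$2$ pairs reduces to the normalization $\mt(\chr_g\circ\ldots)$, that is, the defining identity of a chromatic map contracted with the modified trace. Functoriality and monoidality then follow from the composition of handle decompositions and disjoint unions.

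I expect the main obstacle to be independence of the handle decomposition in the graded setting, where decorations are only defined up to gauge equivalence and base point choices. Theorem \ref{P:SkeinRepCobe} gives isomorphisms that are not canonical, since they depend on an extension of $\rho$ to $Y\cup Y'$. So we must check that the choices made inside a cobordism cancel. My first approach is to fix the base points on the boundary together with a groupoid representation on $M$ relative to all base points. The choice of extension is then fixed by the cobordism itself, and it remains to show that the action of gauge transformations supported in the interior of $M$ is trivial on the resulting map.
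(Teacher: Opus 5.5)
\textbf{There is a genuine gap: the handle maps are assigned to the wrong indices.} With your assignments the maps are not well typed, let alone compatible with cancellation.

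\emph{The $2$-handle map.} For a $\Gr$-decorated $2$-handle, the attaching circle bounds the core disk in $M$, so its holonomy is forced to be $1$; Definition \ref{D:StdDecHandles} builds this in. So the decoration never requires a chromatic map of degree $\neq 1$ at a $2$-handle. Moreover, surgery deletes the attaching annulus, so ``encircling the skein on the attaching annulus'' does not produce an element of $\Skein(\wt\Sigma(\Sp))$. The $2$-handle map has to cut. Fuse the strands crossing the attaching circle into one projective $P$, necessarily of degree $1$. Then replace it by $\sum_i x_i\sqcup x^i$, using the copairing $\Omega_P$ dual to the $\mt$-pairing on $\Hom(\unit,P)\otimes\Hom(P,\unit)$; these spaces vanish unless the degree is $1$.

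\emph{The $1$-handle map.} The free decoration parameter lives at $1$-handles: it is the holonomy $g$ of the path $D^1\times\{1\}$ through the handle. Your $1$-handle map, a plain inclusion, cannot land in $\Skein(\wt\Sigma(\Sp,g))$ for $g\neq1$. A skein supported away from the handle has trivial intersection with that path. One must add a $g$-colored red circle on the belt circle, made blue via Figure \ref{fig:chrom-mod}. This is where chromatic maps in every degree, hence Proposition \ref{chrom-cat1}, are needed.

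\emph{Why $1$--$2$ cancellation then works.} The attaching circle of the cancelling $2$-handle crosses a strand $V\in\cat_g$ and the $W_g$-strand coming from the belt circle. So the cut is on $V\otimes W_g^*$, of degree $1$. The identity \eqref{eq:chrP} says exactly that $\Lambda_{V\otimes W_g^*}$ followed by $\chr_P$ is $\Id_{V\otimes P}$. Without the belt circle (which forces $g=1$), cutting would only give $\Lambda_V=\sum_i x_i\circ x^i$, which factors through $\unit$.

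\emph{Slides.} Likewise, the slide you need is a blue strand over a red belt circle (Proposition \ref{P:slide}), not a strand over a $2$-handle. That relation is what makes the $1$-handle map well defined. It also makes the map commute with gauge maps at $y_\pm$ (Lemma \ref{L:[eq,gen]}).

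\textbf{Second, $3$-handles cannot be discarded.} $\cobnc$ removes only $0$-handles. Every component must have non-empty incoming boundary, but the outgoing boundary may be empty. An example is $M\setminus B^3\colon S^2\to\emptyset$, which gives Corollary \ref{cor:invariant} and is used in Theorem \ref{T:TV=Skein}. The non-compact theory has no symmetry exchanging incoming and outgoing ends, so ``by duality'' does not apply. You need the $3$-handle map $\Graph\sqcup\Graph'\mapsto\F'(\Graph')\,\Graph$ given by the modified trace. The $2$--$3$ cancellation (Proposition \ref{P:hm-23-cancel}) is then the identity $\sum_i\mt_P(f\circ x^i)\,x_i=f$ for $f\in\Hom(\unit,P)$, i.e.\ the statement that $\Omega_P$ is the copairing of $\mt_P$.

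\textbf{Two smaller points.}
\begin{enumerate}
\item Finite dimensionality has nothing to do with the incoming boundary being non-empty. $\Skein(\wt\Sigma)$ is a quotient of $\Skein^h_\Proj(\Sigma\setminus Y)$, which is finite dimensional by Proposition \ref{prop:finitewithboundary} because $\Proj$ is graded finite.
\item Your plan for base points is in the spirit of the paper. The paper writes $\wt C=e_n\circ\wt C_n\circ\cdots\circ\wt C_1\circ e_0$ and uses the uniqueness in Lemma \ref{L:unique-eq}, Lemma \ref{L:[eq,gen]}, and Proposition \ref{P:diffeo-are-rel}. However, this only becomes a proof once the handle maps are the correct ones.
\end{enumerate}
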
 
In particular the above theorem produces invariants of closed $3$-manifolds equipped with flat $\Gr$-bundles by removing a $3$-ball in each component (see Corollary \ref{cor:invariant}). 
%\FCm{Added this and commented below}
%More precisely, from any $\Gr$-chromatic category we obtain a symmetric monoidal functor from a category of non-compact $\Gr$-decorated cobordisms to vector spaces, see Theorem \ref{T:main}. 
The construction relies on a partial %\BPm{added partial because we don't explicitely give a complete set of relations} 
graded version of the
generators and relations presentation of the cobordism category
established by Juh\'{a}sz \cite{Juh18}. %\FCm{Commented here} %This allows us to extend the skein-theoretic operators arising from modified traces and chromatic maps to a particular cobordism category and thereby produce a $(2+1)$-dimensional $\Gr$-HQFT whose state spaces are described in terms of graded admissible skein modules.

Finally, we show that this framework applies to important families of
tensor categories arising in quantum algebra. In particular, when
$\cat$ is the category of representations of an unrestricted quantum
group at a root of unity equipped with its natural grading, the
resulting theory gives a simpler extension of %\BPm{changed extends with gives a simpler extension of}
the modified Turaev-Viro invariants of
$3$-manifolds of \cite{GPT11} and \cite{GP13} to a full $\Gr$-graded
TQFT (see Theorem \ref{T:TV=Skein}). 
%\FCm{Modified here and commented later: do you agree with "extend "? If yes, maybe we should use an  environment \theorem to evidence this ?}\BPm{Yes, in some sense it is not false, but there was already a full $\Gr$-graded TQFT in  \cite{GP13} for a more complicated category of cobordisms where objects are surfaces with holes}
%reproduces the modified Turaev-Viro invariants of
%$3$-manifolds, see Theorem \ref{T:TV=Skein}. 
This provides an HQFT interpretation of these
invariants and illustrates the effectiveness of the graded
skein-theoretic approach developed here.

\section{Preliminaries}
\subsection{Pivotal categories}\label{SS:LinearCat}
  Let $\cat$ be a strict monoidal category with tensor product $\otimes$ and unit object
$\unit$.  The notation $V\in \cat$
means that $V$ is an object of $\cat$.
 
The category $\cat$ is a \emph{pivotal category} if it has duality morphisms
\begin{align*} \lcoev_{V} &: 
\unit \rightarrow V\otimes V^{*} , & \lev_{V} & :  V^*\otimes V\rightarrow
\unit , \\
 \rcoev_V & : \unit \rightarrow V^{*}\otimes V, &  \rev_V & :  V\otimes V^*\rightarrow \unit
\end{align*}
which satisfy compatibility
conditions (see for example \cite{BW99, GKP13, Mal95}). In particular, the left dual and right dual $f^*: W^*\to V^*$ of a morphism $f\colon V \to W$ in $\cat$ coincide:
\begin{align*}
f^*&= (\lev_W \otimes  \Id_{V^*})(\Id_{W^*}  \otimes f \otimes \Id_{V^*})(\Id_{W^*}\otimes \lcoev_V)\\&= (\Id_{V^*} \otimes \rev_W)(\Id_{V^*} \otimes f \otimes \Id_{W^*})(\rcoev_V \otimes \Id_{W^*}).
\end{align*}
The category $\cat$ is \emph{ribbon} if it is pivotal, braided and has a twist satisfying compatibility
conditions (see for example \cite{GP18}).

\subsection{$\kk$--categories}\label{SS:KCategories}
Let $\kk$ be a field.  A \emph{$\kk$--additive category} is an
additive\footnote{Additivity is not strictly needed to define
  admissible skein modules but one can easily show that the admissible
  skein module of a category is equal to the admissible skein module
  of its additive completion.} category $\cat$ such that its hom--sets
are $\kk$--vector spaces and the composition of morphisms is
$\kk$-bilinear.    % , and the canonical $\kk$--algebra map
% $\kk \to \End_\cat(\unit), k \mapsto k \, \Id_\unit$ is an
% isomorphism.
% \NG{this is what was written in the Adm Skein paper, but it seems
%   wrong because we need monoidal to have a $\unit$.} 
A \emph{monoidal $\kk$--additive category} is a monoidal category
$\cat$ such that $\cat$ is a $\kk$--additive category 
where  the tensor product of morphisms is $\kk$-bilinear.  
  A {$\kk$--additive category} is
\begin{enumerate}
\item \emph{locally finite} if its hom--sets are finite dimensional,
\item \emph{essentially small} if it is equivalent to a subcategory
  whose collection of objects and morphisms form a set. 
\end{enumerate}
\begin{definition} A \emph{monoidal $\kk$-category} is an essentially small, locally
finite, monoidal $\kk$-additive category such that
$\End_\cat(\unit)=\kk\Id_\unit$.  A \emph{pivotal $\kk$-category} is a
monoidal $\kk$-category which is pivotal.
\end{definition}

An object $V$ in a $\kk$--additive category is \emph{null} if $\Id_V=0$.
We say a category is \emph{null}, when all its objects are null.  An object
$W$ of a $\kk$-additive category $\cat$ is a \emph{generator of
  $\cat$} if the identity of any object of $\cat$ factors through a
finite direct sum $W^{\oplus n}$ of copies of $W$.

\begin{definition}\label{def:sharp}
  We say a pivotal $\kk$-category $(\cat,\otimes,\unit)$ is \emph{sharp} if
 %N: its unit is simple: 
 any non-zero morphism to $\unit$ is an
  epimorphism and any non-zero epimorphism from $\unit$ is an
  isomorphism.

  % \begin{enumerate}
  % \item $\unit$ is absolutely simple: $\End_\cat(\unit)=\kk$;
  % \item $\unit$ is simple: any non-zero morphism to $\unit$ is an
  %   epimorphism and any non-zero epimorphism from $\unit$ is an
  %   isomorphism.
  % \end{enumerate}
\end{definition}
In a sharp pivotal $\kk$-category, the tensor product of two non-null
objects is non-null.  A tensor category in the sense of \cite{EGNO15} is sharp.

%\FC{added} We shall say that an object is \emph{null} if its identity morphism is $0$. Remark that a null object is both initial and terminal and any two null objects are isomorphic via a unique isomorphism. The ideal generated by a null object is formed by the null objects and is clearly contained in any ideal. 

\subsection{$\Gr$-graded pivotal $\kk$-category}%$\kk$-linear 
Let $\cat$ be a sharp pivotal $\kk$-category and $\Gr$ be a group.  The
category $\cat$ is \emph{$\Gr$-graded} if % it is sharp and
%\NGm{I moved sharp here from the line above.  If ok please remove this comment.}\BPm{I put it back because sharp sound strange in the def of graded} \NGm{the way it is worded, I had to read it very carefully to make sure the definition requires sharp.  I was a little confused when first reading this.} 
there exists a family
$\bs{\cat_g}_{g\in\Gr}$ of non-null full subcategories whose objects
are called \emph{homogeneous} such that
\begin{enumerate}
\item each object of $\cat$ is a direct sum of homegeneous objects,
\item if $g_1,g_2\in\Gr$, $V_1\in\cat_{g_1}$ and $V_2\in\cat_{g_2}$ then
  \begin{enumerate}
  \item if an object of $\cat$ is isomorphic to $V_1$, then it is an
    object of $\cat_{g_1}$,
  \item $V_1\otimes V_2\in\cat_{g_1g_2}$,
  \item if $g_1\neq g_2$ then $\Hom_\cat(V_1, V_2)=\bs0$.
  \end{enumerate}
\end{enumerate}
It follows from the definition that $\unit\in\cat_1$. Also, for
$V\in\cat_g$ we have $V^*\in\cat_{g^{-1}}$.  If $V$ is non-null, $g$
is uniquely determined by $V$ and is called its \emph{degree}.

\subsection{M-traces on ideals in pivotal categories}\label{SS:trace}
Let $\cat$ be a pivotal $\kk$--category.  Here we recall the definition
of an m-trace on an ideal in $\cat$, for more details see
\cite{GKP13,GPV13}.  By an \emph{ideal} of $\cat$ we mean a full
subcategory, $\ideal$, of~$\cat$ which is
\begin{description}
\item[Closed under tensor product]
 If $V\in \ideal$ and
  $W\in \cat$, then $V\otimes W$ and $W \otimes V $ are objects of
  $\ideal$.
\item[Closed under retractions] If $V\in \ideal$, $W\in \cat$
  and there are morphisms $f:W\to V$, $g:V\to W$ such that
  $g f=\Id_W$, then $W\in \ideal$ (we say $W$ is a \emph{retract} of $V$).
\end{description}

As an example, the full subcategory $\Proj$ of projective objects of $\cat$ is an ideal.  
\begin{remark}\label{rk:ideal-gen}
  Since the intersection of ideals is an ideal, we can define the \emph{ideal
  generated} by a collection of objects as the smallest ideal that
  contains these objects.
\end{remark}
Given an object $V$ of $\cat$ the \emph{ideal generated by $V$} is set
of all objects $U$ which are a retract of $W\otimes V\otimes X$ for
some $W,X\in \cat$.  
%\NGm{added this sentence. Bertrand, if ok please  remove this comment.}\BPm{Then I move the remark about ideal  generated before}

%\NGm{I moved this lemma from above because we had not defined an ideal yet.  I also reworded it a little, left the old commented.  Bertrand,  if ok please remove this comment.}
\begin{lemma}\label{L:V-appear}
 %N:  Let $\cat$ be a sharp $\kk$-linear pivotal category,
  Let $\cat$ be a sharp pivotal $\kk$-category and
 %
  % N: let $V$ be a non null object and let $P$ be a projective object
  % of $\cat$, then the ideal generated by $V$ contains $\Proj$ and
  let $V$ be a non-null object then the ideal generated by $V$
  contains the ideal $\Proj$ of projective objects of $\cat$.
  Moreover, if $P$ is a projective object of $\cat$, there exist
  morphisms $e_{V,P}\in\End_\cat(V^*\otimes P)$ and
  $e'_{V,P}\in\End_\cat(V\otimes P)$ such that
  $\ptr^L_{V^*}(e_{V,P})=\ptr^L_{V}(e'_{V,P})=\Id_P$ (see Figure
  \ref{fig:V-appear0} for a graphical interpretation).
\end{lemma}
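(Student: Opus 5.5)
The plan is to prove the two assertions of the lemma together, by first building the morphisms $e_{V,P}$ and $e'_{V,P}$. These morphisms directly exhibit $P$ as a retract of $V^*\otimes P$ and of $V\otimes P$. Concretely, if $\ptr^L_{V^*}(e)=\Id_P$ for some $e\in\End_\cat(V^*\otimes P)$, then write $\ptr^L_{V^*}(e)=(\lev_{V^*}\otimes\Id_P)(\Id_{V^{**}}\otimes e)(\lcoev_{V^{*}}\otimes\Id_P)$, with the appropriate pivotal identifications. This gives maps
\[ f=(\Id_{V^{**}}\otimes e)(\lcoev_{V^*}\otimes\Id_P)\colon P\to V^{**}\otimes V^*\otimes P, \]
\[ g=\lev_{V^*}\otimes\Id_P\colon V^{**}\otimes V^*\otimes P\to P, \]
with $gf=\Id_P$. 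Since $V^{**}\otimes V^*\otimes P=V\otimes V^*\otimes P$ after pivotal identification, $P$ is a retract of an object of the form $W\otimes V\otimes X$. Hence $P$ lies in the ideal generated by $V$, which proves the first claim. So everything reduces to constructing $e_{V,P}$, and $e'_{V,P}$ is obtained the same way with $V$ replaced by $V^*$, which is also non-null.

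To construct $e_{V,P}$, I use sharpness. Since $V$ is non-null, $\Id_V\neq 0$, so by duality $\rev_V\colon V\otimes V^*\to\unit$ is non-zero; otherwise the zigzag identity would force $\Id_V=0$. By sharpness $\rev_V$ is an epimorphism, and hence so is $\rev_V\otimes\Id_P\colon V\otimes V^*\otimes P\to P$. To see this, note that tensoring with $P$ preserves epimorphisms in a rigid category, because $-\otimes P$ has a right adjoint $-\otimes P^*$. Because $P$ is projective, $\Id_P$ lifts along this epimorphism. This gives $s\colon P\to V\otimes V^*\otimes P$ with $(\rev_V\otimes\Id_P)s=\Id_P$. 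Now define $e_{V,P}\in\End(V^*\otimes P)$ by bending the leftmost $V$ strand of $s$ down using $\lev_V$ and $\lcoev$:
\[ e_{V,P}=(\lev_V\otimes\Id_{V^*\otimes P})(\Id_{V^*}\otimes s). \]
The left partial trace of this morphism over $V^*$ closes the strand back up. By the zigzag identities and pivotality, which identifies the left and right duals of $V$, this composite is exactly $(\rev_V\otimes\Id_P)s=\Id_P$. I would check this graphically, using the figure the lemma refers to.

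The main obstacle is the bookkeeping of conventions. I need to make sure that the cap $\rev_V$ used for the lift matches the cap appearing in the left partial trace $\ptr^L_{V^*}$ once the $V$ strand is bent around. If the orientations disagree, the fix is to lift along $\lev_{V^*}$ or $\rev_{V^*}$ instead, which is again a non-zero morphism to $\unit$ and hence an epimorphism by sharpness, so the argument goes through unchanged. The remaining point is the small fact that $-\otimes P$ preserves epimorphisms, which follows from its adjoint, as noted above.
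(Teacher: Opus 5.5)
Your proposal is correct and follows the paper's argument. Sharpness makes $\rev_V\otimes\Id_P$ an epimorphism, which splits because $P$ is projective, and $e_{V,P}$ is obtained by bending the $V$-strand of the splitting; after one zigzag, your $e_{V,P}$ coincides with the paper's for $W=V^*\otimes P$, $f=s$, $g=\rev_V\otimes\Id_P$. The differences are cosmetic: you derive the ideal containment from $e_{V,P}$, whereas the section $s$ already exhibits $P$ as a retract of $V\otimes(V^*\otimes P)$, and you justify steps the paper leaves implicit (that $\rev_V\neq 0$, and that $-\otimes P$ preserves epimorphisms); also, the cup in your partial-trace formula should be $\rcoev_{V^*}$, not $\lcoev_{V^*}$.
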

\begin{figure}
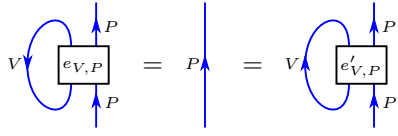

\[\epsh{fig2l.pdf}{11ex}
\putc{70}{50}{$\ms{e_{V,P}}$}
\pute{93}{81}{$\ms{P}$}
\pute{94}{20}{$\ms{P}$}
\putw{0}{51}{$\ms{V}$}
\quad=\quad
\epsh{fig2i.pdf}{11ex}
\putw{0}{51}{$\ms{P}$}
\quad=\quad
\epsh{fig2u.pdf}{11ex}
\putc{70}{52}{$\ms{e'_{V,P}}$}
\pute{93}{81}{$\ms{P}$}
\pute{94}{20}{$\ms{P}$}
\putw{0}{51}{$\ms{V}$}
\]
\caption{The morphisms $e_{V,P}$ and $e'_{V,P}$.}
\label{fig:V-appear0}
\end{figure}
\begin{proof}
  Let $\ideal$ be the ideal generated by $V$. Since $V$ is not null
  then $\rev_V:V\otimes V^*\to \unit$ is an epimorphism and so for
  each projective object $P$, the morphism
  $\rev_V\otimes \Id_P : V\otimes V^*\otimes P\to P$ is an
  epimorphism, therefore it splits. This shows that $P$ is a retract
  of $V\otimes W$ for some $W\in \cat$ (explicitly, $W=V^*\otimes P$)
  and so $\Proj\subset \ideal$. 
  
  Now let $f:P\to V\otimes W$ and
  $g:V\otimes W\to P$ be such that $\Id_P=g\circ f$.  Then let
  $e_{V,P}=(\Id_{V^*}\otimes g)\circ (\rcoev_{V^*}\otimes \Id_P)\circ
  (\rev_{V}\otimes \Id_W)\circ (\Id_{V^*}\otimes f)$.  By applying the
  ``snake'' (or ``zig-zag'') relation twice on dualities, one gets that
  $\ptr^L_{V^*}(e_{V,P})=g\circ f=\Id_P$.  The proof for
  $e'_{V,P}$ is similar.% Now let $f_i:P\to V\otimes W$ and
  % $g_i:V\otimes W\to P$ be such that $\Id_P=\sum g_i\circ f_i$.\BPm{I
  %   think one $f$ and one $g$ might be enough...} Then let
  % $e_{V,P}=\sum_i (\Id_{V^*}\otimes g_i)\circ (\rcoev_{V^*}\otimes
  % \Id_P)\circ (\rev_{V}\otimes \Id_W)\circ (\Id_{V^*}\otimes f_i)$.
  % By applying the snake relation twice on dualities, one gets that
  % $\ptr^L_{V^*}(e_{V,P})=\sum_{i} g_i\circ f_i=\Id_P$.  The proof for
  % $e'_{V,P}$ is similar.
\end{proof}
%\NG{need to modify  the above to be consistent with sharp below.  Also need to figure  out how what terms we use involving: sharp, $\kk$--category and  $\kk$-linear.  We can do this once we have things a little more  figured out, Bertrand wanted to wait. }\BP{I made a try}

A \emph{right (resp.\ left) partial trace} of $f\in \End_\cat(U\otimes W)$ is the morphism
\begin{align*}
  \ptr_R^W(f)&=(\Id_U \otimes \rev_W)(f\otimes \Id_{W^*})(\Id_U \otimes \lcoev_W)\in \End_\cat(U), \\
 \text{ resp.\ } \ptr_L^U(f)&=
( \lev_U\otimes \Id_W )( \Id_{U^*} \otimes f)( \rcoev_U\otimes \Id_W)\in \End_\cat(W).
\end{align*}
A \emph{right m-trace} on an ideal $\ideal$ is a family of linear functions
$\{\mt_V:\End_\cat(V)\rightarrow \kk \}_{V\in \ideal}$
such that:
\begin{description}
\item[Cyclicity] 
 If $U,V\in \ideal$ and $f:V\rightarrow U,$ $g:U\rightarrow V$ are any morphisms in $\cat$ then
$
\mt_V(g f)=\mt_U(f  g).$
\item[Right partial trace property]  If $U\in \ideal$, $W\in \cat$, $f\in \End_\cat(U\otimes W)$ then
$
\mt_{U\otimes W}\bp{f}
=\mt_U(\ptr_R^W(f)).
$
\end{description}
A \emph{left m-trace} on $\ideal$ is a family of linear functions $\{\mt_V:\End_\cat(V)\rightarrow \kk \}_{V\in \ideal}$ which is cyclic and  satisfies
\begin{description}
 \item[Left partial trace property] 
  If $U\in \ideal$, $W\in \cat$, $g\in \End_\cat(W\otimes U)$ then
$
\mt_{W\otimes U}\bp{g}
=\mt_U(\ptr_L^W(g)).
$
\end{description}
A \emph{m-trace} on an ideal is a right m-trace which is also a left m-trace.

\begin{definition}\label{D:spherical}%\BPm{added}\NGm{ok}
  A m-trace $\mt$ on $\ideal$ is \emph{non-degenerate} if for any
  $V\in\ideal$, the pairing
  $\Hom_{\cat}(\unit,V)\otimes_\kk\Hom_{\cat}(V,\unit)\to\kk$ defined
  by $x\otimes y\mapsto \mt(x\circ y)$ is non-degenerate.  A
  \emph{spherical category} (over $\kk$) is a sharp pivotal
  $\kk$-category which has a non-degenerate m-trace on its ideal
  $\Proj$ of projective objects.
\end{definition}
Remark that in a spherical category, any non-zero
m-trace on $\Proj$ is necessarily non degenerate (see \cite[Corollary
4.3.44]{GP25}).
\subsection{Invariants of colored ribbon graphs}
Let $n=2$ or $3$ and $\cat$ be a pivotal $\kk$--category.  If $n=3$,
we assume that $\cat$ is ribbon.  Next, we recall the definition of
$\cat$--colored ribbon graphs, for details see \cite{Tur94}.

A morphism
$f:V_1\otimes{\cdots}\otimes V_n \rightarrow W_1\otimes{\cdots}\otimes
W_m$ in $\cat$ can be represented by a box and arrows, which are
called {\it coupons}, see graph on the right:
$$ 
\epsh{fig32}{9ex}\putw{0}{18}{$\ms{V_1}$}\pute{100}{18}{$\ms{V_n}$}\putw{0}{82}{$\ms{W_1}$}\pute{100}{82}{$\ms{W_n}$}\putc{53}{10}{{\footnotesize $\cdots$}}\putc{53}{90}{{\footnotesize $\cdots$}}\putc{50}{50}{{\footnotesize $f$}}\hspace{10pt}.
$$
All coupons have top and bottom sides which in our pictures will be
the horizontal sides of the coupons.  By a ribbon graph in an oriented
manifold $\Sigma$, we mean an oriented compact surface embedded in
$\Sigma$ which is decomposed into elementary pieces: bands, annuli,
and coupons (see \cite{Tur94}) and is a thickening of an oriented
graph.  In particular, the vertices of the graph lying in
$\Int \Sigma=\Sigma\setminus\partial \Sigma$ are thickened to coupons.
A $\cat$--colored ribbon graph is a ribbon graph whose (thickened)
edges are colored by objects of $\cat$ and whose coupons are colored
by compatible morphisms of~$\cat$.  The colors of the bottom edges
of a coupon determine the source object of the morphism and its top
edges determine the target object of the morphism. 
%N: Thanks: \BPm{I developped  the compatibility of colorings} 
 The intersection of a
$\cat$--colored ribbon graph in $\Sigma$ with $\partial \Sigma$ is
required to be empty or to consist only of vertices of valency~1.
When $\Sigma$ is a surface the ribbon graph is just a tubular
neighborhood of the graph.

%N: Thanks: \BPm{here we only defined the pivotal functor $\F$ for graph in  $\R\times[0,1]$.  We also need the ribbon functor $\F$ for graph in  $\R^2\times[0,1]$. Maybe use $\F:\Rib^n_\cat\to\cat$ where $n=2,3$.}
Let $\Rib^n_\cat$ be the category of $\cat$--colored ribbon graphs in
$\R^{n-1}\times[0,1]$ and let $$\F:\Rib^n_\cat\to \cat$$ be the
Reshetikhin-Turaev functor associated with $\cat$ via the Penrose
graphical calculus, see for example \cite{GPV13,Tur94,GP25}.

Let $\Ladm$ be the class of all $\cat$--colored ribbon graphs in
the sphere $S^n$ obtained as the braid closure of a (1,1)-ribbon graph
$\Graph_V$ whose open edge is colored with an object $V\in\ideal$.  Given
an m-trace $\mt$ on $\ideal$ we can renormalize $\F$ to an invariant
\begin{equation}\label{E:DefF'}
\F':\Ladm\to \kk \text{ given by } \F'(L)=\mt_V(\F(\Graph_V))
\end{equation}
where $\Graph_V$ is any (1,1)-ribbon graph whose closure is
$L\in \Ladm$.  The properties of the m-trace imply that $\F'$ is a
isotopy invariant of $L$, see \cite{GPV13}.

% A $\cat$--colored ribbon graph in $\R^2$ (resp.\
% $S^{2}=\R^2\cup\{\infty\}$) is called \emph{planar} (resp.\
% \emph{spherical}).  If $n=2$, let $\Ladm$ be the class of all spherical
% $\cat$--colored ribbon graphs obtained as the braid closure of a
% (1,1)-ribbon graph $T_V$ whose open edge is colored with an object
% $V\in\ideal$.  Given an m-trace $\mt$ on $\ideal$ we can renormalize $\F$
% to an invariant
% \begin{equation}\label{E:DefF'}
% F':\Ladm\to \kk \text{ given by } F'(L)=\mt_V(F(T_V))
% \end{equation}
% where $T_V$ is any (1,1)-ribbon graph whose closure is $L\in \Ladm$.  The properties of the m-trace imply that $F'$ is a isotopy invariant of $L$, see \cite{GPV13}.  

% If $n=3$ and $\cat$ is a ribbon category then let $\Ladm$ be the
% class of all $\cat$--colored ribbon graphs in $\R^3$ obtained as the
% braid closure of a (1,1)-ribbon graph $T_V$ whose open edge is colored
% with an object $V\in\ideal$.  Then the assignment $F'$ given in
% Equation \eqref{E:DefF'}, where $F$ is the R-T functor, is an isotopy
% invariant of $L$ in $\R^3$.
\section{$\Gr$-decorated cobordisms}
We introduce a cobordism category tailored to the study of the
character variety of 3-manifolds, equivalently the moduli space of
flat $\Gr$-bundles over 3-manifolds.
\subsection{$\Gr$-representations}
If $X$ is a nonempty locally path connected topological space %\NGm{In the topological paper we say "nonempty locally path connected topological space" do we need this here?}\BPm{I add.  It might not be necessary but we are clearly not interested in the other cases} 
and 
%N: $Y\subset X$, 
$Y\subset X$ is a subset, 
the \emph{fundamental groupoid} $\pi_1(X,Y)$ is the groupoid (\ie the small category in which all morphisms are isomorphisms) 
whose set of objects is 
$Y$ and morphisms are the homotopy class of paths from one point of
$Y$ to another (where homotopies fix extremities).  Following
Whitehead \cite{Whi78}, such a path $\gamma$ is a morphism from
$\gamma(1)$ to $\gamma(0)$. This makes the composition compatible with
the usual concatenation of paths (\ie $\gamma\circ\gamma'$ is the
path following first $\gamma$ then $\gamma'$).  We write $\gamma:p\leadsto q$ if $\gamma$ is a morphism from $q=\gamma(1)$ to $p=\gamma(0)$.
%N:  added
When $Y$  is a single point $*$ of $X$ then $\pi_1(X,Y)=\pi_1(X,\{*\})$ is the usual fundamental group.

\begin{definition}\label{def:Greps}
The set of \emph{$\Gr$-representations} of a connected topological space $X$ with set of basepoints $Y$ is
%N: $$\Rep_\Gr(X,Y)=\Hom(\pi_1(X,Y),\Gr)\et \Rep_\Gr(X,\emptyset)=\Hom(\pi_1(X,\bs*),\Gr)/\Gr$$ 
$$\Rep_\Gr(X,Y)=\Hom_{\Groupoid}(\pi_1(X,Y),\Gr)$$
if $Y$ is non-empty and 
$$
 \Rep_\Gr(X,\emptyset)=\Hom_{\Group}(\pi_1(X,\bs*),\Gr)/\Gr$$ 
 if $Y$ is empty, 
where $*$ is
any base point of $X$ and the group $\Gr$ acts by conjugation on
$\Hom_{\Group}(\pi_1(X,\bs*),\Gr)$. If $X$ is not connected we define $$ \Rep_\Gr(X,Y) = \prod_{X_0\in\pi_0(X)} \Rep_\Gr(X_0,Y\cap X_0)\ . $$ By convention,
$\Rep_\Gr(\emptyset,\emptyset)$ is a singleton.
\end{definition}

If $X\subset X'$ and $Y\subset Y'$, there is a restriction map
$\Rep_\Gr(X',Y')\to\Rep_\Gr(X,Y)$.
Remark that a representation always extends when one increases the set of base
  points: If $Y\subset Y'$, the restriction map
  $\Rep_\Gr(X,Y')\to\Rep_\Gr(X,Y)$ is surjective.
Also there is a left action of 
%N:  $\Gr^Y\curvearrowright \Rep_\Gr(X,Y)$ given for $f\in\Gr^Y$ and $\rho\in\Rep_\Gr(X,Y)$ of $\Gr^Y$  by $$(f.\rho)(\gamma)=f(\gamma(0))\rho(\gamma)f(\gamma(1))^{-1}$$
$\Gr^Y=\Hom_{\Set}(Y,\Gr)$ on $ \Rep_\Gr(X,Y)$ 
given by
$$(\vp.\rho)(\gamma)=\vp(\gamma(0))\rho(\gamma)\vp(\gamma(1))^{-1}$$
for $\vp\in\Gr^Y$ and
$\rho\in\Rep_\Gr(X,Y)$.
\begin{lemma}\label{L:Rep}
  Let $X$ be a path connected topological space, and $Y\subset X$ with
  $\card(Y)=n\in\N^*$.  Let $\Gamma$ be a rooted
  directed tree mapped in $X$ with vertices $Y$ and root $y_0\in Y$.
  Then
  %N:  $\Rep_\Gr(X,Y)\simeq \Rep_\Gr(X,\bs{y_0})\times \Gr^{\Gamma_1}$ 
 there is an isomorphism of sets 
 $$\Rep_\Gr(X,Y)\to \Rep_\Gr(X,\bs{y_0})\times \Gr^{\Gamma_1}$$ 
  where $\Gamma_1$ is the set of $n-1$ oriented edges of $\Gamma$.
\end{lemma}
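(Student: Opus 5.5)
The map is $\rho\mapsto(\rho|_{\{y_0\}},(\rho(\gamma_e))_{e\in\Gamma_1})$. Here $\rho|_{\{y_0\}}$ is the restriction of $\rho$ to the full subgroupoid on $y_0$, i.e. to the group $\pi_1(X,y_0)$. Each edge $e$ of $\Gamma$ is mapped in $X$ and so gives a path $\gamma_e$ between two points of $Y$, hence a morphism of $\pi_1(X,Y)$. I will show this map is a bijection by building its inverse.

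The key structural fact is that $\pi_1(X,Y)$ is generated by $\pi_1(X,y_0)$ together with the morphisms $\gamma_e$, with no further relations between the two. For each $y\in Y$ let $\tau_y:y_0\leadsto y$ be the composite of the edge paths $\gamma_e^{\pm1}$ along the unique tree path from the root $y_0$ to $y$, with $\tau_{y_0}$ the identity. Uniqueness of tree paths needs only that $\Gamma$ is a tree; the orientation is irrelevant because inverses are allowed. Since $X$ is path connected, $\pi_1(X,Y)$ is a connected groupoid, and every morphism $\alpha:p\leadsto q$ factors uniquely as
\[\alpha=\tau_p\circ\big(\tau_p^{-1}\circ\alpha\circ\tau_q\big)\circ\tau_q^{-1},\]
where the middle factor lies in $\pi_1(X,y_0)$. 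The orientation conventions on $\tau$ must be chosen so that this composition is well-typed. This is the standard description of a connected groupoid as the product of the vertex group with the indiscrete (tree) groupoid on $Y$.

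For the inverse, take $\sigma\in\Hom(\pi_1(X,y_0),\Gr)$ and $(g_e)\in\Gr^{\Gamma_1}$. Set $t_y\in\Gr$ to be the product of the $g_e^{\pm1}$ along the tree path, matching $\tau_y$, so $t_{y_0}=1$. Then define
\[\rho(\alpha)=t_p\,\sigma(\tau_p^{-1}\alpha\tau_q)\,t_q^{-1}.\]

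Functoriality is a telescoping check: for $\alpha:p\leadsto q$ and $\beta:q\leadsto r$, the inner $t_q^{-1}t_q$ cancels and $\tau_q\tau_q^{-1}$ cancels inside $\sigma$. The tree structure then gives $\rho(\gamma_e)=g_e$: if $e$ goes from $p$ to $q$, then up to orientation $\tau_q=\tau_p\gamma_e$ or $\tau_p=\tau_q\gamma_e^{-1}$, so $\tau_p^{-1}\gamma_e\tau_q$ is trivial and $t_pt_q^{-1}$ equals $g_e^{\pm1}$ correctly. Also $\rho$ restricts to $\sigma$ at $y_0$. Conversely, any $\rho$ is recovered from its image by the same formula, because $\rho$ is a functor and $\rho(\tau_y)=t_y$ by construction. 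So the two maps are mutually inverse.

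There is no real obstacle. The only care needed is bookkeeping of the Whitehead orientation convention, where $\gamma:p\leadsto q$ is a morphism from $q$ to $p$, and of edge directions in $\Gamma$. Finally, $|\Gamma_1|=n-1$ because a tree on $n$ vertices has $n-1$ edges.
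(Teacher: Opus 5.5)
Your proof is correct and follows the paper's approach: both factor each morphism $\alpha:p\leadsto q$ through the tree paths to the root, so that $\pi_1(X,Y)$ is $\pi_1(X,y_0)$ together with the free tree edges, and you additionally write out the inverse map and the functoriality check that the paper leaves implicit. The one slip is the orientation issue you flagged yourself: with $\tau_y:y_0\leadsto y$ in the paper's convention, the loop at $y_0$ is $\tau_p\alpha\tau_q^{-1}$ (the paper's $\gamma_{y_0,y_1}\sigma_0\gamma_{y_2,y_0}$), so the inverse should read $\rho(\alpha)=t_p^{-1}\,\sigma(\tau_p\alpha\tau_q^{-1})\,t_q$, after which the telescoping, the edge check $\rho(\gamma_e)=g_e$ and the two-sided inverse argument go through exactly as you describe.
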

\begin{proof}
  The graph $\Gamma$ gives for every pair of base points $y,y'\in Y$ a
  prefered path $\gamma_{y,y'}:y\leadsto y'$ given by a concatenation
  of edges of $\Gamma_1$.  Then any path $\sigma_0\in\pi_1(X,Y)$ from
  $y_1$ to $y_2$ can be written
  $\sigma_0=\gamma_{y_1,y_0}(\gamma_{y_0,y_1}\sigma_0\gamma_{y_2,y_0})\gamma_{y_0,y_2}$
  with $\gamma_{y_0,y_1}\sigma\gamma_{y_2,y_0}\in\pi_1(X,y_0)$ and 
  any homotopy $\bs{\sigma_t}_{t\in[0,1]}$ between $\sigma_0$ and
  $\sigma_1$ is conjugated to a relation
  $\gamma_{y_0,y_1}\sigma_t\gamma_{y_2,y_0}$ in $\pi_1(X,y_0)$.
\end{proof}
\subsection{The category of $\Gr$-decorated cobordisms}\label{def:Gdecorated}
A \emph{$\Gr$-decorated surface} is a 3-tuple $(\Sigma, Y, \rho)$ where
$\Sigma$ is a closed surface (\ie a smooth compact oriented surface
without boundary), $Y$ is a finite set of base points of $\Sigma$ such
that there is at least one base point in each component of $\Sigma$,
and $\rho\in\Rep_\Gr(\Sigma,Y)$ is a $\Gr$-representation.
 
A \emph{$\Gr$-decorated 3-manifold} $\wt M$ %\FCm{Do we accept closed $3$-manifolds ?}\BPm{Yes, with no base points}  
is a (possibly closed) compact oriented 3-manifold $M$
equipped with a set $Y\subset\partial M$ of base points, and a
representation $\rho\in\Rep_\Gr(M,Y)$ such that the restriction of
$\rho$ to its boundary gives a $\Gr$-decorated surface
$(\partial M,Y,\rho\vert_{\partial M})$.

A \emph{$\Gr$-decorated cobordism}
$\wt M=(M,Y,\rho):(\Sigma_-,Y_-,\rho_-)\to(\Sigma_+,Y_+,\rho_+)$ is
the datum of a $\Gr$-decorated 3-manifold $(M,Y,\rho)$, two $\Gr$-decorated
surfaces $(\Sigma_-,Y_-,\rho_-)$, $(\Sigma_+,Y_+,\rho_+)$, an
orientation preserving diffeomorphism
$f:\Sigma_+\sqcup(-\Sigma_-)\overset\sim\to\partial M$ such that
$Y_\pm=Y\cap f(\Sigma_\pm)$ and $\rho_\pm$ is the restriction of
$\rho\circ f$ to $(\Sigma_\pm,Y_\pm)$. A \emph{diffeomorphism of $\Gr$-decorated
cobordisms} $\psi:(M,Y,\rho)\to (M',Y',\rho')$ is an orientation
preserving diffeomorphism $\psi:M\to M'$ which induces the identity on the boundary, and such that $\psi^*(\rho')=\rho$.

The composition of two $\Gr$-decorated cobordisms is given by gluing them along their common $\Gr$-decorated surface, equipping the result with the unique representation given by Van Kampen Theorem, and erasing the base points contained in the common surface.

\begin{definition}
  The \emph{category of $\Gr$-decorated (2+1)-cobordisms} $\wt\cob$ is
  the category whose objects are $\Gr$-decorated surfaces and
  morphisms are diffeomorphism classes of $\Gr$-decorated
  cobordisms. It is a symmetric monoidal category whose tensor product
  is the disjoint union.
\end{definition}

\begin{definition}\label{def:GHQFT}
A 3-dimensional $\Gr$-HQFT is a symmetric monoidal functor $$\mathcal{Z}: \wt\cob \to \operatorname{Vect} \ .$$
\end{definition}

\section{(Graded) admissible skein modules}
In this section, we recall and adapt material from the unpublished manuscript \cite{CGP23}
concerning (graded) admissible skein modules. These modules generalize classical skein modules by imposing additional algebraic constraints. We present a self-contained treatment of their definition and basic properties.

\subsection{Definition of admissible skein modules}
Let $\cat$ be a % n essentially small
pivotal $\kk$--category with an ideal $\ideal$.
Let $M$ be a
oriented manifold of dimension $n=2$ or $3$. %two or three. 
In this paper all manifolds will be oriented and if we consider the skein module of a three dimensional manifold we require $\cat$ to be ribbon.

An \emph{$\ideal$--admissible ribbon graph} in $M$ is a
$\cat$--colored ribbon graph in $M$ where each connected component of
$M$ contains at least one edge colored with an object in $\ideal$.

A \emph{box} in $M$ is an orientation preserving embedding
$f: \R^{n-1}\times [0,1] \to M$.
\begin{definition}\label{D:ISkeinRelation}
  Given $\Graph_1,\,\Graph_2,\ldots,\Graph_m$ ribbon $\cat$-graphs in
  $M$ and $a_1,\ldots,a_m\in\kk$ the linear combination
  $a_1\Graph_1+a_2\Graph_2+\cdots a_m\Graph_m$ is an
  \emph{$\ideal$-skein relation} (in $M$) if there is a box
  $f:\R^{n-1}\times [0,1] \to M$ such that
  \begin{enumerate}
  \item $f^{-1}(\Graph_i)$ represents a morphism in $\Rib^n_\cat$ for all
    ${i}={1},\dots,{m}$,
  \item the $\Graph_i$'s coincide outside the box:  
  $$\Graph_i \cap \big(M\setminus f(\R^{n-1}\times [0,1])\big)=\Graph_j \cap \big(M\setminus  f(\R^{n-1}\times [0,1])\big)$$ for all  ${i},j={1},\dots,{m}$,
\item $\Graph_i \cap \big(M_0\setminus f(\R^{n-1}\times [0,1])\big)$
  has an $\ideal$--colored edge where $M_0$ is the connected component
  of $M$ which contains the box, and
  % $\Graph_i$ has an $\ideal$--colored edge not completely
  % contained in the box for all $\for{i}{1}{m}$,
  \item $\sum_{i=1}^m a_i\F(f^{-1}(\Graph_i)) =0$ in $\cat$.
\end{enumerate}
We say two formal linear combinations are \emph{$\ideal$-skein
  equivalent} if their difference is a sum of $\ideal$-skein relation.
When $\ideal=\cat$ we say \emph{skein relation} for a $\cat$-skein
relation and skein equivalent for
$\cat$--skein equivalent.
\end{definition}

Now we give the main definition of this subsection. 
\begin{definition}\label{D:AdmisSkeinMod}
  Define the \emph{admissible skein module} $\Skein_{\ideal}(M)$ as
  the $\kk$--span of closed %\BPm{added closed} 
  $\ideal$--admissible ribbon graphs in $M$ modulo
  the span of $\ideal$--skein relations.   Define a \emph{skein} as a
  ribbon $\cat$-graph representing an element of the admissible skein
  module.
\end{definition}
 Since  $\cat$ is essentially small then $\Skein_{\ideal}(M)$ is always a set; indeed up to skein equivalence we can replace any color with a fixed representative of its  isomorphism class in $\cat$.  
Remark that since the skeins have no boundary vertices they are in fact embedded in the interior of $M$. 
Note that isotopic skein are $\ideal$--skein equivalent.  
\subsection{Some properties of admissible skein modules} Some of the properties discussed in this subsection are not used elsewhere in this paper, but are of independent interest and are used in related works. Several of these results first appeared in the unpublished manuscript \cite{CGP23}, and we record them here for completeness and future reference.

Let $\Emb^{surj}_n$ be the category whose objects are oriented
$n$-dimensional manifolds and morphisms are isotopy classes of
orientation preserving 
embeddings $M\hookrightarrow N$ such that the inducted map $\pi_0(M) \to \pi_0(N)$ is surjective.

\begin{prop}\label{P:MappingClassActs}
Let $n\in \{2,3\}$, assuming that $\cat$ is ribbon if $n=3$. Then  $$\Skein_{\ideal}:(\Emb^{surj}_n, \sqcup) \to (\Vect,\otimes)$$ is a symmetric monoidal functor.
%, where $\Emb^{surj}_n$ is equipped with disjoint union and $\Vect$ with tensor product symmetric monoidal structure.
 In particular, if $n=2$ they provide representations of the mapping class group of surfaces. 
\end{prop}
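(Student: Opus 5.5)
The plan is to define the functor on morphisms by pushing forward skeins, check that this is well defined on isotopy classes, and then verify monoidality.

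\emph{Action on morphisms.} Let $\phi\colon M\hookrightarrow N$ be an orientation preserving embedding with $\pi_0(M)\to\pi_0(N)$ surjective, and let $\Graph$ be a closed $\ideal$--admissible ribbon graph in $M$. Then $\phi(\Graph)$ is a $\cat$--colored ribbon graph in $N$, and the framing and coupon orientations are preserved because $\phi$ preserves orientation. It is still $\ideal$--admissible: every component $N_0$ of $N$ contains $\phi(M_0)$ for some component $M_0$ of $M$, by surjectivity, and $M_0$ carries an $\ideal$--colored edge. This is exactly where the surjectivity hypothesis is needed.

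\emph{Compatibility with skein relations.} Suppose $\sum a_i\Graph_i$ is an $\ideal$--skein relation in $M$ with box $f$. Then $\sum a_i\phi(\Graph_i)$ is an $\ideal$--skein relation in $N$ with box $\phi\circ f$. Conditions (1), (2) and (4) of Definition~\ref{D:ISkeinRelation} transfer directly, since $(\phi f)^{-1}(\phi(\Graph_i))=f^{-1}(\Graph_i)$. For condition (3), let $N_0$ be the component of $N$ containing $\phi f(\R^{n-1}\times[0,1])$. Since $\phi$ is injective, the $\ideal$--colored edge of $\Graph_i$ lying in $M_0$ outside the box is sent to an edge of $\phi(\Graph_i)$ lying in $N_0$ outside the box. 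So $\phi$ induces a linear map $\phi_*\colon\Skein_\ideal(M)\to\Skein_\ideal(N)$, and functoriality $(\psi\phi)_*=\psi_*\phi_*$ is immediate.

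\emph{Isotopy invariance.} If $\phi_t$ is an isotopy of embeddings, then $\phi_0(\Graph)$ and $\phi_1(\Graph)$ are isotopic ribbon graphs in $N$. We use the remark that isotopic skeins are $\ideal$--skein equivalent. I expect this to be the only technical step. By compactness, the isotopy is a finite composition of isotopies each supported in a small ball. Each such local move is realized by an $\ideal$--skein relation in a box meeting $\Graph$ in a trivial tangle, and the invariance of $\F$ under isotopy in $\Rib^n_\cat$ gives condition (4). The admissibility condition (3) needs care: we must choose the supporting balls so that they avoid some $\ideal$--colored edge in each component. If a ball would contain the whole $\ideal$--colored part, we first split the isotopy further, moving pieces one at a time, so that some $\ideal$-edge always lies outside the current box. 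If $n=3$, the ribbon structure of $\cat$ is used so that $\F$ is defined and isotopy invariant on $\Rib^3_\cat$.

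\emph{Monoidality and mapping class groups.} We build a map $\Skein_\ideal(M_1)\otimes\Skein_\ideal(M_2)\to\Skein_\ideal(M_1\sqcup M_2)$ by sending $\Graph_1\otimes\Graph_2$ to $\Graph_1\sqcup\Graph_2$. Its inverse comes from the fact that any admissible graph in $M_1\sqcup M_2$ splits uniquely as $\Graph_1\sqcup\Graph_2$ with each $\Graph_j$ admissible, since each component contains an $\ideal$-edge. Every relation in the disjoint union has its box in a single piece, so it is a relation in one factor tensored with a graph in the other. Hence the relations on the two sides correspond, and the map is an isomorphism. The empty manifold gives $\kk$, spanned by the empty graph. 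Naturality, associativity and compatibility with the symmetry follow from the fact that all structure maps are induced by the corresponding diffeomorphisms. Finally, for $n=2$, self-diffeomorphisms of $\Sigma$ are automorphisms in $\Emb^{surj}_2$, and isotopic diffeomorphisms act identically, so we obtain an action of the mapping class group.
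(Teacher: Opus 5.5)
Your proposal is correct and follows the same route as the paper. The paper's proof simply observes that pushing a graph forward along an embedding that is surjective on $\pi_0$ preserves $\ideal$-admissibility and sends $\ideal$-skein relations to $\ideal$-skein relations; you additionally spell out what it leaves implicit (condition (3) via injectivity, independence of the isotopy class via the remark that isotopic skeins are skein equivalent, and the monoidal isomorphism $\Skein_\ideal(M_1)\otimes\Skein_\ideal(M_2)\cong\Skein_\ideal(M_1\sqcup M_2)$), and all of this is sound.
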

\begin{proof}
If $f:M\to N$ is an embedding which is surjective on connected components  and $\Graph\in \Skein_{\ideal}(M)$ 
%N: is admissible then $f(\Graph)\in \Skein_{\ideal}(N)$ is also admissible. 
then $f(\Graph)\in \Skein_{\ideal}(N)$ is $\ideal$-admissible since $\Graph$ is  $\ideal$-admissible.  
%\NGm{Added $\ideal$-admissible here. } 
%
Furthermore, the image of a skein relation under $f$ is a skein relation.  
\end{proof} 

\begin{prop}\label{P:FunctorIndSkein}
Let $\cat$ and $\catd$ be % essentially small 
pivotal $\kk$--categories.   Let $G: \cat\to\catd$ be a strict pivotal functor (see \cite[Section 1.7.5]{TV17}) and let $\ideal$ be an ideal of $\cat$.  Let $\Jideal$ be the ideal of $\catd$ generated by the objects of $G(\ideal)$.  The functor $G$ induces a natural transformation $G_*:\Skein_\ideal\to \Skein_\Jideal$.
\end{prop}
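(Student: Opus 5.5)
The plan is to define $G_*$ on generators by applying $G$ to all colors, then check that it respects admissibility, the skein relations, and naturality. Given a closed $\ideal$-admissible $\cat$-colored ribbon graph $\Graph$ in $M$, let $G(\Graph)$ be the $\catd$-colored ribbon graph with the same underlying ribbon graph. Each edge colored by $V$ is recolored by $G(V)$, and each coupon colored by $f$ is recolored by $G(f)$. This recoloring is compatible at coupons because $G$ is strict monoidal, so $G(V_1\otimes\cdots\otimes V_k)=G(V_1)\otimes\cdots\otimes G(V_k)$. Every component of $M$ contains an edge colored by some $V\in\ideal$, and then $G(V)\in\Jideal$. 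Hence $G(\Graph)$ is $\Jideal$-admissible. Extending linearly gives a map from the free span of $\ideal$-admissible graphs to $\Skein_\Jideal(M)$.

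The main point is that this map sends $\ideal$-skein relations to $\Jideal$-skein relations. Let $\sum a_i\Graph_i$ be an $\ideal$-skein relation with box $f$. The key fact is that a strict pivotal functor commutes with the Reshetikhin--Turaev evaluation. Concretely, $\F_\catd(f^{-1}(G(\Graph_i)))=G(\F_\cat(f^{-1}(\Graph_i)))$, because $\F$ is computed by composing and tensoring the colors of coupons, identities, and the duality morphisms $\lcoev,\lev,\rcoev,\rev$. A strict pivotal functor preserves all of these, and if $n=3$ we also take $G$ braided, so crossings and twists are preserved as well. I expect this compatibility to be the only step needing care. It amounts to checking the generators of $\Rib^n_\cat$, where the functor is presented by generators and relations; alternatively, it follows from the uniqueness of $\F$ applied to the two pivotal functors $\F_\catd\circ G_{\Rib}$ and $G\circ\F_\cat$ from $\Rib^n_\cat$ to $\catd$, which agree on the generators. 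Linearity of $G$ on Hom spaces then gives $\sum a_i\F_\catd(f^{-1}(G(\Graph_i)))=G(0)=0$. Conditions (1) and (2) of Definition \ref{D:ISkeinRelation} are clear. Condition (3) holds because an $\ideal$-colored edge outside the box becomes a $\Jideal$-colored edge. So $G_*:\Skein_\ideal(M)\to\Skein_\Jideal(M)$ is well defined.

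Finally, naturality with respect to $\Emb^{surj}_n$ is immediate. For an embedding $e:M\to N$, the recoloring commutes with pushing graphs forward, because $e(G(\Graph))=G(e(\Graph))$ as colored graphs, and the morphisms of Proposition \ref{P:MappingClassActs} are defined by exactly this pushforward. The same observation on disjoint unions shows that $G_*$ is compatible with the monoidal structure.
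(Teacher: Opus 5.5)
Your proposal is correct and follows the same route as the paper. You recolor by $G$ and note that $\ideal$-colored edges become $\Jideal$-colored, which gives both admissibility and condition (3) of a skein relation; you then transport the vanishing condition through $\F_\catd\circ G_*=G\circ\F_\cat$ on $\Rib^n_\cat$. Your extra remark that $G$ must also be braided when $n=3$ supplies a hypothesis the paper leaves implicit, and your explicit naturality check is a harmless addition.
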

\begin{proof}
It is clear that the image of an $\ideal$--admissible graph is a $\Jideal$-admissible graph.
Furthermore, a $\ideal$-skein relation is transformed via the application of $G$ into a $\Jideal$-skein relation; indeed $\F_\catd\circ G_*=G\circ \F_\cat:\Rib^n_\cat\to \catd$ and if the complement of a box contains a $\ideal$-colored edge then after application of $G_*$ the complement of the box contains a $\Jideal$-colored edge.
\end{proof}

\begin{prop}\label{prop:ddd}
  $\Skein_\ideal(M)$ is generated by ribbon graphs where each edge is
  colored by an object of $\ideal$.
\end{prop}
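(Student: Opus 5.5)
Take an $\ideal$--admissible closed ribbon graph $\Graph$ in $M$; the plan is to push $\ideal$-colors onto its edges one at a time by $\ideal$-skein relations. We may assume $M$ is connected, since the general case follows componentwise. By admissibility $\Graph$ has an edge $e_0$ colored by some $P\in\ideal$. Let $e$ be any other edge, colored by $V\in\cat$. We want to replace the color of $e$ by an object of $\ideal$ without changing the skein class and without creating new non-$\ideal$ edges. Then induction on the number of non-$\ideal$-colored edges finishes the proof.

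The local move is as follows. Choose a path in $M$ from a point of $e_0$ to a point of $e$, disjoint from $\Graph$ except at its endpoints; in dimension $2$ we may have to cross other edges, and this is handled below. Isotope a small piece of $e_0$ along this path so that a strand colored $P$ runs parallel to $e$. Near $e$ we then see a piece of $e_0$ going out and coming back, next to $e$. Insert the identity of $P$ written as the composition $P\to V^*\otimes V\otimes P\to P$, or use a factorization of $\Id_{V\otimes P}$, to merge the $P$-strand and the $e$-strand into a single edge colored $V\otimes P$. Concretely, in a box containing a segment of $e$ and the parallel $P$-strand, replace the two parallel strands $V$, $P$ by two coupons $\Id_{V\otimes P}$ joined by an edge colored $V\otimes P\in\ideal$. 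This is a skein relation, because $\F$ of both sides agree. Condition (3) of Definition~\ref{D:ISkeinRelation} is also satisfied, because an $\ideal$-colored portion of $e_0$ remains outside the box. Now $e$ is split into two pieces with a $V\otimes P$ segment between them. The $V\otimes P$ edge can then be slid along $e$ by repeated skein relations, again using the coupons $\Id_{V\otimes P}$. Equivalently, we can do this directly: the full edge $e$ runs between two coupons $f,g$, and we replace $e$ by an edge colored $V\otimes P$ whose endpoints absorb the $P$-strand into modified coupons $f\otimes\Id_P$ and $g\otimes \Id_P$, precomposed or postcomposed with the appropriate bends. If $e$ is a closed annulus component, we first insert a coupon $\Id_V$ on it.

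To control the bookkeeping, I would formalize this as follows. Pull $e_0$ so that a $P$-colored strand runs alongside all of $e$, from the coupon at one end of $e$ to the coupon at the other end, and enlarge those coupons to absorb the parallel strand. The result is $\ideal$-skein equivalent to $\Graph$, and $e$ now has color $V\otimes P\in\ideal$ (since $\ideal$ is closed under tensor product). The only new edges created are pieces of the $P$-strand, all colored $P\in\ideal$. Hence the number of non-$\ideal$ edges strictly decreases.

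The main obstacle is dimension $2$, where the path from $e_0$ to $e$ may cross other edges, so the finger move of $e_0$ creates crossings that are not allowed in a surface. At each crossing, with an edge colored $W$, I would fuse the two strands with a coupon: insert a coupon realizing the identity of $P\otimes W$ and then split it back into the swapped order. In a pivotal but non-braided setting that swap is not available. Instead, at each transverse intersection I would replace the crossing by a $4$-valent coupon colored by $\Id_{P}\otimes\Id_W$, regarded as a morphism $P\otimes W\to P\otimes W$ with the strands rerouted. This is legitimate as a ribbon graph in the surface because it is a planar coupon. Its edges adjacent to the crossing become $\ideal$-colored after the same absorption trick. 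One must check that each such local replacement is an $\ideal$-skein relation, with an $\ideal$ edge remaining outside the box, and that the process terminates. Both hold because every newly created edge carries a color of the form $X\otimes P$ or $P$, all of which lie in $\ideal$.
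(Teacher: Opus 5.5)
Your core move is exactly the paper's: run an $\ideal$-colored strand alongside the non-$\ideal$ edge $e$ and absorb it into the two end coupons, so that $e$ becomes colored by $V\otimes P\in\ideal$. In dimension $3$ your argument works, since the path from $e_0$ to $e$ can be taken disjoint from $\Graph$ and the finger move is then an isotopy.

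The gap is in dimension $2$, the case you single out as the main obstacle. Your replacement does not realize a crossing. A $4$-valent coupon colored $\Id_P\otimes\Id_W\colon P\otimes W\to P\otimes W$ has the $P$-strand entering and leaving on the same side of $W$, so it is just two parallel strands. If the strands are genuinely rerouted, the coupon must be a morphism $P\otimes W\to W\otimes P$. For the new graph to be $\ideal$-skein equivalent to the old one, the two crossings made by the finger would then have to cancel as in a Reidemeister~II move. That requires a braiding-like isomorphism $P\otimes W\cong W\otimes P$, which a pivotal category does not have in general. For instance, in the $\Gr$-graded setting with homogeneous $P\in\cat_g$, $W\in\cat_h$ and $gh\neq hg$, every morphism $P\otimes W\to W\otimes P$ is zero. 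Your closing justification (``every newly created edge carries a color of the form $X\otimes P$ or $P$'') therefore misses the point. The replacement is not a skein relation, because $\F$ of the two sides differs whatever the colors are.

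The repair is to never cross an edge, by choosing the order of the induction. Pick a non-$\ideal$ edge $e$ such that some component $R$ of $\Sigma\setminus\Graph$ adjacent to $e$ is also adjacent to an $\ideal$-colored edge. Then push the finger of that $\ideal$-colored edge inside $R$, along the corresponding side of $e$. Such an $e$ exists as long as a non-$\ideal$ edge remains, by connectedness of $\Sigma$: at a coupon carrying both kinds of edges, two cyclically consecutive edges of different kinds already border a common region. Equivalently, convert the edges your path crosses one at a time, in order along the path. This is what the paper's phrase ``an edge colored by $V$ which is close to an edge colored by $U\in\ideal$'' tacitly relies on.

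Two minor points:
\begin{itemize}
\item The factorization $P\to V^*\otimes V\otimes P\to P$ you mention is not available for a general ideal; it is Lemma~\ref{L:V-appear}, which needs sharpness and projectivity. You never actually use it, though.
\item As in the paper, insert enough identity coupons that $e$ joins two \emph{distinct} coupons: two, not one, on a closed component. Otherwise a neighborhood of $e$ and its coupon is an annulus rather than a box.
\end{itemize}
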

\begin{proof}
  By adding identity coupons it is easy to see $\Skein_\ideal(M)$ is
  generated by ribbon graphs where each edge is joining two different
  coupons.  Then we can induct on the number of edges whose color
  does not belongs to $\ideal$.
  
      Using a skein relation, we can
  replace such an edge colored by $V$ which is close to an
   edge colored by $U\in \ideal$ with an edge colored by $V\otimes U\in\ideal$
   changing the coupons as in the following figure:
    $$\epsh{fig4a}{16ex}\put(1,2){\ms{U}}\put(-25,2){\ms{V}}
   \put(-19,22){\ms{f}}\put(-19,-19){\ms{g}}
   \qquad\longrightarrow\quad
   \epsh{fig4b}{16ex}\put(-11,2){\ms{V\!\!\otimes\! U}}
   \put(-25,22){\ms{f\!\otimes\!\Id_U}}\put(-25,-20){\ms{g\!\otimes\!\Id_U}} \hspace{15pt}. $$
      Remark that up to skein relations in the neighborhood of the
   coupons adjacent to the $V$-colored edge, we can always assume that
   its position relatively to the coupons is as in the above figure.
 \end{proof}
%\subsection{Algebra and module structures of $\ideal$-skein modules}

\begin{prop}\label{prop:boundaryaction}
  Let $n=2$ or 3 and assume that $\cat$ is ribbon if $n=3$.  Let
  $\ideal$ and $\Jideal$ be ideals of $\cat$.  If $\Sigma$ is an
  $(n-1)$-manifold then $\Skein_{\ideal}(\Sigma\times I)$ is an
  associative algebra, which is unital if $\ideal= \cat$.
  Furthermore, if $M$ is a $n$-manifold and
  $i:\Sigma\hookrightarrow \partial M$ is an orientation reversing
  (resp.\ preserving) %\BPm{exchanged reversing/preserving}\NGm{OK} 
  embedding, then $\Skein_{\ideal}(M)$ is a left
  (resp.\ right) module over $\Skein_{\Jideal}(\Sigma \times
  I)$. Finally, when $n=3$ we have that $\Skein_{\ideal}(\Sigma)$ is
  isomorphic to $\Skein_{\ideal}(\Sigma\times I)$.
\end{prop}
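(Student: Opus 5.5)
The plan is to realize every structure by stacking in the interval direction and then to check compatibility with admissibility and with $\ideal$-skein relations. For the algebra structure, given $\ideal$-admissible ribbon graphs $\Graph_1,\Graph_2$ in $\Sigma\times I$, I will define $\Graph_1\cdot\Graph_2$ by rescaling $\Graph_1$ into $\Sigma\times[1/2,1]$ and $\Graph_2$ into $\Sigma\times[0,1/2]$ and taking the union. Each component $\Sigma_0\times I$ then still contains an $\ideal$-colored edge, coming from either factor, so the product is $\ideal$-admissible. If $\sum a_i\Graph_i$ is an $\ideal$-skein relation witnessed by a box $f$, its image under the rescaling, stacked with a fixed graph $\Graph'$, is again an $\ideal$-skein relation. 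The box is shrunk into the relevant half, and the $\ideal$-colored edge outside the box in the same component survives, because it lay in the rescaled copy and is disjoint from the image of the box. So the product descends to $\Skein_\ideal(\Sigma\times I)$, bilinear in each variable. Associativity follows because the two bracketings differ by an isotopy reparametrizing the $I$-coordinate, and isotopic skeins are equal. The empty graph is a unit, but it is admissible only if $\ideal$ contains the objects needed in every component. For $\ideal=\cat$ the empty graph is not literally admissible, so I would take the unit to be the class of a trivial circle colored by $\unit$, or more precisely an edge colored by $\unit\in\cat$ that skein relations allow one to delete. Deletion is legitimate because, for $\ideal=\cat$, condition (3) of Definition \ref{D:ISkeinRelation} only asks for any edge outside the box.

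For the module structure, I will use a collar. Let $i:\Sigma\hookrightarrow\partial M$ be orientation reversing, identify a collar $c:\Sigma\times I\hookrightarrow M$ with $c(\Sigma\times\{0\})=i(\Sigma)$, and let $M'=M\setminus c(\Sigma\times[0,1/2))\cong M$ via an isotopy pushing the collar. A skein $S\in\Skein_\Jideal(\Sigma\times I)$ and a graph $\Graph\in\Skein_\ideal(M)$ combine into $c(S)\cup\Graph$, with $\Graph$ pushed into $M'$ and $S$ placed in $c(\Sigma\times[0,1/2])$. Admissibility is inherited from $\Graph$, since each component of $M$ still has its $\ideal$-colored edge. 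Here it is essential that $\Graph$, and not $S$, provides admissibility, so no hypothesis relating $\Jideal$ and $\ideal$ is needed. A $\Jideal$-relation on $S$ has an $\Jideal$-edge outside its box inside the collar component, but in $M$ we need an $\ideal$-edge outside the box in that component of $M$. That edge is supplied by $\Graph$, which lies in $M'$, away from the box. Relations on $\Graph$ are handled the same way. Orientation conventions determine whether we act on the left or right, the roles switching in the preserving case, and associativity of the action again follows from collar isotopies.

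For the final claim ($n=3$), I will compare $\Skein_\ideal(\Sigma)$, computed with planar graphical calculus in the surface, with $\Skein_\ideal(\Sigma\times I)$. The natural map $\Sigma\hookrightarrow\Sigma\times\{1/2\}$ sends ribbon graphs on $\Sigma$, with the blackboard framing, to ribbon graphs in $\Sigma\times I$, and boxes to boxes by thickening. Relations go to relations, because $\F$ on $\Rib^3_\cat$ restricts to the planar $\F$ on $\Rib^2_\cat$. The inverse map is projection. A generic graph in $\Sigma\times I$ projects to a diagram on $\Sigma$ with crossings, and each crossing and framing twist is replaced by a coupon colored by the braiding $c^{\pm1}$ or the twist. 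Well-definedness is the step I expect to be the main obstacle. I must show that Reidemeister-type moves, and isotopies through non-generic projections, change the result by $\ideal$-skein relations on $\Sigma$. Each such move happens in a disk, so it is a relation inside a box, and it holds in $\cat$ because $\F$ is an isotopy invariant of ribbon graphs. Condition (3) requires an $\ideal$-colored edge outside that small disk, and I would arrange this by first using Proposition \ref{prop:ddd} so that all edges are $\ideal$-colored, choosing disks that avoid at least one edge. Skein relations in boxes of $\Sigma\times I$ are handled by isotoping the box to be vertical over a disk in $\Sigma$. The two maps are then mutually inverse: in one direction this is immediate, and in the other it holds because a graph is skein equivalent to its resolved projection.
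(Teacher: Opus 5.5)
Your proposal is correct and follows the paper's proof in all three parts. You stack in the $I$-direction, with an $\unit$-colored edge as unit when $\ideal=\cat$ (deletable because the other factor supplies an edge outside the box). You glue a collar, with admissibility and condition (3) supplied by the $\ideal$-admissible graph in $M$. For $n=3$ you use generic projection with braiding coupons, and there you are more careful about well-definedness than the paper. Proposition \ref{prop:ddd} is not needed in that step, and applying it in $\Sigma\times I$ before proving compatibility with relations would be circular; it is enough to take the Reidemeister disks small enough not to contain a whole $\ideal$-colored edge.

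One convention check. With your product ($\Graph_1$ placed above $\Graph_2$) and your collar ($t=0$ on $i(\Sigma)$, increasing inward), an orientation-reversing $i$ gives $S_1\cdot(S_2\cdot\Graph)=(S_2S_1)\cdot\Graph$, i.e.\ a right action. To get the stated left action, use the paper's order: $\Graph$ in $\Sigma\times[0,1]$, $\Graph'$ in $\Sigma\times[1,2]$, with $\Sigma\times\{1\}$ glued to $i(\Sigma)$.
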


\begin{proof}
We give the algebra operation of $\Skein_{\ideal}(\Sigma\times I)$.  Let  $\Graph,\Graph'\subset \Sigma\times [0,1]$ are ribbon graphs.  We can identify $\Graph$ and  $\Graph'$ as graphs in  $\Sigma\times [0,1]$  and  $\Sigma\times [1,2]$, respectively.  Then   $\Graph\cdot \Graph'$ is the ribbon graph $\Graph\cup \Graph'$ in $\Sigma\times [0,2]\simeq\Sigma\times [0,1]$ obtained by glueing $\Sigma\times [0,1]$  and  $\Sigma\times [1,2]$. 

We claim that this operation gives a well defined associative product.
Indeed, if $\Graph$ and $\Graph'$ are $\ideal$-admissible graphs in
$\Sigma\times I$ then so is $\Graph\cdot \Graph'$.
Furthermore,  
%\BPm{modified}\NGm{OK}
 if $\Graph'$ and $\Graph''$ are related by a
$\ideal$-skein relation in a box $B$ then 
$\Graph\cdot \Graph'$ and $\Graph\cdot \Graph''$ are $\ideal$-skein
equivalent with the same box $B$ which does not intersect $\Graph$.
% Furthermore if $\Graph'$ and $\Graph''$ are related by a
% $\ideal$-skein relation on a box $B$ then we will show that
% $\Graph\cdot \Graph'$ and $\Graph\cdot \Graph''$ are $\ideal$-skein
% equivalent: up to isotopy of $\Graph$ we can suppose that both
% $\Graph\cdot \Graph'$ and $\Graph\cdot \Graph''$ intersect $B$
% transversally and that $\Graph\cap B$ lies entirely at the left of
% both $\Graph'\cap B$ and $\Graph''\cap B$.  Then
% $\F(\Graph\cdot \Graph'\cap B)=\F(\Graph\cap B)\otimes \F(\Graph'\cap
% B)$ and
% $\F(\Graph\cdot \Graph''\cap B)=\F(\Graph\cap B)\otimes
% \F(\Graph''\cap B)$.  But since $\F(\Graph'\cap B)=\F(\Graph''\cap B)$
% we conclude that
% $\F(\Graph\cdot \Graph'\cap B)=\F(\Graph\cdot \Graph''\cap B)$.
%
The operation is clearly associative.
% If $\unit\notin \ideal$ then the  empty graph is not a $\ideal$--admissible graph so that $\Skein_{\ideal}(\Sigma)$ is not unital.
If $\ideal=\cat$ then $\Skein_{\ideal}(\Sigma\times I)$ has a unit given by the graph with one edge colored by $\unit$ and two coupons colored by $\Id_\unit$.

Suppose $i:\Sigma\hookrightarrow \partial M$ is an orientation
reversing 
%\BPm{changed}\NGm{OK}
 embedding.  % Let $N\simeq \partial M\times [0,1]$ be a
% closed tubular neighborhood of $\partial
% M$.
Then $i$ induces a map $$j:\Sigma\times[0,1]\sqcup M\to\Sigma\times[0,1]\cup_{i\times\bs1} M\simeq M$$ which at the level of skein module induces
$$\Skein_{\Jideal}(\Sigma\times[0,1])\otimes\Skein_{\ideal}(M)\to\Skein_{\ideal}(M),$$
% Then $i$ induces the map
% $$j:\Sigma \times [1/3,2/3] \hookrightarrow \partial M\times [1/3,2/3] \subset N\subset M$$
% which gives an embedding of $\Sigma \times [1/3,2/3]\simeq \Sigma \times [0,1]$ into the
% interior of $M$.  \NGm{I changed here as well.  If ok please remove.}
given by % We will show the assignment
% $\cdot : \Skein_{\Jideal}(\Sigma\times [0,1])\times \Skein_{\ideal}(M)\to
% \Skein_{\ideal}(M)$ given by
$[\alpha]\cdot [\beta]=[j(\alpha)\sqcup j(\beta)]$ % is well defined 
where
$\alpha$ and $\beta$ are representatives of
$[\alpha]\in \Skein_{\Jideal}(\Sigma\times [0,1])$ and
$[\beta]\in \Skein_{\ideal}(M)$, respectively.%  (here we assume
% $\beta\subset M\setminus N$ % $\beta\subset int(M)$
% which can be done by pushing it inside $M$ with an isotopy).
Notice $j(\alpha)\sqcup j(\beta)$ is an $\ideal$-admissible graph in
$M$.  If $\beta$ and $\beta'$ are related by a $\ideal$-skein relation
then $j(\alpha)\sqcup j(\beta)$ and $j(\alpha)\sqcup j(\beta')$ are
$\ideal$-skein equivalent.  Also, if $\alpha$ and $\alpha'$ are
related by a $\Jideal$-skein relation given by a box
$B\subset \Sigma\times [0,1]$ then $j(\alpha)\sqcup j(\beta)$ and
$j(\alpha')\sqcup j(\beta)$ are $\ideal$-related because $j(\beta)$
has an $\ideal$-colored edge outside $j(B)$.  Therefore we have a well
defined action as claimed.  If the orientation induced by $M$ on
$i(\Sigma)$ is positive %\BPm{changed}\NGm{Ok, thanks!} %negative
then the right action is proven similarly with
$j:\Sigma\times[0,1]\sqcup M\to\Sigma\times[0,1]\cup_{i\times\bs0}
M\simeq M$. % and $i(\alpha)$ lies below $\beta$.
% $j(\alpha)\sqcup j(\beta)$ is an $\ideal$-admissible graph in $M$.  If
% $\beta$ and $\beta'$ are related by a $\ideal$-skein relation given in
% a box $B$ then up to isotopy we can assume $B\subset int(M)$ implying
% that $j(\alpha)\sqcup \beta$ and $j(\alpha)\sqcup \beta'$ are
% $\ideal$--skein related in $B$.  Also, if $\alpha$ and $\alpha'$ are
% related by a $\Jideal$-skein relation given by a box $B\subset \Sigma\times [0,1]$
% then $j(\alpha)\sqcup \beta$ and $j(\alpha')\sqcup \beta$ are
% $\Jideal$-related in a box $B'$ obtained by thickening slightly $j(B)$
% inside $M$.  Since $[\beta]\in \Skein_{\ideal}(M)$ then up to isotopy
% this relation is actually a $\ideal$-skein relation.  Therefore we
% have a well defined action as claimed.  If the orientation induced by
% $M$ on $i(\Sigma)$ is negative then the right action is proven
% similarly where $[\beta\cdot \alpha]=[\beta\sqcup
% j(\alpha)]$. % and $i(\alpha)$ lies below $\beta$.

For the final statement, let $n=3$ then $\Sigma$ is a surface which is
naturally embedded in the 3-manifold $\Sigma\times [0,1]$ by the map
$p: \Sigma \hookrightarrow \Sigma\times \{1/2\} \subset \Sigma\times
[0,1]$.  Let $\Graph$ be a skein in $\Skein_{\ideal}(\Sigma)$.  The
assignment $\Graph\mapsto p(\Graph)$ gives a skein in
$\Skein_{\ideal}(\Sigma\times [0,1])$ and a map from
$\Skein_{\ideal}(\Sigma)$ to $\Skein_{\ideal}(\Sigma\times [0,1])$.
The inverse of this map is given as follows.  Let $\Graph'$ be a skein
in $\Skein_{\ideal}(\Sigma\times I)$.  There exist a regular
projection of the ribbon graph $ \Graph'$ onto $\Sigma \times \{1/2\}$
where all the intersections are transverse points.  Each of these
crossing points can be replaced by a coupon decorated by the braiding
of $\cat$ to obtain a skein in $\Skein_{\ideal}(\Sigma)$.
\end{proof}

%\NG{The above proposition maps a surface to an algebra.  Is this interesting?  }

Next we recall an  interpretation of the skeins of the disk and the sphere in terms of m-traces, which is given in \cite[Theorem 2.4.]{CGPV23}. 
\begin{theorem}\label{T:DiskRmt}Let $\cat$ be a % n essentially small
  pivotal $\kk$--category and let $\ideal$ be an  ideal of $\cat$.  Then
$$\Skein_\ideal(D^2)^*\cong \{\text{right m-traces on } \ideal\}\cong \{\text{left m-traces on } \ideal\} \;\; $$ $$\text{ and } \;\;
\Skein_\ideal(S^2)^*\cong \{\text{m-traces on } \ideal\}$$
where $D^2$ and $S^2$ are the 2-disk and 2-sphere, respectively. 
\end{theorem}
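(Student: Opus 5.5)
The plan is to build the isomorphisms by hand. For the disk, I will produce a linear functional on $\Skein_\ideal(D^2)$ out of any right m-trace, and a right m-trace out of any linear functional on $\Skein_\ideal(D^2)$. Then I will check that these two constructions are mutually inverse.

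\emph{From a trace to a functional.} Start with a right m-trace $\mt$ on $\ideal$. Given an $\ideal$-admissible graph $\Graph\subset D^2$, identify $D^2$ with a disk in $\R^2$ and cut $\Graph$ along an $\ideal$-colored edge, say colored by $V\in\ideal$. After an isotopy this presents $\Graph$ as the closure on the right of a $(1,1)$-graph $\Graph_V$. Set $\Phi_\mt(\Graph)=\mt_V(\F(\Graph_V))$.

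Independence of the choices comes in two parts:
\begin{itemize}
\item For a fixed cut edge, different $(1,1)$-presentations differ by planar isotopies in the disk. Because the closure is always taken on the right side, these can be realized without passing strands around the back. Cyclicity of $\mt$ together with the right partial trace property then absorbs the changes. This is the planar, one-sided version of the argument of \cite{GPV13}.
\item Changing the cut edge from $V$ to another $\ideal$-colored edge $U$ is handled by the standard trick: write both as partial traces of a single endomorphism of $V\otimes U$, suitably arranged. Only the right partial trace is needed for this.
\end{itemize}

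Compatibility with $\ideal$-skein relations: by Definition~\ref{D:ISkeinRelation}, any such relation has an $\ideal$-colored edge outside its box. I will cut along that edge, so the box sits inside $\Graph_V$ and $\Phi_\mt$ vanishes on the relation by functoriality of $\F$.

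\emph{From a functional to a trace.} Conversely, given $\phi\in\Skein_\ideal(D^2)^*$, set $\mt_V(f)=\phi(\text{right closure of a coupon } f)$ for $f\in\End(V)$ and $V\in\ideal$. Each trace axiom follows from an isotopy combined with a skein relation:
\begin{itemize}
\item Cyclicity is sliding a coupon around the loop. The box can be chosen to leave an $\ideal$-edge outside.
\item The right partial trace property says the closure of $f$ on $U\otimes W$ equals the closure of $\ptr_R^W(f)$ on $U$. The skein relation here is applied in a box containing the $W$-strand, while the $U$-strand, which lies in $\ideal$, stays outside.
\end{itemize}

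\emph{Mutual inverses, and the main obstacle.} The two maps are inverse because every admissible skein is equivalent to the closure of a single coupon colored by an $\ideal$-object. Indeed, cutting at an $\ideal$-edge and applying one skein relation in a box containing everything else collapses the rest to one coupon; this is legitimate since the cut edge remains outside the box. I expect the independence-of-cut step in the first construction to be the main obstacle, since it is exactly where the two-sided partial trace property is normally used. What makes it work in the disk is that only right closures are planar-isotopic there.

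\emph{Left traces and the sphere.} The left-trace identification is the mirror image: reflect the disk, or take closures on the left instead. On $S^2$, a strand may pass through the point at infinity, so right closures become isotopic to left closures. The functional must therefore satisfy both partial trace properties, which gives m-traces; conversely, an m-trace gives a well-defined functional by the $\Ladm$-invariance result \eqref{E:DefF'} of \cite{GPV13}, with $n=2$. Finally, a functional determined by its values on single-coupon closures gives the inverse map as before.
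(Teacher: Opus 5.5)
The paper does not prove this theorem itself; it quotes \cite[Theorem 2.4]{CGPV23}. Judged on its own terms, your sphere case and your ``functional $\to$ right m-trace'' direction for the disk are fine. The ``right m-trace $\to$ functional'' direction for $D^2$, however, rests on a false step: the claim that after an isotopy \emph{any} $\ideal$-colored edge can be cut so as to present $\Graph$ as the right closure of a $(1,1)$-graph. In a right (or left) closure, the closing strand borders the region of $D^2\setminus\Graph$ adjacent to $\partial D^2$, and isotopies of the disk preserve which region that is.

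Take two concentric circles, the outer one colored by some $U\notin\ideal$ and the inner one by $V\in\ideal$. This skein is admissible, but no $\ideal$-edge touches the outer region, so $\Phi_\mt$ is not defined on it. Your one-step collapse to a single coupon also fails here. A box is simply connected, so a box containing the $U$-circle contains the disk it bounds, and hence all of the $V$-circle. The same issue breaks your compatibility check. For a relation supported near a coupon on the $U$-circle, the only $\ideal$-edge outside the box is the buried $V$-circle, and you cannot cut along it. On $S^2$ this never occurs, because the point at infinity can be placed in a face adjacent to any edge. This, and not only the left/right distinction, is what separates the disk from the sphere.

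The missing ingredient is a way to bring an $\ideal$-color to the outer region, together with a proof that the result does not depend on the choices made. For instance, take an arc $\alpha$ from an $\ideal$-edge $e$ to $\partial D^2$, transverse to $\Graph$. Fuse $e$ with all strands crossing $\alpha$ inside a thin box along $\alpha$; this is a legitimate $\ideal$-skein relation, since the rest of $e$ stays outside the box. The fused edge is $\ideal$-colored and borders the outer region, so it can be cut.

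You must then show two things. First, the resulting value does not depend on $\alpha$ or on $e$; this can be done, e.g., by induction on the number of intersection points of two such arcs, as in the proof of Proposition~\ref{prop:bichrome_welldef}. Second, relations whose only outside $\ideal$-edge is buried are sent to zero. Only after this does your ``independence of the cut edge'' step become the relevant one. For edges already on the outer region it does go through, but it uses cyclicity as well as the right partial trace property. Also, an edge with the outer region on its right has to be closed with color $V^*$.
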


The following result is a consequence of the last theorem; it first appeared in the unpublished manuscript \cite{CGP23}, we prove it here so it can appear in print.  %N \BH{Please add a sentence on where this already appears in the literature.}

\begin{corollary}\label{T:dim3SphereSkein}
  Let $\cat$ be a % n essentially small
  ribbon $\kk$--category and let
  $\ideal$ be an ideal of $\cat$.  Then
  $$\Skein_\ideal(B^3)^*\cong \Skein_\ideal(S^3)^*
  \cong \{\text{m-traces on } \ideal\}$$
  where $B^3$ and $S^3$ are the 3-ball and 3-sphere, respectively.
\end{corollary}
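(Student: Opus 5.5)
The plan is to reduce both three-dimensional statements to the two-dimensional ones of Theorem \ref{T:DiskRmt}, by comparing the skein modules of $B^3$ and $S^3$ with $\Skein_\ideal(S^2)$.

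\emph{Step 1: $B^3$ versus $S^2$.} Write $B^3\cong S^2\times[0,1]$ minus a ball, or more simply note that the interior of $B^3$ is diffeomorphic to the interior of $D^2\times[0,1]$. Skeins live in the interior, so $\Skein_\ideal(B^3)\cong\Skein_\ideal(D^2\times[0,1])$. The last part of Proposition \ref{prop:boundaryaction} then gives $\Skein_\ideal(D^2\times[0,1])\cong\Skein_\ideal(D^2)$. By Theorem \ref{T:DiskRmt}, the dual of this space is the space of right m-traces. That is not yet what we want, since the claim involves two-sided m-traces. So in the three-dimensional setting I will extract the extra relation. The braiding and twist of the ribbon category, together with the additional isotopies available in $B^3$, allow a strand to be moved from the right side of a $(1,1)$-graph to the left side: one rotates the planar diagram by $\pi$ around a vertical axis inside the ball.

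A cleaner route is to avoid $D^2$ altogether and argue directly. Take a linear form $\phi$ on $\Skein_\ideal(B^3)$. For $V\in\ideal$ and $f\in\End_\cat(V)$, define $\mt_V(f)=\phi(\text{closure of the coupon } f)$. Cyclicity and the right partial trace property follow exactly as in the disk case. The left partial trace property follows because, in $\R^3$, the right closure and the left closure of a $(1,1)$-graph are isotopic: one rotates about the vertical axis, and since we use ribbon graphs with framing, no twist is introduced.

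Conversely, given an m-trace $\mt$, I set $\phi(\Graph)=\F'(\Graph)$ using the renormalized invariant \eqref{E:DefF'} for graphs in $S^3$. This is well defined, because every admissible closed graph is the closure of a $(1,1)$-graph cut along an $\ideal$-colored edge, and by \cite{GPV13} the result is independent of that choice. It also vanishes on $\ideal$-skein relations: the box can be isotoped away from an $\ideal$-colored edge, and cutting there expresses the relation as $\mt_V$ applied to a combination of morphisms which is zero by condition (4).

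\emph{Step 2: $S^3$ versus $B^3$.} The inclusion $B^3\hookrightarrow S^3$ induces a surjection on skein modules, since any graph in $S^3$ can be isotoped off a point. I must show it is injective. A skein relation in $S^3$ whose box, or whose isotopy, passes through the point at infinity is the only issue. Here I use the m-trace description: both dual spaces are identified with m-traces via the same formula $\phi\mapsto\mt$, and $\F'$ is already an invariant of graphs in $S^3$ by \cite{GPV13}. This shows that every m-trace gives a form on $\Skein_\ideal(S^3)$, so the restriction map on duals is bijective.

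The main obstacle is the precise verification in Step 1 that the $S^3$/$B^3$ isotopies provide exactly the left partial trace property and nothing more. Concretely, one has to check that $\phi\mapsto\mt$ and $\mt\mapsto\F'$ are mutually inverse. Injectivity of $\phi\mapsto\mt$ needs every admissible skein to be equivalent to the closure of a single coupon colored by an $\ideal$-object. I would prove this as in Proposition \ref{prop:ddd}: absorb every crossing into a braiding coupon, and fuse everything into one coupon around an $\ideal$-colored edge.
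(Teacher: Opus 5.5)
Your proposal is correct. Your first route is the paper's. The paper identifies $\Skein_\ideal(B^3)$ with $\Skein_\ideal(D^2)$: inclusion one way, and the other way a projection in general position with crossings replaced by braiding coupons, which is exactly the last part of Proposition~\ref{prop:boundaryaction}. It then applies Theorem~\ref{T:DiskRmt} to get right m-traces, and simply cites \cite[Remark 9]{GPV13} for the fact that in a ribbon category every right m-trace is a two-sided m-trace.

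Your ``cleaner'' route is genuinely different: it bypasses $D^2$ and Theorem~\ref{T:DiskRmt} entirely. You build the duality directly in dimension three, via $\phi\mapsto \mt_V(f)=\phi(\text{closure of }f)$ and $\mt\mapsto\F'$. The left partial trace property then comes for free from the isotopy in $\R^3$ that swings the closing arc of a $(1,1)$-graph from the right side to the left side; this is in effect a skein-theoretic proof of the cited remark.

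What this buys is self-containedness. It also treats $S^3$ explicitly: surjectivity of $\Skein_\ideal(B^3)\to\Skein_\ideal(S^3)$, together with the $S^3$-invariance of $\F'$, makes the dual map bijective. The paper's proof leaves this point implicit. The price is that you must re-check the routine facts that the 2D theorem packages. One is that any admissible skein is equivalent to the closure of a single $\ideal$-colored coupon; cutting at an $\ideal$-colored edge and replacing the whole resulting $(1,1)$-graph by one coupon takes just one skein relation. Another is that $\F'$ kills skein relations.

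Two small corrections:
\begin{enumerate}
\item $B^3$ is not ``$S^2\times[0,1]$ minus a ball''; it is $B^3$ minus a ball that is $S^2\times[0,1]$.
\item You should rotate only the closing arc, about the vertical line through its two endpoints, not the whole planar diagram. Rotating the whole diagram would show the coupons from behind. The half-twists created near the two endpoints are opposite with respect to the direction of the strand and cancel, which is why no framing change occurs.
\end{enumerate}
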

\begin{proof}
  We will show that $\Skein_\ideal(D^2)$ is isomorphic to
  $\Skein_\ideal(B^3)$, then by \cite[Remark 9]{GPV13} since any right
  m-trace is a m-trace in a ribbon category then the corollary will
  follow from Theorem \ref{T:DiskRmt}.

  The inclusion of $D^2$ into $B^3$ induces a morphism
  $\Skein_\ideal(D^2)\to\Skein_\ideal(B^3)$.  It has an inverse map
  given by sending any ribbon graph in $B^3$ to a 
%N:   standard projection  in $D^2$ at the price of   replacing  crossings with coupons
  projection in general position  in $D^2$ where we have replaced each crossing with a coupon
%  \NGm{changed "standard projection" to "projection in general position"  This is the term I found on the internet.  If ok please remove this comment. }
  %
colored by
  a braiding.  Any two such projections are related by %N standard
  Reidemeister type moves which are skein equivalences in $D^2$.
  Finally, a skein relation in $B^3$ can be isotoped to project onto  a
  skein relation in $D^2$.
\end{proof}

\subsection{Definition of graded admissible skein modules}
As above, let $n=2$ or $3$, $\Gr$ be
a group and $\cat$ be a sharp
$\Gr$-graded pivotal $\kk$--category. If $n=3$, we assume that $\cat$ is ribbon and $\Gr$ is commutative.

Define the discrete category $\catd_\Gr$ of $\Gr$ to be the category
whose objects are the elements of $\Gr$, % \ie $\{g, g\in \Gr\}$,
and the only morphisms are the
identities.  A strict monoidal structure on $\catd_\Gr$ is given by
$g\otimes h=gh$ and the pivotal structure by $g^*=g^{-1}$ and the fact
that all (co)-evaluation morphisms are the identity of $\unit$.  If
$\Gr$ is abelian we endow $\catd_\Gr$ with the symmetric braiding
$g\otimes h\to h\otimes g$ given by the identity map of $gh=hg$
turning $\catd_\Gr$ into a ribbon category.

%N: This category is not linear, but we can nevertheless talk about its skein module $\Skein_{\catd_\Gr}(M)$ over an $n$-manifold $M$. It is now a only a set, of $\catd_\Gr$-colored graphs modulo isotopy and the skein relations where two graphs are skein equivalent if they agree outside a ball and evaluate to the same morphism inside the ball. Since there is at most one morphism between any tensor product of objects of $\catd_\Gr$, two $\catd_\Gr$-colored graphs are skein equivalent as soon as they agree outside a ball. 
This category is not $\kk$--additive, but we can nevertheless talk about its skein module associated to  an $n$-manifold $M$ as follows: let $\Skein_{\catd_\Gr}(M)$ be the set of $\catd_\Gr$-colored graphs modulo  skein relations.  Here two graphs are skein equivalent if they agree outside a ball and evaluate to the same morphism inside the ball.  Note calling $\Skein_{\catd_\Gr}(M)$ a module is an abuse of terminology since it is only a set.  
%\NGm{Changed this paragraph, left the original commented.}
%

\begin{lemma}\label{L:H1ab}
  If $\Gr$ is abelian then sending a $\catd_\Gr$-colored
  skein to the homology class of the associated $1$-cycle with
  coefficients in $\Gr$ induces a bijection $$\Skein_{\catd_\Gr}(M)\simeq H_1(M;\Gr)\ .$$
\end{lemma}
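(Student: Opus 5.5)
We construct a map $\Phi:\Skein_{\catd_\Gr}(M)\to H_1(M;\Gr)$ and then show it is well defined, surjective and injective. A $\catd_\Gr$-colored ribbon graph $\Graph$ in $M$ is an oriented graph whose edges $e$ carry colors $g_e\in\Gr$. Its coupons carry the unique morphism between the source and target objects, which forces the following balance at each coupon: the product of the incoming colors equals the product of the outgoing colors. Since $\Gr$ is abelian, the order of the factors does not matter. We set $\Phi(\Graph)=\sum_e g_e\,[e]$, viewing each edge as a singular $1$-simplex, or a loop if the edge is a closed annulus component. The coupon balance says exactly that the boundary of this chain vanishes, with each coupon contracted to a vertex. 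So $\Phi(\Graph)$ is a cellular $1$-cycle with coefficients in $\Gr$, and we take its homology class. Isotopy does not change the class.

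\emph{Well definedness under skein relations.} Suppose $\Graph,\Graph'$ agree outside a box $B\cong \R^{n-1}\times[0,1]$ and evaluate to the same morphism inside. Equality of morphisms in $\catd_\Gr$ just means that the source and target products agree, and this is automatic. The difference $\Phi(\Graph)-\Phi(\Graph')$ is then a $1$-chain supported in $B$. Its boundary is zero, because the two graphs have the same boundary points on $\partial B$ with the same colors. A cycle supported in the contractible set $B$ is a boundary, so the two classes in $H_1(M;\Gr)$ agree.

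\emph{Surjectivity.} Every class in $H_1(M;\Gr)\cong H_1(M;\Z)\otimes\Gr\oplus\mathrm{Tor}(H_0,\Gr)$ is represented by a finite sum $\sum_i g_i[\gamma_i]$ of smooth embedded oriented loops. Here $H_0$ is free, so the Tor term vanishes. Such a class is realized by the disjoint union of annuli, or bands in dimension 2 after a small perturbation, colored by the $g_i$. In dimension $2$ the loops may be forced to intersect. At each transverse crossing we insert a $4$-valent coupon with inputs $g,h$ and outputs $h,g$; this is allowed since $gh=hg$, and it does not change the chain.

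\emph{Injectivity.} This is the main step. We first simplify an arbitrary graph within its skein class. Using skein relations near each coupon, we merge adjacent edges: two parallel edges colored $g,h$ may be replaced by one edge colored $gh$, and an edge colored $g$ may be replaced by its reverse colored $g^{-1}$. These moves show that every skein is skein equivalent to a graph in normal form, built on a fixed graph $K\subset M$ carrying $\pi_1$. To set this up, we choose a handle decomposition, or a triangulation, of $M$. We take a spanning tree in each component together with one loop for each generator, so that $K$ is the $1$-skeleton with a single vertex per component. Any colored graph can be isotoped and skein-pushed, tree edges being contracted via coupons, into a neighborhood of $K$, leaving one edge per loop of $K$ with a color in $\Gr$. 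This gives a bijection between normal forms and the set $C_1(K;\Gr)$ of cellular chains; all of these are cycles because $K$ has one vertex. Two cycles have the same homology class exactly when they differ by a sum $\sum_f g_f\,\partial f$ over $2$-cells $f$. It therefore suffices to realize adding $g\,\partial f$ by skein moves. To do this we insert a small trivial loop colored $g$; this is a skein relation, since it only adds a coupon configuration evaluating to the identity of $\unit$. We then slide the loop across the $2$-cell $f$ by isotopy until it runs along $\partial f$, and finally merge it into the edges of $K$ using the moves above. The main obstacle is making this normalization rigorous, namely that every skein can be pushed into a regular neighborhood of a one-vertex $1$-skeleton. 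I would argue it by general position: graphs avoid the dual $(n-2)$-skeleton, and the complement of that skeleton retracts onto a neighborhood of the $1$-skeleton.
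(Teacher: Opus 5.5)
The paper states Lemma~\ref{L:H1ab} without proof and only uses it afterwards, for instance to see that $\pi_\Gr$ is a bijection in the proof of Theorem~\ref{T:finitewithoutboundary}. So there is no argument of the paper to compare yours against. Your proof is correct.

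Its strategy matches techniques the paper uses elsewhere: push every skein onto a fixed graph so that it is determined by its edge colours, then realise changes by coboundaries through local moves.
\begin{itemize}
\item In the proof of Proposition~\ref{P:H1GisRepG}, $\Sigma\setminus Y$ retracts onto the dual graph of an ideal triangulation. No $2$-cells appear, so the normal form alone gives injectivity, even for non-abelian $\Gr$.
\item In the proof of Theorem~\ref{T:finitewithoutboundaryribboncase}, representatives are changed by coboundaries using moves like \eqref{eq:boundary}.
\end{itemize}
Your extra step (insert a trivial $g$-loop, isotope it onto $\partial f$, fuse) is what is needed when $M$ does not retract onto a graph.

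Two points deserve to be made explicit.

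\emph{The real engine of the normal form.} In $\catd_\Gr$, any two valid graphs that agree outside a ball are skein equivalent, so the contents of any ball can be replaced by a single coupon. Concretely:
\begin{itemize}
\item after isotoping into $N(K)$, make the graph transverse to the cocore disks of the $1$-handles;
\item replace everything in the complementary ball ($0$-handle plus stubs) by one coupon;
\item fuse the parallel strands crossing each cocore.
\end{itemize}
This is what removes knotting and braiding inside $N(K)$ in dimension $3$. Strands in a $1$-handle are not parallel a priori, so ``merging adjacent parallel edges'' needs this first; it also takes care of framings.

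By comparison, the step you flag as the main obstacle is routine. A compact graph misses a neighbourhood of the cocores of the handles of index $\geq 2$. Radial pushes in those handles then give an ambient isotopy into $N(K)$. A bare deformation retraction would not suffice, since it need not preserve embeddedness.

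\emph{Non-compact $M$.} The lemma is stated for arbitrary $M$ and is applied to the non-compact $\Sigma\setminus Y$. Your finite handle-decomposition argument covers this once you work in a compact codimension-$0$ submanifold containing $\Graph$, $\Graph'$ and a $2$-chain bounding $\Phi(\Graph)-\Phi(\Graph')$. Skein equivalences there are skein equivalences in $M$.
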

When $\Gr$ is not abelian, the above lemma motivates us to call the skein module $\Skein_{\catd_\Gr}(M)$ the ``non-commutative $H_1$'' of
$M$ with coefficients in $\Gr$.  Remark that in
contrast with the commutative case this $H_1$ with non-commutative
coefficients is not a group.
\begin{definition}
 We denote $H_1(M;\Gr):=\Skein_{\catd_\Gr}(M)$.
\end{definition}

% A \textit{$\Gr$-grading} on $\cat$ is an equivalence of linear categories
% $\cat \cong \bigoplus_{g \in \Gr} \cat_g$ where 
% $\{ \cat_g \mid g \in \Gr \}$ is a  family of full subcategories of $\cat$
% satisfying the following conditions: 1) $\unit \in \cat_1$, 2)  if $V\in\cat_g$,  then  $V^{*}\in\cat_{g^{-1}}$, 3) if $V\in\cat_g$, $V'\in\cat_{g'}$ then $V\otimes
%     V'\in\cat_{gg'}$ and 4)  if $V\in\cat_g$, $V'\in\cat_{g'}$ and $\Hom_\cat(V,V')\neq \{0\}$, then
%     $g=g'$.

% Let $\cat$ be a sharp $\Gr$-graded pivotal $\kk$-category. An object
% $V\in\cat_g$ is called homogeneous with degree $g$.  The degree of a
% non-zero homogeneous object is unique.
\begin{definition}\label{def:homogeneous}
  % A $\cat$-colored graph $\Graph$ is \emph{homogeneous} if each edge $e$ of $\Graph$ is colored by an homogeneous object $c(e)\in \cat_{g_e}$ for some $g_e\in \Gr$. In this case there is an associated $\catd_\Gr$-colored graph $h_\Graph \in H_1(M;\Gr)$ having the same underlying graph, the color of each edge $e$ is $g_e$ and morphisms decorating the coupong of $\Graph$ are replaced by $1$ on each coupon. 
%  \FCm{Is this ok for you ? I have a doubt because what if one morphism was zero in the coupons ? It seems to me that this is no contradiction as 0 has every degree but just want to check with you}\BPm{I put back the previous definition, because yours does not guaranty the cycle condition at vertices which is necessary for the existence of $h_T$}
   A $\cat$-colored graph $\Graph$ is \emph{homogeneous} if there exists a  $\catd_\Gr$-colored graph $h_\Graph \in H_1(M;\Gr)$ with the same underlying ribbon graph such that each edge $e$ of $\Graph$ is colored with a homogeneous %\FCm{I don't thiknk we can talk about homogeneous here: I would drop this here, but I did not to check with you guys}\BPm{why ? it is really an homogeneous object of $\cat$}%\FCm{Because the colors of $h\Graph$ must be in $\catd_\Gr$ not $\cat$.}\BPm{I think it is ok: $\catd_\Gr\simeq\Gr$.  We should discuss how to clarify...} 
   object in $\cat$ whose degree $g\in\Gr$ %\BPm{added $g\in\Gr$} 
   is given by the color of $e$ in $h_\Graph$.  The graph $h_\Graph$ is called the \emph{degree} of $\Graph$.
\end{definition}
% \begin{definition}\label{def:homogeneous}
%   A $\cat$-colored graph $\Graph$ is \emph{homogeneous} if each edge
%   $e$ of $\Graph$ is colored with an non-zero homogeneous
%   object % of$_{g(e)}$ for some $g(e)\in \Gr$
%   and every coupon $c$ of $\Graph$ is filled with a morphism of
%   $\cat_{g(c)}$ for some $g(c)\in \Gr$.
% \end{definition}

\begin{proposition}\label{P:graduation}
Let $M$ be an $n$-manifold.  We have the following decomposition of $\kk$-vectors spaces:
 $$\Skein_{\ideal}(M)=  \bigoplus_{h\in H_1(M;\Gr)} \Skein^h_{\ideal}(M)$$
where $\Skein^h_{\ideal}(M)$ is the $\kk$-span of homogeneous $\ideal$--admissible graphs $M$ of degree $h$.
%, induced by the inclusion of each $\Skein^h_{\ideal}(M)$ into $\Skein_{\ideal}(M)$.
\end{proposition}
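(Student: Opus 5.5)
We prove two things: that the subspaces $\Skein^h_{\ideal}(M)$ span $\Skein_{\ideal}(M)$, and that the sum is direct. For directness we build, for each $h$, a linear map on the free span of admissible graphs that keeps the homogeneous graphs of degree $h$ and kills the others, and we check that it preserves $\ideal$-skein relations.

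\emph{Spanning.} Take an $\ideal$-admissible graph $\Graph$. Following the proof of Proposition \ref{prop:ddd}, we first add identity coupons so that every edge joins two coupons. Each edge color $V$ is a finite direct sum $\bigoplus_k V_k$ of homogeneous objects. Inserting $\Id_V=\sum_k \iota_k\pi_k$ on each edge is a skein relation performed in a small box. Enforcing admissibility needs care: we first use Proposition \ref{prop:ddd} so that all edges are $\ideal$-colored, and then split one edge at a time, keeping another $\ideal$-colored edge outside the box. If the graph has a single edge, we first double it with identity coupons. The summands $V_k$ are retracts of $V\in\ideal$, so they lie in $\ideal$, and admissibility is preserved throughout.

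After absorbing the $\iota_k,\pi_k$ into the coupons, $\Graph$ becomes a sum of graphs whose edges all carry homogeneous, and we may assume non-null, colors. Such a graph is either homogeneous or zero. At a coupon colored by $f:\bigotimes V_i\to\bigotimes W_j$, the source has degree $\prod g_i$ and the target has degree $\prod g'_j$. By property (2c) of the grading, either these degrees agree or $f=0$. When they agree at every coupon, the degrees satisfy the cycle condition, so they define a $\catd_\Gr$-colored graph $h_\Graph$. When some $f=0$, the graph is $0$ by a skein relation in a box around that coupon. So the $\Skein^h$ span.

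\emph{Directness.} Let $\mathcal{G}$ be the free span of admissible graphs. Define $p_h:\mathcal{G}\to\mathcal{G}$ as follows. On a graph $\Graph$, first decompose every edge color into homogeneous components by choosing splittings. Then $p_h(\Graph)$ is the sum of the resulting terms that are homogeneous of degree $h$, taken as formal graphs.

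This choice of splittings is the main obstacle. I would avoid it by defining $p_h$ only on graphs whose edges are already homogeneous: $p_h(\Graph)=\Graph$ if $\Graph$ is homogeneous of degree $h$, and $p_h(\Graph)=0$ otherwise. By the spanning step, every $\ideal$-skein relation is equivalent, modulo relations among homogeneous graphs, to one between homogeneous graphs. So it suffices to show that $p_h$ sends skein relations between homogeneous graphs to sums of skein relations.

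Take a relation $\sum a_i\Graph_i$ supported in a box $B$, with each $\Graph_i$ homogeneous. Outside $B$ the graphs coincide, so the degrees on edges outside $B$ are common. Inside $B$, each $f^{-1}(\Graph_i)$ is a morphism between the same boundary objects.
\begin{itemize}
\item Because $B$ is a ball, two $\catd_\Gr$-colorings that agree outside $B$ are equal in $\Skein_{\catd_\Gr}(M)$: they agree outside a ball and evaluate to the unique morphism inside it. Hence all $\Graph_i$ whose degrees are defined have the same degree $h_0$.
\item Graphs with $\F(f^{-1}(\Graph_i))$ inhomogeneous do not arise, since homogeneous morphisms between homogeneous objects live in a single degree.
\end{itemize}
So $p_h$ is the identity on the relation if $h=h_0$ and zero otherwise, and relations are preserved.

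A residual subtlety concerns null colors: graphs with a null edge are zero and can be assigned any degree. I would exclude them up front, since such graphs are $0$ by a skein relation. The maps $p_h$ then descend to complementary idempotents $\Skein_\ideal(M)\to\Skein^h_\ideal(M)$, which gives the direct sum.
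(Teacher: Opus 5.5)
Your spanning argument is essentially the paper's surjectivity argument. Your check that two homogeneous graphs agreeing outside a box have the same degree is exactly the paper's final step. The gap is the sentence ``By the spanning step, every $\ideal$-skein relation is equivalent, modulo relations among homogeneous graphs, to one between homogeneous graphs.''

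Let $\mathcal{R}$ be the span of all $\ideal$-skein relations and $\mathcal{G}^{\mathrm{hom}}$ the span of homogeneous graphs. For your $p_h$, defined only on $\mathcal{G}^{\mathrm{hom}}$, to descend to $\Skein_\ideal(M)$, you need $p_h(\mathcal{R}\cap\mathcal{G}^{\mathrm{hom}})\subseteq\mathcal{R}$. An element of $\mathcal{R}\cap\mathcal{G}^{\mathrm{hom}}$ can be a sum of relations each involving inhomogeneous graphs. You must show it lies in the span of relations among homogeneous graphs, and the spanning step does not give this. It rewrites each $\Graph_i$ as a sum of homogeneous graphs, but the relations it uses, $\Graph_i-\sum_k\Graph_{i,k}$, contain the inhomogeneous $\Graph_i$. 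Nothing you say guarantees that the homogenized pieces of a relation $\sum_i a_i\Graph_i$ are again box relations. This is the very obstacle you named, the choice of splittings. Restricting the domain of $p_h$ does not remove it; it moves it into this unproved reduction.

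The missing idea is the paper's middle step. Take a relation $\sum_i a_i\Graph_i$ in a box $B$.
\begin{enumerate}
\item Split the edges outside $B$ using the same $\iota_k,\pi_k$ for every $\Graph_i$. This is possible because the $\Graph_i$ coincide outside $B$, so the split graphs still coincide there.
\item The identity $\sum_i a_i\F(\Graph_i\cap B)=0$ in $\Hom_\cat(V_-,V_+)$ then splits along the homogeneous summands of $V_\pm$ into identities $\sum_i a_i\,\pi\circ\F(\Graph_i\cap B)\circ\iota=0$. Each of these is a box relation among graphs homogeneous outside $B$.
\item Splitting inside $B$ and using your ball argument, all resulting non-zero homogeneous graphs have the same degree.
\end{enumerate}

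Alternatively, your first idea can be made choice-free. For $V=\bigoplus_k V_k$, set $e_g=\sum_{\deg V_k=g}\iota_k\pi_k\in\End_\cat(V)$. This idempotent does not depend on the decomposition: for another decomposition, axiom (2c) gives $\pi'_l\iota_k=0$ when degrees differ, which forces $e'_g=e'_ge_g=e_g$. Inserting these canonical idempotents on edges defines $p_h$ on all graphs without choices. Checking that $p_h$ kills box relations is then the computation above.
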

\begin{proof}
Let us first show surjectivity of the map $\omega:\bigoplus_{h\in H_1(M;\Gr)} \Skein^h_{\ideal}(M)\to
  \Skein_{\ideal}(M)$, \ie that each $\ideal$--admissible graph is $\ideal$-skein equivalent to a linear combination of homogeneous $\ideal$--admissible graphs.
 If $V\in \cat$  then the definition of a $\Gr$-grading 
  implies there exist finitely many homogeneous objects $V_i$
  such that $V=\oplus V_i$. Furthermore, if $V\in \ideal$ then each
  $V_i$ is in $\ideal$ because $V_i$ is a retract of $V$.  Let
  $p_i:V\to V_i$ and $j_i:V_i\to V$ be projections and inclusions such
  that $\Id_V=\sum_{i} j_i\circ p_i$.
 
  %N: If $\Graph$ contains an edge colored by $V$ 
   If a skein contains an edge colored by $V\in \ideal$ 
  then up to applying an
  $\ideal$-skein relation we can replace a small sub-arc of the edge
  by the formal sum of arcs each containing two coupons decorated
  respectively with $j_i$ and $p_i$ because
  $\Id_V=\sum_{i} j_i\circ p_i.$ Then isotoping each of the coupons
  decorated by $j_i$ and $p_i$ we can ``absorb'' them in the coupons
  at the endpoint of the edge via a $\ideal$-skein relation.
  Repeating this argument for all the non-homogeneous edges we
  conclude that any skein is $\ideal$-skein equivalent to a linear
  combination of graphs whose edges are colored by homogeneous objects.
  For such a  graph $\Graph$,
  %N: Added, to make it clear it is not the original skein:
  colored with homogeneous objects,  
   if a coupon is colored by a non-zero
  morphism, this means that its source and target objects of $\cat$
  are homogeneous objects of the same degree.  Hence, if $\Graph$ is
  not $0$ in the admissible skein module, changing each object with
  its degree in $\Gr$ and each morphism with the identity gives a
  $\catd_\Gr$-coloring which means that $\Graph$ is homogeneous.

Let us turn to injectivity of
  $\omega$, %N: :\bigoplus_{h\in H_1(M;\Gr)} \Skein^h_{\ideal}(M)\to  \Skein_{\ideal}(M)$, 
  %N i.e. 
  \ie we will show 
  that any skein relation in
  $\Skein_{\ideal}(M)$ is a sum of skein relations where each skein
  relation involves graphs of the same homogeneous degree.  Let
  $\sum_{i=1}^k c_i \Graph_i=0\in \Skein_{\ideal}(M)$ be a skein
  relation associated to a box $B$. As above we can split the $\Graph_i$ as linear
  combination of graphs which are homogeneous outside $B$.  By this
  process, the identity $\sum_ic_i\F(\Graph_i\cap B)=0$ which
  holds in some hom-space $\Hom_\cat(V_-,V_+)$ splits into many
  identities corresponding to summands $\Hom_\cat(V_-,V_+)$.  Hence
  the skein relation $\sum_{i=1}^k c_i \Graph_i=0$ is a sum of skein
  relations where all involved graphs are homogeneous outside
  $B$. 
  %\BPm{added}
   This reduces to the case of a skein relation where
  all $\Graph_i$ are homogeneous outside $B$.  Now if $\Graph$ and
  $\Graph'$ are any two homogeneous graph which coincide outside the
  box $B$, we have $h_\Graph=h_{\Graph'}$ because they are both
  equivalent to the $\catd_G$-graph where the box $B$ is replaced with
  a coupon colored with the identity morphism.  Hence applying the surjectivity argument above inside the box
  gives a linear combination of homogeneous graphs which have all the
  same degree.
\end{proof}

\subsection{$\Gr$-representations as a skein module}
\label{SS:HomoGSkeinMod}
We now want to compare $\Gr$-re\-pre\-sen\-ta\-tions with $\catd_\Gr$-colored
graphs used to define homogeneous graphs above.
We will see that admissible skein modules of surfaces are actually graded by
% a very familiar object, the set of representations of the fundamental group into $\Gr$.
the set of representations of the fundamental group into $\Gr$. 

\begin{definition}
Let $\Sigma$ be an oriented connected surface and $Y\subset \Sigma$ a finite set of basepoints, let $\gamma$ be a path immersed in $\Sigma$ with endpoints in $Y$ and let $\Graph$ be a $\catd_\Gr$-colored graph in $\Sigma$ which does not intersect $Y$. Assume that $\gamma$ is transverse to $\Graph$ in the sense that $\gamma$ only intersects $\Graph$ along edges transversally. 
The \emph{intersection of $\gamma$ and $\Graph$} is the ordered product 
$$[\gamma\cap \Graph]_\Gr:=\prod_{i=1}^n[\gamma\cap e_i]_\Gr \in \Gr $$
where $\{x_i\}_{i=1,\dots,n}$ is the ordered set of intersection
points of $\gamma$ and $\Graph$ (ordered by the path $\gamma$, see
Figure \ref{fig:intersection}), $e_i$ is a portion of the edge of
$\Graph$ in a neighborhood of $x_i$ 
%\BPm{I changed again because it was looking strange to me to have $\gamma\cap$ an intersection point}\NGm{Ok} 
such that at each intersection
$x_i$, we set $[\gamma\cap e_i]_\Gr=g^{\ve}$ where $g\in\Gr$ is the
color of the edge of $\Graph$ at the intersection point, and $\ve$ is
the sign of the intersection.
\end{definition}
This intersection only depends on the homotopy class of $\gamma$ in $\pi_1(\Sigma,Y)$ because the only coupons in $\catd_\Gr$ are identities. It only depends on the class of $\Graph\in H_1(\Sigma\smallsetminus Y;\Gr)$ because skein relations happen in a ball, which can be avoided by % \BPm{changed from by an isotopy of $\gamma$.}
moving $\gamma$ using an isotopy.

%\BH{There was no proof of this proposition in the old version, only that the map is well defined. This statement is FALSE without the assumption that the surface is closed. Consider a disk with a single basepoint. Then $H_1(D^2\smallsetminus \ast;\Gr) = G$ generated by circles around the points, whereas $\Rep_\Gr(D^2,\ast) = Hom(\pi_1(D^2,\ast),\Gr) = \ast$ is trivial. In the proof below we rely on a good Poincaré duality.} \NG{I think this comment is now addressed.  If so Ben please remove this comment and yours.  Thanks!!}
\begin{proposition} \label{P:H1GisRepG}
Suppose $\Sigma$ is closed and $Y\subset \Sigma$ a finite set of basepoints with at least one basepoint per connected component. The intersection pairing induces a bijection %an isomorphism \NGm{Should we say bijection here, or say what kind of isomorphism it is?}
$$
\begin{array}{rcl}
[-\cap -]_\Gr : H_1(\Sigma\smallsetminus Y;\Gr) &\overset\sim\longrightarrow& \Rep_\Gr(\Sigma,Y) \\ \Graph\quad&\mapsto&[\any\cap \Graph]_\Gr
\end{array}\ . 
$$
% \NGm{I changed $[T\cap\any]_\Gr$ to $[\any\cap T]_\Gr$ in the definition of the isomorphism, so it is consistent with above and below.  IF this is ok please remove this comment.}
\end{proposition}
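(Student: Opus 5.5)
We reduce to the case of a connected surface $\Sigma$ with $Y\neq\emptyset$. Both sides are multiplicative over components: $H_1(-;\Gr)$ is a skein set, and skeins on a disjoint union are tuples, while $\Rep_\Gr$ is a product by Definition \ref{def:Greps}. Throughout, set $\Sigma^\circ=\Sigma\smallsetminus Y$.

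\textbf{Step 1: the map lands in representations.} Fix $\Graph$. We check that $\gamma\mapsto[\gamma\cap\Graph]_\Gr$ is a groupoid morphism $\pi_1(\Sigma,Y)\to\Gr$. Invariance under homotopy rel endpoints was noted before the statement; the basic move is passing $\gamma$ across a coupon, where the cycle condition (all coupons are identities in $\catd_\Gr$) guarantees the product is unchanged. Multiplicativity under concatenation follows from the ordering convention $\gamma\circ\gamma'$ and the ordered product. Invariance under skein relations, i.e.\ under change of representative in $H_1(\Sigma^\circ;\Gr)$, holds because $\gamma$ can be isotoped off the ball.

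\textbf{Step 2: injectivity and surjectivity via a cut system.} We use a model that makes both sides explicit. Choose a set of arcs $\gamma_1,\dots,\gamma_m$ in $\Sigma$ with endpoints in $Y$ such that cutting $\Sigma$ along them yields a single disk $D$, chosen to contain no point of $Y$ in its interior. For instance, take a standard $4g$-gon system at one base point $y_0$, plus a tree of arcs joining $y_0$ to the other points of $Y$, and refine so the complement is a disk. The classes of the $\gamma_j$ generate $\pi_1(\Sigma,Y)$ freely as a groupoid except for the single polygon relation coming from $\partial D$. Hence, by Lemma \ref{L:Rep}, $\Rep_\Gr(\Sigma,Y)$ is identified with tuples $(g_j)\in\Gr^m$ satisfying the one boundary relation.

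Dually, take the graph $\Graph^\vee$ with one vertex (a coupon) inside $D$ and one edge $e_j$ crossing each $\gamma_j$ exactly once. The cycle condition at that coupon is exactly the boundary relation read around $\partial D$. Given a tuple $(g_j)$ satisfying the relation, color $e_j$ by $g_j^{\pm1}$, with the sign chosen by orientation. This produces a $\catd_\Gr$-graph whose intersection with $\gamma_j$ is $g_j$, which gives surjectivity.

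For injectivity, take any $\Graph\subset\Sigma^\circ$ and isotope it to be transverse to the $\gamma_j$. Inside the disk $D$, which has no base points, we can apply skein relations: all of $\Graph\cap D$ together with its crossing points on $\partial D$ is contained in a ball, so we may replace it by a single coupon with legs to $\partial D$. Then, in a neighborhood of each $\gamma_j$, the strands crossing $\gamma_j$ can be fused into one strand colored by the ordered product. This uses a box around a sub-arc of $\gamma_j$ that avoids $Y$; such a box exists since $Y$ lies only at the endpoints of the $\gamma_j$. The result is the normal form $\Graph^\vee$ colored by the values $[\gamma_j\cap\Graph]_\Gr$. Two graphs with the same intersection data therefore have the same normal form, which gives injectivity.

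\textbf{Main obstacle.} The hard part is the fusion near $\gamma_j$ and the bookkeeping at the base points. The arcs $\gamma_j$ meet at points of $Y$, and skein moves must not sweep across $Y$, because crossing a base point changes the class in $H_1(\Sigma^\circ;\Gr)$. This is exactly the phenomenon in the disk counterexample, and it is why closedness matters: it ensures the cut-open piece is a genuine disk whose boundary relation matches the single relation in $\pi_1$.

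I would first check the case $\Sigma=S^2$ with $Y$ consisting of $n$ points. Here $H_1(S^2\smallsetminus Y;\Gr)$ should be $\Gr^{n-1}$, matching Lemma \ref{L:Rep}, and this case isolates the base-point subtleties. The general case then follows by adding the standard $4g$-gon for the genus.
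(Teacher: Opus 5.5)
Your proposal is correct and follows essentially the paper's route: the inverse is built by colouring the graph dual to a cell decomposition of $\Sigma$ with vertex set $Y$ by the holonomies of the dual arcs, and injectivity comes from reducing any skein in $\Sigma\smallsetminus Y$ to that dual graph, whose colours are then forced by the intersection values. The differences are minor: you use a one-face cut system where the paper uses an ideal triangulation, which covers $S^2$ with one or two base points without the paper's separate case, and you spell out the reduction (collapse everything in the disk into one coupon, then fuse the strands crossing each arc) that the paper compresses into the remark that $\Sigma\smallsetminus Y$ retracts onto the dual graph.
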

\begin{proof}
  One can construct its inverse as follows. Let
  $\rho\in \pi_1(\Sigma,Y)$ be a $\Gr$-represen\-ta\-tion. Choose an
  ideal triangulation of $\Sigma$ with 0-cells $Y$, orient its edges
  arbitrarily and take $\Graph$ to be the 1-skeleton dual to the
  triangulation; orient the edges of $\Graph$ so that  the
  edges of the triangulation intersect $\Graph$ positively with respect to the
  orientation of $\Sigma$. Ideal triangulations exist except for
  $\Sigma = S^2$ with at most two basepoints, which can be treated
  separately. An edge of $\Graph$ intersects % \BPm{changed from corresponds
    % to}
  an edge $\gamma$ of the ideal triangulation which we view as
  an element of $\pi_1(\Sigma,Y)$. We color this edge by
  $\rho(\gamma)\in\Gr$.  We thicken the vertices of the graph $\Graph$ into
  coupons colored by an identity morphism (all possible ways of doing
  this are skein-equivalent). % \BPm{added}
  Let us denote the skein
  obtained by $\Graph_\rho \in H_1(\Sigma\smallsetminus Y;\Gr)$.

Clearly, for every edge $\gamma$ of the ideal triangulation, we have $[\gamma\cap \Graph_\rho]_\Gr = \rho(\gamma)$ as there is only one intersection point by construction. Such edges generate $\pi_1(\Sigma, Y)$, hence $[-\cap \Graph_\rho]_\Gr = \rho$ as elements of $\Rep_\Gr(\Sigma,Y)$. Reciprocally, $\Sigma\smallsetminus Y$ retracts on $\Graph$ hence every element $\Graph'$ of $H_1(\Sigma\smallsetminus Y;\Gr)$ is skein equivalent to one with underlying graph $\Graph$. For every edge of $\Graph$, there is an edge $\gamma$ of the ideal triangulation that intersects it (and only it) once. Hence $\Graph_{[-\cap \Graph']_\Gr}$ has the same colors as $\Graph'$.
\end{proof}
\begin{remark}
%N It is also true that $H_1(\Sigma;\Gr) \simeq \Rep_\Gr(\Sigma,\emptyset)$, though we will not need it.
One can also show that there is a bijection  $H_1(\Sigma;\Gr) \simeq \Rep_\Gr(\Sigma,\emptyset)$, however this fact will not be used in this paper.  
% \NGm{I edited this remark. Is this just a bijection?  Left old, if ok please remove this comment.  }
%
\end{remark}
It will be useful to generalize the intersection pairing defined above to $\cat$-graphs.

\begin{definition}

Let $\gamma$ be an immersed path with endpoints in $Y$ transverse to a  homogeneous $\cat$-graph $\Graph$ which does not intersect  $Y$ in an oriented surface $\Sigma$. We define the $\cat$-intersection % and $\Gr$-intersection 
of $\gamma$ and $\Graph$, % respectively,
as the ordered (tensor) products
\begin{equation}
  \label{eq:inter}
  [\gamma\cap \Graph]_\cat=\bigotimes_{i=1}^n[\gamma\cap e_i]_\cat
  % \quad \text{  and  }\quad
  % [\gamma\cap \Graph]_\Gr=\prod_{i=1}^n[\gamma\cap e_i]_\Gr
\end{equation} where $\gamma$ intersects $\Graph$ only at the edges
$e_1,\ldots e_n$ in this order and when $\gamma$ intersects a piece of % \NGm{Why is the word piece used here?  Can we remove this or make it more clear what is happening?} 
an oriented edge $e$ of $\Graph$ which is colored by an object $V$ of degree $g$ of
$\cat$, we define $[\gamma\cap e]_\cat=V^{\ve}$ % and $[\gamma\cap e]_\Gr=g^{\ve}$ 
where $\ve$ is the sign of the intersection $\gamma\cap e$ and by convention, $V^{+1}=V$ and
$V^{-1}=V^*$. See Figure \ref{fig:intersection}.
\end{definition}
Clearly, $[\gamma\cap \Graph]_\cat$ is an object of $\cat_{[\gamma\cap \Graph]_\Gr}$ and $[\gamma\cap \Graph]_\Gr = [\gamma\cap h_\Graph]_\Gr$ where $h_\Graph$ is the degree of $\Graph$.

% $%{\begin{array}{l}
%       V_1\in\cat_{g_1}, V_2\in\cat_{g_2}, V_3\in\cat_{g_3},$\\%
%       $[ \gamma\cap \Graph ]_{\cat}=V_1\otimes V_2^*\otimes V_3,$\\%
%       $[ \gamma\cap \Graph ]_\Gr=g_1g_2^{-1}g_3$
%       % \end{array}

\begin{figure}
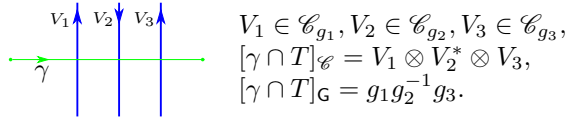

  \begin{minipage}{0.4\linewidth}
    \hfill  $
    \epsh{fig33f.pdf}{10ex}
    \putw{32}{86}{$\ms{V_1}$}
    \putw{54}{87}{$\ms{V_2}$}
    \putw{75}{87}{$\ms{V_3}$} \puts{17}{45}{${\gamma}$}
  $
\end{minipage}
\quad
  \begin{minipage}{0.4\linewidth}
  $V_1\in\cat_{g_1}, V_2\in\cat_{g_2}, V_3\in\cat_{g_3},$\\
      $[\gamma\cap \Graph]_\cat=V_1\otimes V_2^*\otimes V_3,$\\
      $[\gamma\cap \Graph]_\Gr=g_1g_2^{-1}g_3$.
\end{minipage}
\caption{Exemple of intersection of a path $\gamma$ with a skein $\Graph$.}
  \label{fig:intersection}
\end{figure}

\subsection{Finite dimensionality}
In this subsection $\cat$ is a % n additive
             sharp
pivotal $\Gr$-graded $\kk$--category (the case of ungraded categories
is included by setting $\Gr$ to be the trivial group).  Let $\Sigma$ be a
surface.

\begin{definition}
  We say that an ideal $\ideal$ of $\cat$ is \emph{graded finite} if for each $g\in \Gr$
  the subcategory $\ideal\cap \cat_g$ admits a generator $V_g$. % , that is any object of $\ideal\cap \cat_g$ is retract of $V_g^{\oplus n}$ for some $n$.
\end{definition}
Recall that by Proposition \ref{P:graduation} we have
$\Skein_\ideal(\Sigma)=\bigoplus_{h\in H_1(\Sigma;\Gr)}
\Skein_\ideal^h(\Sigma)$.
\begin{proposition}\label{prop:finitewithboundary}
Let $\Sigma$ be a compact connected 
  surface with non-empty boundary then for every graded finite ideal
  $\ideal\subset \cat$ and every $h\in H_1(\Sigma;\Gr)$ it holds
  $\dim_{\kk}\Skein^h_\ideal(\Sigma)<\infty$. 
\end{proposition}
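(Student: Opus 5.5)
The plan is to cut $\Sigma$ along a family of arcs into a disk and push every skein into a small neighbourhood of the dual graph. Since $\Sigma$ is compact, connected, with non-empty boundary, it deformation retracts onto a wedge of circles. Concretely, there exist pairwise disjoint properly embedded arcs $\gamma_1,\dots,\gamma_k$ (with $k=\operatorname{rank} H_1(\Sigma)$), with endpoints on $\partial\Sigma$, such that $D=\Sigma\setminus\bigcup_i N(\gamma_i)$ is a disk. Here $N(\gamma_i)$ is a thin open strip $\gamma_i\times(-1,1)$.

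First I normalise a skein. Let $\Graph$ be an $\ideal$-admissible homogeneous skein of degree $h$, put in general position with respect to the $\gamma_i$. By Proposition \ref{prop:ddd} we may assume all edges are $\ideal$-colored. Near each $\gamma_i$, the strands crossing it can be fused by a skein relation in a box (a neighbourhood of $\gamma_i$ minus a collar near $\partial\Sigma$). This replaces them by a single edge colored by the tensor product $[\gamma_i\cap\Graph]_\cat$, with a coupon on each side of $\gamma_i$. The skein relation is legitimate because of condition (3): we keep a separate $\ideal$-colored edge outside the box, for instance by first fusing all but one strand, or by doing the fusions one arc at a time while leaving an $\ideal$-colored edge in $D$.

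The key observation is that $[\gamma_i\cap\Graph]_\Gr=:g_i$ depends only on $h$ (as in the discussion after Proposition \ref{P:H1GisRepG}, the intersection only depends on the class in $H_1$). So the fused object lies in $\cat_{g_i}$. If it is in $\ideal$, it is a retract of $V_{g_i}^{\oplus n}$. Inserting $\Id=\sum \iota_j\pi_j$ and absorbing coupons, we may assume the edge crossing $\gamma_i$ is colored by $V_{g_i}$, the generator of $\ideal\cap\cat_{g_i}$.

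There is a subtlety: the fused object need not be in $\ideal$ if no strand crossing $\gamma_i$ is $\ideal$-colored. This is why I first use Proposition \ref{prop:ddd} to make every edge $\ideal$-colored. Arcs $\gamma_i$ crossed by no strand get a strand colored by a chosen object of $\ideal\cap\cat_1$: insert an $\Id_{\unit}$ edge and tensor it with an $\ideal$-colored edge as in the proof of Proposition \ref{prop:ddd}. The result is that $\Skein^h_\ideal(\Sigma)$ is spanned by skeins consisting of a fixed graph, namely one edge colored $V_{g_i}$ through each $N(\gamma_i)$, and everything else inside the disk $D$.

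Next I reduce to a Hom space. Up to skein relations in $D$, I apply the same fusing inside $D$, which is a box minus collars. Everything in $D$ then becomes a single coupon whose boundary object is $W=\bigotimes_i V_{g_i}^{\pm}\otimes V_{g_i}^{\mp}$, read in the order around $\partial D$. This gives a surjective linear map $\Hom_\cat(\unit,W)\to\Skein^h_\ideal(\Sigma)$ (or from $\End$ of a suitable object, depending on how the coupon is placed). The Hom space is finite dimensional because $\cat$ is locally finite, so $\dim\Skein^h_\ideal(\Sigma)<\infty$.

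I expect the main obstacle to be the careful bookkeeping of condition (3) in the skein relations, which requires an $\ideal$-edge outside each box. This matters especially for the final collapse of $D$ to one coupon. I would handle it by always keeping one $V_{g_i}$-strand through some $N(\gamma_i)$ outside the box, which is available when $k\geq1$. The case $k=0$ ($\Sigma$ a disk) I would treat directly: there $\Skein_\ideal(D^2)$ is spanned by closures of endomorphisms of $V_1$, a quotient of $\End(V_1)$.
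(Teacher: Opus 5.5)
Your proposal is correct and follows essentially the paper's argument: cut $\Sigma$ along arcs into a disk, fuse the $\ideal$-colored strands crossing each arc, use that $[\gamma_i\cap\Graph]_\Gr=[\gamma_i\cap h]_\Gr$ depends only on $h$ to replace the fused color by the generator $V_{g_i}$, and collapse the disk to a single coupon, so that $\dim_\kk\Skein^h_\ideal(\Sigma)$ is bounded by the dimension of a finite-dimensional Hom space. Your bookkeeping is more careful than the paper's: your $W$ correctly contains each $V_{g_i}$ twice around $\partial D$, you check condition (3) of Definition \ref{D:ISkeinRelation}, and you treat the disk separately. One point to tighten: for an uncrossed arc, and in the disk case, get the degree-one $\ideal$-colored crossing strand by a finger move of an existing $\ideal$-edge across the arc (resp.\ a chord of the disk), followed by fusion; this is needed because a cut edge in the disk can have degree $g\neq1$, and then it is not a retract of copies of $V_1$.
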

\begin{proof}
  Let $Y$ be a set of base points lying in the boundary of $\Sigma$.
  Remark that $\Skein_\ideal(\Sigma\setminus Y)=\Skein_\ideal(\Sigma)$ and
  $H_1(\Sigma\setminus Y;\Gr)=H_1(\Sigma;\Gr)$.  There exists a set of
  disjoint simple arcs $c_1,\ldots c_k$ properly embedded in
  $\Sigma$, with end points in $Y$, and such that
  $\Sigma \setminus (c_1\sqcup \ldots \sqcup c_k)$ is a disk.  If
  $\Graph\in \Skein_\ideal(\Sigma)$ is homogeneous of degree
  $h\in H_1(\Sigma, \Gr)$, then we can assume up to isotopy that it
  intersects each $c_i$ transversally and by Proposition
  \ref{prop:ddd} that all its edges are $\ideal$-colored. Then
  $\Graph$ is skein equivalent (by fusing all the edges intersecting
  each $c_i$) to a graph intersecting each $c_i$ at most once; let
  $h_i=[c_i\cap\Graph]_\Gr = [c_i\cap h]_\Gr$ be the degree of such an intersection and $V_{h_i}$ a generator of $\ideal\cap \cat_{h_i}$. Then up
  to applying one skein relation for each $c_i$ we can replace
  $\Graph$ with a linear combination of graphs intersecting $c_i$ via
  a single edge colored by $V_{h_i}$.  We can express each of these
  graphs via linear combination of graphs each containing a single
  coupon decorated by a morphism in
  $\Hom(\unit,V_{h_1}\otimes \cdots \otimes V_{h_k})$ in the disk
  $\Sigma\setminus (c_1\sqcup\cdots \sqcup c_k)$.  Therefore % if
  % $h\in H_1(\Sigma, \Gr)$ is the homology class of $\Graph$ 
  we just
  showed that
  $\dim_\kk \Skein_\ideal^h(\Sigma)\leq
  \dim_{\kk}\Hom(\unit,V_{h_1}\otimes \cdots \otimes
  V_{h_k})<\infty$.% \BP{we did not prove that $h_i$ is independant of
    % $T$ (depends only of $h$).}
\end{proof}

\begin{proposition}\label{P:G_I}
  Let $\ideal$ be an ideal in $\cat$, then the set
  $\Gr_\ideal=\{g\in\Gr:\ideal\subset \ideal_{\cat_g}\}$ is a subgroup
  of $\Gr$ where $\ideal_{\cat_g}$ is the ideal generated by all
  objects of $\cat_g$.
\end{proposition}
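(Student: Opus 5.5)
We check the three subgroup axioms directly: $1\in\Gr_\ideal$, and $\Gr_\ideal$ is closed under products and inverses. The basic tool is a description of the ideals $\ideal_{\cat_g}$. By Remark \ref{rk:ideal-gen} and the description of the ideal generated by one object, $\ideal_{\cat_g}$ consists of the retracts of finite direct sums of objects $W\otimes V\otimes X$ with $V\in\cat_g$ and $W,X\in\cat$. Since $\cat$ is additive and retracts of retracts are retracts, a retract of a direct sum of objects of an ideal lies in that ideal. Hence, to prove $\ideal_{\cat_g}\subset\Jideal$ for an ideal $\Jideal$, it is enough to show that every object of $\cat_g$ lies in $\Jideal$.

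\emph{Identity.} Since $\unit\in\cat_1$, the ideal $\ideal_{\cat_1}$ contains every object $\unit\otimes W$. It is therefore all of $\cat$, and so $1\in\Gr_\ideal$.

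\emph{Inverses.} We first show $\ideal_{\cat_g}=\ideal_{\cat_{g^{-1}}}$. Take $V\in\cat_g$. If $V$ is null, it is a retract of anything, in particular of $0$, so it lies in every ideal. If $V$ is non-null, it is a retract of $V\otimes V^*\otimes V$ via the zig-zag maps
\[
(\Id_V\otimes\lcoev_{V}^{\,*}\text{-type})\colon V\to V\otimes V^*\otimes V
\quad\text{and}\quad
\rev_V\otimes\Id_V\colon V\otimes V^*\otimes V\to V,
\]
where the first map is $\lcoev_V\otimes\Id_V$ and the snake relation gives $(\Id_V\otimes\lev_V)(\lcoev_V\otimes\Id_V)=\Id_V$. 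Because $V^*\in\cat_{g^{-1}}$, the object $V\otimes V^*\otimes V$ lies in $\ideal_{\cat_{g^{-1}}}$, and hence so does $V$. The statement is symmetric in $g$ and $g^{-1}$, so the two ideals coincide. Consequently $g\in\Gr_\ideal$ if and only if $g^{-1}\in\Gr_\ideal$.

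\emph{Products.} The key claim is $\ideal_{\cat_{gh}}\supset\ideal_{\cat_g}\cap\ideal_{\cat_h}$, or at least $\ideal\subset\ideal_{\cat_{gh}}$ whenever $g,h\in\Gr_\ideal$. The natural route is to show $\ideal_{\cat_g}\subset\ideal_{\cat_{gh}}$ whenever $h\in\Gr_\ideal$. Take a homogeneous $V\in\cat_g$. We want it to be a retract of something built from $\cat_{gh}$. Write $V$ as a retract of $V\otimes U^*\otimes U$ for some non-null $U\in\cat_h$. If $V\otimes U^*$ is non-null, it has degree $gh^{-1}$; the choice of $U$ must be made so that the resulting object lands in $\ideal_{\cat_{gh}}$. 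So the useful form is the following: $V$ is a retract of $(V\otimes U)\otimes U^*$ provided $\rev_U$ splits after tensoring. Here $V\otimes U\in\cat_{gh}$, so $V\in\ideal_{\cat_{gh}}$ whenever $V$ is a retract of $V\otimes U\otimes U^*$.

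This splitting is the main obstacle. In general $\Id_V\otimes\rev_U$ need not split. It does split when $V$ is projective: this is exactly the argument of Lemma \ref{L:V-appear}, since $\rev_U$ is an epimorphism by sharpness. We therefore use the hypothesis $h\in\Gr_\ideal$ to supply the needed objects. Since $\ideal\subset\ideal_{\cat_h}$, every $P\in\ideal$ is a retract of a sum of objects $W\otimes U\otimes X$ with $U\in\cat_h$. Similarly, $g\in\Gr_\ideal$ makes $W\otimes U\otimes X$ ...

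The cleanest plan is to show directly that each $P\in\ideal$ lies in $\ideal_{\cat_{gh}}$. Write $P$ as a retract of $\bigoplus W_i\otimes U_i\otimes X_i$ with $U_i\in\cat_h$, using Lemma \ref{L:V-appear}-style retractions. Then rewrite $W_i\otimes U_i$ using $P$'s membership in $\ideal_{\cat_g}$: insert an object of $\cat_g$ adjacent to $U_i$, so that a factor $V'\otimes U_i\in\cat_{gh}$ appears. Doing this requires moving the $\cat_g$-factor next to $U_i$. This is possible because the retraction for $\ideal_{\cat_g}$ can be applied to the whole object $W_i\otimes U_i\otimes X_i$, which lies in $\ideal$ only if $\ideal$ contains it. That last point is where I expect to need care, probably by applying it to $P$ itself before expanding. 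Once the factors sit adjacent, closure of $\ideal_{\cat_{gh}}$ under tensoring and retracts concludes $gh\in\Gr_\ideal$.
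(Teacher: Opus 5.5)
\paragraph{Identity and inverses.} Your identity step is the paper's. Your inverse step is correct, and proving $\ideal_{\cat_g}=\ideal_{\cat_{g^{-1}}}$ from ``$V$ is a retract of $V\otimes V^*\otimes V$'' is a bit more direct than the paper's appeal to $\ideal^*=\ideal$ and $(\ideal_{\cat_g})^*=\ideal_{\cat_{g^{-1}}}$. Two small fixes are needed:
\begin{itemize}
\item Your display is garbled. The retraction paired with $\lcoev_V\otimes\Id_V$ is $\Id_V\otimes\lev_V$. By contrast, $(\rev_V\otimes\Id_V)(\lcoev_V\otimes\Id_V)$ is a dimension times $\Id_V$.
\item A retract of a direct sum of objects of an ideal need not lie in that ideal, since ideals here are not assumed closed under $\oplus$ (a union of two ideals is an ideal). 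Your reduction to checking $\cat_g\subset\Jideal$ still holds, simply by the definition of a generated ideal.
\end{itemize}

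\paragraph{The gap: closure under products.} Your proposal never proves that $\Gr_\ideal$ is closed under products.
\begin{itemize}
\item The route you call natural, $\ideal_{\cat_g}\subset\ideal_{\cat_{gh}}$ for $h\in\Gr_\ideal$, is false. For $g=1$ it says $\ideal_{\cat_h}=\cat$ for every $h\in\Gr_\ideal$. But $\Gr_\Proj=\Gr$ by Lemma \ref{L:V-appear}. Meanwhile a degree $h$ in which every object is projective (the generic degrees in Subsection \ref{SS:exemple}) has $\ideal_{\cat_h}=\Proj$, which differs from $\cat$ whenever $\unit$ is not projective.
\item Your final plan aims at the right statement, $\ideal\subset\ideal_{\cat_{gh}}$. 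It stops exactly at the point you flag: the hypothesis on $g$ applies only to objects of $\ideal$, and the coefficient in a presentation of $P$ need not lie in $\ideal$.
\end{itemize}

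\paragraph{The missing idea.} The key step of the paper's product argument is to push the coefficient into $\ideal$ using the snake retraction of $V$ itself.
\begin{itemize}
\item Suppose $V\in\ideal$ is a retract of $U\otimes W'$ with $W'\in\cat_{g'}$. Insert this retraction into $(\Id_V\otimes\lev_V)(\lcoev_V\otimes\Id_V)=\Id_V$. This exhibits $V$ as a retract of $(V\otimes V^*\otimes U)\otimes W'$.
\item The new coefficient $V\otimes V^*\otimes U$ lies in $\ideal$, because $\ideal$ is closed under tensor products.
\item Now $g\in\Gr_\ideal$ applies to this coefficient: it is a retract of some $U'\otimes W$ with $W\in\cat_g$.
\item Hence $V$ is a retract of $U'\otimes(W\otimes W')$ with $W\otimes W'\in\cat_{gg'}$, so $gg'\in\Gr_\ideal$.
\end{itemize}

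\paragraph{One further point.} The argument above is written with one-sided presentations, where the $\cat_{g'}$-factor is rightmost; that is what puts $W$ directly next to $W'$. In your two-sided form $W_i\otimes U_i\otimes X_i$, expanding the coefficient leaves a stray factor between the new $\cat_g$-object and $U_i$. You would therefore also need to arrange, and justify, such one-sided presentations.
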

\begin{proof}
  First $1\in\Gr_\ideal$ since the ideal generated by $\unit\in\cat_1$ is $\cat$.%\BPm{added}  
  Let $g,g'\in\Gr_\ideal$ and $V\in\ideal$, %\FCm{Is it clear that $G_\ideal$ is non empty ?} 
  then
  $V\in\ideal_{\cat_{g'}}$ is a retract of $U\otimes W'$ for some
  $U\in \cat$ and $W'\in\cat_{g'}$ implying $V$ is also a retract of
  $(V\otimes V^*\otimes U)\otimes W'$ (using $\lcoev_V$ and $\lev_V$).
  So we can assume $U\in\ideal$ but then $U\in\ideal_{\cat_g}$ is retract of
  $U'\otimes W$ for some $U'\in \cat$ and $W\in\cat_g$ implying $V$ is
  a retract of $U'\otimes (W\otimes W')\in\ideal_{\cat_{gg'}}$. Thus, $gg'\in\Gr_\ideal$.  A
  similar argument for the left tensor product implies that
  $g'g\in\Gr_\ideal$.  Finally using the equivalence of category given
  by the dual, if $\ideal\subset\ideal_{\cat_g}$, then
  $\ideal=\ideal^*\subset(\ideal_{\cat_g})^*=\ideal_{\cat_{g^{-1}}}$
  thus $(\Gr_\ideal)^{-1}=\Gr_\ideal$.
\end{proof}

\begin{theorem}\label{T:finitewithoutboundary}
  Let $\ideal$ be a graded finite ideal of $\cat$.  Assume $\Gr$ is
  abelian or $\Gr=\Gr_\ideal$.  Let $\Sigma$ be a compact oriented
  surface without boundary then for every $h\in H_1(\Sigma;\Gr)$ we have $\dim_{\kk}\Skein^h_\ideal(\Sigma)<\infty$.
\end{theorem}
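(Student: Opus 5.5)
We reduce to Proposition \ref{prop:finitewithboundary} by removing a small disk $D\subset\Sigma$ (we may assume $\Sigma$ connected, since $\Skein_\ideal$ is monoidal and $H_1$ splits over components). Let $\Sigma^\circ=\Sigma\setminus \Int D$. By Proposition \ref{P:MappingClassActs}, the inclusion $\Sigma^\circ\hookrightarrow\Sigma$ gives a linear map $\Skein_\ideal(\Sigma^\circ)\to\Skein_\ideal(\Sigma)$. This map is surjective, since any admissible graph can be isotoped off $D$. It respects degrees: an element of $H_1(\Sigma^\circ;\Gr)$ is sent to its image in $H_1(\Sigma;\Gr)$. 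Hence
$$\Skein^h_\ideal(\Sigma)=\sum_{h'\mapsto h}\operatorname{Im}\bigl(\Skein^{h'}_\ideal(\Sigma^\circ)\bigr),$$
and each summand is finite dimensional. The whole difficulty is that the fiber of $H_1(\Sigma^\circ;\Gr)\to H_1(\Sigma;\Gr)$ over $h$ is infinite. Concretely, in terms of representations (Proposition \ref{P:H1GisRepG} with a base point in $\partial D$), a lift $h'$ also records the holonomy $k\in\Gr$ around $\partial D$. Over $h$, the possible $k$ run over products of commutators, or are constrained by the relation in the closed surface. The key step is therefore to show that only finitely many lifts $h'$ are needed.

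To do this, I will show that a graph crossing $\partial D$ with total degree $k$ can be traded, modulo skein relations in $\Sigma$, for graphs whose $\partial D$-degree is trivial, or lies in a fixed finite set. Fuse all strands crossing a small arc into one edge colored by an object of $\ideal\cap\cat_k$. Then use Lemma \ref{L:V-appear}-type manipulations: an $\ideal$-colored edge $P$ can be rewritten via $e_{V,P}$ as a partial trace over an auxiliary object $V$ of any chosen degree. Sliding this auxiliary $V$-loop across $D$ (which is allowed in $\Sigma$ but not in $\Sigma^\circ$) changes the $\partial D$-degree from $k$ to a conjugate or translate.

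The two hypotheses enter exactly here. If $\Gr$ is abelian, $H_1(\Sigma^\circ;\Gr)\to H_1(\Sigma;\Gr)$ is injective (both are $\Gr^{2g}$ by Lemma \ref{L:H1ab}). Every graph in $\Sigma^\circ$ of degree $h'$ then has trivial $\partial D$-degree, the fiber is a single point, and we are done.

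If $\Gr=\Gr_\ideal$, then $\ideal\subset\ideal_{\cat_g}$ for every $g$. So every $\ideal$-colored edge crossing $\partial D$ can be replaced, using retractions, by a tensor product $U\otimes W$ with $W\in\cat_g$ for any prescribed $g$. Choosing $g$ appropriately and isotoping the $W$ strand across $D$, I expect to reduce the $\partial D$-holonomy to $1$. This places $\Skein^h_\ideal(\Sigma)$ inside the image of the single finite-dimensional space $\Skein^{h_0}_\ideal(\Sigma^\circ)$, where $h_0$ is the lift with trivial boundary holonomy, or at worst a finite union of such images.

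I expect this reduction of the boundary holonomy in the non-abelian case to be the main obstacle. The bookkeeping of how the degree changes when strands are pushed over the disk needs care, as does checking that the rewritten graphs stay $\ideal$-admissible outside the box used for each skein relation.
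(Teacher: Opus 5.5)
Your overall plan matches the paper's. You puncture $\Sigma$ (the paper removes one point $y$ per component, which is the same as removing a disk). You use the surjective, degree-compatible map to $\Skein_\ideal(\Sigma)$ and apply Proposition \ref{prop:finitewithboundary} to each lift. You dispose of the abelian case by bijectivity on $H_1$. When $\Gr=\Gr_\ideal$, you split an $\ideal$-coloured edge so that a strand of prescribed degree can be pushed across the puncture. However, the non-abelian step, which is the whole content of the theorem, is left as an expectation and is aimed at the wrong invariant.

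For a $\catd_\Gr$-graph in $\Sigma^\circ$, a loop parallel to $\partial D$ can be homotoped onto $\partial D$, which the graph does not meet. Intersection products depend only on homotopy classes (coupons are identities), so the holonomy around $\partial D$ is always $1$. The correct description comes from Proposition \ref{P:H1GisRepG} applied to the closed surface with base point $y$ at the centre of $D$. This gives $H_1(\Sigma^\circ;\Gr)\cong\Rep_\Gr(\Sigma,\{y\})=\Hom(\pi_1(\Sigma,y),\Gr)$, and the map to $H_1(\Sigma;\Gr)\cong\Rep_\Gr(\Sigma,\emptyset)$ is the quotient by conjugation. So the fibre over $h$ is a conjugacy orbit, not a family of lifts indexed by boundary holonomies $k$. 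Since every lift already has ``trivial boundary holonomy'', your $h_0$ is not a single lift. Your final inclusion into ``the image of $\Skein^{h_0}_\ideal(\Sigma^\circ)$'' is therefore again an inclusion into a possibly infinite sum, and bounds nothing. Likewise, pushing a $g$-coloured strand across $D$ does not alter a boundary holonomy; it changes the lift by the gauge action of $g^{\delta_y}$, i.e.\ by conjugation.

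What is needed, and what the paper does, is to fix one lift $\tilde h$ of $h$ and move every other lift onto it. Any lift $\tilde h'$ is obtained from $\tilde h$ by sliding finitely many edges of the degree graph across $y$. Take a skein $\Graph'$ of degree $\tilde h'$, with all edges $\ideal$-coloured by Proposition \ref{prop:ddd}. For each required slide of an edge of colour $g$:
take an edge of $\Graph'$ next to $y$ coloured by $W_l\in\ideal\cap\cat_l$;
insert coupons exhibiting $W_l$ as a retract of $W_g\otimes W_{g^{-1}l}$, which is possible because $\Gr_\ideal=\Gr$;
push the $W_g$ strand across $y$.
This is a skein relation in $\Sigma$ (not in $\Sigma^\circ$), and it changes the degree in $\Sigma^\circ$ by exactly that slide. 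After finitely many moves the skein has degree $\tilde h$, so $\Skein^h_\ideal(\Sigma)$ is the image of the single finite-dimensional space $\Skein^{\tilde h}_\ideal(\Sigma^\circ)$. Your tools are the right ones; what is missing is the correct parametrisation of the fibre and a fixed lift as the target.

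Your abelian case reaches the right conclusion, but the reason needs correcting. Injectivity comes from the map being induced by the inclusion isomorphism $H_1(\Sigma^\circ;\Z)\to H_1(\Sigma;\Z)$, not merely from both sides being isomorphic to $\Gr^{2g}$.
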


\begin{proof} Choose basepoints $Y\subset \Sigma$, one
%\BPm{added ``one'', else $\pi_\Gr$ is not an iso in the commutative case}\NGm{Ok} 
in every connected
  components. Any graph in $\Sigma$ can be isotoped to be disjoint
  from $Y$, hence we have surjective maps
  $${\pi_\ideal: \Skein_\ideal(\Sigma\smallsetminus Y) \to
  \Skein_\ideal(\Sigma)} \;\; \text{ and } \;\;
  \pi_\Gr: H_1(\Sigma\smallsetminus Y;\Gr) \to H_1(\Sigma;\Gr).$$ The
  map $\pi_\ideal$ sends homogeneous skeins of degree $\tilde h$ to
  homogeneous skeins of degree $\pi_\Gr(\tilde h)$. Hence we have a
  surjective map
$$\bigoplus_{\tilde h\in \pi_\Gr^{-1}(h)}\Skein_\ideal^{\tilde h}(\Sigma\smallsetminus Y) \twoheadrightarrow \Skein_\ideal^h(\Sigma). $$
By Proposition \ref{prop:finitewithboundary}, each summand 
%N: at the left 
in this sum 
is finite dimensional, but there might be infinitely many summands. If $\Gr$ is abelian, the map $\pi_\Gr$ is an isomorphism so there is only one summand; and the statement is proven. Otherwise, choose an arbitrary preferred lift $\tilde h \in \pi_\Gr^{-1}(h)$, \ie isotope the $\catd_\Gr$-graph $h$ to be disjoint from $Y$ in some way. We will prove that the map from a single summand $\Skein_\ideal^{\tilde h}(\Sigma\smallsetminus Y) \to \Skein_\ideal(\Sigma)$ is surjective, which will prove the statement. 

Consider any other $\tilde h' \in \pi_\Gr^{-1}(h)$ and let $\Graph'$ % $= \pi_G(\tilde\Graph')$ \NGm{Do we need  $= \pi_G(\tilde\Graph')$ here?}
be in the image of $\Skein_\ideal^{\tilde h'}(\Sigma\smallsetminus Y) \to \Skein_\ideal(\Sigma)$. Then $\tilde h'$ is a $\catd_\Gr$-graph % another of isotoping $h$ to be disjoint from $Y$ is 
related to $\tilde h$ by sliding finitely many edges of $h$ across a point of $Y$. % \NGm{Can we edit this sentence?}
  If an an edge colored by
  $g\in \Gr$ crosses a point $v\in Y$, then assuming there is a
  $W_l\in \ideal \cap \cat_{l}$ colored edge of $\Graph'$ near $v$, we apply the
  skein equivalence given in the following figure to $\Graph'$ where
  the coupons in the graph represents $W_l$ as a retract of
  $W_g\otimes W_{g^{-1}l}$ which exist since $l\in \Gr_{\ideal} = \Gr$ by our assumption:
% \end{minipage}
% \begin{minipage}{0.4\linewidth}
  \begin{equation}\label{eq:boundary}
  \epsh{fig7a}{14ex}\put(-20,5){\ms v}\put(0,5){\ms{W_l}}\quad\longrightarrow\epsh{fig7b}{14ex}\longrightarrow\quad\epsh{fig7c}{14ex}\putw{0}{50}{$\ms{W_g}$}\pute{100}{50}{$\ms{W_{g^{-1}l}}$}
\end{equation}
% \end{minipage}
The new skein is disjoint from $Y$ and has degree $\tilde h$ in $\Sigma\smallsetminus Y$, showing that $\Graph'$ is in the image of $\Skein_\ideal^{\tilde h}(\Sigma\smallsetminus Y) \to \Skein_\ideal(\Sigma)$. This concludes the proof.
%After all these modifications on $\Graph'$ we fuse the resulting graph as
%we did for $\Graph$ in the complement of $v$ then the resulting graph
%is $\Graph''$: indeed by these modifications, one ``follows'' the degree graph via skein equivalences on the level of $\Graph$.
   \end{proof}
%\BH{I found this proof hard to understand. I would put forward the fact that we have to choose a representative for $h$, for each representative it's clear it's finite, but there may be infinitely many representatives.}

%\begin{remark}
%One can prove in a similar way that if $\cat_\Gr$\BPm{why $\cat_G$ insted of $\cat$ ?}\FCm{I would say it should be $\cat$} is ribbon and $\ideal$ is a graded finite ideal of $\cat$ then similar statement to Theorem \ref{T:finitewithoutboundary} holds for compact 3-manifolds.  
%\end{remark}
%\BPm{To check that we can replace the remark by the theorem.  I don't see how to avoid the assumption $G=G_\ideal$...}
\begin{theorem}\label{T:finitewithoutboundaryribboncase}
  Let $\Gr$ be an abelian group and $\ideal$ a graded finite ideal of a sharp ribbon $\Gr$-graded
  category $\cat$.  Let $M$ be a compact oriented 3-manifold without boundary. Then for every $h\in H_1(M;\Gr)$ we have $\dim_{\kk}\Skein^h_\ideal(M)<\infty$.
\end{theorem}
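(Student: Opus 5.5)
We reduce the closed 3-manifold $M$ to a 3-dimensional analogue of the surface-with-boundary argument of Proposition \ref{prop:finitewithboundary}, using a Heegaard splitting. Choose a Heegaard splitting $M=H_1\cup_\Sigma H_2$ into two handlebodies of genus $k$. Remove a small open ball $B$ from $M$ in the interior of $H_2$. Any skein can be isotoped off a point, so the inclusion $M\setminus B\hookrightarrow M$ induces a surjection $\Skein_\ideal(M\setminus B)\to\Skein_\ideal(M)$. Since $\Gr$ is abelian, Lemma \ref{L:H1ab} gives $H_1(M\setminus B;\Gr)\cong H_1(M;\Gr)$. So this surjection respects degrees and restricts to a surjection $\Skein^h_\ideal(M\setminus B)\to\Skein^h_\ideal(M)$, and it suffices to bound $\dim\Skein^h_\ideal(M\setminus B)$.

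Next we cut along disks. In $H_1$ take meridian disks $D_1,\dots,D_k$ with $H_1\setminus\bigcup D_i$ a ball. In $H_2$ take meridian disks $E_1,\dots,E_k$, which we may assume avoid $B$. Then $H_2\setminus(\bigcup E_j\cup B)$ is a ball minus a ball, i.e. $S^2\times I$. Put a skein $\Graph$ of degree $h$ in general position with respect to all disks. By Proposition \ref{prop:ddd}, all edges may be taken $\ideal$-colored. For each disk, fuse the strands crossing it into a single edge; this is a skein relation in a box, a thickened neighbourhood of the disk, and admissibility holds because $\ideal$-colored edges remain outside. The degree $g_i=[D_i\cap h]_\Gr$ (respectively $g'_j$) of the fused edge depends only on $h$: it is the homological intersection with the relative class of the disk, which is well defined since $\Gr$ is abelian. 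Then insert $\Id=\sum j\circ p$ through the generator $V_{g_i}$ of $\ideal\cap\cat_{g_i}$. Thus $\Graph$ becomes a combination of skeins crossing each disk in exactly one edge colored by a fixed generator.

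The complement $M\setminus(B\cup\bigcup D_i\cup\bigcup E_j)$ is a ball $\mathcal B_1$ (from $H_1$) glued along the punctured surface $\Sigma\setminus(\text{disk boundaries})$ to $S^2\times I$. This gluing is not itself a ball, and this is the main obstacle. Inside $\mathcal B_1$ we can collapse everything to one coupon by Corollary \ref{T:dim3SphereSkein}-type reasoning: a ball's skeins with fixed boundary colors are spanned by $\Hom$ spaces. But the graph also lives on the $H_2$ side. There, $S^2\times I$ with fixed boundary points has skeins given by a 2-dimensional sphere-type evaluation with marked points, so up to skein relations it is spanned by finitely many planar pictures times coupons. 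To handle this concretely, I would instead push all of the graph into a neighbourhood of $H_1\cup(\text{cores of the 2-handles } E_j\times I)$: every skein in $M\setminus B$ can be isotoped into $H_1$ union the 2-handles, since $M\setminus B$ deformation retracts onto the 2-skeleton. Then the graph lives in $H_1\cup$(2-handles), where the 2-handles contribute only arcs crossing the $E_j$. After fusion, the graph is one coupon in the ball $H_1\setminus\bigcup D_i$ joined by a fixed configuration of edges colored $V_{g_i}$ through the $D_i$ and $V_{g'_j}$ through the $E_j$. Over-/under-crossings are absorbed into the coupon using the braiding, as in the proof of Corollary \ref{T:dim3SphereSkein}. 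The result is
$$\dim\Skein^h_\ideal(M)\le\dim_\kk\Hom_\cat\bigl(\unit,\textstyle\bigotimes_i (V_{g_i}\otimes V_{g_i}^*)\otimes\bigotimes_j (V_{g'_j}\otimes V_{g'_j}^*)\bigr)<\infty,$$
using local finiteness. The delicate checks are that the fusions respect admissibility, and that fixing the combinatorial position of the handle arcs costs only finitely many choices; both follow as in Proposition \ref{prop:finitewithboundary}.
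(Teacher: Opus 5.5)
There is a genuine gap, and it is the central step. You claim that the degree $g_i=[D_i\cap h]_\Gr$ of the fused edge through $D_i$ (and likewise $g'_j$ through $E_j$) depends only on $h$, being ``the homological intersection with the relative class of the disk''. This is false. The meridian disks have their boundaries on the Heegaard surface $\Sigma$, which lies in the interior of $M$ (and of $M\setminus B$). So they are not relative $2$-cycles, and their intersection with a $1$-cycle is not a homological invariant: pushing a strand across $\partial D_i$ changes it. Abelianness of $\Gr$ does not help.

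For example, take $S^3$ with its genus-one splitting. The core of $H_1$, colored by any $V\in\ideal\cap\cat_g$, has degree $h=0$ but crosses $D_1$ once in degree $g$. Hence, when $\Gr$ is infinite, the boundary colors of your final coupon range over possibly infinitely many degree vectors, each with its own $\Hom$-space, and no finite bound follows.

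Your workaround does not remove this. A skein in $M\setminus B$ can even be pushed off the co-cores of the $2$-handles entirely into $H_1$, where the $D_i$ are properly embedded and the $g_i$ are well defined. But they are then fixed by a lift $\tilde h\in H_1(H_1;\Gr)$ of $h$, and that lift is only determined modulo the span of the classes $g\,[\partial E_j]$, $g\in\Gr$. The ``fixed configuration'' claim is also unjustified, since $M\setminus(B\cup\bigcup D_i\cup\bigcup E_j)$ is not a ball, as you noticed yourself.

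The missing idea is to normalize these degrees by skein moves. The paper uses a special spine $S$, so that $M\setminus S$ is a single ball, which also settles the configuration issue. It observes that the degrees read on the regions of $S$ form a $2$-cocycle $z^*$ dual to the $1$-cycle of the skein. This cocycle is determined by $h$ only up to coboundaries supported on the edges of $\mathrm{sing}(S)$. The paper then uses moves as in \eqref{eq:boundary} to change $z^*$ into a fixed representative $h^*$. Concretely, it realizes an $\ideal$-colored strand $W_l$ as a retract of $W_g\otimes W_{g^{-1}l}$, as in the proof of Theorem \ref{T:finitewithoutboundary}, and pushes the $W_g$-part across an edge of $\mathrm{sing}(S)$. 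Only after this normalization do fusion, the generators $V_g$, and an isotopy in the ball reduce every skein to one coupon with fixed boundary colors.

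In your Heegaard setting, the analogous step would be to split off strands of prescribed degree and slide them over the $2$-handles, so as to pass between all lifts $\tilde h$. Without such an argument the proof does not go through.
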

\begin{proof}
%Consider a $t$ triangulation of $M$ whose vertices we denote $Y$, and denote $PD(t)$ the Poincaré-dual cellularization and $T$ the 1-skeleton of $PD(t)$. Since any ribbon graph can be made disjoint from $Y$, and any isotopy can be made to avoid $Y$, the map $\Skein_\ideal(M\smallsetminus Y)\tilde \to \Skein_\ideal(M)$ is an isomorphism. Since $T$ is the 1-skeleton of cellularization of $M$, any skein $\Graph$ in $M$ can be represented by a skein with underlying graph $T$. Now, $\Graph$ has fixed degree $h$, and for each edge $e$ of $T$ there is an edge $e'$ of $t$ intersecting it, and only it, once. Hence the color of the edge $e$ has fixed degree $[e'\cap h]_\Gr$. By the arguments above, $\Graph$ is skein equivalent to a graph whose edges are colored by the generators $V_{[e'\cap h]_\Gr}$, and the only choice is in the morphisms coloring the vertices, which is finite-dimensional. 
%
  Let $S$ be a special spine of the 3-manifold $M$, that is $S$ is a
  2-polyedra with: 1) $M\setminus S\simeq B^3$, 2) its 1-skeleton sing$(S)$ has no
  circular components (isomorphic to $S^1$), 3) $S\setminus$sing$(S)$
  is a disjoint union of discs $B^2$ called the regions of the spine.
  Then any $1$-cycle $z\in Z_1(M,\Gr)$ which does not intersect
  sing$(S)$ is Poincar\'e dual to a unique $2$-cocycle
  $z^*\in Z^2(S,\Gr)$. Let us fix a framed dot in each region of $S$ and
  fix a graph $\Graph$ made of a coupon in box $B$ disjoint from $S$,
  a braid in $M\setminus (S\cup B)\simeq S^2\times]0,1[$ and the dots
  on the regions.  Let us fix a 1-cycle representing $h$ and call
  $h^*$ the dual 2-cocycle on $S$. We color each edge of $\Graph$ with
  a generator $V_g$ of $\ideal_g$ where $g$ is given by the value of
  $h^*$ on the region intersected by the edge.  Filling the coupon of
  $\Graph$ by a morphism $f$, we obtain a skein
  $\Graph_f\in\Skein^h(M)$.  We claim that the map $f\mapsto \Graph_f$
  is surjective which will prove the finite dimensionality of
  $\Skein^h(M)$.  Indeed, given a skein in $\Skein^h(M)$, we can first
  assume that each region intersects an edge of $\Graph'$ colored by a
  color in $\ideal$. Then, if $z^*$ is a 2-cocyle of $S$ Poincar\'e dual
  to the degree of $\Graph'$, we have that $z^*-h^*$ is a $\Gr$-linear
  combination of coboundary supported by edges of sing$(S)$.  Using
  relations like \eqref{eq:boundary} where now $v$ represents an edge
  of sing$(S)$ we can modify $z^*$ so that $z^*=h^*$.  Finally, fusing
  the edges intersecting the same region, we can see that $\Graph'$ is
  equivalent to a skein which intersects $S$ as $\Graph$ and with the
  same color $V_g$ (since $V_g$ is a generator of
  $\cat_g\cap \ideal$).  By an isotopy in $M\setminus S$, we can now
  deform $\Graph'$ so that it become the same braid that $\Graph$
  outside the box $B$.  Finally a skein relation in $B$ reduce
  $\Graph'$ to some $\Graph_f$.
\end{proof}
%\section{Admissible skein modules for $\Gr$-chromatic categories}

\section{\texorpdfstring{Representation of the groupoid $\wt\cob_e$}{Representation of the groupoid Cob e}}
\subsection{The admissible skein module of a $\Gr$-decorated surface}
We now turn to the definition of a vector space
associated to a $ \Gr$-decorated surface.
       %        equipped with a $\Gr$-representation.
These vector spaces can be thought of as a generalization of the
construction of \cite{DST24} to non-semisimple and non-finite
settings.  They will be the vector space that the $\Gr$-HQFT of
Section \ref{S:GHQFT} assigns to $\Gr$-decorated surfaces.
% \begin{definition}\label{D:sk-dec-surface}
%   Let $\cat$ be a $\Gr$-graded pivotal $\kk$--category and $\Sigma$ be
%   a closed surface with a finite set $Y\subseteq \Sigma$ of
%   basepoints. Then by the above result the admissible skein module of
%   the punctured surface $\Skein_\ideal(\Sigma\smallsetminus Y)$ is
%   graded by $\Rep_\Gr(\Sigma,Y)$. For $\rho \in \Rep_\Gr(\Sigma,Y)$,
%   we will denote
%   $\Skein_\ideal^\rho(\Sigma\smallsetminus Y):=
%   \Skein_\ideal^h(\Sigma\smallsetminus Y)$ the graded component of
%   grading the unique $h\in H_1(\Sigma\smallsetminus Y)$ with
%   $[h\cap -]_\Gr = \rho(-)$.

%   We call the triple $\wt\Sigma=(\Sigma, Y,\rho)$ a
%   \emph{$\Gr$-decorated surface}. We define the \emph{admissible skein
%     module of the decorated surface}
%   $$\Skein(\wt\Sigma)=\Skein^{\rho}_{\ideal}(\Sigma\setminus Y)/\bs{\text{transparency relations}}$$
% to be the quotient of the admissible skein module of the punctured surface by the {\em transparency relation} where an edge with coloring of trivial degree can pass through any base point.
% %\BPm{(this ensure that the image of the  gauge cylinder are well defined, and that restriction cylinder are  compatible iso).}
% \end{definition}
\begin{definition}\label{D:sk-dec-surface}%\BPm{modified}
  Let $\cat$ be a sharp
  %\BPm{added sharp... maybe we don't need it for Proposition \ref{P:H1GisRepG}?} \NGm{I think we do not need it here because it is in the definition of graded?}\BPm{not anymore} 
  $\Gr$-graded pivotal $\kk$--category and $\Proj$ its ideal of projective objects.
  %\BPm{added $\ideal=\Proj$}  
  Let
  $\wt\Sigma=(\Sigma, Y,\rho)$ be a $\Gr$-decorated surface.  By
  Proposition \ref{P:H1GisRepG}, there exists a unique
  $h\in H_1(\Sigma\smallsetminus Y)$ with $[h\cap -]_\Gr =
  \rho(-)$. Recall the admissible skein module of the punctured
  surface $\Skein_\Proj(\Sigma\smallsetminus Y)$ is graded by
  $H_1(\Sigma\smallsetminus Y)$.  We define the \emph{admissible skein
    module of the $\Gr$-decorated surface}
  $$\Skein(\wt\Sigma)=\Skein_\Proj^h(\Sigma\setminus Y)/\bs{\text{transparency relations}}$$
  to be the quotient of the admissible skein module of the punctured
  surface by the {\em transparency relations} given by an edge with
  coloring of trivial degree can pass through any base point:
 %\NGm{ what are the transparency relations?} \NGm{make this more clear that this is the definition.}
%\BPm{(this ensure that the image of the  gauge cylinder are well defined, and that restriction cylinder are  compatible iso).}
\[\epsh{fig33y.pdf}{12ex}
\putne{50}{91}{$\ms{V\!\in\!\cat_1}$}
\putw{0}{50}{$\ms{p\,\mathsf x}$}\qquad=% \skeq
\qquad
  \epsh{fig33x.pdf}{12ex}
\pute{85}{90}{$\ms{V\!\in\!\cat_1}$}
\pute{97}{52}{$\ms{\mathsf x\,p}$}\qquad\in\Skein(\wt\Sigma)
  \]
  here the base point is denoted by $p$ and a cross.  %\NGm{Added.  Remove if ok.}
\end{definition}
This whole section aims at proving Theorem \ref{P:SkeinRepCobe} which in particular states that the isomorphism class of %\BPm{added} 
$\Skein(\wt\Sigma)$ only depends on the image of $\rho$ in $\Rep_\Gr(\Sigma,\emptyset)$. 
Roughly this theorem is a preparatory step to the construction of the TQFT in that it allows to treat in detail a subgroupoid of the category of cobordisms consisting in freely changing the basepoints and conjugating the morphisms.  %\FCm{slightly modified here}

\subsection{\texorpdfstring{The groupoid $\wt\cob_e$}{The groupoid Cob e}}
As examples of $\Gr$-decorated cobordisms, we have:
\begin{enumerate}
\item The trivial cylinder
  $C_{\Sigma,Y,\rho}:(\Sigma,Y,\rho)\to(\Sigma,Y,\rho)$ is the
  cylinder $\Sigma\times[0,1]$ equipped with the representation
  $\rho\circ\pi$ where $\pi:\Sigma\times[0,1]\to\Sigma$ is the
  projection.
\item A gauge cylinder for a $\Gr$-decorated surface $(\Sigma,Y,\rho)$ and
  for $\vp\in \Gr^Y$ is the $\Gr$-decorated cobordism
  $C_{\Sigma,Y,\rho}^\vp:(\Sigma,Y,\rho)\to(\Sigma,Y,\vp.\rho)$ where for
  $*\in Y$, the image of the path $*\times[0,1]$ from $*\times0$ to
  $*\times 1$ by the representation
  $\pi_1(\Sigma\times[0,1],Y\times\bs{0,1})\to\Gr$ is
  $\vp(*)^{-1}\in\Gr$.  % \BPm{To such a lift is associated the map
    % between skein modules which pass an edge colored in degree $\vp(*)$
    % through each base point $*\in Y$.}
  If $\vp_1,\vp_2\in \Gr^Y$, we have
  $C_{\Sigma,Y,\rho}^{\vp_2}\circ C_{\Sigma,Y,\rho}^{\vp_1}\simeq
  C_{\Sigma,Y,\rho}^{\vp_2\vp_1}$.  The support of $C_{\Sigma,Y,\rho}^{\vp}$
  is the support of $\vp$ \ie the set $\bs{y\in Y:\vp(y)\neq 1}$.
% \item A path cylinder
%   $C_{\Sigma,\rho,\gamma}:(\Sigma,*,\rho)\to(\Sigma,*',\rho' )$ where
%   $\gamma$ is a path embedded in $\Sigma$ from $*$ to $*'$ and
%   $\rho'=\rho(\gamma^{-1}\any\gamma)$.  \BP{To such a lift is associated
%   the map between skein modules which push the edges intersecting
%   $\gamma$ along $\gamma$.}   \BP{Maybe we don't need them (just use restriction cylinders).}
\item A restriction cylinder is obtained from a trivial cylinder by
  forgetting some base points in the source or target surfaces: if
  $Y_1,Y_2$ are finite subsets of $\Sigma$ with possible intersection
  and $\rho\in\Rep_\Gr(\Sigma,Y_1\cup Y_2)$ then
  $R_{\Sigma,Y_1,Y_2,\rho}:(\Sigma,Y_1,\rho_{|(\Sigma,Y_1)})\to(\Sigma,Y_2,\rho_{|(\Sigma,Y_2)})$
  is the cylinder $\Sigma\times[0,1]$ equipped with the representation
  $\rho\circ\pi$ where $\pi:\Sigma\times[0,1]\to\Sigma$ is the
  projection. %\BPm{added the notation $R_{\Sigma,Y_1,Y_2,\rho}$}
\end{enumerate}
\begin{definition}\label{def:equivalence} %\BPm{added the end of the section}
  A $\Gr$-decorated cobordism $\wt M:\wt\Sigma\to\wt\Sigma'$ between two
  $\Gr$-decorated surfaces $\wt\Sigma=(\Sigma,Y,\rho)$ and
  $\wt\Sigma'=(\Sigma,Y',\rho')$ with the same underlying surface is
  called an {\em equivalence of $\Gr$-decorated surfaces} if the underlying
  cobordism is diffeomorphic to the trivial cobordism $\Sigma\times[0,1]$ via a diffeomorphism which is the identity on the boundary. %\FCm{Detailed here}\BPm{ok}  
  Then the
  $\Gr$-decorated surfaces $\wt\Sigma$ and $\wt\Sigma'$ are called
  equivalent.
\end{definition}
Equivalences can be thought as the ``kernel'' of the forgetful functor
from $\wt\cob$ to the usual category $\cob$ of cobordisms.  Clearly,
gauge and restriction cylinders are equivalences of $\Gr$-decorated
surfaces.
\begin{remark} It is important to stress that mapping cylinders are not equivalences of
  $\Gr$-decorated surfaces: If $f:(\Sigma,Y,\rho)\to (\Sigma,Y,\rho)$
  is a diffeomorphism different from the identity, the trivial
  cylinder with parametrization of its boundary given by $\Id\sqcup f$
  is not an equivalence according to the above definition.
\end{remark}
\begin{definition}
  Let $\wt\cob_e$ be the subcategory of $\wt\cob$ with the same
  objects and whose maps are equivalences of $\Gr$-decorated surfaces.
\end{definition}
The following proposition shows that $\wt\cob_e$ is a groupoid
generated by gauge and restriction cylinders.
\begin{proposition}\label{P:eq-surf}
  Two $\Gr$-decorated surfaces $\wt\Sigma=(\Sigma,Y,\rho)$ and
  $\wt\Sigma'=(\Sigma,Y',\rho')$ are equivalent if and only if the
  restrictions of $\rho$ and $\rho'$ in $\Rep_\Gr(\Sigma,\emptyset)$
  are equal.  Furthermore, equivalences of $\Gr$-decorated surfaces are
  generated by gauge and restriction cylinders.
\end{proposition}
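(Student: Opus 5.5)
The plan is to prove the two directions separately, and obtain the generation statement from the construction in the "if" direction.

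\emph{Only if.} Let $\wt M=(M,Y_M,\rho_M)$ be an equivalence from $\wt\Sigma$ to $\wt\Sigma'$. Fix a diffeomorphism $\Phi:M\to\Sigma\times[0,1]$ that is the identity on the boundary. Transport $\rho_M$ along $\Phi$ to a representation of $\pi_1(\Sigma\times[0,1],Y\times\{0\}\cup Y'\times\{1\})$. Restricting to a single base point of each component kills the gauge data. Each inclusion $\Sigma\times\{i\}\hookrightarrow\Sigma\times[0,1]$, $i=0,1$, is a homotopy equivalence, and the two inclusions are homotopic. So the classes in $\Rep_\Gr(\Sigma,\emptyset)=\Hom(\pi_1(\Sigma),\Gr)/\Gr$ (componentwise) induced from the two ends coincide. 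These classes are the images of $\rho$ and $\rho'$, which gives the claim.

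\emph{If, and generation.} Suppose $\rho$ and $\rho'$ have the same image in $\Rep_\Gr(\Sigma,\emptyset)$. First pass from $(\Sigma,Y,\rho)$ to a surface with base points $Y\cup Y'$. Since restriction to fewer base points is surjective, choose $\tilde\rho\in\Rep_\Gr(\Sigma,Y\cup Y')$ extending $\rho$. The restriction cylinder $R_{\Sigma,Y,Y\cup Y',\tilde\rho}$ then goes from $\wt\Sigma$ to $(\Sigma,Y\cup Y',\tilde\rho)$, and the restriction cylinder $R_{\Sigma,Y\cup Y',Y',\tilde\rho}$ goes from there to $(\Sigma,Y',\tilde\rho|_{Y'})$.

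It remains to compare $\tilde\rho|_{Y'}$ with $\rho'$, two representations on the same base set $Y'$ that are conjugate in $\Rep_\Gr(\Sigma,\emptyset)$. I show they differ by the action of some $\vp\in\Gr^{Y'}$, working componentwise. Apply Lemma~\ref{L:Rep} with a tree $\Gamma$ rooted at $y_0$. This identifies $\Rep_\Gr(\Sigma_0,Y'_0)$ with $\Hom(\pi_1(\Sigma_0,y_0),\Gr)\times\Gr^{\Gamma_1}$. By hypothesis the first factors are conjugate by some $g\in\Gr$, so take $\vp(y_0)=g$. Then choose $\vp$ on the remaining vertices inductively along the tree so that the edge values agree. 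This is possible because each new edge value $\vp(y)\rho(e)\vp(y')^{-1}$ can be adjusted arbitrarily through the value at the child vertex. Hence $\rho'=\vp.\tilde\rho|_{Y'}$, and composing with the gauge cylinder $C^{\vp}$ yields an equivalence $\wt\Sigma\to\wt\Sigma'$ that is a composite of restriction and gauge cylinders. Glued cylinders are again cylinders, so this composite is indeed an equivalence.

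Generation also requires that an \emph{arbitrary} equivalence $\wt M$ be such a composite. Identify $\wt M$ with $\Sigma\times[0,1]$ carrying a representation $\sigma$ of $\pi_1(\Sigma\times[0,1],Y\times0\cup Y'\times1)$. Project to $\Sigma$ by $(x,t)\mapsto x$; after a small perturbation the images of $Y$ and $Y'$ are disjoint, or we simply work with the disjoint union of the two sets. The representation $\sigma$ then becomes a representation $\tilde\rho$ on $(\Sigma,Y\cup Y')$, twisted by the vertical paths $y\times[0,1]$, which is exactly what gauge cylinders account for.

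Concretely, for each $y\in Y\cap Y'$ insert a gauge $\vp(y)=\sigma(y\times[0,1])^{-1}$. This exhibits $\wt M$ as $C^{\vp}\circ R\circ R$ with $\tilde\rho$ the induced representation, using the composition rule (Van Kampen, erasing middle base points). I expect this last identification to be the main obstacle: checking that the representation on the glued cylinder equals $\sigma$ requires tracking paths through the intermediate level carefully, especially the points of $Y\cap Y'$. My approach would be to verify the equality on the generators given by Lemma~\ref{L:Rep} (loops at one base point together with tree edges, including the vertical edges), since representations of the groupoid are determined by these values.
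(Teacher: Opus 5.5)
Your proof is correct. It takes a somewhat different route from the paper's, and it also attempts more than the paper proves at this point.

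\textbf{Both directions of the equivalence.} The paper adds one auxiliary base point $y_0\notin Y\cup Y'$ and extends $\rho,\rho'$ to $Y\cup\{y_0\}$ and $Y'\cup\{y_0\}$.
\begin{itemize}
\item For \emph{only if}, it computes with the vertical path $\sigma=\{y_0\}\times[0,1]$ and gets $\rho_0(\gamma)=\rho''(\sigma)\rho_0'(\gamma)\rho''(\sigma)^{-1}$. Your homotopy-invariance argument for $\Rep_\Gr(-,\emptyset)$ is the basepoint-free version of the same computation.
\item For \emph{if}, the paper restricts all the way down to the single base point $y_0$. There the gauge action of $\vp(y_0)=g$ is literally conjugation by $g$, so the hypothesis is used verbatim and no tree argument is needed. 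It then restricts back up.
\item Your \emph{if} argument instead passes through $Y\cup Y'$ and uses Lemma~\ref{L:Rep} to prove a slightly stronger fact: two elements of $\Rep_\Gr(\Sigma,Y')$ with the same image in $\Rep_\Gr(\Sigma,\emptyset)$ lie in a single $\Gr^{Y'}$-orbit. This costs the inductive tree argument, but it yields the equivalence directly in the form $C^\vp\circ R$, with no auxiliary point.
\end{itemize}

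\textbf{Arbitrary equivalences.} Your last part, showing that an arbitrary equivalence is such a composite, is not in the paper's proof of this proposition. That proof only exhibits one generated equivalence between any two equivalent surfaces. The full decomposition is the content of the next Proposition~\ref{P:JR}, which follows exactly your plan: set $\vp(y)=\sigma(y\times[0,1])^{-1}$ on $Y\cap Y'$, then compare the two representations on loops at one base point and on a tree, using Lemma~\ref{L:Rep}.

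To make your sketch complete, define $\tilde\rho$ precisely. A ``small perturbation'' is not available, since base points cannot be moved. Instead:
\begin{itemize}
\item lift paths between points of $Y$ at level $0$;
\item lift paths between points of $Y'\setminus Y$ at level $1$;
\item lift paths between the two kinds diagonally.
\end{itemize}
Equivalently, pull $\sigma$ back along a section $x\mapsto(x,\lambda(x))$ with $\lambda=0$ near $Y$ and $\lambda=1$ near $Y'\setminus Y$. This makes functoriality of $\tilde\rho$ automatic. The identity $\vp.\tilde\rho|_{Y'}=\rho'$ then follows by inserting vertical paths at the points of $Y\cap Y'$.
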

\begin{proof}
  The equivalence of $\Gr$-decorated surfaces is clearly an equivalence
  relation and is compatible with the disjoint union. Thus it is
  enough to consider the case of a connected surface.  % {\bf Let
  % $Y''=Y\cup Y'$ and choose extensions $\wb\rho,\wb\rho'$ of $\rho$
  % and $\rho'$ in $\Rep_\Gr(\Sigma,Y'')$. } \FCm{ I don't understand: suppose that $Y=Y'$ but $\rho\neq \rho'$. Then there is no extension.... Therefore I propose another proof here below: the previous is commented. }\BPm{François, you misunderstood: the meaning was: $\wb\rho$ extends $\rho$ and $\wb\rho'$ extends $\rho'$.  But your proof is simpler. Good job.}

  Let $y_0\in \Sigma\setminus (Y\cup Y')$, $Y_0=Y\sqcup \{y_0\}$,
  $Y'_0=Y'\sqcup \{y_0\}$, $\sigma=\{y_0\}\times[0,1]:(y_0,0)\leadsto (y_0,1)$ and choose
  extensions $\rho_0,\rho_0'$ respectively of $\rho,\rho'$ to
  $(\Sigma,Y_0)$ and $(\Sigma,Y'_0)$.

  If there exists an equivalence
  $(\Sigma\times[0,1],Y\times\bs{0}\sqcup Y'\times \bs{1},\rho'')$,
  then, by pre and post-composing with restriction cylinders
  $R_{\Sigma,Y_0,Y,\rho_0}$ and $R_{\Sigma,Y',Y'_0,\rho'_0}$, there is
  also one
  $(\Sigma\times[0,1],Y_0\times\bs{0}\sqcup Y'_0\times
  \bs{1},\rho'')$. Then for any loop $\gamma\in\pi_1(\Sigma,y_0)$ we
  have
  %\BPm{changed $\wb\rho $ with $\rho_0$}
  $\rho_0(\gamma)=\rho''(\gamma\times\bs0)=\rho''(\sigma(\gamma\times\bs1)\sigma^{-1})=
  \rho''(\sigma)\rho'_0(\gamma)\rho''(\sigma)^{-1}$. Thus the
  restrictions of $\rho_0$ and $\rho'_0$ coincide in
  $\Rep_\Gr(\Sigma,\emptyset)$ hence also the restrictions of $\rho$ and $\rho'$.%\BPm{added Hence also}
  
Reciprocally, suppose that there is $g\in \Gr$ such that %\BPm{changed $\wb\rho $ with $\rho_0$}
$\rho'_0(\gamma)=g\rho_0(\gamma)g^{-1},\ \forall \gamma\in \pi_1(\Sigma,y_0)$. Then an equivalence between $(\Sigma,Y_0,\rho_0)$ and $(\Sigma,Y'_0,\rho'_0)$ is $R_{\Sigma,\{y_0\},Y'_0,\rho}\circ C_{\Sigma,\{y_0\},\rho}^\vp \circ R_{\Sigma,Y_0,\{y_0\},\rho_0}$ where $\vp(y_0)=g$.%\BPm{changed $g^{-1}$ with $g$.  Please double check François}% \BPm{you mean there exists a $\vp$ like this and you don't think it needs a justification ?}  \FCm{Remark that I am pre and post composing with restriction morphisms so that the gauge cylinder in the middle has only the marked point $y_0$. Is there anything to justify here then ?}\BPm{Ok, I missed the point.}
\end{proof}
\subsection{\texorpdfstring{Presentation of $\wt\cob_e$}{Presentation of Cob e}}
If $\vp:Y\to\Gr$, $Y\subset Y'\subset\Sigma$ we will keep calling $\vp$ the
extension of $\vp$ to $Y'$ defined on $y\in Y'\setminus Y$ by
$\vp(y)=1_\Gr$.  We use the simplified notation
$J_\vp=C_{\Sigma,Y',\rho}^{\vp}$.
\begin{proposition}\label{P:JR}
  Any equivalence $\wt e=(\Sigma\times[0,1],Y,\rho_0)$ between two
  $\Gr$-decorated surfaces $\wt\Sigma_1=(\Sigma,Y_1,\rho_1)$ and
  $\wt\Sigma_2=(\Sigma,Y_2,\rho_2)$ can be written in a unique way
  $$\wt e=J_\vp\circ R(Y_1,Y_2,\rho)$$%\BPm{added the equality to avoid confusion in the order} 
    as the composition of a gauge cylinder $J_\vp$ with support in
  ${Y_1\cap Y_2}$ and a restriction cylinder
  $R(Y_1,Y_2,\rho)$.%\FCm{Reversed the order here}
  % associated to $f\in \Gr^{Y_1\cap Y_2}$.
  % Furthermore, 
  % $$R(Y_1,Y_2,\rho)\circ J_f=J_f\circ R(Y_1,Y_2,f\cdot\rho).$$
\end{proposition}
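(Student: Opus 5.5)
We identify the cobordism $\wt e$ with the trivial cylinder $\Sigma\times[0,1]$ via a diffeomorphism fixed on the boundary. Then $\rho_0\in\Rep_\Gr(\Sigma\times[0,1],Y_1\times\bs0\sqcup Y_2\times\bs1)$, and we use that $\Sigma\times[0,1]$ deformation retracts onto $\Sigma$. For each $y\in Y_1\cap Y_2$ let $\sigma_y=\bs{y}\times[0,1]:(y,0)\leadsto(y,1)$ be the vertical path, and set
\[
\vp(y)=\rho_0(\sigma_y)^{-1} \quad\text{for } y\in Y_1\cap Y_2, \qquad \vp(y)=1 \text{ elsewhere.}
\]
Next we define $\rho\in\Rep_\Gr(\Sigma,Y_1\cup Y_2)$. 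Any path $\gamma$ in $\Sigma$ between points of $Y_1\cup Y_2$ is lifted to $\Sigma\times[0,1]$ by placing its endpoints at level $0$ when they lie in $Y_1$ and at level $1$ when they lie in $Y_2\setminus Y_1$; we then set $\rho(\gamma)=\rho_0$ of that lift. This is well defined because projection gives an isomorphism of fundamental groupoids $\pi_1(\Sigma\times[0,1],\cdot)\cong\pi_1(\Sigma,\cdot)$ once one basepoint level is chosen for each point. Concretely, we transport $\rho_0$ along the vertical segments to an element of $\Rep_\Gr(\Sigma\times[0,1], (Y_1\cup Y_2)\times\bs0)$ and identify it with $\Rep_\Gr(\Sigma,Y_1\cup Y_2)$.

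The composite $J_\vp\circ R(Y_1,Y_2,\rho)$ is a cylinder glued from two cylinders, hence again a trivial cylinder. Its representation is determined by three pieces of data: its values on paths at level $0$, which equal $\rho|_{Y_1}=\rho_1$; its values on paths at level $1$; and its values on the vertical paths $\sigma_y$ for $y\in Y_1\cap Y_2$. By the gauge cylinder definition, the vertical path in $J_\vp$ has value $\vp(y)^{-1}=\rho_0(\sigma_y)$, while the vertical path in the restriction cylinder has value $1$. These data generate the groupoid $\pi_1(\Sigma\times[0,1],Y_1\times\bs0\sqcup Y_2\times\bs1)$, since by Van Kampen or Lemma \ref{L:Rep}-type arguments any path factors through vertical segments and level paths. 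The two representations therefore agree, which gives existence. One must also check that the top restriction equals $\rho_2$; this follows from $\vp.\rho|_{Y_2}=\rho_2$, because $\rho_0$ restricted to level $1$ is $\rho_2$, and conjugating by the vertical segments gives exactly the gauge formula $(\vp.\rho)(\gamma)=\vp(\gamma(0))\rho(\gamma)\vp(\gamma(1))^{-1}$.

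For uniqueness, suppose $J_{\vp'}\circ R(Y_1,Y_2,\rho')$ is another such factorization with $\vp'$ supported in $Y_1\cap Y_2$. Evaluating on the vertical paths $\sigma_y$ forces $\vp'(y)^{-1}=\rho_0(\sigma_y)$, so $\vp'=\vp$. Evaluating on lifted paths then forces $\rho'=\rho$, because $\rho'$ is recovered from the composite in the same way $\rho$ was defined; at points of $Y_2\setminus Y_1$ the gauge is trivial, so level $1$ lifts see $\rho'$ directly. The main obstacle is the bookkeeping of the orientation conventions (Whitehead's order of composition and $\vp$ versus $\vp^{-1}$). A point that needs care is the case $Y_1\cap Y_2=\emptyset$ on a component. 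There one needs the fact that the groupoid representation on $\Sigma\times[0,1]$ with basepoints at both levels is determined by its restriction to $Y_1\cup Y_2$ after projection, with no residual vertical data. This holds because, when no $y$ lies in both sets, there is no vertical path between basepoints, so the projected lifts generate the whole groupoid.
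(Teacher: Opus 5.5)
Your proof is correct and follows the paper's argument. The paper likewise defines $\rho\in\Rep_\Gr(\Sigma,Y_1\cup Y_2)$ by lifting paths with endpoints in $Y_1$ at level $0$ and endpoints in $Y_2\setminus Y_1$ at level $1$, sets $\vp(y)=\rho_0(\sigma_y)^{-1}$ on $Y_1\cap Y_2$, checks $\vp.\rho_{|Y_2}=\rho_2$ by conjugating with vertical segments, and gets uniqueness from the vertical paths. The one difference is cosmetic: the paper compares $\wt e$ and $J_\vp\circ R(Y_1,Y_2,\rho)$ on $\pi_1$ at a base point of $Y_1\times\{0\}$ together with a rooted tree and invokes Lemma~\ref{L:Rep}, which covers components with $Y_1\cap Y_2=\emptyset$ uniformly rather than as the separate case you single out.
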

\begin{proof}
  It is enough to consider the case of a connected surface.  We define
  $\rho\in\Rep_\Gr(\Sigma,Y_1\cup Y_2)$ as follows~:\\ For any
  $\gamma:y\leadsto y'\in \pi_1(\Sigma,Y_1\cup Y_2)$,
  \begin{enumerate}
  \item if $y,y'\in Y_1$, then
    $\rho(\gamma)=\rho_0(\gamma_{11})=\rho_1(\gamma)$ where
    $\gamma_{11}(t)=(\gamma(t),0)$ ;
  \item if $y\in Y_1$, $y'\in Y_2\setminus Y_1$ then
    $\rho(\gamma)=\rho_0(\gamma_{12})$ where
    $\gamma_{12}(t)=(\gamma(t),t)$ ;
  \item if $y\in Y_2\setminus Y_1$, $y'\in Y_1$ then
    $\rho(\gamma)=\rho_0(\gamma_{21})$ where
    $\gamma_{21}(t)=(\gamma(t),1-t)$ ;
  \item if $y,y'\in Y_2\setminus Y_1$ then
    $\rho(\gamma)=\rho_0(\gamma_{22})=\rho_2(\gamma)$ where
    $\gamma_{22}(t)=(\gamma(t),1)$.
  \end{enumerate}
  Then we claim that $\rho$ is a well defined representation in
  $\Rep_\Gr(\Sigma,Y_1\cup Y_2)$.  Indeed remark that if
  $\gamma''=\gamma\circ\gamma'\in\pi_1(\Sigma,Y_1\cup Y_2)$, then
  there exist $i,j,k\in\bs{1,2}$ such that
  $\rho(\gamma)=\rho_0(\gamma_{ij})$,
  $\rho(\gamma')=\rho_0(\gamma'_{jk})$ and
  $\rho(\gamma'')=\rho_0(\gamma''_{ik})$ where
  $\gamma''_{ik}\simeq\gamma_{ij}\circ\gamma'_{jk}$. (In words: each oriented path $\gamma$  in $\Sigma$ with endpoints in $Y_i$ and $Y_j$ can be mapped canonically to one in $\Sigma\times [0,1]$ of the form $\gamma_{ij}$. Then $\rho$ is the pull back of $\rho_0$ through this map.) %\FCm{Added this. Do you agree?}\BPm{yes}  
   Hence
  $\rho(\gamma'')=\rho(\gamma)\rho(\gamma')$ and $\rho$ is a
  representation.

  Now, for $y\in Y_1\cap Y_2$, let $\gamma_y:(y,0)\leadsto(y,1)$ be
  the path defined by $\gamma_y(t)=(y,t)$, and let
  $\vp:Y_1\cap Y_2\to\Gr$ be given by $\vp(y)=\rho_0(\gamma_y)^{-1}$.  Recall
  we still call $\vp: Y_2\to\Gr$ the extension by $1_\Gr$ of
  $\vp:Y_1\cap Y_2\to\Gr$.  We now show that
  $$\wt e\simeq J_\vp\circ R(Y_1,Y_2,\rho).$$
  If $y,y'\in Y_2$ and $\gamma:y\leadsto y'$ is a path in $\Sigma$, then
  $\rho(\gamma)=\rho_0(\gamma_{ij})$ where $(i,j)$ depends on if
  $y,y'$ belong to $Y_1$.  In particular,
  $\gamma_{11}=\gamma_y\circ(\gamma\times\bs1)\circ\gamma_{y'}^{-1}$,
  $\gamma_{12}=\gamma_y\circ(\gamma\times\bs1)$,
  $\gamma_{21}=(\gamma\times\bs1)\circ\gamma_{y'}^{-1}$ and
  $\gamma_{22}=\gamma\times\bs1$.  In any case,
  $\rho(\gamma)=\vp(y)^{-1}\rho_2(\gamma)\vp(y')=(\vp^{-1}.\rho_2)(\gamma)$ and
  $$R(Y_1,Y_2,\rho):(\Sigma,Y_1,\rho_1)\to(\Sigma,Y_2,\vp^{-1}.\rho_2).$$
  Finally let $y_0\in Y_1\times\bs0$. The $\Gr$-decorated cobordisms $\wt e$
  and $J_\vp\circ R(Y_1,Y_2,\rho)$ have the same source and target, they
  coincide with $\rho_1$ on
  $\pi_1(\Sigma\times[0,1],y_0)\simeq\pi_1(\Sigma,y_0)$, and they
  coincide on a tree made by pathes from $y_0$ to any other point of
  $Y$.  Hence, by Lemma \ref{L:Rep}, they are the same representation.

  This also prove the unicity since for $y\in Y_1\cap Y_2$, $\vp(y)$ has
  to be the image of $\gamma_y^{-1}$ by the representation $\rho_0$
  and $R(Y_1,Y_2,\rho)=J_{\vp^{-1}}\circ\wt e$.
% the representation $\rho\in\Rep_\Gr(\Sigma,Y_1\cup Y_2)$ is
  % uniquely determined by the representation of 
\end{proof}
%\BPm{I changed the order of the next Lemma and the Prop}
\begin{lemma}\label{L:unique-eq}
  For fixed $\Gr$-decorated surfaces $\wt\Sigma=(\Sigma,Y_0,\rho_0)$,
  $\wt\Sigma_1=(\Sigma',Y_1,\rho_1)$,
  $\wt\Sigma_2=(\Sigma',Y_2,\rho_2)$, let
  $\wt C_1=(M,\wb Y_1,\wb \rho_1):\wt\Sigma\to\wt\Sigma_1$ and
  ${\wt C_2=(M,\wb Y_2,\wb \rho_2):\wt\Sigma\to\wt\Sigma_2}$ be two $\Gr$-decorated
  cobordisms where $\wb \rho_1$, $\wb \rho_2$ have the same restriction
  $\wb \rho_0\in\Rep_\Gr(M,Y_0)$.
  %ith
  %${(\rho_1)_{|(M,Y_0)}=(\rho_2)_{|(M,Y_0)}}$.
  % the same underlying manifold.
  % If ${(\rho_1)_{|(M,Y_0)}=(\rho_2)_{|(M,Y_0)}}$ t
  Then there exists a unique
  equivalence of surfaces ${\wt e:\wt\Sigma_1\to\wt\Sigma_2}$ such that
  $\wt C_2=\wt e\circ\wt C_1$.
\end{lemma}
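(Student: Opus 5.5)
We plan to build $\wt e$ explicitly on a collar and to get both existence and uniqueness from Lemma \ref{L:Rep}.

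Fix a collar $\Sigma'\times[0,1]$ glued to $M$ along $\Sigma'\times\bs0\simeq\Sigma'\subset\partial M$. Write $M'=M\cup_{\Sigma'}(\Sigma'\times[0,1])$, and fix a diffeomorphism $\theta:M'\to M$ that absorbs the collar and is the identity on $\partial M\setminus\Sigma'$. Any equivalence $\wt e:\wt\Sigma_1\to\wt\Sigma_2$ is such a collar with base points $Y_1\times\bs0\sqcup Y_2\times\bs1$. The composite $\wt e\circ\wt C_1$ is then $M'$ with base points $Y_0\sqcup Y_2\times\bs1$. Its representation is the restriction of the unique
$$\tau\in\Rep_\Gr(M',Y_0\sqcup Y_1\times\bs0\sqcup Y_2\times\bs1)$$
given by Van Kampen. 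This $\tau$ restricts to $\wb\rho_1$ on $(M,Y_0\cup Y_1)$ and to the representation of $\wt e$ on the collar. Since $Y_1\times\bs0$ and $Y_2\times\bs1$ are disjoint, there is no clash even when $Y_1\cap Y_2\neq\emptyset$. So the problem becomes: find and characterise those $\tau$ on $M'$ with all three sets of base points that restrict to $\wb\rho_1$ on $(M,Y_0\cup Y_1)$ and to $\theta^*\wb\rho_2$ on $(M',Y_0\cup Y_2\times\bs1)$.

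For existence, we work one connected component of $M$ at a time and pick a root $y_0\in Y_0$ in it. Choose a rooted tree $\Gamma$ in $M'$ with vertex set $Y_0\sqcup Y_1\times\bs0\sqcup Y_2\times\bs1$, chosen so that it restricts to a tree on $Y_0\cup Y_1$ inside $M$ and to a tree on $Y_0\cup Y_2\times\bs1$. Edges ending in $Y_0\cup Y_1$ get their values from $\wb\rho_1$, and edges ending in $Y_2\times\bs1$ get theirs from $\theta^*\wb\rho_2$. On the vertex group $\pi_1(M',y_0)\simeq\pi_1(M,y_0)$ we use $\wb\rho_0$; this choice is consistent precisely because of the hypothesis $(\wb\rho_1)_{|(M,Y_0)}=(\wb\rho_2)_{|(M,Y_0)}=\wb\rho_0$. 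Lemma \ref{L:Rep} turns this data into a unique $\tau$. A second application of Lemma \ref{L:Rep} to each of the two subsets of base points shows that $\tau$ restricts correctly: on each subset the restriction agrees with the target representation on the vertex group and on a spanning tree. Define $\wt e$ to be the collar with the restriction of $\tau$. The underlying cobordism is $\Sigma'\times[0,1]$, so $\wt e$ is an equivalence, and $\theta$ provides a diffeomorphism of $\Gr$-decorated cobordisms $\wt e\circ\wt C_1\simeq\wt C_2$. This diffeomorphism is the identity on the boundary, and $\tau$ restricted to $Y_0\sqcup Y_2\times\bs1$ equals $\theta^*\wb\rho_2$.

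For uniqueness, let $\wt e'$ be another equivalence with $\wt e'\circ\wt C_1=\wt C_2$, and let $\tau'$ be the associated Van Kampen representation on $M'$. Then $\tau'$ satisfies the same restriction constraints as $\tau$. Since the tree $\Gamma$ only uses paths inside $M$ or paths between points of $Y_0\cup Y_2\times\bs1$, $\tau'$ takes the same values as $\tau$ on $\Gamma$ and on the vertex group. Lemma \ref{L:Rep} then forces $\tau'=\tau$, so $\wt e'$ and $\wt e$ carry the same representation. One also has to check that a diffeomorphism $\wt e'\circ\wt C_1\simeq\wt C_2$ fixing the boundary can be taken to be $\theta$ up to isotopy rel boundary. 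Such an isotopy does not change the pulled-back representation, because the base points lie on the boundary.

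The step I expect to be the main obstacle is a component of $M$ that meets $\Sigma'$ but contains no point of $Y_0$. There $\wb\rho_0$ only gives a conjugacy class, and the value of $\tau$ on a path from $Y_1\times\bs0$ to $Y_2\times\bs1$ seems determined only up to the centraliser of the image. I would first check whether the hypothesis ``same restriction $\wb\rho_0\in\Rep_\Gr(M,Y_0)$'' together with the conventions of Definition \ref{def:Greps} already excludes this case, or whether it is implicit that each component of $M$ meeting $\Sigma'$ meets $Y_0$. If neither holds, I would try to pin down the ambiguity by comparing directly on $(M,Y_1)$ versus $(M,Y_2)$. If the ambiguity cannot be removed, the statement would need that extra assumption.
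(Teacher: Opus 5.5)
Your argument is correct when every component of $M$ meets $\Sigma$, and it takes a different route from the paper's.

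\textbf{Comparison of routes.} The paper reduces to connected $M$ with $Y_0\neq\emptyset$. It then uses Proposition~\ref{P:eq-surf} to replace $\wt\Sigma_1,\wt\Sigma_2$ by a common $\wt\Sigma_3$ with one base point per component, and Proposition~\ref{P:JR} to know that any self-equivalence of $\wt\Sigma_3$ is a gauge cylinder $J_\psi$. It reads off $\psi(y)=\rho_2(\gamma_y)^{-1}\rho_1(\gamma_y)$ from paths $\gamma_y:y_0\leadsto y$ in $M$, and Lemma~\ref{L:Rep} gives existence. You instead build the Van Kampen representation $\tau$ on $M\cup(\Sigma'\times[0,1])$ in one step, from a single tree adapted to both $Y_0\cup Y_1\times\bs0$ and $Y_0\cup Y_2\times\bs1$. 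Such a tree is necessarily a spanning tree of $Y_0$ with the other points hung off it and no edge from $Y_1\times\bs0$ to $Y_2\times\bs1$; this is easy to arrange. Your route bypasses Propositions~\ref{P:eq-surf} and~\ref{P:JR} and treats arbitrary $Y_1,Y_2$ directly. The paper's route has the advantage of producing an explicit gauge formula for $\wt e$.

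\textbf{First correction.} Drop the claim that a boundary-fixing diffeomorphism $\wt e'\circ\wt C_1\simeq\wt C_2$ can be taken to be $\theta$ up to isotopy rel boundary. It is false, and if arbitrary such diffeomorphisms are allowed, uniqueness can fail. For example, suppose $\Sigma'$ has a $2$-sphere component carrying a single base point $y$. Pushing that sphere once around a loop representing $\ell\in\pi_1(M,y_0)$ gives a diffeomorphism of $M$ with these properties:
it is the identity on $\partial M$ and on $\pi_1(M,y_0)$;
it multiplies the value of the representation on a path $y_0\leadsto y$ by $\wb\rho_0(\ell)^{\pm1}$.
If $\wb\rho_0(\ell)\neq1$, a nontrivial gauge cylinder at $y$ then also satisfies the equation up to diffeomorphism. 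The lemma has to be read as the paper's proof reads it: the path $\gamma_y$ in $\wt C_2$ ``corresponds'' in $\wt e\circ\wt C_1$ to $\gamma_y$ followed by $y\times[0,1]$. That is, $\wt e\circ\wt C_1$ is identified with $M$ by the collar-absorbing map. With that reading your $\theta$ is fixed once and for all, and the step is unnecessary.

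\textbf{Second correction.} On a component of $M$ containing no point of $Y_0$, the paper removes a small ball, puts a base point $y_0$ on the new sphere, and chooses extensions $\dot\rho_1,\dot\rho_2$ that agree at $y_0$. This gives existence, and you can do the same, or choose a conjugating value directly. Your centraliser worry is justified, though. Those extensions are only determined up to the centraliser, so the reduction does not give uniqueness, and uniqueness in fact fails there. Take $\Sigma=\emptyset$, $M=B^3$, and $\wt\Sigma_1=\wt\Sigma_2=(S^2,\bs{y},1)$ with trivial representation. Then every gauge cylinder satisfies $J_\vp\circ\wt C_1=\wt C_1$, and these cylinders are pairwise distinct because $\pi_1(S^2\times[0,1])$ is trivial. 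So uniqueness requires every component of $M$ to meet $\Sigma$. This holds for the non-compact cobordisms to which the lemma is applied in the paper.
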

\begin{proof}%\BPm{I changed the proof}
  First the general case can easily be deduced from the case where $M$
  is connected which we will assume in this proof.  Then if
  $\Sigma=\emptyset$, we fix $\dot M=M\setminus B^3$ obtained by
  removing a small 3-ball in the interior of $M$, we put a base point
  $y_0$ in the 2-sphere and choose extensions $\dot\rho_1,\dot\rho_2$
  of $\wb\rho_1,\wb\rho_2$ ($\dot\rho_i\in\Rep_\Gr(\dot M,\wb Y_i\cup\bs{y_0})$
  such that
  $(\dot\rho_1)_{|(\dot M,\bs{y_0})}=(\dot\rho_2)_{|(\dot M,\bs{y_0})}$.  This
  shows that it is enough to consider the case in which
  $Y_0\neq\emptyset$.  Then we fix a base point $y_0\in Y_0$.  

  Let $Y_3\subset\Sigma'$ be a subset with exactly one point per
  connected component and choose any extension
  $\rho_3\in\Rep_\Gr(\Sigma',Y_3)$ of the restriction of
  $(\wb\rho_0)_{|\pi_1(\Sigma',\emptyset)\to\Gr}$.  % to $\pi_1(\Sigma',\emptyset)$.
  By Proposition \ref{P:eq-surf}, the $\Gr$-decorated surface $\wt \Sigma_1$
  and $\wt \Sigma_2$ are equivalent to
  $\wt\Sigma_3=(\Sigma',Y_3,\rho_3)$ with equivalences
  $\wt e_i:\wt\Sigma_i\to\wt\Sigma_3$ for $i=1,2$. Hence it is enough
  to show that there exists a unique equivalence
  $\wt e':\wt\Sigma_3\to\wt\Sigma_3$ such that
  $\wt e_2\circ\wt C_2=\wt e'\circ\wt e_1\circ\wt C_1$ and the desired
  equivalence will then be $\wt e=(\wt e_2)^{-1}\circ\wt e'\circ\wt e_1$.  Thus
  it suffices to consider only the case where
  $\wt\Sigma_1=\wt\Sigma_2=\wt\Sigma_3$ which we assume now.

  For any $y\in Y_3$, let $\gamma_y:y_0\leadsto y$ be a path in $M$
  and define $\vp:Y_3\to\Gr$ by
  $\vp(y)=\rho_2(\gamma_y)^{-1}\rho_1(\gamma_y)$.  By Proposition
  \ref{P:JR}, if the equivalence $\wt e:\wt\Sigma_3\to\wt\Sigma_3$
  exists, then it is a gauge cylinder,
  $\wt e=\wt C^\psi_{\Sigma',Y_3}$.  Then, the path $\gamma_y$ in
  $\wt C_2$ corresponds in the composition $\wt e\circ \wt C_1$ (i.e. it is homotopic)
  %\FCm{Added this}
  , to the
  composition of $\gamma_y$ in $\wt C_1$ with $y\times[0,1]$ in
  $\wt e$. This forces $\rho_2(\gamma_y)=\rho_1(\gamma_y)\psi(y)^{-1}$
  and $\psi=\vp$ is unique.  Conversely, let
  $\wt e=\wt C^\vp_{\Sigma',Y_3}$, then the representations of
  $\wt e\circ \wt C_1$ and of $\wt C_2$ coincide on $\pi_1(M,Y_0)$ and
  on every path in $\bs{\gamma_y}_{y\in Y_3}$, so by Lemma
  \ref{L:Rep}, they are equal and $\wt e\circ \wt C_1=\wt C_2$.  This
  shows the existence and unicity of $\wt e$.
\end{proof}
% \BP{Maybe remove ?
%   \begin{remark}
%   Lemma \ref{L:unique-eq} has an obvious counterpart for $\Gr$-decorated
%   corbordisms with the same target, 
%   $\wt C_1=(M,Y_1,\rho_1):\wt\Sigma_1\to\wt\Sigma \et\wt
%   C_2=(M,Y_2,\rho_2):\wt\Sigma_2\to\wt\Sigma$ related by
%   $\wt C_2=\wt C_1\circ\wt e$.
% \end{remark}
% \begin{corollary}
%   Let $\wt C_1=(M,Y_1,\rho_1)$, $\wt C_2=(M,Y_2,\rho_2)$ be two
%   $\Gr$-decorated cobordisms with the same underlying cobordism, such that
%   $\rho_1$ and $\rho_2$ restrict to the same element in
%   $\Rep_\Gr(M,\emptyset)$ then there exists equivalences
%   $\wt e_s,\wt e_t$ such that
%   $\wt C_2=\wt e_t\circ\wt C_1\circ\wt e_s$.
% \end{corollary}
% If $g\in\Gr$, let $g^{\delta_y}$ be a $\Gr$-valued map with support
% $\bs y$ such that $g^{\delta_y}(y)=g$ (we use the same notation for
% maps whose sources are different sets of base points).}
\begin{lemma}\label{L:rel_eq}
  The category $\wt\cob_e$ is the groupoid generated by gauge and restriction
  cylinders with relations:
  \begin{enumerate}
  \item $J_{\vp_1}\circ J_{\vp_2}=J_{\vp_1\vp_2}$, for any $\vp_1,\vp_2\in \Gr^Y$,
       %        where $J_\vp$ is a gauge
  % cylinder associated to a $\Gr$-valued map $\vp$,
  \item
    $R(Y_1,Y_2,\rho)\circ J_{\vp}=J_{\vp_{|Y_2}}\circ
    R(Y_1,Y_2,\vp^{-1}\cdot\rho)$ for any $\vp\in \Gr^{Y_1}$ if
    $Y_2\subset Y_1$,
       %        \item $R(Y_1,Y_2,\rho)\circ J_{\vp_{|Y_1}}=J_{\vp_{|Y_2}}\circ R(Y_1,Y_2,\vp^{-1}\cdot\rho)$ for any $\vp:Y_1\cup Y_2\to\Gr$,
  \item $R(Y_2,Y_3,\rho_{|Y_2})\circ R(Y_1,Y_2,\rho)=R(Y_1,Y_3,\rho)$ if $Y_3\subset Y_2\subset Y_1$,
  \item $R(Y_2,Y_1,\rho)= R(Y_1,Y_2,\rho)^{-1}$ if $Y_2\subset Y_1$,
  \item $R(Y_1\cup Y_2,Y_2,\rho)\circ R(Y_1,Y_1\cup Y_2,\rho)=R(Y_1,Y_2,\rho)$.
  % \item
  %   $R(Y_2,Y_2\cup\bs y,\rho')\circ R(Y_1,Y_2,\rho)=R(Y_1,Y_2\cup\bs
  %   y,\rho'')$ if $Y_2\subset Y_1$ and $y\notin Y_1$, where $\rho''$
  %   is the unique representation that coincide with $\rho$ on
  %   $Y_1$ and with $\rho'$ on $Y_2\cup\bs y$,
  %   %is uniquely determined by the source and target,
  % \item
  %   $R(Y,Y\cup\bs y,\rho')\circ R(Y\cup\bs
  %   y,Y,\rho)=J_{g^{\delta_y}}$ if
  %   $y\notin Y$, where $g=\rho'(\gamma)\rho(\gamma)^{-1}$ for any path
  %   $\gamma:y\leadsto y'\in Y\setminus\bs y$.\BP{not needed}
  % % \item $R(Y_1,Y_2,\rho)\circ J_\vp=J_{\vp_{|Y_1}}\circ R(Y_1,Y_2,\vp\cdot\rho)$,
  % %\item $R(Y_2,Y_3,\rho')\circ R(Y_1,Y_2,\rho)=R(Y_1,Y_3,\rho)$ if $Y_3\subset Y_2\subset Y_1$,
  % % \item $R_1R_2=R_3J_1$,
  \end{enumerate}
  In relations (1) and (2), the maps are assumed to be composable and
  both sides of the equalities have the same source and target.
\end{lemma}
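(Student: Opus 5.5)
The plan has two parts: check that the five relations hold in $\wt\cob_e$, and then show that they generate every relation, using the normal form of Proposition \ref{P:JR}.

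\textbf{The relations hold.} Every morphism involved is a cylinder $\Sigma\times[0,1]$ carrying a representation. By Lemma \ref{L:Rep}, two such decorated cylinders with the same source and target coincide as soon as their representations agree on $\pi_1$ at one base point and on a spanning tree of paths. For relation (1) this is the computation already noted when gauge cylinders were introduced. For relation (2), both sides are cylinders from $(\Sigma,Y_1,\rho)$ to $(\Sigma,Y_2,\vp_{|Y_2}\cdot\rho)$. I would compare them on the vertical paths $y\times[0,1]$ for $y\in Y_2$ (each gives $\vp(y)^{-1}$) and on horizontal paths at the bottom level. Relations (3), (4) and (5) say that a composite of restriction cylinders is again a restriction cylinder. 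This holds because all of them carry the pulled-back representation $\rho\circ\pi$, and van Kampen gluing of pullbacks is again a pullback. For (4), note that $R(Y_1,Y_2,\rho)\circ R(Y_2,Y_1,\rho)$ is the trivial cylinder $R(Y_1,Y_1,\rho)=J_1$.

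\textbf{Generation.} Let $\mathcal G$ be the groupoid presented by these generators and relations. There is an obvious functor $\mathcal G\to\wt\cob_e$, and by Proposition \ref{P:eq-surf} it is full. Proposition \ref{P:JR} says that every morphism of $\wt\cob_e$ has a unique expression $J_\vp\circ R(Y_1,Y_2,\rho)$ with $\vp$ supported in $Y_1\cap Y_2$. So it suffices to show that, in $\mathcal G$, every word in the generators and their inverses reduces to such a normal form. Uniqueness in $\wt\cob_e$ then gives faithfulness.

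To reduce a word, I would show that the set of normal forms is closed under composition with each generator, on either side. Inverses are handled by (1) and (4), since $J_\vp^{-1}=J_{\vp^{-1}}$ and inverses of restrictions are restrictions. Post-composition with a gauge cylinder is immediate by (1).

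Post-composition with a restriction requires moving $R(Y_2,Y_3,\rho')$ past nothing and then merging $R(Y_2,Y_3,\rho')\circ R(Y_1,Y_2,\rho)$ into a single restriction. I would do this by factoring through unions: by (5) and (4), each $R(Y_a,Y_b,\cdot)$ is written as $R(Y_a\cup Y_b,Y_b)\circ R(Y_a\cup Y_b,Y_a)^{-1}$, and then I would pass to the common union $Y_1\cup Y_2\cup Y_3$ using (3). The key subtlety is that $\rho$ and $\rho'$ must glue to a single representation on $Y_1\cup Y_2\cup Y_3$. When they do not glue directly, the discrepancy is a gauge on the points of $Y_3\setminus Y_2$ and $Y_1\setminus Y_2$, and I expect to absorb it using relation (2) after restricting to $Y_2$.

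Pre-composition with a gauge cylinder $J_\psi$ on $Y_1$ uses (2) to commute $J_\psi$ past $R$. This requires first writing $R(Y_1,Y_2)=R(Y_1\cup Y_2,Y_2)\circ R(Y_1,Y_1\cup Y_2)$ and extending $\psi$ by $1$, so that the restriction is one with $Y_2\subset Y_1$ as relation (2) demands.

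\textbf{Main obstacle.} The hard step is the merging of two restriction cylinders whose representations on the enlarged base-point set are not given a priori as restrictions of a common one. Extensions of representations to new base points are not unique, which is exactly the non-canonicity issue the authors flag. The strategy is to show that two extensions differ by a gauge supported on the added points. That gauge can then be pushed through by relation (2) and killed by restricting away those points, since a gauge supported on forgotten points becomes trivial after restriction (this is (2) with $\vp_{|Y_2}=1$). I would check this carefully in the connected case, using Lemma \ref{L:Rep} to parametrise the extensions by tree edges.
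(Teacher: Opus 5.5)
Your plan follows the same route as the paper. You check each relation on a connecting path via Lemma \ref{L:Rep}, upgrade (2) through (5) to gauges on $Y_1\cup Y_2$, and show that the normal forms $J_\vp\circ R(Y_1,Y_2,\rho)$ of Proposition \ref{P:JR} are stable under composition. The step you single out as central, however, is stated wrongly, and carried out as written it fails.

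\textbf{The gap: merging two restrictions.} A composite of two restriction cylinders is in general \emph{not} a restriction cylinder, and the gauge discrepancy is not always killed. Take $Y_2\subset Y_1$ and $\vp\in\Gr^{Y_1}$ supported on $Y_1\setminus Y_2$. From (1), (2) (with $\vp_{|Y_2}=1$) and (4) one gets $R(Y_2,Y_1,\vp\cdot\rho)=J_\vp\circ R(Y_2,Y_1,\rho)$. Hence $R(Y_2,Y_1,\vp\cdot\rho)\circ R(Y_1,Y_2,\rho)=J_\vp$. This is a morphism $(\Sigma,Y_1,\rho)\to(\Sigma,Y_1,\vp\cdot\rho)$ whose source and target differ, so it is not a restriction. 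The points of $Y_1\setminus Y_2$ are forgotten in the middle but restored in the target, so restricting them away removes nothing.

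In general, $\rho\in\Rep_\Gr(\Sigma,Y_1\cup Y_2)$ and $\rho'\in\Rep_\Gr(\Sigma,Y_2\cup Y_3)$ agree on $Y_2$. They can differ only by a gauge at the common points $(Y_1\cap Y_3)\setminus Y_2$. That discrepancy survives, and the correct statement is $R(Y_2,Y_3,\rho')\circ R(Y_1,Y_2,\rho)=J_\vp\circ R(Y_1,Y_3,\bar\rho)$ with $\vp$ supported on $(Y_1\cap Y_3)\setminus Y_2$. This $\vp$ is exactly the gauge part of the normal form.

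The paper derives this in three steps: the identity above, then the case $Y_2\subset Y_1\cap Y_3$, then arbitrary $Y_1,Y_2,Y_3$ via (5). Your mechanism is the right one: two extensions differ by a gauge on the added points, and (2) pushes it along. But only the part of that gauge sitting on points absent from the final target disappears; the rest must be kept.

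\textbf{Smaller slips of the same kind.}
\begin{itemize}
\item Post-composition with a gauge is not immediate by (1). We have $J_\psi\circ J_\vp\circ R(Y_1,Y_2,\rho)=J_{\psi\vp}\circ R(Y_1,Y_2,\rho)$, but $\psi\vp$ may be non-trivial on $Y_2\setminus Y_1$. You need the upgraded form of (2) to absorb that part: $J_{\psi_2}\circ R(Y_1,Y_2,\rho)=R(Y_1,Y_2,\psi_2\cdot\rho)$ for $\psi_2$ supported on $Y_2\setminus Y_1$.
\item Post-composing $J_\vp\circ R(Y_1,Y_2,\rho)$ with a restriction requires first commuting that restriction past $J_\vp$ by the same upgraded relation, not ``past nothing''.
\item In your check of (4), $R(Y_1,Y_2,\rho)\circ R(Y_2,Y_1,\rho)$ is the identity of $(\Sigma,Y_2,\rho_{|Y_2})$, not $R(Y_1,Y_1,\rho)$.
\end{itemize}
With these corrections your argument becomes the paper's.
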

\begin{proof}
  We show that these relations hold.  First it is enough to consider
  the case of a connected surface $\Sigma$.  By Lemma \ref{L:Rep}, an
  equivalence of connected $\Gr$-decorated surface is uniquely determined by
  its source and target together with the image by the representation
  of any path from a base point of the source to a base point of the
  target.  In all relations (1)--(5), the two sides have the same
  source and target.  Then to show these relations hold, it is enough
  to check that their representations coincide on a well chosen path
  $\gamma_{12}$.  Relation (1) was already established. For relation
  (2), we can choose $\gamma_{12}(t)=(y,t)$ for any $y\in Y_2$ which
  is sent to $\vp(y)^{-1}$.%\FCm{Shouldn't it be $y\in Y_1$ ?}\BPm{no I don't think, else $\gamma_{12}$ would not finish on a base point} 
  For relation (3), we can choose
  $\gamma_{12}(t)=(y,t)$ for any $y\in Y_3$ which is sent to $1$.  For
  relation (4) rewitten $R(Y_1,Y_2,\rho)R(Y_2,Y_1,\rho)=\Id$, we can
  choose $\gamma_{12}(t)=(y,t)$ for any $y\in Y_2$ which is sent to
  $1$.  For relation (5), we choose in $\Sigma$ a path
  $\gamma:y_1\in Y_1\leadsto y_2\in Y_2$; then we set
  $\gamma_{12}(t)=(\gamma(t),t)$ which is homotopic to the
  concatenation of $\gamma_1(t)=(y_1,t/2)$ with a path
  $\gamma_2(t)=(\gamma(t),\frac{1+t}2)$; for both side of relation
  (5), the image of $\gamma_{12}$ is $\rho(\gamma)$.  Hence all the relations hold.

  We now show that this set of relations is complete.  First,
  relations (1) and (3) imply that $J_{y\mapsto1_\Gr}$ and
  $R(Y,Y,\rho)$ are the identity because they are idempotents and isomorphisms.%\FCm{Why an idempotent is necessarily the identity ?}\BPm{because its a groupoid presentation(i.e all morphisms are invertible). I add and isomorphisms, is it ok ?}  
  
  Also
  relation (4) implies that $R(Y_1,Y_2,\rho)$ and $R(Y_2,Y_1,\rho)$
  are inverse isomorphisms when $Y_2\subset Y_1$. Then for general
  $Y_1,Y_2$, we have:
  \begin{align*}
      R(Y_1,Y_2,\rho)&\overset{(5)}=R(Y_1\cup Y_2,Y_2,\rho)R(Y_1,Y_1\cup Y_2,\rho)\\
  &\overset{(4)}=(R(Y_1\cup Y_2,Y_1,\rho)R(Y_2,Y_1\cup Y_2,\rho))^{-1}\overset{(5)}=R(Y_2,Y_1,\rho)^{-1}.
  \end{align*}
  Next we can generalize relation (2) to
  \begin{equation}
    \tag{2'}R(Y_1,Y_2,\rho)\circ J_{\vp_{|Y_1}}=J_{\vp_{|Y_2}}\circ
  R(Y_1,Y_2,\vp^{-1}\cdot\rho),
  \end{equation}
  % (2'):
  % $$
  for any $\vp:Y_1\cup Y_2\to\Gr$.
  Indeed relation (5) allows to decompose $R(Y_1,Y_2,\rho)$ as
  $R(Y_1\cup Y_2,Y_2,\rho)\circ R(Y_1,Y_1\cup Y_2,\rho)$, then
  relation (2) describes how gauge generators commute with these
  restrictions.

  To show that the set of relations is complete, it is enough to show
  that the product of two elements in normal form (i.e. as in
  Proposition \ref{P:JR}) % \FCm{Added this}\BPm{Good. Should have we
    % defined ``normal form'' in Proposition \ref{P:JR}?}
  can be reduced
  to its normal form using these relations. Let
  $\vp=\vp_1\vp_2:Y_2\to\Gr$ where $\vp_1$ has support in
  $Y_1\cap Y_2$ and $\vp_2$ has support in $Y_2\setminus Y_1$; by
  relation (1) and (2'), we have
  $$J_\vp R(Y_2,Y_1,\rho)=J_{\vp_1}R(Y_2,Y_1,\vp_2.\rho),$$
  which is a normal form.  Let $J_i$ be gauge generators and
  $R_i$ be restriction cylinders.  Then by relation (2') a product
  $J_1R_1J_2R_2$ can be reduced to the form
  $J_1J_3R_3R_2$.  Hence it is enough to show that a product of two
  restrictions can be reduced to its normal form since then, we can
  write $R_3R_2=J_4R_4$, and any product
  $J_1J_3J_4R_4$ can be reduced to its normal form using (1).  %\FCm{I don't understand don't we have $J_1J_2=J_{12}$ by    $(1)$ ? Why having $J_4$ here ?}  \BPm{I changed the formulation}
    % \BP{old
    % to remove: Any product $J_1J_3J_4R_4$ can be reduced to its normal
    % form thus it is enough to show that a product of two restrictions
    % can be reduced to its normal form.}

  First if $Y_\cup:=Y_1\cup Y_3\subset Y_2$, then we have
\begin{align*}
  R(Y_2,Y_3,\rho)R(Y_1,Y_2,\rho)
  &\overset{(3)}=R(Y_\cup,Y_3,\rho)R(Y_2,Y_\cup,\rho)R(Y_\cup,Y_2,\rho)R(Y_1,Y_\cup,\rho)\\
  &\overset{(4)}=R(Y_\cup,Y_3,\rho)R(Y_1,Y_\cup,\rho)\overset{(5)}=R(Y_1,Y_3,\rho).\qquad(R6)
       %        We denote this new relation by $(R6)$.\\
\end{align*}
Consider a product $R(Y_2,Y_1,\rho')\circ R(Y_1,Y_2,\rho)$ with $Y_2\subset Y_1$.  Then there exists $\vp:Y_1\to\Gr$ such that $\rho'=\vp\cdot\rho$. Then,
\begin{align*}
  R(Y_2,Y_1,\rho')R(Y_1,Y_2,\rho)
  &=R(Y_2,Y_1,\vp.\rho)R(Y_1,Y_2,\rho)\\
  &\overset{(2')}= J_{\vp_{|Y_2}} R(Y_2,Y_1,\rho)R(Y_1,Y_2,\rho)=J_{\vp_{|Y_2}}.\qquad(R7)
\end{align*}
    Now if $Y_2\subset Y_\cap:=Y_1\cap Y_3$, we have
\begin{align*}
  R(Y_2,Y_3,\rho')R(Y_1,Y_2,\rho)
  &\overset{(3)}= R(Y_\cap,Y_3,\rho')R(Y_2,Y_\cap,\rho')R(Y_\cap,Y_2,\rho)R(Y_1,Y_\cap,\rho)\\
  &\overset{(R7)}=R(Y_\cap,Y_3,\rho')J_\vp R(Y_1,Y_\cap,\rho) \text{ for some $\vp:Y_2\to\Gr$}\\
  &\overset{(2')}=J_\vp R(Y_\cap,Y_3,\rho'')R(Y_1,Y_\cap,\rho);
\end{align*}
since $\rho$ and $\rho''$ coincide on $Y_\cap=Y_1\cap Y_3$, there exists
a unique extension to $Y_\cup=Y_1\cup Y_3$ which restrict to $\rho$ on
$Y_1$ and to $\rho''$ on $Y_3$.  Then
$R(Y_\cap,Y_3,\rho'')R(Y_1,Y_\cap,\rho)=R(Y_\cup,Y_3,\wb\rho)R(Y_\cap,Y_\cup,\wb\rho)R(Y_\cup,Y_\cap,\wb\rho)R(Y_1,Y_\cup,\wb\rho)=R(Y_1,Y_3,\wb\rho)$
%\FCm{Corrected first R here (previous commented)}\BPm{thanks}
%{$R(Y_3,Y_\cap,\rho'')R(Y_1,Y_\cap,\rho)=R(Y_\cup,Y_3,\wb\rho)R(Y_\cap,Y_\cup,\wb\rho)R(Y_\cup,Y_\cap,\wb\rho)R(Y_1,Y_\cup,\wb\rho)=R(Y_1,Y_3,\wb\rho)$}
and thus we deduce an eighth relation $(R8)$:
$R(Y_2,Y_3,\rho')R(Y_1,Y_2,\rho)=J_\vp R(Y_1,Y_3,\wb\rho)$ when
$Y_2\subset Y_1\cap Y_3$.
Finally, for any $Y_1,Y_2,Y_3$, we have
\begin{align*}
  R(Y_2,Y_3,\rho')&R(Y_1,Y_2,\rho)\\
  &\overset{(5)}= R(Y_2\cup Y_3,Y_3,\rho')R(Y_2,Y_2\cup Y_3,\rho')R(Y_1\cup Y_2,Y_2,\rho)R(Y_1,Y_1\cup Y_2,\rho)\\
  &\overset{(R8)}=R(Y_2\cup Y_3,Y_3,\rho')J_\vp R(Y_1\cup Y_2,Y_2\cup Y_3,\wb\rho) R(Y_1,Y_1\cup Y_2,\rho),\\
  &\overset{(2'),(3)}=J_\vp R(Y_2\cup Y_3,Y_3,\wb\rho_{|Y_2\cup Y_3}) R(Y_1,Y_2\cup Y_3,\wb\rho),\\
  &\overset{(3),(R6)}=J_\vp R(Y_1,Y_3,\wb\rho_{|Y_1\cup Y_3}).
\end{align*}
This completes the proof that the system of relations is complete.
\end{proof}
\subsection{\texorpdfstring{The representation of $\wt\cob_e$ in $\Skein$}{The representation of Cob e in $\Skein$}}
%\subsection{Cylinder maps}
We assume $\cat$ is a sharp $\Gr$-graded pivotal category.

\begin{figure}
  \begin{equation*}
  %\label{eq:create-V-skein}
  \epsh{fig33d.pdf}{10ex}
  \putn{33}{60}{${\gamma}$}
  \putw{50}{86}{$\ms{V_1}$}
  \pute{66}{87}{$\ms{V_2}$}
  \putw{96}{87}{$\ms{P}$}
  \putc{81}{61}{${\cdots}$}
  \quad\skeq\quad
  \epsh{fig33e.pdf}{10ex}
  \putn{9}{69}{$\ms{V}$}
  \putnw{48}{90}{$\ms{V_1}$}
  \putne{64}{89}{$\ms{V_2}$}
  \putnw{92}{87}{$\ms{P}$}
  \putw{48}{10}{$\ms{V_1}$}
  \pute{64}{10}{$\ms{V_2}$}
  \putw{92}{9}{$\ms{P}$}
  \putc{71}{50}{${e_{V,[\gamma\cap \Graph]_\cat}}$}
  \putc{78}{85}{${\cdots}$}
  \putc{80}{11}{${\cdots}$}
\end{equation*}
If $V\in\cat$ and $\gamma$ is a path transverse to a skein $\Graph$ which
ends on the left of an edge of $\Graph$ colored by a projectif object $P$, then the above
skein relation, in a tubular neighborhood of $\gamma$, create at
$\gamma(0)$ an edge colored by $V$.  Here $Q=[\gamma\cap \Graph]_\cat$ is a
projective object and $e_{V,Q}$ is a morphism as in Lemma
\ref{L:V-appear}.  $Q$ is not null unless a color of $T$ is null.
  \caption{Locally creating an edge colored by $V$. }\label{fig:V-appear}
\end{figure}

\begin{proposition}\label{P:gauge-map}
  Let $(\Sigma,Y,\rho)$ be a $\Gr$-decorated surface and let $\vp\in\Gr^Y$.
  There exists {\em gauge isomorphisms}:
  $$\vp_*:\Skein(\Sigma,Y,\rho)\to\Skein(\Sigma,Y,\vp.\rho),$$
  which satisfy $\vp_*\circ \vp'_*=(\vp\vp')_*$ for any $\vp,\vp'\in\Gr^Y$.
\end{proposition}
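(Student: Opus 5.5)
Since $\Gr^Y$ is a product over the base points, I first treat $\vp=g^{\delta_y}$, supported on a single $y\in Y$ with value $g\in\Gr$. I then define $\vp_*$ in general as the product of these elementary maps over $y\in Y$ and check that the order does not matter.

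\emph{The elementary map.} Let $\Graph$ be a homogeneous $\Proj$-admissible skein of degree $h$, with $[\any\cap h]_\Gr=\rho$. Choose an object $V\in\cat_g$; each $\cat_g$ is non-null by definition of the grading. Using the skein relation of Figure~\ref{fig:V-appear}, which relies on Lemma~\ref{L:V-appear}, I create near $y$ a small closed loop colored by $V$ encircling $y$. A path ending at $y$ crosses this loop once with the appropriate sign, so its image under the representation is multiplied by $g$ or $g^{-1}$ at the endpoint $y$. With the orientation chosen correctly, the degree becomes the $h'$ with $[\any\cap h']_\Gr=\vp.\rho$, by Proposition~\ref{P:H1GisRepG}. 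Concretely, I set $\vp_*(\Graph)=\Graph\cup O_V$, where $O_V$ is a small $V$-colored circle around $y$. To stay within the ideal, I fuse it to a nearby $\Proj$-colored edge via $e_{V,P}$: by Lemma~\ref{L:V-appear}, $\ptr^L_{V^*}(e_{V,P})=\Id_P$, so this fused loop contracts back to $\Graph$ when $y$ is deleted.

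\emph{Well-definedness.} I must show three things.
\begin{enumerate}
\item The result does not depend on $V\in\cat_g$. Given $V,V'\in\cat_g$, I create both loops. The circle colored by $V\otimes V'^*\in\cat_1$ is then transparent through $y$, and after the transparency relation it is absorbed by Lemma~\ref{L:V-appear}. Equivalently, the loop colored by $V$ equals the one colored by $V'$ modulo a degree-$1$ loop that slides off $y$.
\item Skein relations and transparency relations are respected. A box can be isotoped away from the small loop, since $y$ is not in any box. For transparency, an edge in $\cat_1$ passing through $y$ commutes with the loop by isotopy.
\item Only the isotopy class of the circle matters.
\end{enumerate}

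\emph{Inverse and composition.} The inverse of $(g^{\delta_y})_*$ is obtained by adding a loop colored by $V^*\in\cat_{g^{-1}}$, or equivalently a loop of opposite orientation. Two parallel loops colored by $V$ and by $V^*$ combine into one loop colored by $V\otimes V^*\in\cat_1$, which passes through $y$ by transparency and is then removed via $\lev$/$\rcoev$ and Lemma~\ref{L:V-appear}. The same fusion of two nested loops, colored by $V\in\cat_g$ and $V'\in\cat_{g'}$, gives a loop colored by $V\otimes V'\in\cat_{gg'}$ (or $V'\otimes V$, depending on nesting). Combined with independence of the color, this yields $(g^{\delta_y})_*(g'^{\delta_y})_*=((gg')^{\delta_y})_*$. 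Loops around distinct base points commute by isotopy, so $(\vp\vp')_*=\vp_*\vp'_*$ holds in general, and each $\vp_*$ is an isomorphism with inverse $(\vp^{-1})_*$.

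\emph{Main obstacle.} The hard step is independence of the choice of $V$, i.e.\ that the loop is determined only by its degree. Fusing two different loops requires an object of $\cat_1$ crossing $y$, which is exactly what the transparency relations of Definition~\ref{D:sk-dec-surface} are designed to permit. A secondary difficulty is keeping the intermediate skeins $\Proj$-admissible; this needs the existence of a $\Proj$-colored edge in each component.
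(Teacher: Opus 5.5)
There is a genuine gap. The map you define, $\vp_*(\Graph)=\Graph\cup O_V$ with $O_V$ a \emph{free} closed $V$-colored circle around $y$, is not the gauge isomorphism. For this map, the two steps you treat as routine (independence of $V$, and the inverse) are false.

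Test it with $g=1$, where $(1^{\delta_y})_*$ must be the identity. The circle $O_V$ then has degree $1$, so a transparency relation slides it off $y$. A skein relation in a small disk then replaces the contractible circle by the scalar $d(V)\in\kk$ it evaluates to. Hence $\Graph\cup O_V=d(V)\,\Graph$. This is $\Graph$ for $V=\unit$, but $0$ for any projective $V$ as soon as $\unit$ is not projective: if $d(V)\neq0$, then $\unit$ would be a retract of the projective object $V\otimes V^*$. So the result depends on $V$, contrary to your step (1).

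The same computation rules out your inverse. The nested circles colored by $V$ and $V^*$ fuse into a degree-one circle which, after passing through $y$, contributes $d(V\otimes V^*)$ rather than $1$. In the examples the paper targets, this scalar is $0$; for instance, in generic degrees of the unrestricted quantum group every object of $\cat_g$ is projective. So your two maps compose to zero.

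A free closed circle can never be ``absorbed'' via Lemma \ref{L:V-appear}. That lemma only says that a $V$-strand attached to a projective strand through the specific coupon $e_{V,P}$ closes up to $\Id_P$. Note also that Figure \ref{fig:V-appear} cannot create a circle around $y$. It is a skein relation supported near a path, so its box avoids $y$ and it preserves the degree. It only creates a $V$-colored finger next to $y$, attached to the coupon $e_{V,[\gamma\cap\Graph]_\cat}$.

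Your side remark about attaching the circle to a $\Proj$-colored edge through $e_{V,P}$ points at the right object: a lasso rather than a free circle. But then your arguments for (1), for the inverse and for the composition law no longer apply, since they all fuse and discard free circles. The real issue becomes independence of the edge and of the path used.

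The missing idea is the paper's construction. Define $(g^{\delta_p})_*$ by \emph{pushing an existing edge} of degree $g$ across $p$ along a gauge path $\alpha$, after creating such an edge next to $p$ by Figure \ref{fig:V-appear}. For two disjoint gauge paths ending on edges colored by $V,V'\in\cat_g$, one fuses the two edges into an edge colored by $V\otimes V'^*\in\cat_1$, passes it through $p$ by a transparency relation, and unfuses, as in \eqref{eq:fus-trans}. The general case follows by induction on the number of intersection points of $\alpha$ and $\alpha'$.

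The remaining properties then follow:
\begin{itemize}
\item Compatibility with skein relations comes from choosing $\alpha$ away from the box. Transparency relations at $p$ itself need one more fusion of the same kind.
\item Invertibility is simply pushing back.
\item The group law holds because pushing $V\in\cat_h$ and then $U\in\cat_g$ across $p$ is skein equivalent to pushing $U\otimes V$.
\end{itemize}
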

\begin{proof}
  If $p\in Y$ and $g\in\Gr$, let $g^{\delta_p}\in\Gr^Y$ be defined on
  $q\in Y$ by
  \[g^{\delta_p}(q)=\left\{
      \begin{array}{l}
        g\ \text{ if }q=p,\\1_\Gr\text{ else.}
      \end{array}\right.
  \]
  We first define $(g^{\delta_p})_*$ by the following: If $\Graph$ is
  a skein representing an element of
  $\Skein(\Sigma,Y,\rho)$, % let $D$ be a small disk in $\Sigma$ centered on $p$ which does not intersect $T$.
  we say that a path $\alpha$ is a gauge path if $\alpha$ is a path
  from $p$ to a point on an edge $e$ of $\Graph$,
  $\alpha([0,1])\cap \Graph=\alpha(1)$, the intersection is transverse
  and $[\alpha\cap e]_\Gr=g^{-1}$.  First, any non trivial skein is
  skein equivalent to a skein which has a gauge path.  Indeed, any non
  trivial skein is skein equivalent to a skein which
  has % an edge colored by a projective
  % object close to the base point $p$, then we can use Lemma
  % \ref{L:V-appear} to create
  an edge close to $p$ colored by a non-null object of degree $g$
  (see Figure \ref{fig:V-appear}).  %\FCm{If $p$ is encircled by a circle colored by a null object this is not true. But I guess we can prove that in this case the skein is the $0$ skein...}\BPm{you are right, I added non trivial} \FCm{I think we need a lemma stating that if a color of an edge of $\Graph$ is null then the skein is null...} 
  Given a skein $\Graph'$ with a gauge
  path $\alpha$, we obtain a skein $(\Graph')_\alpha$ by modifying $\Graph'$ in
  a tubular neighborhood of $\alpha$ to pass the edge $e$ through the
  base point.
  \[\epsh{fig33g.pdf}{10ex}
    \pute{14}{85}{$\ms{V\in\cat_g}$}
    \putn{41}{49}{$\ms{\alpha}$}
    \puts{96}{39}{$\ms{p}$} \quad\leadsto\quad \epsh{fig33h.pdf}{10ex}
    \putnw{80}{47}{$\ms{p}$} \putne{15}{85}{$\ms{V}$}
\]

  We claim that the class of $(\Graph')_\alpha$ is in
  $\Skein(\Sigma,Y,g^{\delta_p}.\rho)$ %\BPm{to justify?}\FCm{I would not justify this: it seems clear to me}\BPm{ok} 
  and that it is
  independent of $\alpha$.  Indeed, if $\alpha$ and $\alpha'$ are two
  gauge paths that only intersect at the base point $p$, we have a
  sequence of skein equivalences between $(\Graph')_\alpha$ and
  $(\Graph')_{\alpha'}$ given by:%\BP{maybe reverse arrows in the picture}
  % \[\epsh{fig1a}{10ex}\to\epsh{fig1b}{10ex}\skeq\epsh{fig1c}{10ex}\skeq
  %   \epsh{fig1d}{10ex}\skeq\epsh{fig1e}{10ex}\]
  \[\epsh{fig33i.pdf}{10ex}
    \putn{29}{49}{$\ms{\alpha}$}
    \putn{72}{55}{$\ms{\alpha'}$}
    \pute{10}{86}{$\ms{V}$}
    \putw{91}{12}{$\ms{V'}$}
    \puts{53}{40}{$\ms{p}$}
    \quad\leadsto\quad\]
  \begin{equation}
    \label{eq:fus-trans}
  \epsh{fig33j.pdf}{10ex}
    \putw{46}{43}{$\ms{p}$}
    \pute{9}{85}{$\ms{V}$}
    \putw{91}{14}{$\ms{V'}$}
    \ \skeq\
    \epsh{fig33k.pdf}{10ex}
    \putn{18}{83}{$\ms{V}$}
    \putn{15}{24}{$\ms{V}$}
    \putw{88}{8}{$\ms{V'}$}
    \pute{84}{49}{$\ms{V\otimes{V'}^*}$}
    \putw{87}{93}{$\ms{V'}$}
    \puts{47}{38}{$\ms{p}$}
    \putc{79}{71}{$\ms{\Id}$}
    \putc{79}{28}{$\ms{\Id}$}
    \qquad\ \skeq\ \qquad
    \epsh{fig33l.pdf}{10ex}
    \putw{3}{89}{$\ms{V}$}
    \putw{2}{6}{$\ms{V}$}
    \putw{16}{48}{$\ms{V\otimes{V'}^*}$}
    \puts{89}{78}{$\ms{V'}$}
    \putn{86}{29}{$\ms{V'}$}
    \puts{52}{52}{$\ms{p}$}
    \putc{19}{72}{$\ms{\Id}$}
    \putc{19}{29}{$\ms{\Id}$}
    \ \skeq\
  \epsh{fig33m.pdf}{10ex}
  \pute{10}{85}{$\ms{V}$}
  \pute{55}{57}{$\ms{p}$} \putw{91}{14}{$\ms{V'}$}
  \end{equation}
where the middle equivalence $\skeq$ is a transparency relation.  Now
we suppose by induction that $(T')_\alpha=(T')_{\alpha'}$ whenever
$\alpha\cap \alpha'$ has less than $k$ transverse intersections
(including $\{p\}$) and claim that if $\alpha\cap \alpha'$ has $k$
intersections then $(T')_\alpha=(T')_{\alpha'}$. Indeed let $\alpha''$
be the path from $p$ to $\alpha'\cap T$ which is equal to $\alpha$ on
the interval from $p$ to the ``last point'' $q$ of
$\alpha\cap \alpha'$ i.e. the point such that the open subarc of
$\alpha'$ connecting $q$ and $\alpha'\cap T$ does not intersect
$\alpha$.  Then up to slightly perturbing this latter arc, both
$\alpha'\cap \alpha''$ and $\alpha\cap \alpha''$ are composed by less
than $k$ points and by induction we have
$(T')_\alpha=(T')_{\alpha''}=(T')_{\alpha'}$.%\FCm{added this}\BPm{ok}

Now if $\sum_ia_i\Graph_i=0\in\Skein(\Sigma,Y,\rho)$ is a skein
relation in a box $B$ which does not intersect $\alpha$, we can use
the same modifications outside $B$ and use the same gauge path to get
a skein relation
$\sum_ia_i(\Graph'_i)_\alpha=0\in\Skein(\Sigma,Y,g^{\delta_p}.\rho)$.
If it intersects $\alpha$ then we can change $\alpha$ to another path
not intersecting $B$, with the same conclusion.  %\FCm{Added this: should we detail why this is true ?(I would rather not...}\BPm{I agree with no more detail}
Finally, two skeins related by a transparency relation at a base point $q$ have their
image related by a transparency relation. This is clear when $p\neq q$ and in the case $p=q$, we have for $U$ in $\cat_1$ and $V,V'\in\cat_g$:
\[
  \begin{array}{ccc}
    \epsh{fig33v.pdf}{10ex}
    \putw{0}{88}{$\ms{V}$}
    \putw{43}{88}{$\ms{U}$}
    \putw{90}{13}{$\ms{V'}$}
    \puts{70}{60}{$\ms{\overset{\mathsf x}{p}}$}
    &\tto{\text{transparency}}
    & \epsh{fig33v.pdf}{10ex}
      \putw{0}{88}{$\ms{V}$}
      \putw{43}{88}{$\ms{U}$}
      \putw{90}{13}{$\ms{V'}$}
      \puts{30}{60}{$\ms{\overset{\mathsf x}{p}}$}
    \\\\g^\delta_*\downarrow&&\downarrow g^\delta_*\\\\
    \epsh{fig33v.pdf}{10ex}
    \putw{0}{88}{$\ms{V}$}
    \putw{43}{88}{$\ms{U}$}
    \putw{90}{13}{$\ms{V'}$}
    \puts{105}{60}{$\ms{\overset{\mathsf x}{p}}$}
    &\tto{\text{transparency}}
    & \epsh{fig33v.pdf}{10ex}
      \putw{0}{88}{$\ms{V}$}
      \putw{43}{88}{$\ms{U}$}
      \putw{90}{13}{$\ms{V'}$}
      \puts{-5}{60}{$\ms{\overset{\mathsf x}{p}}$}
  \end{array}
\]
where the last transparency relation involve a fusion of the strands
similar to Equality \eqref{eq:fus-trans}. % the two involved base point are distincts
% and for the same (***)
%\FCm{Why these three asterisks? To me this does  not need justification}\BPm{I am not sure this is true if the  transparency relation is at the same vertex $p$: we also need to merge/unmerge the two $g$-colored edges... I added some detail}  
  Hence
$\Graph\mapsto (\Graph')_\alpha$ defines a map $(g^{\delta_p})_*$
which clearly has an inverse $((g^{-1})^{\delta_p})_*$.  Since the
construction is local, it is clear that if $p\neq q$ are distinct base
points, $(g^{\delta_p})_*$ and $(h^{\delta_q})_*$ commute for any
$g,h\in\Gr$.  This gives a natural definition of $\vp_*$ by
$\prod_{p\in Y}\vp(p)^{\delta_p}$. Also
$(g^{\delta_p})_*\circ(h^{\delta_p})_*=((gh)^{\delta_p})_*$ since
passing an edge colored by $V\in\cat_h$, then an edge colored by
$U\in\cat_g$ through the base point $p$ is skein equivalent to passing
an edge colored by $U\otimes V\in\cat_{gh}$.  This implies the last
statement of the definition.
\end{proof}

\begin{proposition}\label{P:restr-iso}
  Let $(\Sigma,Y,\rho)$ and $(\Sigma,Y',\rho')$ be $\Gr$-decorated surface
  where ${Y\subset Y'}$ and $\rho\in\Rep_\Gr(\Sigma,Y)$ is the
  restriction of
  $\rho'\in\Rep_\Gr(\Sigma,Y')$. % Let $\emptyset\neq Y\subset Y'$ be finite subset of $\Sigma$ and
  % $\rho\in\Rep_\Gr(\Sigma,Y)$ be the restriction of
  % $\rho'\in\Rep_\Gr(\Sigma,Y')$.
  Then the inclusion
  $\Sigma\setminus Y'\subset \Sigma\setminus Y$ induces an isomorphism
  $$\Skein(\Sigma,Y',\rho')\to\Skein(\Sigma,Y,\rho).$$
\end{proposition}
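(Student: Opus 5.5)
By induction on $\card(Y'\setminus Y)$ it is enough to treat the case $Y'=Y\sqcup\{q\}$ with $q$ a single extra base point. Let $\iota$ be the inclusion $\Sigma\setminus Y'\subset\Sigma\setminus Y$.

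\textbf{Well-definedness.} By Proposition \ref{P:MappingClassActs}, $\iota$ induces a linear map $\Skein_\Proj(\Sigma\setminus Y')\to\Skein_\Proj(\Sigma\setminus Y)$. First I check that it respects degrees. Let $h'$ be the degree associated with $\rho'$ (Proposition \ref{P:H1GisRepG}); its image in $H_1(\Sigma\setminus Y;\Gr)$ must be the degree $h$ associated with $\rho$. This holds because every path with endpoints in $Y$ can be isotoped off $q$, so $[\gamma\cap\iota(h')]_\Gr=[\gamma\cap h']_\Gr=\rho'(\gamma)=\rho(\gamma)$. Next, transparency relations at points of $Y$ are sent to transparency relations. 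Transparency relations at $q$ are sent to isotopies in $\Sigma\setminus Y$, since $q$ is no longer a puncture. Hence we get a map $\iota_*:\Skein(\Sigma,Y',\rho')\to\Skein(\Sigma,Y,\rho)$.

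\textbf{Surjectivity.} Take a skein $\Graph$ of degree $h$ in $\Sigma\setminus Y$ and isotope it off $q$. Its degree in $\Sigma\setminus Y'$ is then some lift $\tilde h$ of $h$. The corresponding representation restricts to $\rho$ on $Y$, so it differs from $\rho'$ only by the value assigned to paths ending at $q$; that is, it equals $(g^{\delta_q}).\rho'$ for some $g\in\Gr$ (Lemma \ref{L:Rep}). Now slide a degree-$g$ edge of $\Graph$ across $q$, exactly as in the definition of the gauge map $(g^{\delta_q})_*$ of Proposition \ref{P:gauge-map}. To have such an edge available, first create one near $q$ via Figure \ref{fig:V-appear}, which is possible because $\Graph$ has a projective-colored edge. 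In $\Sigma\setminus Y$ this sliding is an isotopy, and the result has degree exactly $h'$. So $\Graph$ lies in the image of $\iota_*$.

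\textbf{Injectivity.} This is the main step. I plan to construct an inverse $r:\Skein(\Sigma,Y,\rho)\to\Skein(\Sigma,Y',\rho')$ and show that $r\circ\iota_*=\Id$.
\begin{itemize}
\item \emph{Definition.} Given a skein $\Graph$ in $\Sigma\setminus Y$, choose a generic position avoiding $q$. This gives an element of $\Skein(\Sigma,Y',(g^{\delta_q}).\rho')$ for some $g$. Then apply the gauge isomorphism $((g^{-1})^{\delta_q})_*$, so that $r(\Graph)$ lands in degree $\rho'$.
\item \emph{Independence of the position.} Two generic positions differ by an isotopy in $\Sigma\setminus Y$, which crosses $q$ finitely many times by moving an edge across $q$. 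Each crossing multiplies the $q$-component of the gauge by the degree of that edge. After the compensating gauge map, crossing an edge of degree $k$ becomes the composite of $(k^{\delta_q})_*$ and an actual passage of that edge. By the construction in Proposition \ref{P:gauge-map}, and its independence of the gauge path, these two operations agree in $\Skein$. When the edge has trivial degree, the crossing is precisely a transparency relation at $q$.
\item \emph{Skein relations and transparency.} A skein relation lives in a box that can be isotoped to avoid $q$, so it maps to a skein relation. Transparency relations at points of $Y$ are local and can likewise avoid $q$.
\item \emph{Conclusion.} Once $r$ is well defined, $r\circ\iota_*=\Id$ is immediate: a skein that already lies in $\Sigma\setminus Y'$ with degree $h'$ needs $g=1$.
\end{itemize}

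The main obstacle is the independence claim in the second item. Moving an edge across $q$ must coincide with the gauge map $(k^{\delta_q})_*$ even when the edge is not adjacent to a gauge path. I would handle this by using Figure \ref{fig:V-appear} to merge it with a newly created edge, in the same way as the fusion argument of \eqref{eq:fus-trans}.
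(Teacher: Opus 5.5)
Your proposal is correct and follows essentially the same route as the paper: reduce to a single extra base point, then define the inverse by pushing a skein off the new point and correcting its degree with the gauge map at that point. The path-independence in Proposition \ref{P:gauge-map} shows that pushing an edge across that point is itself a gauge map, and skein-relation boxes are isotoped off it. Your separate surjectivity check only makes explicit the identity $\iota_*\circ r=\Id$ that the paper leaves implicit. The obstacle you flag is not really one: at the instant an edge crosses $q$ in a generic isotopy, the short arc from $q$ to that edge meets the skein only at its endpoint, so it is already a gauge path.
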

\begin{proof}
  First the map
  $\Skein(\Sigma\setminus Y',\rho')\to\Skein(\Sigma\setminus Y,\rho)$
  composed with the projection
  $\Skein(\Sigma\setminus Y,\rho)\to\Skein(\Sigma,Y,\rho)$ (recall
  that the latter module is obtained by quotienting the former by
  transparency relations) factors through $\Skein(\Sigma,Y',\rho)$
  since the additional transparency relations are sent either to
  isotopies of the skein or to transparency relations in
  $\Skein(\Sigma,Y,\rho)$.  Hence, we have a map
  $f:\Skein(\Sigma,Y',\rho')\to\Skein(\Sigma,Y,\rho).$ It is enough to
  show that this map is bijective when $Y'=Y\cup \bs{p_1}$ has a
  unique additional base point.  Let $p_0\in Y$ be in the same
  connected component of $\Sigma$ as $p_1$, let's fix a path $\gamma$
  from $p_0$ to $p_1$ embedded in $\Sigma$ and let
  $g=\rho'(\gamma)\in\Gr$. Remark that $\rho'$ is determined by $\rho$
  and $g$.  To show that $f$ is bijective, we construct an inverse
  $f'$ of $f$ as follows.

  Let $\Graph$ be a skein representing an element of
  $\Skein(\Sigma,Y,\rho)$.  If the point $p_1$ belongs to an edge of
  the skein $\Graph$, we slightly move $\Graph$ near $p_1$ to get a
  skein $\Graph'\in\Skein(\Sigma\setminus Y')$ (there are two
  possibilities up to isotopy in $\Sigma\setminus Y'$).  Let
  $h=[\gamma\cap \Graph']_\Gr$, then $\Graph'$ represents an element of
  $\Skein(\Sigma\setminus Y',((h^{-1}g)^{\delta_{p_1}}).\rho')$.  Then
  we define the image by $f'$ of $\Graph$ by
  $f'(\Graph)=((g^{-1}h)^{\delta_{p_1}})_*(\Graph')$. Remark that when
  $p_1\in \Graph$, the two possibilities for $\Graph'$ are related by
  a gauge map and the two associated values for $f'(\Graph)$ coincide.

  This implies that $f'(\Graph)$ is a well defined element of
  $\Skein(\Sigma\setminus Y',\rho')$.  Also this implies that if
  $\Graph'$ and $\Graph''$ are two skeins in $\Skein(\Sigma,Y,\rho)$
  related by a small isotopy where an edge pass through $p_1$ then
  $f'(\Graph')\skeq f'(\Graph'')$.  Hence,
  $f'(\Graph)\in\Skein(\Sigma,Y',\rho')$ only depends of the isotopy
  class of $\Graph$ in $\Sigma\setminus Y$.

  It is also clear that if $\Graph'$ and $\Graph''$ are two skeins in
  $\Skein(\Sigma,Y,\rho)$ related by a transparency relation, then so
  are $f'(\Graph')$ and $f'(\Graph'')$.

  Finally let us show that the image by $f'$ of a skein relation in
  $\Skein(\Sigma\setminus Y,\rho)$ is zero in
  $\Skein(\Sigma,Y',\rho')$.  This is clear when the box of the skein
  relation does not contain $p_1$ since the construction of the image
  by $f$' can be done outside the box.  We now consider a skein
  relation $\sum_ia_i\Graph_i$ where the box contains the point $p_1$.
  Consider an isotopy in $\Sigma\setminus Y$ moving the box away from
  $p_1$.  This isotopy transform the skeins $\Graph_i$ to skeins
  $\Graph'_i$ with $f'(\Graph_i)\skeq f'(\Graph'_i)$.  Then
  $\sum_ia_if'(\Graph_i)\skeq\sum_ia_if'(\Graph_i')\skeq0\in\Skein(\Sigma,Y',\rho')$
  where this last equivalence follows from the first case since
  $\sum_ia_i\Graph'_i$ is a skein relation away from $p_1$.

  In conclusion we have defined a map
  $f':\Skein(\Sigma,Y,\rho)\to\Skein(\Sigma,Y',\rho')$ which is an
  inverse of $f$, thus $f$ is bijective.
\end{proof}
\begin{definition}\label{D:restr-map}
  Let $R_{\Sigma,Y_1,Y_2,\rho}$ be a restriction cylinder from
  $(\Sigma,Y_1,\rho_1)$ to $(\Sigma,Y_2,\rho_2)$.  Then we define the
  {\em restriction map}
  $$r(\Sigma,Y_1,Y_2,\rho)=f_2\circ
  f_1^{-1}:\Skein(\Sigma,Y_1,\rho_1)\to\Skein(\Sigma,Y_2,\rho_2),$$
  where
  $f_i:\Skein(\Sigma,Y_1\cup Y_2,\rho)\to\Skein(\Sigma,Y_i,\rho_i)$
  is the isomorphism of Proposition \ref{P:restr-iso}. 
\end{definition}
% \BP{Add a notion of generalized restriction-extension isomorphism
%   $\Skein(\Sigma,Y,\rho)\overset\sim\to\Skein(\Sigma,Y',\rho')$ where
%   $(\Sigma,Y,\rho)$ and $(\Sigma,Y',\rho')$ are both restriction of
%   some $(\Sigma,Y'',\rho'')$.  This exists if
%   $(\Sigma,Y\cap Y',\rho_|)=(\Sigma,Y\cap Y',\rho'_|)$ is a $\Gr$-decorated
%   surface this is independant of $(\Sigma,Y'',\rho'')$.}
% \begin{proposition}
%   Let $(\Sigma,Y,\rho)$, $(\Sigma,Y',\rho')$ be $\Gr$-decorated surfaces.
%   Suppose that the restrictions of $\rho$ and $\rho'$ coincide in
%   $\Rep_\Gr(\Sigma,Y\cap Y')$, then there exists $(\Sigma,Y'',\rho'')$
%   with $Y\cup Y'\subset Y''$, such that $\rho$ and $\rho'$ are both
%   restriction of $\rho''$.  Futhermore the induced isomorphism
%   $\Skein(\Sigma,Y,\rho)\overset\sim\to\Skein(\Sigma,Y',\rho')$ is
%   independant of $(\Sigma,Y'',\rho'')$.
% \end{proposition}
% \begin{proof}
%   **********************
% \end{proof}

\begin{theorem}\label{P:SkeinRepCobe}
%Fix a surface $\Sigma$ and a class $\rho\in \Hom(\pi_1(\Sigma),\Gr)/\Gr$. 
  There is an unique
  functor
  $$\Skein:\wt\cob_e \to \operatorname{Vect}$$
  %\BPm{I changed the formulation where image/preimage were ambigous}
  which sends %associates
  a $\Gr$-decorated surface $\wt\Sigma$ (see Subsection \ref{def:Gdecorated}) to
  $\Skein(\wt\Sigma)$, a gauge cylinder $C_{\Sigma,Y,\rho}^\vp$ to $\vp_*$
  and a restriction cylinder $R_{\Sigma,Y_1,Y_2,\rho}$ to the
  restriction map $r(\Sigma,Y_1,Y_2,\rho)$. 
%  \\\BP{I would remove this:\\  In particular, for a fixed surface $\Sigma$ and class $\rho\in \Hom(\pi_1(\Sigma),\Gr)/\Gr$, the vector spaces $\Skein(\wt\Sigma)$ are all canonically isomorphic.} \FCm{added this : do you agree?Also, promoted it to a theorem}\BPm{No I don't think.  Maybe we should stress the opposite: there is no canonical iso}
\end{theorem}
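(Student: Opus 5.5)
Since Lemma \ref{L:rel_eq} gives a presentation of the groupoid $\wt\cob_e$ by generators (gauge cylinders $J_\vp$ and restriction cylinders $R(Y_1,Y_2,\rho)$) and relations (1)--(5), uniqueness is immediate: a functor is determined by its values on generators. For existence, it suffices to check that the assignments $J_\vp\mapsto\vp_*$ (Proposition \ref{P:gauge-map}) and $R(Y_1,Y_2,\rho)\mapsto r(\Sigma,Y_1,Y_2,\rho)$ (Definition \ref{D:restr-map}) send each of the relations (1)--(5) to equalities of linear maps. All these maps are already isomorphisms, so the relations force no extra invertibility condition. We may work with a connected surface, since everything is compatible with disjoint union.

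The relations would be checked in order.

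\textbf{Relation (1).} This is exactly the identity $\vp_*\circ\vp'_*=(\vp\vp')_*$ of Proposition \ref{P:gauge-map}.

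\textbf{Relations (3), (4), (5).} These concern only restriction maps. Write $f^{Y'}_{Y}:\Skein(\Sigma,Y',\rho')\to\Skein(\Sigma,Y,\rho)$ for the isomorphism of Proposition \ref{P:restr-iso} when $Y\subset Y'$. This map is induced by the inclusion $\Sigma\setminus Y'\subset\Sigma\setminus Y$, so it is the identity on representative skeins, and hence $f^{Y_2}_{Y_3}\circ f^{Y_1}_{Y_2}=f^{Y_1}_{Y_3}$ when $Y_3\subset Y_2\subset Y_1$.
\begin{itemize}
\item For $Y_2\subset Y_1$, the definition gives $r(Y_1,Y_2)=f^{Y_1}_{Y_2}$ and $r(Y_2,Y_1)=(f^{Y_1}_{Y_2})^{-1}$. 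Relations (3) and (4) then follow from the transitivity just stated.
\item Relation (5) reads $r(Y_1\cup Y_2,Y_2)\circ r(Y_1,Y_1\cup Y_2)=f^{Y_1\cup Y_2}_{Y_2}\circ (f^{Y_1\cup Y_2}_{Y_1})^{-1}$. This is literally Definition \ref{D:restr-map} of $r(Y_1,Y_2,\rho)$.
\end{itemize}

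\textbf{Relation (2).} This is the main point. Here $Y_2\subset Y_1$ and $\vp\in\Gr^{Y_1}$, and we must show
\[f^{Y_1}_{Y_2}\circ\vp_*=(\vp_{|Y_2})_*\circ f^{Y_1}_{Y_2}\]
as maps $\Skein(\Sigma,Y_1,\rho)\to\Skein(\Sigma,Y_2,\vp_{|Y_2}.\,(\cdots))$. Write $\vp$ as a product of elementary gauges $g^{\delta_p}$; these commute for distinct points, so it is enough to treat one factor, in two cases.

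\emph{Case $p\in Y_2$.} The gauge move $(g^{\delta_p})_*$ is performed locally in a neighborhood of a gauge path $\alpha$ from $p$. We may choose $\alpha$ avoiding $Y_1\setminus\{p\}$, since a generic path does. The move then commutes with forgetting the base points $Y_1\setminus Y_2$: the same skein picture represents both sides.

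\emph{Case $p\in Y_1\setminus Y_2$.} Here $(\vp_{|Y_2})$ is trivial at $p$, so we must show that $f^{Y_1}_{Y_2}\circ(g^{\delta_p})_*=f^{Y_1}_{Y_2}$. After forgetting $p$, the operation of passing an edge through $p$ is just an isotopy in $\Sigma\setminus Y_2$, so the two skeins coincide in $\Skein(\Sigma,Y_2,\cdot)$.

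In both cases one still has to confirm that the target representations agree; this matches the computation in the proof of Lemma \ref{L:rel_eq}.

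The main obstacle I expect is bookkeeping rather than substance. One must make sure that the skein moves used in defining $\vp_*$ — creating an edge via Figure \ref{fig:V-appear} and the fusions — can all be performed away from the forgotten points. One must also check that the inverse $f'$ from the proof of Proposition \ref{P:restr-iso} is compatible with these moves. I would handle this by choosing the gauge path and the creation region in a small disk around $p$ that contains no other base point, which is always possible.
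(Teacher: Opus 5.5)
Your proposal is correct and follows the paper's proof essentially step for step. Both reduce existence to checking that $\vp_*$ and the restriction maps satisfy relations (1)--(5) of Lemma~\ref{L:rel_eq}: (1) comes from Proposition~\ref{P:gauge-map}, (3)--(5) from Definition~\ref{D:restr-map}, and (2) is reduced to elementary gauges $g^{\delta_p}$ and split into the same two cases, $p\in Y_2$ (the gauge move commutes with forgetting the other base points) and $p\notin Y_2$ (passing an edge through $p$ becomes an isotopy once $p$ is erased). Your remarks on uniqueness via the generators, and on performing the edge-creation and fusion away from the forgotten base points, only make explicit what the paper leaves implicit.
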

\begin{proof}
  We need to show that the gauge map and restriction map satisfy the
  analogous relations of Lemma \ref{L:rel_eq}.  Relation (1) was
  stated in Proposition \ref{P:gauge-map}. Relation (3) just mean that
  base points can be erased in any order without changing the result.
  Relations (4) and (5) follow directly from Definition
  \ref{D:restr-map}. To prove relation (2) it is enough to consider
  gauge functions $\vp=g^{\delta_y}$ where $y\in Y_1$.  There are two cases to distinguish.

  If $y\notin Y_2$ then the gauge modification at $y$ is followed by
  erasing the base point $y$.  But passing an edge of the skein
  through $y$, then erasing $y$, gives a skein isotopic to the one
  obtained by just erasing $y$.  Hence the relation holds.

  If $y\in Y_2$, then the point $y$ is not erased by the restriction
  maps so passing an edge of the skein through $y$ commutes with the
  suppression of other base points and the relation also holds.
%  \BP{I would remove:\\ The last statement comes from the fact that each equivalence has a canonical expression as a composition of a gauge cylinder and a restriction morphism (see Proposition \ref{P:JR}) which in turn yield explicit isomorphisms on the spaces $\Skein(\widetilde{\Sigma})$ (see Proposition \ref{P:gauge-map} and Definition \ref{D:restr-map}).}\BPm{but the equivalence is not unique because you need to choose a representation in the cylinder}
\end{proof}
\section{$\Gr$-Chromatic categories}
\subsection{Graded chromatic maps and categories}

Let $\cat$ be a sharp $\Gr$-graded pivotal $\kk$-category.   Recall the definition of a generator given in Subsection \ref{SS:KCategories}.  
\begin{definition}
%N: 
%  Let $\cat$ be a $\kk$-linear category.  We say that $W$ is a
%  projective generator of $\cat$ if for any projective module
%  $P\in\cat$, there exists a finite family of morphisms
%  $P\tto {\alpha_i}W$ and $W\tto {\beta_i}P$ such that
%  $\sum_i\beta_i\alpha_i=\Id_P$.
  For $g\in \Gr$, a \emph{projective generator in degree $g$} is a
  projective object which is a generator of the subcategory of
  projective objects of $\cat_g$.%\BPm{added}
\end{definition}
\begin{lemma} 
 %N: removed:  Let $\cat$ be a $\Gr$-graded category.  
 Suppose $W_g$ and $W_{g'}$ are
  projective generators in degrees $g$ and $g'$, respectively.  Then $W_g^*$
  and $W_g\otimes W_{g'}$ are projective generators in degrees $g^{-1}$
  and $gg'$, respectively.
\end{lemma}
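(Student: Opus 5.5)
We prove the two claims separately. In each case we check two things: that the new object is a projective object of the right degree, and that it generates the projective objects of that degree.

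\emph{Duals.} The object $W_g^*$ lies in $\cat_{g^{-1}}$ by the grading axioms. It is projective because in a pivotal category the dual of a projective is projective: $\Proj$ is an ideal, and $W_g^*$ is a retract of $W_g^*\otimes W_g\otimes W_g^*$ via the (co)evaluations, so it lies in $\Proj$. For generation, let $P\in\cat_{g^{-1}}$ be projective. Then $P^*\in\cat_g$ is projective, so $\Id_{P^*}=\sum_i\beta_i\alpha_i$ with $\alpha_i:P^*\to W_g$ and $\beta_i:W_g\to P^*$. Dualizing, and using the pivotal identification $P^{**}\cong P$, we get $\Id_P=\sum_i\alpha_i^*\beta_i^*$ with $\beta_i^*:P\to W_g^*$ and $\alpha_i^*:W_g^*\to P$. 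Since $\Id_P$ factors through a finite direct sum of copies of $W_g^*$, this shows $W_g^*$ is a generator of $\Proj\cap\cat_{g^{-1}}$.

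\emph{Tensor products.} The object $W_g\otimes W_{g'}$ lies in $\cat_{gg'}$ and is projective because $\Proj$ is an ideal. Let $P\in\cat_{gg'}$ be projective. The key step is to write $P$ as a retract of $W_g\otimes Q$ with $Q$ projective of degree $g'$. For this, apply Lemma~\ref{L:V-appear} to $W_g$, which is non-null since the subcategory of degree-$g$ projectives is non-null. This shows that the ideal generated by $W_g$ contains $P$. More concretely, following the proof of that lemma, $\rev_{W_g}\otimes\Id_P:W_g\otimes W_g^*\otimes P\to P$ is an epimorphism onto a projective, hence it splits. So $P$ is a retract of $W_g\otimes Q$ with $Q:=W_g^*\otimes P$, which is projective of degree $g^{-1}gg'=g'$. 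Now write $\Id_Q=\sum_j\delta_j\gamma_j$ with each $\gamma_j,\delta_j$ factoring through $W_{g'}$. Then $\Id_{W_g}\otimes\Id_Q=\sum_j(\Id_{W_g}\otimes\delta_j)(\Id_{W_g}\otimes\gamma_j)$ factors through a direct sum of copies of $W_g\otimes W_{g'}$. Composing with the retraction $P\to W_g\otimes Q\to P$ shows that $\Id_P$ factors through $(W_g\otimes W_{g'})^{\oplus n}$.

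The only point needing care is the splitting step: it requires $\rev_{W_g}$ to be an epimorphism. This follows from sharpness, because $\rev_{W_g}$ is a non-zero morphism to $\unit$ when $W_g$ is non-null; it is non-zero by the snake relation, since otherwise $\Id_{W_g}=0$. With that in place, the argument is a direct check.
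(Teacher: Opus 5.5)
Your proof is correct and follows the paper's approach. The dual case is identical: you dualize the factorization of $\Id_{P^*}$ and conjugate by the pivotal isomorphism. For the tensor product, the paper also realizes your $P$ as a retract of $W_g\otimes(W_g^*\otimes P)$, with $W_g^*\otimes P$ projective of degree $g'$. The only difference is that it packages the splitting into the morphism $e_{W_g,P}$ of Lemma~\ref{L:V-appear}, using $\lcoev_{W_g}\otimes\Id_P$ as section and $(\rev_{W_g}\otimes\Id_P)(\Id_{W_g}\otimes e_{W_g,P})$ as retraction, whereas you use the splitting of $\rev_{W_g}\otimes\Id_P$ directly.

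The non-nullness of $W_g$ that you invoke is likewise implicit in the paper. It holds by definition for a $\Gr$-finite spherical category. In general, if $W_g$ were null, then sharpness would force every projective to be null, and the statement would be trivial.
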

\begin{proof}
%N:  If $P^*\in \cat_{g^{-1}}$ then $P\in \cat_g$ and there exist  $f_i:P\to W_g$ and $g_i:W_g\to P$ such that $\Id_P=\sum_i g_i\circ f_i$.  Then $\Id_{P^*}=\sum_{i} f_i^*\circ g_i^*$ implying $W_g^*$ is a  projective generator of degree $g^{-1}$. 
  If $P\in \cat_{g^{-1}}$ is projective then $P^*\in \cat_g$ is projective and there exist  
  $$f_i:P^*\to W_g \text{ and } g_i:W_g\to P^*$$ such that $\Id_{P^*}=\sum_i g_i\circ f_i$.  Then $\Id_{P^*}^*=\sum_{i} f_i^*\circ g_i^*$ implying $\Id_P=\sum_{i} \phi^{-1}\circ f_i^*\circ g_i^* \circ\phi $ where $\phi:P\to P^{**}$ is the pivotal structure.  Thus, $W_g^*$ is a  projective generator of degree $g^{-1}$. 

 %N:   If $Q\in \cat_{gg'}$ then $W_{g^{-1}}\otimes Q\in \cat_{g'}$ so there exist $f_i:W_{g^{-1}}\otimes Q\to W_{g'}, g_i:W_{g'}\to W_{g^{-1}}\otimes Q$ such that $\Id_{W_{g^{-1}}\otimes Q}=\sum_i g_i\circ f_i$. 

  If $Q\in \cat_{gg'}$ is projective then
  $W_{g}^*\otimes Q\in \cat_{g'}$  is projective so there exist
  $f_i:W_{g}^*\otimes Q\to W_{g'}$ and $ g_i:W_{g'}\to W_{g}^*\otimes
  Q$ such that $\Id_{W_{g}^*\otimes Q}=\sum_i g_i\circ f_i$. 
   Then let
  $$f''_i=(\Id_{W_{g}}\otimes f_i)\circ (\lcoev_{W_g}\otimes \Id_Q)
  {\rm\ and\ } g''_i= (\rev_{W_g}\otimes \Id_Q)\circ(\Id_{W_g}\otimes
  e_{W_g,Q})\circ (\Id_{W_g}\otimes g_i)$$ where 
  %N: $e'_{W_g,Q}$ 
  $e_{W_g,Q}$
  is as in
  Lemma \ref{L:V-appear}. Then
%N  $$g''_i\circ f''_i=(\rev_{W_g}\otimes \Id_Q)\circ(\Id_{W_g}\otimes  e_{W_g,Q})\circ (\lcoev_{W_g}\otimes  \Id_Q)=\ptr^L_{W_g}(e'_{W_g,Q})=\Id_Q.$$
$$\sum_i g''_i\circ f''_i=(\rev_{W_g}\otimes \Id_Q)\circ(\Id_{W_g}\otimes  e_{W_g,Q})\circ (\lcoev_{W_g}\otimes  \Id_Q)=\ptr^L_{W_g^*}(e_{W_g,Q})=\Id_Q.$$
  Thus, $W_g\otimes W_{g'}$ is a projective generator in degree $gg'$.  
\end{proof}

\begin{definition}
%\NGm{In the book we say "A category C is called spherical if it is a unimodular sharp K-additive pivotal category such that there exists a non-degenerate (two-sided) m- trace on Proj. " What we have here is different.  No unimodular, no non-deg ...}
 %N: A \emph{spherical category} is a sharp pivotal $\kk$-category such that there exist a non-degenerate  m-trace on $\Proj$.   
  A \emph{$\Gr$-finite spherical category} is a $\Gr$-graded spherical category
  $\cat=\bigoplus_{g\in\Gr}\cat_g$ where for
  each $g\in\Gr$, $\cat_g$ has a  non-null projective generator. % a family
  % $\bs{\P_g}_{g_in\Gr}$ where for each $g\in\Gr$, $\P_g$ is a
  % projective generator of $\cat_g$.
  A \emph{finite spherical category} is a $\Gr$-finite spherical category where the grading group is trivial. 
%  \BPm{could we remove this definition and define a finite/$\Gr$-finite monoidal $\kk$-category in section 1.2/1.3 ? I added spherical in 1.4.} 
\end{definition}
%Here the word finite is used to because if the category is abelian and locally finite then it is finite if and only if the category has finitely many simple objects.  
Let $\cat$ be a $\Gr$-finite spherical category. Note, that $\cat_1$
is a finite spherical category.  Let $\mt$ be a non-zero trace on
$\Proj$.  Then $\mt$ is \emph{non-degenerate} (see Definition
\ref{D:spherical}).  % which means that for any $P\in\Proj$, the pairing
% $\Hom_{\cat}(\unit,P)\otimes_\kk\Hom_{\cat}(P,\unit)\to\kk$ defined by
% $x\otimes y\mapsto \mt(x\circ y)$ is non-degenerate.
Given % such a non-degenerate m-trace $\mt$ on
% $\Proj$ and
an object $P\in \Proj$, set
\begin{equation}\label{E:copairing}
\Omega_p=\sum_i x^i\otimes_\kk x_i\in \Hom_{\cat}(P,\unit)\otimes_\kk
\Hom_{\cat}(\unit,P) \text{ and } \Lambda_P=\sum_{i} x_i\circ x^i\in \Hom_\cat(P,P)
\end{equation}
where $\{x^i\}_i$ and $\{x_i\}_i$ are basis of $ \Hom_{\cat}(P,\unit)$ and $ \Hom_{\cat}(\unit,P)\}$, respectively, which are dual with respect to the m-trace $\mt$, \ie $\mt_P(x_i \circ x^j) = \delta_{i,j}$. 
%\NGm{I rewrote this.  I think there was a mistake it said "$\Omega_p=\sum_i x_i\otimes_\kk x^i\in \Hom_{\cat}(P,\unit)\otimes_\kk \Hom_{\cat}(\unit,P)$" in the last version.  I changed the $x_i$ to $x^i$.  Bertrand, if ok please remove this comment.}  
 It is a
straightforward exercise in linear algebra to verify that $\Omega_P$
and $\Lambda_P$ do not depend on the choice of the basis.

%\NGm{in the below lemma I changed $\ideal $ to $\Proj$ and $X, Y$ to $P, Q, V$. If ok please remove this comment.}
%\begin{lemma} \label{P:Omega-nat}
%Let $X,Y\in \ideal$ and $Z \in \cat$, and let $f: X\to Y$ be a morphism in $\cat$. 
%  \begin{enumerate}
%%\labela
%  \item \emph{Duality}: If $\Omega_X=\sum_ix^i\otimes x_i$, then
%    $$\Omega_{X^*}=\sum_i  (x_i)^*  \otimes
%    (x^i)^*\in\Hom_\cat(X^*,\unit)\otimes_{\kk} \Hom_\cat(\unit,X^*).$$
%  \item \emph{Naturality}: If $\Omega_X=\sum_ix^i\otimes x_i$ and    $\Omega_Y=  \sum_j y^j\otimes y_j $,  then
%    $$\sum_ix^i\otimes (f\circ x_i)= \sum_j(y^j\circ f)\otimes
%    y_j  \in\Hom_\cat(X,\unit)\otimes_{\kk}\Hom_\cat(\unit,Y).$$
%  \item \emph{Rotation}: If $\Omega_{X\otimes Z}=\sum_iz^i\otimes z_i$
%    then $\Omega_{Z\otimes X}=\sum_i\wt z^i\otimes \wt z_i$ where
%    $$ \wt z^i=\rev_Z(\Id_Z\otimes
%    z^i\otimes\Id_{Z^*})(\Id_{Z \otimes X}\otimes\lcoev_Z) \quad \text{and} \quad \wt z_i=(\Id_{Z \otimes X}\otimes\rev_Z)(\Id_Z \otimes
%    z_i\otimes\Id_{Z^*})\lcoev_Z.$$
%  \end{enumerate}
%\end{lemma}
%\begin{proof}
%  The duality and rotation properties follow  from the fact that  we apply
%  transformations  sending  dual basis to dual basis. The naturality
%  can be checked by applying
%  $\mt_X(x_k\circ\_)\otimes\mt_Y(\_\circ y^\ell)$ to both sides:  it
% reduces then  to the cyclic property
%  $\mt_Y(f\circ x_k\circ y^\ell)=\mt_X(x_k\circ y^\ell\circ f)$ of the m-trace $\mt$.
%\end{proof}
\begin{lemma} \label{P:Omega-nat}
Let $P,Q\in \Proj$ and $V \in \cat$, and let $f: P\to Q$ be a morphism in $\cat$. 
  \begin{enumerate}
%\labela
  \item \emph{Duality}: If $\Omega_P=\sum_ix^i\otimes x_i$, then
    $$\Omega_{P^*}=\sum_i  (x_i)^*  \otimes
    (x^i)^*\in\Hom_\cat(P^*,\unit)\otimes_{\kk} \Hom_\cat(\unit,P^*).$$
  \item \emph{Naturality}: If $\Omega_P=\sum_ix^i\otimes x_i$ and    $\Omega_Q=  \sum_j y^j\otimes y_j $,  then
    $$\sum_ix^i\otimes (f\circ x_i)= \sum_j(y^j\circ f)\otimes
    y_j  \in\Hom_\cat(P,\unit)\otimes_{\kk}\Hom_\cat(\unit,Q).$$
  \item \emph{Rotation}: If $\Omega_{P\otimes V}=\sum_iz^i\otimes z_i$
    then $\Omega_{V\otimes P}=\sum_i\wt z^i\otimes \wt z_i$ 
      \end{enumerate}
where    $$ \wt z^i=\rev_V(\Id_V\otimes
    z^i\otimes\Id_{V^*})(\Id_{V \otimes P}\otimes\lcoev_V) $$
    and  $$
    \wt z_i=(\Id_{V \otimes P}\otimes\rev_V)(\Id_V \otimes
    z_i\otimes\Id_{V^*})\lcoev_V.$$

\end{lemma}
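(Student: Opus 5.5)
The plan is to characterize $\Omega_P$ intrinsically and then check that each proposed element satisfies that characterization. Since $\mt$ is non-degenerate, the pairing $\Hom_\cat(\unit,P)\otimes_\kk\Hom_\cat(P,\unit)\to\kk$, $x\otimes y\mapsto\mt_P(x\circ y)$, is perfect. Hence $\Omega_P$ is the unique element $\sum_i x^i\otimes x_i$ such that for every $a\in\Hom_\cat(\unit,P)$ and $b\in\Hom_\cat(P,\unit)$,
$$\sum_i \mt_P(a\circ x^i)\,x_i=a \et \sum_i \mt_P(x_i\circ b)\,x^i=b.$$
Either identity alone suffices, because dual bases are characterized by one of them. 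So for each item it is enough to show that the proposed families are again dual bases for the relevant pairing, or equivalently that they reproduce vectors through that pairing.

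\emph{Duality.} The maps $x\mapsto x^*$ give isomorphisms $\Hom(\unit,P)\to\Hom(P^*,\unit)$ and $\Hom(P,\unit)\to\Hom(\unit,P^*)$, where the pivotal structure identifies $\unit^*=\unit$. We then need
$$\mt_{P^*}\bigl((x^j)^*\circ(x_i)^*\bigr)=\mt_{P^*}\bigl((x_i\circ x^j)^*\bigr)=\mt_P(x_i\circ x^j)=\delta_{ij}.$$
This uses the standard fact that an m-trace satisfies $\mt_{P^*}(f^*)=\mt_P(f)$. I would prove that fact from the two partial trace properties in the usual way. Write $f^*$ via evaluation and coevaluation, apply the right partial trace property on $P^*\otimes P\otimes P^*$, use cyclicity to rotate, and then apply the left partial trace property. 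This reduces $f^*$ to $f$ by zig-zag relations, as in \cite{GKP13,GPV13}. This step is the main obstacle and I would cite it if it is available.

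\emph{Naturality.} Both sides lie in $\Hom(P,\unit)\otimes\Hom(\unit,Q)$, so I would pair with an arbitrary $y_k$ on the first factor via $\mt_P(y'\circ -)$-type functionals; more cleanly, apply $\mt_P(a\circ -)\otimes\mathrm{id}$ for arbitrary $a\in\Hom(\unit,P)$. On the left this gives $f\circ a$ by the reproducing property for $P$. On the right it gives $\sum_j\mt_P(a\circ y^j\circ f)\,y_j$. Cyclicity rewrites this as $\sum_j \mt_Q(f\circ a\circ y^j)\,y_j$, which equals $f\circ a$ by the reproducing property for $Q$. Non-degeneracy then shows the two tensors are equal.

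\emph{Rotation.} The maps $z^i\mapsto\wt z^i$ and $z_i\mapsto\wt z_i$ are the rotation isomorphisms $\Hom(P\otimes V,\unit)\cong\Hom(V\otimes P,\unit)$ and $\Hom(\unit,P\otimes V)\cong\Hom(\unit,V\otimes P)$, which bend the strand $V$ around. We must check that $\mt_{V\otimes P}(\wt z_i\circ\wt z^j)=\mt_{P\otimes V}(z_i\circ z^j)$. Graphically, $\wt z_i\circ\wt z^j$ is obtained from $V\otimes(z_i\circ z^j)\otimes V^*$-type diagrams by closing the $V$ strands. Using the right partial trace property to cap off $V^*$, and then cyclicity to move the $V$-cup and cap across, we reduce to $\mt_{P\otimes V}(z_i\circ z^j)=\delta_{ij}$. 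Equivalently, $\wt z_i\circ\wt z^j$ and $z_i\circ z^j$ have equal traces because the trace of a rotated endomorphism equals the original by the partial trace properties, as is standard for two-sided m-traces. Since the rotations are bijections, the families $\{\wt z^i\}$ and $\{\wt z_i\}$ are dual bases, which gives the claim.
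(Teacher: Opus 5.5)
Your proposal is correct and follows the paper's argument. The paper proves duality and rotation by noting that the transformations send $\mt$-dual bases to $\mt$-dual bases; you additionally make explicit why this holds, namely $\mt_{P^*}(f^*)=\mt_P(f)$ and invariance of $\mt$ under rotating a strand, both coming from cyclicity and the two partial trace properties. For naturality the paper also uses cyclicity, $\mt_Q(f\circ x_k\circ y^\ell)=\mt_P(x_k\circ y^\ell\circ f)$; it pairs both tensor factors via $\mt_P(x_k\circ -)\otimes\mt_Q(-\circ y^\ell)$, whereas you pair only the first and use the reproducing property of $\Omega_Q$, which is an equivalent cosmetic variant.
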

\begin{proof}
  The duality and rotation properties follow  from the fact that  we apply
  transformations  sending  dual basis to dual basis. The naturality
  can be checked by applying
  $\mt_P(x_k\circ\_)\otimes\mt_Q(\_\circ y^\ell)$ to both sides:  it
 reduces then  to the cyclic property
  $\mt_Q(f\circ x_k\circ y^\ell)=\mt_P(x_k\circ y^\ell\circ f)$ of the m-trace $\mt$.
\end{proof}

Recall a chromatic map in $\cat_1$ for a projective generator $W$ is a
map $\chr_W\in\End_\cat(W\otimes W)$ such that the following equality
holds for all $V\in \cat$:

  \begin{equation}
    \label{eq:chrP1}
    \epsh{fig34a.pdf}{10ex}
\putc{24}{52}{$\ms{\Lambda_{V\otimes W^*}}$}
\putc{81}{50}{${\chr_W}$}
\pute{93}{15}{$\ms{W}$}
\pute{93}{81}{$\ms{W}$}
\putw{7}{81}{$\ms{V}$}
\putw{7}{14}{$\ms{V}$}
\puts{53}{8}{$\ms{W}$}
\putn{56}{93}{$\ms{W}$}
    =\epsh{fig34c}{10ex} \putw{0}{90}{$\ms{V}$}\pute{100}{90}{$\ms{W}$}\;
    \quad\text{ that is }\quad
    \sum_i
    \epsh{fig34b.pdf}{10ex}
\putc{15}{70}{$\ms{x_i}$}
\putc{15}{29}{$\ms{x^i}$}
\putc{80}{50}{${\chr_W}$}
\pute{93}{82}{$\ms{W}$}
\pute{93}{15}{$\ms{W}$}
\putw{6}{93}{$\ms{V}$}
\putw{6}{6}{$\ms{V}$}
\puts{42}{2}{$\ms{W}$}
\putn{44}{98}{$\ms{W}$}
    =\epsh{fig34c}{10ex}
    \putw{0}{90}{$\ms{V}$}\pute{100}{90}{$\ms{W}$} \;\;\ .
  \end{equation}

\begin{definition}\label{D:chr}
  A \emph{degree $g$ chromatic map based on $P \in \Proj$} for a
  projective generator $W_g$ of $\cat_g$ is a morphism
  $\chr_P\in\End_\cat(W_g\otimes P)$ satisfying
  % \begin{equation}\label{E:ChrMapDef}
  %   \epsh{tfig19}{10ex}\put(-17,0){$\chr$}
  %   \put(-69,0){\ms{\Lambda_{\otimes G^*}}}=\epsh{tfig20}{10ex} \;.
  % \end{equation}
  % More generally, for all $V\in\cat$, we
  % have
  \begin{equation}
    \label{eq:chrP}
    \epsh{fig34a.pdf}{10ex}
\putc{24}{52}{$\ms{\Lambda_{V\otimes W_g^*}}$}
\putc{81}{50}{${\chr_P}$}
\pute{93}{15}{$\ms{P}$}
\pute{93}{81}{$\ms{P}$}
\putw{7}{81}{$\ms{V}$}
\putw{7}{14}{$\ms{V}$}
\puts{53}{8}{$\ms{W_g}$}
\putn{56}{93}{$\ms{W_g}$}
    =\epsh{fig34c}{10ex} \putw{0}{90}{$\ms{V}$}\pute{100}{90}{$\ms{P}$}
    \quad\text{ that is }\quad
    \sum_i
    \epsh{fig34b.pdf}{10ex}
\putc{15}{70}{$\ms{x_i}$}
\putc{15}{29}{$\ms{x^i}$}
\putc{80}{50}{${\chr_P}$}
\pute{93}{82}{$\ms{P}$}
\pute{93}{15}{$\ms{P}$}
\putw{6}{93}{$\ms{V}$}
\putw{6}{6}{$\ms{V}$}
\puts{42}{2}{$\ms{W_g}$}
\putn{44}{98}{$\ms{W_g}$}
    =\epsh{fig34c}{10ex}
    \putw{0}{90}{$\ms{V}$}\pute{100}{90}{$\ms{P}$}
  \end{equation}
for any $V\in\cat_g$, where $\{x_i\}_i$ and $\{x^i\}_i$ are any dual bases with respect to the m-trace.
\end{definition}
\begin{lemma}\label{L:chr-based}
  If $\cat$ has a degree $g\in\Gr$ chromatic map $\chr_P$ based on
  $P \in \Proj$ for a projective generator $W_g$, then for any
  $Q\in\Proj$ and any projective generator $W'_g$ of $\cat_g$, there
  exists a chromatic map $\chr_Q$ based on $Q$ for the
  projective generator $W'_g$.
\end{lemma}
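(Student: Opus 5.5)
The proof has two independent reductions: first change the base object $P$ to $Q$ while keeping the generator $W_g$, then change the generator $W_g$ to $W'_g$ while keeping the base $Q$. Both are done by conjugating the given chromatic map by retraction data and checking the defining identity \eqref{eq:chrP}, using only naturality of $\Lambda$ and the snake relations.

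\emph{Change of base.} We may assume $P$ is non-null. (If $P$ were null, \eqref{eq:chrP} would be vacuous and carry no information. In that case I would check whether the definition implicitly requires a non-null base; otherwise I would run the argument below with $P$ replaced by a non-null projective of $\cat$.) By Lemma \ref{L:V-appear} and its proof, $\rev_P\otimes\Id_Q\colon P\otimes P^*\otimes Q\to Q$ is a split epimorphism. Hence there are $f\colon Q\to P\otimes X$ and $h\colon P\otimes X\to Q$ with $X=P^*\otimes Q$ and $h\circ f=\Id_Q$. I will set
$$\chr_Q=(\Id_{W_g}\otimes h)\circ(\chr_P\otimes\Id_X)\circ(\Id_{W_g}\otimes f)\in\End_\cat(W_g\otimes Q).$$
To check \eqref{eq:chrP} for $V\in\cat_g$, tensor the identity \eqref{eq:chrP} for $\chr_P$ on the right by $\Id_X$. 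The coupons $f$ and $h$ sit on the $Q$ strand, which is outside the region where the $W_g$ strand is closed up against $\Lambda_{V\otimes W_g^*}$. So $f$ and $h$ slide out, and the left-hand side for $\chr_Q$ equals $(\Id_V\otimes h)\circ\Id_{V\otimes P\otimes X}\circ(\Id_V\otimes f)=\Id_{V\otimes Q}$.

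\emph{Change of generator.} Now fix the chromatic map $\chr_Q$ for $W_g$ and let $W'_g$ be another projective generator of $\cat_g$. Since $W_g$ is projective of degree $g$, there are finitely many $\alpha_i\colon W_g\to W'_g$ and $\beta_i\colon W'_g\to W_g$ with $\sum_i\beta_i\alpha_i=\Id_{W_g}$. I will define
$$\chr'_Q=\sum_i(\alpha_i\otimes\Id_Q)\circ\chr_Q\circ(\beta_i\otimes\Id_Q)\in\End_\cat(W'_g\otimes Q).$$
In the left-hand side of \eqref{eq:chrP} for $\chr'_Q$, the $W'_g$ strand leaves $\Lambda_{V\otimes {W'_g}^*}$, passes through $\beta_i$, $\chr_Q$ and $\alpha_i$, and returns to $\Lambda_{V\otimes {W'_g}^*}$. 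I then apply the naturality part of Lemma \ref{P:Omega-nat}, in the form $\Lambda_{Q'}\circ\phi=\phi\circ\Lambda_{P'}$ obtained by composing both sides there, with $\phi=\Id_V\otimes\alpha_i^*\colon V\otimes{W'_g}^*\to V\otimes W_g^*$. This slides $\alpha_i$ around the closed $W$ loop through $\Lambda$, turning $\Lambda_{V\otimes {W'_g}^*}$ into $\Lambda_{V\otimes W_g^*}$. After the slide, $\beta_i$ and $\alpha_i$ sit next to each other on the $W_g$ portion of the loop and compose to $\beta_i\alpha_i$. Summing over $i$ gives $\Id_{W_g}$, which leaves exactly the left-hand side of \eqref{eq:chrP} for $\chr_Q$. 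That expression equals $\Id_{V\otimes Q}$.

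\emph{Expected obstacle.} The main difficulty is the diagrammatic bookkeeping in the second step. One has to confirm that sliding $\alpha_i$ through the dual strand ${W'_g}^*$ via duality really matches the direction of the naturality identity in Lemma \ref{P:Omega-nat}, and that the pivotal identifications make $\alpha_i^*$ land on the correct side of $\Lambda$. I would first check this on the explicit formula $\Lambda=\sum_j x_j\circ x^j$, applying the trace pairing as in the proof of Lemma \ref{P:Omega-nat}.
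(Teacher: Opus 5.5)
Your proof is correct and is essentially the paper's argument split into two steps. The paper writes the single formula $\chr_Q=\sum_i(\alpha_i\otimes\rev_P\otimes\Id_Q)\circ(\chr_P\otimes e_{P,Q})\circ(\beta_i\otimes\lcoev_P\otimes\Id_Q)$, which after the snake relations hidden in $e_{P,Q}$ is exactly your composite $\sum_i(\alpha_i\otimes h)\circ(\chr_P\otimes\Id_X)\circ(\beta_i\otimes f)$; your naturality-of-$\Lambda$ slide with $\phi=\Id_V\otimes\alpha_i^*$ goes through as you describe and supplies the verification the paper leaves to the reader.

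Your caveat about null $P$ is well founded, since then \eqref{eq:chrP} holds trivially and the hypothesis carries no information. However, your fallback of substituting another non-null projective is unavailable, because no chromatic map based on it is given. The lemma should simply be read with $P$ non-null, which the paper's proof also tacitly assumes when it invokes $e_{P,Q}$ from Lemma \ref{L:V-appear}.
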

\begin{proof}
  Let $W_g\tto {\alpha_i}W'_g$ and $W'_g\tto {\beta_i}W_g$ be a family
  of morphisms such that $\sum_i\beta_i\alpha_i=\Id_{W_g}$.  Then
  $$\chr_Q=\sum_i(\alpha_i\otimes\rev_P\otimes\Id_Q)\circ(\chr_{P}\otimes e_{P,Q})
  \circ(\beta_i\otimes\lcoev_P\otimes\Id_Q)$$
  is the desired chromatic map, where $e_{P,Q}$ is as in Lemma \ref{L:V-appear}
\end{proof}

\begin{definition}\label{def:newdef}
  A \emph{$\Gr$-chromatic category} is a $\Gr$-finite spherical category
  equipped with a non-degenerate modified trace $\mt$ on $\Proj$ for which there exists a chromatic map in each degree
  $g\in\Gr$.
\end{definition}

The proof of the following is straightforward from the definitions of
$\chr_{W_h\otimes P}$ and of $e'_{W_h,P}$:
\begin{lemma}\label{L:chr-degree}
  Let $\cat$ be a $\Gr$-finite spherical category with projective generators
  $W_g$ and $W_{h}$ in degrees $g$ and $h$, respectively.  If $\chr_{W_h\otimes P}$
  is a chromatic map for $W_g$ based on ${W_h\otimes P}$, then
  $\chr_P=(\Id_{W_g}\otimes e'_{W_h,P})\circ\chr_{W_h\otimes P}$ is a
  chromatic map for $W_g\otimes W_h$ based on $P$, where $e'_{W_h,P}$
  is as in Lemma \ref{L:V-appear}. %\BHm{I think the content of this Lemma should be the proof of the next corollary.}
\end{lemma}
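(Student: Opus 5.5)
We check the defining identity \eqref{eq:chrP} for the proposed $\chr_P$ with $W_g\otimes W_h$ as the generator. Let $V\in\cat_{gh}$. The left-hand side of \eqref{eq:chrP} for $\chr_P$ involves $\Lambda_{V\otimes (W_g\otimes W_h)^*}=\Lambda_{V\otimes W_h^*\otimes W_g^*}$. We set $V'=V\otimes W_h^*$, which lies in $\cat_g$ and is projective, since $W_h$ is projective and $\Proj$ is an ideal. Write $\Lambda_{V'\otimes W_g^*}=\sum_i x_i\circ x^i$ with dual bases for $\Hom(V'\otimes W_g^*,\unit)$ and $\Hom(\unit,V'\otimes W_g^*)$. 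The strand labelled $W_g\otimes W_h$ then splits into a $W_g$ strand and a $W_h$ strand. The $W_g$ strand runs through $\chr_{W_h\otimes P}$, and the $W_h$ strand loops around from the $\Lambda$ coupons to the $W_h$ input of $\chr_{W_h\otimes P}$.

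The key step is to recognise this diagram as the one in \eqref{eq:chrP} for $\chr_{W_h\otimes P}$, with $V'$ playing the role of the $\cat_g$ object, followed by $e'_{W_h,P}$. Isotoping the $W_h$ strand is planar pivotal graphical calculus: we bend it so that it passes from the $\Lambda$ coupons as a $W_h^*$ strand attached to $V$. This gives exactly $V'=V\otimes W_h^*$ entering and leaving the $\Lambda$ coupon, with $W_h\otimes P$ passing through $\chr_{W_h\otimes P}$.

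There are bookkeeping differences between this bent diagram and the defining picture for $\chr_{W_h\otimes P}$. The $W_h$ strand sits between $V$ and $P$, and there is a cup/cap relating the $W_h^*$ part of $V'$ to the $W_h$ factor of $W_h\otimes P$. Concretely, after applying \eqref{eq:chrP} for $\chr_{W_h\otimes P}$ with object $V'$, the $\Lambda\circ\chr$ part collapses to $\Id_{V'}\otimes\Id_{W_h\otimes P}$. What remains is $e'_{W_h,P}$ together with the evaluation and coevaluation closing the $W_h$ strand, i.e.\ a left partial trace over $W_h$. By Lemma \ref{L:V-appear}, $\ptr^L_{W_h}(e'_{W_h,P})=\Id_P$. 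This closes up to $\Id_V\otimes\Id_P$, which is the right-hand side.

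The main obstacle is making sure that the ordering and duality conventions match. Specifically, $(W_g\otimes W_h)^*=W_h^*\otimes W_g^*$, the closed $W_h$ loop must be a left partial trace rather than a right one, and $e'$ (not $e$) must be the morphism that appears. Should the orientation instead produce $V\otimes W_h^*$ on the wrong side, we first transform the dual bases using the rotation and naturality properties of Lemma \ref{P:Omega-nat}, so that $\Lambda_{V\otimes W_h^*\otimes W_g^*}$ is expressed in the form needed to apply \eqref{eq:chrP} to $V'$. Finally, $W_g\otimes W_h$ is a projective generator in degree $gh$ by the preceding lemma on generators, so $\chr_P$ qualifies as a chromatic map in degree $gh$.
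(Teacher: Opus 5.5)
Your proposal is correct and is essentially the verification the paper has in mind. The paper only says the lemma is straightforward from the definitions of $\chr_{W_h\otimes P}$ and $e'_{W_h,P}$: with $V'=V\otimes W_h^*\in\cat_g$, slide $e'_{W_h,P}$ past the $W_g$ cap, apply the chromatic identity for $\chr_{W_h\otimes P}$ to collapse the inner $W_g$ loop, and what remains is $\Id_V\otimes\ptr^L_{W_h}(e'_{W_h,P})=\Id_V\otimes\Id_P$.

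Your hedge is not needed. Since $\lev_{W_g\otimes W_h}$ and $\rcoev_{W_g\otimes W_h}$ are just the nested caps and cups, the conventions match directly. No isotopy of the $W_h$ strand and no rotation of dual bases via Lemma \ref{P:Omega-nat} is required.
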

% \begin{corollary}
%   If $\cat$ is a $\Gr$-spherical category with modified trace $\mt$, then there exists a
%   chromatic map in one degree $g\in\Gr$ if and only if there exists a
%   chromatic map in each degree.
% \end{corollary}
\begin{proposition}\label{chrom-cat1}
  If $\cat$ is a $\Gr$-finite spherical category with modified trace
  $\mt$, then $\cat$ is a $\Gr$-graded chromatic category if and only
  if $\cat_1$ is a chromatic category.
\end{proposition}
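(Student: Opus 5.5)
One direction is immediate. If $\cat$ is $\Gr$-chromatic, then by Definition \ref{def:newdef} it has a chromatic map in degree $1$. Take one based on some $P\in\Proj$. Restricted to $V\in\cat_1$, this is a degree-$1$ chromatic map. By Lemma \ref{L:chr-based} we may re-base it at $Q=W_1$, where $W_1$ is a projective generator of $\cat_1$; it then becomes a chromatic map $\chr_{W_1}$ in the sense of \eqref{eq:chrP1}. One caveat: \eqref{eq:chrP1} is required for all $V\in\cat$, whereas the degree-$1$ definition only asks it for $V\in\cat_1$. This mismatch is harmless if "$\cat_1$ is a chromatic category" is read intrinsically in $\cat_1$, with the restriction of $\mt$ to $\Proj\cap\cat_1$. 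That restriction is still a non-degenerate m-trace, because the hom spaces $\Hom(\unit,P)$ only involve degree-$1$ objects. So this direction amounts to unwinding definitions.

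For the converse, assume $\cat_1$ has a chromatic map $\chr_{W_1}$ for a projective generator $W_1$ of $\cat_1$. We must produce a chromatic map in every degree $g$.

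\textbf{Step 1 (re-basing).} By Lemma \ref{L:chr-based} applied inside degree $1$, we get, for every $Q\in\Proj$ (of any degree), a degree-$1$ chromatic map $\chr_Q\in\End(W_1\otimes Q)$.

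\textbf{Step 2 (passing to degree $g$).} Fix $g$ and a non-null projective generator $W_{g^{-1}}$ of $\cat_{g^{-1}}$. Then $W_g:=W_{g^{-1}}^*$ is a projective generator of degree $g$ by the preceding lemma on generators. We want to shift degree, using Lemma \ref{L:chr-degree} with the roles of the degrees arranged suitably. Apply that lemma with the degree-$1$ chromatic map $\chr_{W_h\otimes P}$ for $W_1$, based on $W_h\otimes P$, with $h=g$. The output is a chromatic map $\chr_P$ for $W_1\otimes W_g$ based on $P$. The degree of $W_1\otimes W_g$ is $g$, and it is a projective generator of $\cat_g$ by the lemma on tensor products of generators. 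Which degree this chromatic map governs needs checking: Lemma \ref{L:chr-degree} must be interpreted as saying $\chr_P$ satisfies \eqref{eq:chrP} for $V$ of degree $gh$. So from degree $1$ and $h=g$ one obtains degree $g$.

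\textbf{Step 3 (verification).} The point to check is the equation for $V\in\cat_g$, assuming \eqref{eq:chrP} holds in degree $1$ for $V\otimes W_g^*\in\cat_1$. Write $\Lambda_{V\otimes(W_1\otimes W_g)^*}$ using the rotation property of Lemma \ref{P:Omega-nat}. This identifies it with $\Lambda_{(V\otimes W_g^*)\otimes W_1^*}$ up to reordering of strands. Then apply the degree-$1$ chromatic identity with $V$ replaced by $V\otimes W_g^*$, based on $W_g\otimes P$. The leftover $W_g$ strand is closed up against $e'_{W_g,P}$. Lemma \ref{L:V-appear} gives $\ptr^L_{W_g}(e'_{W_g,P})=\Id_P$, which removes that strand and yields the identity on $V\otimes P$.

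I expect the main obstacle to be this graphical bookkeeping in Step 3. The difficulty is matching the strand orderings and duals so that rotation and duality of $\Omega$ apply exactly, and making sure the partial trace of $e'$ comes out on the correct side. Once this is done, a chromatic map exists in every degree, and $\cat$ is $\Gr$-chromatic.
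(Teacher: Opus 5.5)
Your proposal is correct and is essentially the paper's proof: re-base the degree-$1$ chromatic map at $W_g\otimes P$ using Lemma~\ref{L:chr-based}, then shift the degree with Lemma~\ref{L:chr-degree}. The only difference is cosmetic: the paper takes $W_g\otimes W_g^*$ as the degree-$1$ generator, so it ends with the generator $W_g\otimes W_g^*\otimes W_g$, and it has no need for your detour through $W_{g^{-1}}^*$.

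One small correction in Step~3: no rotation is needed, and rotation could not swap two interior factors anyway. Since $(W_1\otimes W_g)^*=W_g^*\otimes W_1^*$, you already have $\Lambda_{V\otimes(W_1\otimes W_g)^*}=\Lambda_{(V\otimes W_g^*)\otimes W_1^*}$. After that, your use of the degree-$1$ identity with $V\otimes W_g^*\in\cat_1$ and of $\ptr^L_{W_g}(e'_{W_g,P})=\Id_P$ is exactly right.
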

%\FCm{I would add a proof of the corollary}\NGm{I added it, if ok please remove these comments.}
\begin{proof}
  By definition, if $\cat$ is a $\Gr$-graded chromatic category then
  $\cat_1$ is a chromatic category.  Conversely, assume $\cat_1$ is a
  chromatic category.  Let $P\in \Proj$ and let $W_g$ be a projective
  generator of degree $g\in \Gr$.  Then $W_g\otimes W_g^*$ is a
  projective generator of degree $1$ so by Lemma \ref{L:chr-based} there exists a chromatic map
  for $W_g\otimes W_g^*$ based on $W_g\otimes P$.  Then Lemma \ref{L:chr-degree} 
  implies that there exists a chromatic map based on $P$ of degree $g$
  for the generator $W_g\otimes W_g^*\otimes W_g$.  
\end{proof}
\begin{remark}
  Let $\cat$ be $\Gr$-finite spherical category where $\cat_1$ is a finite
  tensor category in the sense of \cite{EGNO15}. Then from \cite{CGPV23}
  % \cite{CGPV23a,CGPV23b}
  it has a chromatic map.  Then the corollary implies that $\cat$ is a
  $\Gr$-graded chromatic category.
\end{remark}
  % more generally
% \begin{proposition}
%   If $\cat$ is a $\Gr$-spherical category with modified trace $\mt$,
%   then $\cat$ is a $\Gr$-graded chromatic category if and only if
%   there exists a chromatic map in some degree $g\in\Gr$.
% \end{proposition}
A different flavour of ``$\Gr$-finite spherical'' based on generically
semi-simple categories was used in \cite{GPT09,GP13} to construct
renormalized Turaev-Viro invariants:
\begin{definition}\label{def:olddef}
  A \emph{relative $\Gr$-finite spherical category} $\cat$, relative to $\XX$ is a
  sharp $\Gr$-graded category which is finitely semi-simple in each
  degree outside a small subset $\XX\subset\Gr$:
  \begin{enumerate}
  \item $\XX\subset\Gr$, $\XX^{-1}=\XX$;
  \item for any $g_1,\ldots g_n\in\Gr$, $\Gr\not\subset\bigcup_ig_i\XX$;
  \item for any $g\in\Gr\setminus\XX$, the category $\cat_g$ is
    finitely semi-simple.  Simple objects of $\cat_g$ with $g\notin\XX$
    are called generic simple.
  \item There exists a non zero modified trace on $\Proj$.
  \item There exists a $\kk$-valued function $\mb$ on generic simple
    objects, which is constant on isomorphism classes and satisfies
    for any $\bs{g_1,g_2,g_1g_2}\subset\Gr\setminus\XX$ and any simple
    object $V\in\cat_{g_1g_2}$,
    $$\mb(V^*)=\mb(V)=\sum_{V_1\in S_{g_1},V_2\in S_{g_2}}\mb(V_1)\mb(V_2)
    \dim_\kk(\Hom(V,V_1\otimes V_2)),$$ where $S_g$ is a set of
    representative of isomorphic classes of simple objects of
    $\cat_g$.
  \end{enumerate}
\end{definition}
For example, the category of weight modules of an unrestricted quantum
group at odd root of unity is a relative $\Gr$-finite spherical category (see Subsection \ref{SS:exemple}).
% (where $\Gr$ is a Lie group which is Poisson-Lie dual to the Lie group
% associated to the quantum group), see \cite{GP13}.
\begin{proposition}\label{rel-spherical-is-chromatic}
  % A relative $\Gr$-spherical category (generically semi-simple) is a
  % $\Gr$-spherical category (chromatic maps).
  A relative $\Gr$-finite spherical category in the sense of Definition \ref{def:olddef} is a
  $\Gr$-finite spherical category in the sense of Definition \ref{def:newdef}.
\end{proposition}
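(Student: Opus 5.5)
The plan is to produce, for one generic degree $g\in\Gr\setminus\XX$, a chromatic map of degree $g$, and then invoke Proposition \ref{chrom-cat1} through Lemmas \ref{L:chr-based} and \ref{L:chr-degree} to get chromatic maps in every degree. Indeed, once a chromatic map exists in one degree $g$, Lemma \ref{L:chr-degree} with the roles of degrees permuted (tensoring with generators of degree $g^{-1}h$) yields one in every degree $h$. We must also check that $\cat$ is $\Gr$-finite spherical: each $\cat_h$ needs a non-null projective generator, and the trace must be non-degenerate.

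\textbf{Generators.} For generic $g$, $\cat_g$ is finitely semisimple. Each generic simple object $V$ is projective: by condition (2) we can choose $h$ with $h,h^{-1}g\notin\XX$, so $V$ is a retract of $V\otimes W\otimes W^*$ with $W$ generic simple of degree $h^{-1}g$... Rather, the direct route is: a generic simple object has nonzero modified dimension (from the nonzero trace and simplicity), so it lies in $\Proj$ since $\Proj$ is the ideal on which $\mt$ lives and is generated by any non-null projective (Lemma \ref{L:V-appear}). Thus $W_g=\bigoplus_{V\in S_g}V$ is a projective generator of $\cat_g$. For a non-generic $h$, condition (2) gives $g\notin\XX\cup h\XX$, so $h=g\cdot g'$ with $g,g'$ generic, and $W_g\otimes W_{g'}$ is a projective generator of degree $h$ by the earlier lemma. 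Non-degeneracy follows from the remark after Definition \ref{D:spherical}.

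\textbf{Chromatic map.} For generic $g$, I would set
$$\chr_P=\sum_{V\in S_g}\mb(V)\,\bigl(\text{projection of }W_g\text{ onto }V\bigr)\otimes\Id_P,$$
suitably normalised with $\qd(V)=\mt_V(\Id_V)$. To verify \eqref{eq:chrP} for a simple $U\in\cat_g$ (it suffices to treat simples, since $\cat_g$ is semisimple, and then extend by naturality, Lemma \ref{P:Omega-nat}), expand $\Lambda_{U\otimes V^*}$ using semisimplicity of $\cat_1$ restricted to generic pieces. This is delicate: $\cat_1$ may not be generic. To avoid this, I would take $P$ of generic degree $k$ with $k$ and $g^{-1}k$ generic, which is possible by (2). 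The left-hand side then becomes a sum over simple summands $X$ of $U\otimes V^*\otimes\ldots$, with coefficients $\mt$-dual bases, that is, $1/\qd$ factors. Applying the identity of (5),
$$\mb(X)=\sum\mb(V_1)\mb(V_2)\dim\Hom(X,V_1\otimes V_2),$$
collapses it to $\Id_{U\otimes P}$, once $\mb$ is identified with a multiple of $\qd$ via the standard argument of \cite{GP13}.

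\textbf{Main obstacle.} The main obstacle is this last computation: matching the modified-dimension normalisation of the dual bases in $\Lambda$ with the function $\mb$, and handling the fact that the relevant intermediate objects have possibly non-generic degree. I would first try to choose all intermediate degrees generic using (2), and use Lemma \ref{L:chr-based} afterwards to move the base $P$ and the generator to arbitrary ones.
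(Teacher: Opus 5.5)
\textbf{The gap is in the generic chromatic map, which is the only step with real content.} The weights must be $\qd(S_i)$, not $\mb(S_i)$. The check of \eqref{eq:chrP} is a direct computation that uses neither $\cat_1$, nor the degree of $P$ (a spectator strand), nor identity (5) of Definition \ref{def:olddef}.

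Take $U\in\cat_g$ simple and $W_g=\bigoplus_iS_i$. By duality, $\Hom(\unit,U\otimes S_i^*)\cong\Hom(S_i,U)$ is a Hom-space in the semisimple category $\cat_g$. It vanishes unless $S_i\cong U$, in which case it is spanned by $\lcoev_U$. The $\mt$-dual vector is then $\qd(U)^{-1}\rev_U$, because $\mt_{U\otimes U^*}(\lcoev_U\circ\rev_U)=\mt_U(\Id_U)$ by the right partial trace property.

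So if you use weights $w(S_i)$, the left side of \eqref{eq:chrP} is $\frac{w(U)}{\qd(U)}\Id_{U\otimes P}$ after straightening zigzags. This forces $w=\qd$, which is exactly the paper's $\chr_P=\bigoplus_i\qd(S_i)\Id_{S_i}\otimes\Id_P$. Naturality then extends the identity from simple $U$ to all of $\cat_g$.

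The obstacle you identify therefore does not exist. Moreover, the tool you propose for it fails, because $\mb$ is in general \emph{not} a multiple of $\qd$. Suppose $\unit\notin\Proj$, which is the case of interest, e.g.\ unrestricted quantum groups.
Then every projective $Q$ has $\lev_Q\circ\rcoev_Q=0$; otherwise $\unit$ would be a retract of $Q^*\otimes Q$.
By duality, semisimplicity of $\cat_{g_2}$ and the left partial trace property, for generic simples $V,V_1$ this gives
$\sum_{V_2}\qd(V_2)\dim\Hom(V,V_1\otimes V_2)=\qd(V_1^*\otimes V)=0$.
Inserting $\mb=c\,\qd$ with $c\neq0$ into (5) would then give $\qd(V)=0$, contradicting non-degeneracy of $\mt$.

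\textbf{The rest of your argument is fine, with one slip.} Your propagation from one generic degree to all degrees is correct. It actually fills in something the paper's proof leaves implicit, since the paper only treats generic $g$.
By (2), write $h=g\cdot g^{-1}h$ with $g,g^{-1}h\notin\XX$, and take the generator $W_g\otimes W_{g^{-1}h}$, which is non-null by sharpness.
Move the base to $W_{g^{-1}h}\otimes P$ with Lemma \ref{L:chr-based}, then apply Lemma \ref{L:chr-degree}. Proposition \ref{chrom-cat1} plays no role here.

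The slip: you cannot deduce $V\in\Proj$ from $\qd(V)\neq0$, because $\mt$ is only defined on $\Proj$. Projectivity of every object of $\cat_g$ for $g\notin\XX$ follows instead from semisimplicity of $\cat_g$ together with the vanishing of morphisms between different degrees, which is what the paper uses.
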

\begin{proof}
  % In generic degree $g$, $W=\bigoplus_iS_i$ is a projective generator
  % (where the $S_i$ are representing the isomorphic classes of simple
  % object of $\cat_g$) and $\chr_P=\oplus_i\qd(S_i)\Id_{S_i}\otimes P$ is a chromatic
  % map for $W$ based on $P$.
  Observe that for generic $g\in \Gr\setminus\XX$, $\cat_g$ is
  semisimple so that $\Proj\cap \cat_g=\cat_g$. So, in generic degree
  $g$, $W=\bigoplus_iS_i$ is a projective generator (where the $S_i$
  are representing the isomorphism classes of simple object of
  $\cat_g$).  For $\qd(S_i)=\mt_{S_i}(\Id_{S_i})$, it is easy to check
  directly that $\chr_P=\oplus_i\qd(S_i)\Id_{S_i}\otimes \Id_P$ is a
  chromatic map for $W$ based on $P$.
\end{proof}

\subsection{Example of $\Gr$-chromatic categories}\label{SS:exemple}
\newcommand{\rk}{n}
\newcommand{\e}{\operatorname{e}}
\newcommand{\roots}{\Delta}
\newcommand{\La}{L_W}
\newcommand{\h}{\mathfrak{h}}
\newcommand{\Uqg}{U_\xi\g}
\newcommand{\ro}{\ell}
Let $\ell$ be an odd integer such that $\ell\geq 2$ (and $\ell\notin 3\Z$ if $\g=G_2$).  Let $\g$ be a simple finite-dimensional complex Lie algebra of rank
$\rk$ and  dimension $2N+\rk$ with the following:
\begin{enumerate}
\item  a Cartan subalgebra $\h$,
\item a root
system consisting in simple roots
$\{\alpha_1,\ldots,\alpha_\rk\}\subset\h^*$,
\item  a Cartan
matrix $A=(a_{ij})_{1\leq i,j\leq \rk}$,
\item a set $\roots^{+}$ of $N$
positive roots,
\item a root lattice $L_R=\bigoplus_i\Z\alpha_i\subset\h^*$,
\item a scalar product $\left\langle{\cdot,\cdot}\right\rangle$ on the real span of $L_R$ given by its matrix $DA=(\ba{\alpha_i,\alpha_j})_{ij}$ where $D=\operatorname{diag}(d_1,\ldots,d_\rk)$ and the minimum of all the $d_i$ is 1.
\end{enumerate}
The Cartan subalgebra has a basis $\{H_i\}_{i=1\cdots\rk}$ determined
by $\alpha_i(H_j)=a_{ji}$ and its dual basis of $\h^*$ is the
fundamental weights basis of $\h^*$ which generates the lattice of
weights $\La$.  Let
$\rho=\frac12\sum_{\alpha\in\roots^{+}}\alpha\in \La$.
 
% For $i=1,\ldots,\rk$, let
% $\xi_i=\xi^{d_i}$.
  For $x\in \C$ and $k,l\in \N$ we use the notation:
$$
\xi^x=\e^{\frac{2i\pi x}\ell},\quad \qn x_\xi=\xi^x-\xi^{-x},\quad % \qN x_\xi=\frac{\qn
  % x_\xi}{\qn 1_\xi},\quad
\qn k_\xi!=\qn1_\xi\qn2_\xi\cdots\qn k_\xi,\quad{k\brack
  l}_\xi=\frac{\qn{k}_\xi!}{\qn{l}_\xi!\qn{k-l}_\xi!}.
$$    

For each lattice $L$ with $L_R\subset L\subset \La$, there is an
associated \emph{unrestricted quantum group}  which contains the group ring of $L$: 
define $U_\xi^L\g$ as the $\C$-algebra with generators
$K_\beta, \, X_i,\, X_{-i}$ for $\beta\in L, \,i=1,\ldots,\rk$ and
relations
\begin{eqnarray}\label{eq:rel1}
 & K_0=1, \quad K_\beta K_\gamma=K_{\beta+\gamma},\, \quad K_\beta X_{\sigma
    i}K_{-\beta}=\xi^{\sigma \ba{\beta,\alpha_i}}X_{\sigma i}, & \\
 \label{eq:rel2} &[X_i,X_{-j}]=
 \delta_{ij}\frac{K_{\alpha_i}-K_{\alpha_i}^{-1}}{\xi^{d_i}-\xi^{-d_i}}, &  \\
 \label{eq:rel3} & \sum_{k=0}^{1-a_{ij}}(-1)^{k}{{1-a_{ij}} \brack
   k}_{\xi^{d_i}} X_{\sigma i}^{k} X_{\sigma j} X_{\sigma i}^{1-a_{ij}-k} =0,
 \text{ if }i\neq j &
\end{eqnarray}
where $\sigma=\pm 1$.   
Drinfeld and Jimbo consider the quantum group corresponding to
$L=L_R$.  
The algebra $U_\xi^L\g$ is a Hopf algebra with coproduct
$\Delta$, counit $\epsilon$ and antipode $S$ defined by
\begin{align*}
  \Delta(X_i)&= 1\otimes X_i + X_i\otimes K_{\alpha_i}, &
  \Delta(X_{-i})&=K_{\alpha_i}^{-1} \otimes
  X_{-i} + X_{-i}\otimes 1,\\
  \Delta(K_\beta)&=K_\beta\otimes K_\beta, & \epsilon(X_i)&=
  \epsilon(X_{-i})=0, &
  \epsilon(K_{\alpha_i})&=1,\\
  S(X_i)&=-X_iK_{\alpha_i}^{-1}, & S(X_{-i})&=-K_{\alpha_i}X_{-i}, &
  S(K_\beta)&=K_{-\beta}.
\end{align*}
For a fixed choice of a convex order
$\beta_*=(\beta_1,\ldots,\beta_N)$ of $\roots^{+}$ define recursively
a convex set of root vectors $(X_{\pm\beta})_{\beta\in\beta_*}$ in
$U_\xi^L\g$ (see Section 8.1 and 9.1 of \cite{CP95}).  The algebra
$Z_0$ is the subalgebra generated by the elements
$$X_{\pm\beta}^\ro\text{ and } K_\gamma^{\ro}\text{ for all }\beta
\in\Delta^+,\gamma\in L.$$ It is a central sub-Hopf algebra of
$U_\xi^L\g$ and $U_\xi^L\g$ is free $Z_0$-module of rank
${\ro^{2N+n}[L\!:\! L_R]}$.  A $U_\xi^L\g$-module is a weight module
if it is finite dimensional semi-simple as a $Z_0$-module. Then let
$\cat$ be the category of weight modules.

The element $\phi=K_{2\rho}^{1-\ell}$ in $U_\xi^L\g$ satisfies
$S^2(x)=\phi x\phi^{-1}$.  It follows that $U_\xi^L\g$ is a pivotal
Hopf algebra and the category $\cat$ of weight modules is a pivotal
$\C$-category, which is spherical, for details see \cite{GP13}.  Here,
the duality morphisms $\lev$ and $\lcoev$ are the standard evaluation
and coevaluation whereas $\rev\,=\lev\circ\tau\circ(\phi\otimes1)$ and
$\rcoev=\tau\circ(\phi^{-1}\otimes1)\circ\lcoev$ where $\tau$ is the
flip.
 
Let $\Gr=\Hom_{\operatorname{algebra}}(Z_0,\C)$, then $\cat$ is a
sharp $\Gr$-graded category where $\cat_g$ is the full subcategory of
modules $V$ such that any $z\in Z_0$ acts on $V$ by the scalar $g(z)$.
The coproduct of $Z_0$ induces a group structure on $\Gr$ which is
known to be isomorphic to the Poisson-Lie dual of a connected Lie
group with Lie algebra $\g$; Let $\Gr'$ be a connected complex Lie group
with Lie algebra $\g$ and $T$ be the image of $\h$ by the exponential
map.  Suppose $L$ is the lattice dual to the kernel of the exponential
map $h\mapsto\e^{2i\pi h}$, and let $B^+$, $B^-$ be the positive and
negative Borel of $\Gr'$ and $\pi:B^+\times B^-\to T$ be the product of
the two projections $B^\pm\to T$.  Then $\Gr$ is isomorphic to the
kernel of $\pi$.
\begin{proposition}
  The category $\cat$ of weight $U_\xi^L\g$-modules is a
  $\Gr$-chromatic category.
\end{proposition}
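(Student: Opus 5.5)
The goal is to check the ingredients of Definition \ref{def:newdef}: $\cat$ is a $\Gr$-finite spherical category with a non-degenerate m-trace on $\Proj$, and it has a chromatic map in each degree. By Proposition \ref{chrom-cat1} it suffices to show that $\cat$ is $\Gr$-finite spherical and that $\cat_1$ is chromatic. Alternatively, Proposition \ref{rel-spherical-is-chromatic} handles the generic degrees, and then Lemma \ref{L:chr-degree} propagates to all degrees. I would take the second route, since the existing literature (\cite{GP13}) provides exactly the generic semisimplicity data.

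\textbf{Step 1: spherical structure and grading.} Sharpness, pivotality and the $\Gr$-grading by $Z_0$-characters were recalled above. For the spherical part I would invoke \cite{GP13}, which constructs a non-zero m-trace on $\Proj$. By the remark after Definition \ref{D:spherical} (\cite[Corollary 4.3.44]{GP25}), such a trace is automatically non-degenerate.

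\textbf{Step 2: $\Gr$-finiteness.} For each $g\in\Gr$ the category $\cat_g$ is the category of finite dimensional modules over the finite dimensional algebra $U_\xi^L\g/\ker(g)U_\xi^L\g$. This algebra has dimension $\ro^{2N+n}[L\!:\!L_R]$, since $U_\xi^L\g$ is free over $Z_0$. Hence $\cat_g$ has finitely many simples, and the regular module $W_g$ of this quotient is a projective generator of $\Proj\cap\cat_g$. I would also check that $W_g$ is projective in $\cat$, not merely in $\cat_g$. This should follow because $\cat$ splits as a direct sum of the $\cat_g$, with no morphisms between different degrees, so projectivity can be tested degree-wise. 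Finally, $W_g$ is non-null because the quotient algebra is nonzero, by freeness.

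\textbf{Step 3: chromatic maps.} By \cite{GP13} there is a subset $\XX\subset\Gr$ (a Zariski-closed proper subset, the non-generic characters) such that $\cat_g$ is semisimple for $g\notin\XX$. The data of Definition \ref{def:olddef} should then be available from \cite{GP13,GPT09}. Proposition \ref{rel-spherical-is-chromatic} then gives chromatic maps $\chr_P=\oplus_i\qd(S_i)\Id_{S_i}\otimes\Id_P$ in every generic degree.

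For an arbitrary $g\in\Gr$, condition (2) of Definition \ref{def:olddef} lets me pick $h\notin\XX\cup g\XX$. Then both $h$ and $h^{-1}g$ are generic, since $\XX^{-1}=\XX$. Lemma \ref{L:chr-degree} combined with Lemma \ref{L:chr-based} produces a degree $g$ chromatic map from the degree $h$ chromatic map based on $W_{h^{-1}g}\otimes P$, for the generator $W_h\otimes W_{h^{-1}g}$. Lemma \ref{L:chr-based} then transfers this to any generator of $\cat_g$ and any base.

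The main obstacle I expect is verifying that the $\kk$-valued function $\mb$ of Definition \ref{def:olddef}(5) exists. Equivalently, I need to check directly that the generic chromatic maps satisfy \eqref{eq:chrP} for all $V\in\cat_g$. I would take $\mb(V)=\qd(V)=\mt_V(\Id_V)$ and derive the identity from the m-trace property together with semisimplicity of the generic tensor products $V_1\otimes V_2$, as in \cite{GPT09}. If that route proves delicate, I would instead verify \eqref{eq:chrP} directly in a generic degree. There every $V\in\cat_g$ is semisimple, so $\Lambda_{V\otimes W^*}$ decomposes along the simple summands, and the relation reduces to the dual-basis identity $\sum_i x_i\circ x^i$ traced against $\qd(S_i)$.
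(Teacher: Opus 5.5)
Your argument is correct if you take your fallback in the last paragraph, but it starts the construction in a different degree from the paper.

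\textbf{The paper's route.} It takes $\Gr$-finite sphericity (your Steps 1--2) from \cite[Theorem 5]{GP18}. It gets a chromatic map in degree $1$ from \cite{CGPV23}, where $\cat_1$ is a non-semisimple finite tensor category. It then transports that map to every degree via Proposition \ref{chrom-cat1}, i.e.\ Lemma \ref{L:chr-degree} applied with the degree-$1$ generator $W_g\otimes W_g^*$.

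\textbf{Your route.} You start instead at a generic degree $h\notin\XX$, where $\cat_h$ is semisimple and $\chr_P=\bigoplus_i\qd(S_i)\Id_{S_i}\otimes\Id_P$ is explicit. You then transport with the same Lemma \ref{L:chr-degree}.

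\textbf{What each buys.} Your route replaces the nontrivial existence theorem for chromatic maps in finite tensor categories by an elementary computation. It also yields exactly the modified Kirby colours $\sum_i\qd(V_i)V_i$ that reappear in the Turaev--Viro comparison. The cost is reliance on generic semisimplicity from \cite{GP13}. The paper's route is a one-liner given its citations, needs no generic degrees, and applies to any $\Gr$-finite spherical category whose degree-$1$ part is a finite tensor category.

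Incidentally, you only need $h$ generic, not $h^{-1}g$. The degree-$h$ chromatic map may be based on any projective, e.g.\ $W_{h^{-1}g}\otimes P$ with $W_{h^{-1}g}$ from Step 2.

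\textbf{The obstacle you single out is not part of the argument, and your first proposed fix is false.}

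Condition (5) of Definition \ref{def:olddef} plays no role in \eqref{eq:chrP}. The proof of Proposition \ref{rel-spherical-is-chromatic} uses only two facts: $\cat_h$ is semisimple with finitely many simples, and $\qd(S_i)\neq0$ by non-degeneracy of $\mt$. So cite only generic semisimplicity and verify \eqref{eq:chrP} directly, as in your fallback. Your claim that this is ``equivalent'' to the existence of $\mb$ is wrong.

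Moreover, $\mb=\qd$ does not satisfy (5). Fix $V_1$. Since $V_1^*\otimes V\in\cat_{g_2}$ is semisimple, $\sum_{V_2}\qd(V_2)\dim\Hom(V,V_1\otimes V_2)=\qd(V_1^*\otimes V)$. By the left partial trace property this equals the ordinary quantum dimension of the projective object $V_1^*$ times $\qd(V)$. That quantum dimension is $0$, since otherwise $\unit$ would be a retract of $V_1^{**}\otimes V_1^*\in\Proj$. So the right-hand side of (5) vanishes while $\qd(V)\neq0$.

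\textbf{Minor point.} As stated in the paper, the remark after Definition \ref{D:spherical} presupposes sphericity. Take non-degeneracy of the trace directly from the cited sources instead.
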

\begin{proof}
  By \cite[Theorem 5]{GP18}, the category is $\Gr$-spherical, then by
  \cite{CGPV23}, $\cat_1$ is a chromatic tensor category so by
  Proposition \ref{chrom-cat1}, it is a $\Gr$-chromatic category.
\end{proof}
Remark that the unrestricted quantum group $U_\xi^L\g$ has more
structure: the quantum coadjoint action (see \cite{DKP92}) and a
holonomic braiding (see \cite{KR05,KR05a,BGPR20,McP20,McP21,MR25})
that we do not exploit in our construction.

\subsection{Admissible skein module of $\Gr$-chromatic categories}%a $\Gr$-decorated surface

%Let $\Sigma$ be a closed oriented surface with a finite set of basepoints $Y\subseteq \Sigma$, at least one per connected component, and a representation $\rho\in \Rep_\Gr(\Sigma,Y)$ of its fundamental groupoid. To simplify notations, we will call the triple $\wt\Sigma=(\Sigma, Y,\rho)$ a \emph{$\Gr$-decorated surface}.\BPm{I think Ben you already defined this in Definition \ref{D:sk-dec-surface}...}
 Let $\cat$ be a $\Gr$-chromatic category and $\ideal = \Proj$.%\BHm{I think in the section above you switched to $\ideal = \Proj$ right?}

% \BP{Introduction of $\Gr$-colored oriented red edges which satisfy the graded
%   slidding relation and the inversion.}

% \BP{**********  I think François, it would be easier to say the red circles are just a notation for a real blue skein. In any case the relations (3red), (4) and (5) below are just consequences of the red-to-blue relation (we say a ``chromatic move'' in the book).}

We will now introduce a notation, the ``$\Gr$-colored red circles'' which will later represent surgeries being operated on surfaces. 
As we will see immediately, they are actually place holders: each graph containing some $\Gr$-colored red circle is represented by a well defined skein equivalence class of admissible skeins in the previous sense. So a priori red circles could be avoided in our account but they are very handy for proofs. 
\begin{definition}
  A bichrome graph in a connected surface $\Sigma$ is the disjoint
  union of an admissible $\cat$-colored graph $\Graph\subset\Sigma$
  and a finite set of disjoint oriented circles (called ``red
  circles'') each decorated by some element $g\in \Gr$ and disjoint from
  $\Graph$ and from the basepoints of $\Sigma$. A bichrome graph in a
  disjoint union of surfaces $\sqcup \Sigma_i$ is the datum of a
  bichrome graph in each $\Sigma_i$. We will call the union of the red
  circles the ``red part'' of $\Graph$ and its complement in $\Graph$
  the ``blue part'' and we will say that $\Graph$ is monochrome if the
  red part is empty.
\end{definition}
For each connected component of $\Sigma$, by definition at least one red circle in a bichrome graph $\Graph$ is adjacent to at least one region of the component containing some strand of $\Graph$ colored by a projective object. By recurrence then, one can apply the following:
\begin{definition}[Admissible skein class of bichrome graphs]\label{def:bichrome}
Let $\Graph\subset \Sigma$ be a bichrome graph. Its skein equivalence class, denoted $[\Graph]\in \Skein(\Sigma)$ is the admissible skein equivalence class of any monochrome graph obtained from $\Graph$ by applying any finite sequence of the following modifications:
\begin{itemize}
\item one of the two ``red-to-blue'' modifications depicted in Figure
  \ref{fig:chrom-mod} applied to a red circle
\item an admissible skein relation on the blue subgraph of $\Graph$
  operated in a box not intersecting the red part.
\end{itemize}
\end{definition}
The previous definition is independent on all the choices made by the following:
\begin{proposition}\label{prop:bichrome_welldef}
The skein equivalence class $[\Graph]\in \Skein(\Sigma)$ of the bichrome graph $\Graph$ is well defined. %In particular it does not depend on : the choice of which strands are used to perform the ``red-to-blue'' modifications, what isotopies are used to bring these strands near the red circles, what chromatic maps are used and on what sides of the red circles are the ``red-to blue modification'' applied and what list of admissible skein relations is applied to the blue part of $T$. 
\end{proposition}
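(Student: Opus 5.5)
We reduce the statement to local invariance properties of the red-to-blue modification, proved from the chromatic identity \eqref{eq:chrP} together with Lemma \ref{P:Omega-nat}, following the ungraded argument of \cite{CGPV23}. Figure \ref{fig:chrom-mod} is not reproduced above; we assume it replaces a red circle of degree $g$, next to a $P$-colored strand ($P\in\Proj$), by a blue circle colored by a projective generator $W_g$ of degree $g$, fused with the strand through a degree $g$ chromatic map $\chr_P$ on one of the two sides of the circle. The choices to control are:
\begin{enumerate}
\item the projective strand $P$ and the path along which it is brought next to the circle;
\item the side of the circle;
\item the generator $W_g$ and the chromatic map $\chr_P$;
\item the order in which the red circles are converted;
\item the blue skein relations interspersed with the conversions.
\end{enumerate}
Choices (4) and (5) are essentially formal: two red-to-blue modifications on distinct circles are supported in disjoint neighborhoods and commute up to isotopy. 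A blue skein relation in a box avoiding the red part still avoids it after converting other circles, and after conversion it is an admissible relation because the box complement keeps a projective edge. So it suffices to show that converting a single circle gives a class in $\Skein$ independent of the local choices (1)--(3), and that blue relations performed before the conversion are respected.

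The key step is a ``sliding'' lemma. Let $\Graph_1$ be the result of converting a circle $c$ (degree $g$) using a projective strand $P$, and let $V\in\cat$ be any other strand of $\Graph$ adjacent to the same region. Then $\Graph_1$ is skein equivalent to the graph where the blue $W_g$-circle is handle-slid over $V$. We prove this by inserting the identity $\Id_{V\otimes W_g^*}$, decomposed with $\Lambda_{V\otimes W_g^*}$, via \eqref{eq:chrP} applied to the projective object $V\otimes W_g^*$; this is the graded analogue of the standard chromatic sliding argument, using naturality and rotation from Lemma \ref{P:Omega-nat}. Two consequences follow:
\begin{itemize}
\item \emph{Independence of the strand.} Sliding lets us move the fusion point from one projective strand to another adjacent one, since transporting the $P$-strand and then fusing with $Q$ amounts to using $e_{P,Q}$ as in Lemma \ref{L:chr-based}. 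By connectedness of the surface components, any two admissible choices are linked by finitely many such moves.
\item \emph{Independence of the side.} The two sides of the circle are exchanged by sliding across the whole circle, i.e.\ passing the strand around it. Equivalently, the mirrored chromatic identity follows from \eqref{eq:chrP} by the duality property in Lemma \ref{P:Omega-nat}(1), noting that $W_g^*$ is a projective generator of degree $g^{-1}$ and the circle's orientation is reversed.
\end{itemize}

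For choice (3), independence of $W_g$ and $\chr_P$: given two chromatic maps $\chr_P,\chr'_P$, insert the second one via \eqref{eq:chrP} on the strand produced by the first. The slide lemma then lets us absorb the first one, so both are equal to the same doubly-fused skein; the same trick with $\alpha_i,\beta_i$ as in Lemma \ref{L:chr-based} compares different generators. Finally, a blue relation in a box $B$ disjoint from the red part, done before conversion, can be done after conversion provided the strand used for fusion lies outside $B$ or is moved out of it. This is possible by the strand independence above, since admissibility requires a projective edge outside $B$.

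The main obstacle is the slide lemma in the graded, non-abelian setting. One must check that the degrees are compatible: $V$ crossing the circle changes the degree of the strands adjacent to it, and $V\otimes W_g^*$ must be projective and of the degree for which a chromatic map of degree $g$ exists. I would first handle $V$ projective, using \eqref{eq:chrP} directly, and reduce the general case to it by Proposition \ref{prop:ddd} (all edges may be taken $\Proj$-colored). I would check the degree bookkeeping against Definition \ref{def:homogeneous}.
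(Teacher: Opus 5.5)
Your proposal has a genuine gap in the global step. The assertion that ``by connectedness of the surface components, any two admissible choices are linked by finitely many such moves'' is where the content of the proposition lies, and you do not argue it.

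In general a projective strand must be brought to the circle along an arc $\alpha\subset\Sigma\setminus R$ that crosses other blue edges. Those edges have to be fused with it, since strands cannot pass each other in a surface. Two such arcs may end on different circles or on different sides, cross different edges, wind differently around the topology of $\Sigma$, and intersect each other arbitrarily often. For the same reason, your claim that conversions of distinct circles have disjoint supports fails as soon as the arcs used meet or share an edge.

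The paper handles all of this inside an induction on the number of red circles. It defines the generalised red-to-blue move along an arc $\alpha$: fuse every blue strand met by $\alpha$, then convert. It proves $[\Graph_\alpha]=[\Graph_{\alpha'}]$ when $\alpha\cap\alpha'=\emptyset$. It then inducts on the number of points of $\alpha\cap\alpha'$, using an auxiliary arc $\beta$ that follows $\alpha$ from the circle to its first intersection point $p$ with $\alpha'$ and then follows $\alpha'$ to its blue edge. After a small isotopy, $\beta$ is disjoint from $\alpha'$ and meets $\alpha$ in fewer points than $\alpha'$ does, so $[\Graph_\alpha]=[\Graph_\beta]=[\Graph_{\alpha'}]$. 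Nothing in your outline plays this role.

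The slide lemma you use as the engine does not fill the gap. Equation \eqref{eq:chrP} only applies to $V\in\cat_g$. A strand of degree $h$ moved across the circle forces the circle's degree to become $gh$ (Figure \ref{fig:slide}), so the circle cannot remain $W_g$-colored. You flag this but leave it open. In the paper this sliding is Proposition \ref{P:slide}(3), which is proved after, and using, well-definedness.

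Your side argument has the same problem. ``Passing the strand around the circle'' is meaningless for a separating circle. The duality remark only gives the inversion relation, which is immediate from Figure \ref{fig:chrom-mod}. Comparing conversions on opposite sides needs a separate computation, which the paper does with the rotation property of Lemma \ref{P:Omega-nat}.

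What does work is the double insertion you sketch for choice (3), and it needs no slide lemma. Start with the circle converted via $\chr_P$. Insert a second chromatic map $\chr_Q$ via \eqref{eq:chrP}, applied to the circle's strand where it runs next to $Q$. Move the resulting coupons $x_i,x^i$ along the circle to $\chr_P$, and absorb $\chr_P$ by \eqref{eq:chrP} again after applying duality (same side) or rotation (opposite sides).

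This is exactly the paper's base case for two disjoint arcs ending on the same circle, and it covers different strands, generators, chromatic maps and sides at once. Together with commutation for disjoint arcs ending on different circles and the intersection-number induction above, it proves the proposition.
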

\begin{proof}
It is sufficient to prove the statement for connected $\Sigma$. 
For our proof we will argue by induction on the number $n$ of red circles. 
For $n=0$ there is nothing to prove. Suppose the statement true for bichrome graphs containing up to $n$ red circles and let $\Graph$ be one containing $n+1$ red-circles. 

To reduce the number of red-circles one needs to apply a red-to-blue modification to $\Graph$; so one needs to connect an edge of $\Graph$ colored by a projective color, call it $e$, to a red-circle, call it $c$, via a path $\alpha$ embedded in $\Sigma\setminus R$ (where $R$ is the red part) having one endpoint on $e$ and the other one on $c$. In general, though, $\alpha$ is transverse to $\Graph$ and intersects in its interior finitely many of its edges besides $c$ and $e$. The \emph{generalised red-to-blue} move along $\alpha$ is the composition of the admissible skein relation consisting in fusing all the blue strands intersecting $\alpha$ (including $e$, so that the resulting color is projective because $\Proj$ is an ideal) and then applying the red-to-blue modification to the so-obtained graph using $\alpha$ to approach the new projective color to $c$. We will denote $\Graph_\alpha$ the resulting bichrome graph (the slightly abusive notation suppresses the choice of the generator and the chromatic morphism used in the modification). 

It is clear that $\Graph_\alpha$ depends only on $\alpha$ up to
isotopies which do not change its intersections with $\Graph$.
Therefore, if $\alpha$ and $\alpha'$ are two arcs as above connecting
edges $e$ (resp. $e'$) to red circles $c$ (resp. $c'$) we can suppose,
up to perturbing them via a small isotopy, that they intersect
transversally and in a finite number, call it $m$, of points.  We
claim that the admissible skein equivalence class of $\Graph_\alpha$
and of $\Graph_\alpha'$ are equal (these classes are well defined by
inductive hypothesis because both $\Graph_\alpha$ and
$\Graph_{\alpha'}$ have $m$ red circles).

To prove our claim we argue by induction on $m$.  If $m=0$ $\alpha$
and $\alpha'$ are disjoint and we distinguish two cases. If $c\neq c'$
then $\Graph_\alpha$ is skein equivalent to
$(\Graph_\alpha)_{\alpha'}$ simply by applying the generalised
red-to-blue modification along $\alpha'$ to $\Graph_\alpha$. But
$(\Graph_{\alpha})_{\alpha'}=(\Graph_{\alpha'})_{\alpha}$ (because
$\alpha$ and $\alpha'$ are disjoint) and the latter is skein
equivalent to % equal to
$(\Graph_{\alpha'})$ by application of the
generalised red-to-blue modification along $\alpha$.

If $c=c'$, let $P$ (resp. $Q$) be the color obtained after operating the fusion of all the blue edges intersecting $\alpha$ (resp. $\alpha'$),
let $W_g,W'_g$ be projective generators of $\cat_g$ and pick a chromatic map $\chr_P$ based on $P$ at $W_g$ and a chromatic map  $\chr_Q$ based on $Q$ at $W'_g$. There are two subcases to consider. First, if the two
  modifications are made on the same side of the red curve, then
$$
  \epsh{fig4a_2}{24ex}\put(-19,-32){\ms{{{\chr}}_P}}\put(-21,45){\ms{Q}}
    =\sum_i
    \epsh{fig4b_2}{24ex}\put(-19,-32){\ms{{{\chr}}_P}}\put(-19,13){\ms{{{\chr}}_Q}}
    \put(-38,36){\ms{x_i}}\put(-38,-11){\ms{x^i}} =\sum_i
    \epsh{fig4c}{24ex}\put(-19,-9){\ms{{{\chr}}_P}}\put(-19,36){\ms{{{\chr}}_Q}}
    \put(-40,12){\ms{{x^*}_i}}\put(-40,-34){\ms{{x^*}^i}} =
    \epsh{fig4d}{24ex}\put(-19,36){\ms{{{\chr}}_Q}}\put(0,-44){\ms{P}}
$$
where ${x^*}_i$ and ${x^*}^i$ are the dual basis obtained by   ${x^*}^i=(x_i)^*\circ(\phi_{W'_g}\otimes\Id_{W_g^*})$ and ${x^*}_i=(\phi_{W'_g}^{-1}\otimes\Id_{W_g^*})\circ(x^i)^*$ (recall that $\phi_X:X\to X^{**}$ is the pivotal structure).   Here the first and third equalities follow from  \eqref{eq:chrP}  and the second equality from isotopying  the coupon and applying duality of Lemma~\ref{P:Omega-nat}.
Second, if the modifications are made on opposite sides of the red curve, then (with implicit summation):
$$
    \epsh{fig5a}{24ex}\put(-12,0){\ms{{{\chr}}_P}}\put(-38,-2){\ms{Q}}
    \quad=\quad
    \epsh{fig5b}{24ex}\put(-12,-4){\ms{{{\chr}}_P}}\put(-48,18){\ms{{{\chr}}_Q}}
    \put(-65,-5){\ms{x^i}}\put(-65,38){\ms{x_i}}
    \quad=
    \epsh{fig5c}{24ex}\put(-57,14){\ms{{{\chr}}_Q}}\put(-14,22){\ms{{{\chr}}_P}}
    \put(-75,34){\ms{x_i}}\put(-19,-11){\ms{x^i}}
    \quad=$$
    $$=
    \epsh{fig5d}{24ex}\put(-55,14){\ms{{{\chr}}_Q}}\put(-11,22){\ms{{{\chr}}_P}}
    \put(-63,34){\ms{\wt x_i}}\put(-24,-11){\ms{\wt x^i}}
    \quad=\quad
    \epsh{fig5e}{18ex}\put(-37,24){\ms{{{\chr}}_Q}}\put(0,27){\ms{P}}
$$
where $\wt x_i$ and ${\wt x}^i$ are the dual basis obtained from   $x_i$ and $x^i$ by the rotation property of Lemma~\ref{P:Omega-nat}.
Then the admissible skein equivalence classes of the bichrome graphs $\Graph_\alpha$ and $\Graph_{\alpha'}$ are equal in this case. Notice that the two red-to blue modifications above are corresponding to the two different versions of Figure \ref{fig:chrom-mod} so that the blue strand corresponding to the red circle is oriented in one direction with color $W_g$ for one move, and in the opposite direction with color $W_{g}^*=W_{g^{-1}}$ in the other case.

Arguing by induction on $m$, suppose now that we proved that $[\Graph_\alpha]=[\Graph_{\alpha'}]$ for each $\alpha,\alpha'$ which intersect transversally at most $m$ times, and let $\alpha,\alpha'$ be two arcs as above intersecting transversally exactly $m+1$ times. 

We claim that there exist an other arc, call it $\beta$ embedded in
$\Sigma\setminus R$, connecting a blue edge $e''$ (possibly coinciding
with $e$ or $e'$) to a red circle $c''$ (possibly coinciding with $c$
or $c'$), transverse to $\Graph$, $\alpha$ and $\alpha'$ and such that
$\card(\alpha\cap \beta)\leq m$ and $\card(\alpha'\cap \beta)\leq
m$. This will conclude the proof as, by inductive hypothesis we will
have $[\Graph_{\alpha}]=[\Graph_\beta]=[\Graph_{\alpha'}]$. To exhibit
$\beta$, let $\alpha_0$ be the subarc of $\alpha$ formed by the
connected component of $\alpha\setminus \alpha'$ containing
$q=\alpha\cap c$ and let $p\in \alpha\cap \alpha'$ be its other
endpoint. The arc $\beta$ is then the concatenation of $\alpha_0$ and
of the subarc of $\alpha'$ connecting $p$ to $e'$. It is
straightforward to see that $\beta$ can be isotoped so that $\beta$ is
disjoint from $\alpha'$ and intersects $\alpha$ at most $m$ times
(corresponding to $(\alpha\cap \alpha')\setminus \{p\}$).
\end{proof}
By Proposition \ref{prop:bichrome_welldef} we can now draw skein diagrams including red circles (oriented and $\Gr$-colored) and these represent well defined classes in $\Skein(\Sigma)$. They satisfy a list of relations summarised here:
\begin{proposition}\label{P:slide} $[\Graph]=[\Graph']$ if $\Graph'$ is obtained from $\Graph$ by one of the following modifications:
\begin{enumerate}
\item Inversion relations: consisting in switching the orientation of
  a $g$-colored red circle and simultaneously changing its color to
  $g^{-1}$, see Figure \ref{fig:change-orient}.
\item Transparency relations (including also the case of a $1$-colored
  red edge).
\item Slide relations: a blue edge of $\Graph$ can be slid over a
  $g$-colored red circle as in Figure \ref{fig:slide}.
\end{enumerate}
\end{proposition}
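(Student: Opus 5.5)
The plan is to reduce each of the three relations to statements about monochrome skeins. By Proposition \ref{prop:bichrome_welldef}, $[\Graph]$ may be computed by first converting the red circle under consideration, then all the other circles, using any admissible choices. So in each case we convert the relevant red circle in $\Graph$ and in $\Graph'$ by suitably matched red-to-blue modifications, and then compare the resulting bichrome graphs, which have one red circle fewer. Those remaining circles can be removed identically on both sides, since the two graphs differ only in a small region.

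\emph{Inversion relation.} Take a $g$-colored red circle $c$ and its inverse $\bar c$ (the opposite orientation, color $g^{-1}$). For $c$, apply the red-to-blue modification of Figure \ref{fig:chrom-mod} using a projective generator $W_g$ and a chromatic map $\chr_P$ of degree $g$. For $\bar c$, apply the other version of the modification (on the opposite side), using the generator $W_g^*=W_{g^{-1}}$ and the dual chromatic map of degree $g^{-1}$. The latter is obtained from $\chr_P$ by rotating and dualizing, using the duality and rotation properties of Lemma \ref{P:Omega-nat}. The two blue graphs then coincide up to isotopy: a $W_g$-strand oriented one way is the same as a $W_g^*$-strand oriented the other way. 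This is exactly the observation made at the end of the second subcase in the proof of Proposition \ref{prop:bichrome_welldef}.

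\emph{Transparency relations.} For a blue edge of trivial degree passing a base point, there is nothing new to prove: after all circles are converted, the equality is the transparency relation in $\Skein(\wt\Sigma)$. For a $1$-colored red circle passing a base point, first convert the circle using $W_1$. The resulting blue strand has degree $1$, so it passes the base point by the transparency relation of Definition \ref{D:sk-dec-surface}.

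\emph{Slide relation.} This is the main step. Let a blue edge colored by $V$ slide over a $g$-colored red circle $c$. Convert $c$ using a projective strand $P$ that is brought near $c$ on the side away from the slide. After conversion, $c$ becomes a blue $W_g$ circle. This circle is joined to $P$ by the chromatic coupon $\chr_P$ and passes through the insertion $\Lambda$. The slide is then the following identity: inserting a $V$-strand parallel to the $W_g$ strand, fusing it, and applying \eqref{eq:chrP} with $V\otimes W_g^*$ in place of $W_g^*$ reproduces the unslid configuration.

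Concretely, I would fuse the $V$-edge with the $P$-edge, so the new projective color is $V\otimes P$, and use the naturality property of Lemma \ref{P:Omega-nat} to move $\Lambda$ across. The argument should mirror the first subcase of the proof of Proposition \ref{prop:bichrome_welldef}: there, two conversions on the same side were exchanged via \eqref{eq:chrP}, and here the same manipulation is applied with one of the strands being the slid $V$-edge. The delicate point is grading. Since $V$ may have any degree $h$, the slid strand together with the red circle has degree $hg$ or $gh$ along the relevant arcs. One therefore has to check that \eqref{eq:chrP}, which is required for all $V\in\cat_g$, is applied with an object of the correct degree. I would achieve this by fusing $V$ with $W_g^*$ before invoking \eqref{eq:chrP}, and Lemma \ref{L:chr-based} lets us freely change the base object and the generator.
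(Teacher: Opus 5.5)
Your handling of (1) and (2) is essentially the paper's: both follow directly from the definition of the two red-to-blue moves in Figure \ref{fig:chrom-mod} and from the transparency relation for blue edges. In (1), however, you misread the second move. Applied to the reversed, $g^{-1}$-colored circle, it uses $W_{(g^{-1})^{-1}}=W_g$ and a degree-$g$ chromatic map. So with the same $\chr_P$ it gives literally the same blue graph as the first move on $c$, and no dual chromatic map of degree $g^{-1}$ is needed.

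The gap is in (3), which is the real content of the proposition. Your sketch never fixes a concrete instance of \eqref{eq:chrP}: which object plays the role of $V$, in which degree, with which generator and which chromatic map. The details you do give are also off. After a red-to-blue move the $W_g$ circle carries only the coupon $\chr_P$; there is no $\Lambda$ on it. Fusing $V\in\cat_h$ with $W_g^*$ gives an object of degree $hg^{-1}$, which matches neither circle.

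The essential point you miss is that the relation of Figure \ref{fig:slide} changes the color of the red circle from $g$ to $gh$. The blue computation must therefore end with a circle colored by a degree-$gh$ generator and attached by a degree-$gh$ chromatic map. Manipulations using only the degree-$g$ identity, as in the first subcase of Proposition \ref{prop:bichrome_welldef} (where both circles have the same degree), cannot produce this.

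The missing idea is an exchange between circles of two different degrees. The paper proceeds as follows.
\begin{enumerate}
\item For a projective strand $P\in\cat_h$, convert the $g$-circle using $P$ itself. This gives a $W_g$ circle attached by $\chr_P$.
\item Since $W_g\otimes P\in\cat_{gh}$, apply \eqref{eq:chrP} backwards in degree $gh$. Here $W_g\otimes P$ plays the role of $V$, the generator is $W_{gh}$, and the chromatic map is some $\chr_{P^*}$ based on $P^*$ (Lemma \ref{L:chr-based}). This inserts a $W_{gh}$ circle and a copairing $\sum_i x_i\circ x^i$.
\item By the duality property of Lemma \ref{P:Omega-nat}, rewrite this copairing with ${x^*}^i=(x_i)^*\circ(\phi_{W_{gh}}\otimes\Id_{P^*\otimes W_g^*})$ and ${x^*}_i=(\phi_{W_{gh}}^{-1}\otimes\Id_{P^*\otimes W_g^*})\circ(x^i)^*$.
\item After an isotopy, the $W_g$ circle, $\chr_P$ and this copairing form the left-hand side of \eqref{eq:chrP} in degree $g$, now with $W_{gh}\otimes P^*\in\cat_g$ in the role of $V$. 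So the $W_g$ circle can be erased.
\item What remains is a $W_{gh}$ circle attached by $\chr_{P^*}$. By Proposition \ref{prop:bichrome_welldef}, this is the red-to-blue image of the right-hand side of Figure \ref{fig:slide}.
\end{enumerate}
Only once this projective case is established does your reduction for general $V$ go through. Create a projective strand next to $V$ as in Figure \ref{fig:V-appear}, slide the fused $V\otimes P$ by the projective case, unfuse, and then slide the $P$-strand back, again by the projective case.
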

\begin{proof}
  The invariance under inversion comes directly from the definition of
  the two red-to-blue moves in Figure \ref{fig:chrom-mod}. Indeed by
  the definition given in the figure, a red-to-blue on the left side
  of a $g$-colored red circle is exactly the same as a red-to-blue
  move applied to the right side of a $g^{-1}$-colored red circle
  oriented in the opposite way.

  Concerning transparency relation, if a red circle is colored by
  $1\in \Gr$, then, after the red-to blue modification, it is represented
  by an edge colored by $W_1$ which has degree $1$, and, this edge, by
  the transparency relation of blue edges, can slide over any base
  point. Undoing then the red-to-blue modification proves the relation
  for the red circles.

Finally the slide relation is proved as follows : 
We first consider the case where we want to slide a strand colored by $P\in\Proj$ of degree $h\in \Gr$ over a red curve.  Then we have the following skein relations:
% \FC{The strands in the figures need to be oriented: I could not find the original svg files from the paper with alexis. Do you guys have it ?}
% $$
% \epsh{fig8a}{16ex}\put(-9,26){\ms{P}}
%   =\epsh{fig8b}{18ex}\put(-20,-22){\ms{{{\chr}}_P}}\put(-30,-12){\ms{W_g}}\put(-10,-12){\ms{P}}
%   =\epsh{fig8c}{24ex}\put(-33,-37){\ms{{{\chr}}_P}}\put(-45,-27){\ms{W_g}}\put(-25,-27){\ms{P}}\put(-20,0){\ms{W_{gh}}}\put(-11,27){\ms{P}}
%   \put(-21,18){\ms{{{\chr}}_{P^*}}}
%   \put(-43,37){\ms{x_i}}\put(-34,-12.5){\ms{x^i}}
%   =\epsh{fig8d}{24ex}\put(-33,-37){\ms{{{\chr}}_P}}
%   \put(-21,19){\ms{{{\chr}}_{P^*}}}
%   \put(-46,38){\ms{x^{*i}}}\put(-36,-11){\ms{{{x^*}_i}}}\put(-40,-22){\ms{W_g}}\put(-20,-22){\ms{P}}\put(-30,5){\ms{W_{gh}}}
%   =\epsh{fig8e}{24ex}\put(-31,-15){\ms{{{\chr}}_P}}
%   \put(-20,37){\ms{{{\chr}}_{P^*}}}
%   \put(-40,-39.5){\ms{{x^{*i}}}}\put(-34,8.5){\ms{{x^*}_i}}\put(-40,-28){\ms{W_g}}\put(-20,-28){\ms{P}}\put(-40,-05){\ms{W_g}}\put(-20,-05){\ms{P}}\put(-36,27){\ms{W_{gh}}}\put(-10,20){\ms{P}}
%   =\epsh{fig8f}{18ex}\put(-21.5,26){\ms{{{\chr}}_{P^*}}}\put(-26,-7){\ms{W_{gh}}}
%   =\epsh{fig8g}{16ex}\put(-9,-26){\ms{P}}\put(-15,2){\ms{W_{gh}}}
%   $$
%   \BP{new pictures:}% \chr_{P^{\!*}}
  \[\epsh{fig8ao.pdf}{20ex}
\puts{33}{95}{$\ms{g}$}
\pute{101}{86}{$\ms{P}$}
\ =\ 
\epsh{fig8bo.pdf}{22ex}
\putc{58}{19}{$\ms{\chr_P}$}
\putw{89}{89}{$\ms{P}$}
\putw{45}{43}{$\ms{W_g}$}
\pute{100}{7}{$\ms{P}$}
\ =\ 
\epsh{fig8co.pdf}{24ex}
\putc{25}{85}{$\ms{x_i}$}
\putc{72}{66}{$\ms{\chr_{P^{*}}}$}
\putc{43}{38}{$\ms{x^i}$}
\putc{47}{14}{$\ms{\chr_P}$}
\putsw{59}{80}{$\ms{W_{gh}}$}
\pute{77}{85}{$\ms{P}$}
\pute{100}{79}{$\ms{P}$}
\pute{73}{46}{$\ms{W_{gh}}$}
\pute{57}{23}{$\ms{P}$}
\putw{33}{26}{$\ms{W_g}$}
\putn{19}{7}{$\ms{W_g}$}
\pute{98}{7}{$\ms{P}$}
\ =\ 
\epsh{fig8do.pdf}{24ex}
\putne{66}{93}{$\ms{P}$}
\putw{64}{78}{$\ms{W_{gh}}$}
\pute{100}{81}{$\ms{P}$}
\putnw{52}{56}{$\ms{W_{gh}}$}
\putw{40}{24}{$\ms{W_g}$}
\pute{67}{27}{$\ms{P}$}
\putn{21}{7}{$\ms{W_g}$}
\putne{96}{8}{$\ms{P}$}
\putc{29}{87}{$\ms{x^{\!*i}}$}
\putc{73}{68}{$\ms{\chr_{P^{*}}}$}
\putc{45}{39}{$\ms{{x^{\!*}}_{\!i}}$}
\putc{49}{14}{$\ms{\chr_P}$}
\]\[\ =\ 
\epsh{fig8eo.pdf}{24ex}
\putc{73}{84}{$\ms{\chr_{P^{*}}}$}
\putc{45}{58}{$\ms{{x^{\!*}}_{\!i}}$}
\putc{49}{34}{$\ms{\chr_P}$}
\putc{36}{12}{$\ms{x^{\!*i}}$}
\putn{37}{101}{$\ms{P}$}
\putse{29}{89}{$\ms{W_{gh}}$}
\pute{101}{88}{$\ms{P}$}
\putnw{52}{73}{$\ms{W_{gh}}$}
\pute{69}{48}{$\ms{P}$}
\putw{42}{45}{$\ms{W_g}$}
\putne{95}{22}{$\ms{P}$}
\pute{60}{14}{$\ms{W_g}$}
\ =\ 
\epsh{fig8fo.pdf}{24ex}
\putc{62}{81}{$\ms{\chr_{P^{*}}}$}
\pute{100}{85}{$\ms{P}$}
\pute{59}{44}{$\ms{W_{gh}}$}
\putn{79}{17}{$\ms{P}$}
\ =\ 
\epsh{fig8go.pdf}{20ex}
\puts{34}{90}{$\ms{gh}$}
\pute{100}{12}{$\ms{P}$}
\]
where ${x^*}_i$ and ${x^*}^i$ are the dual basis defined by
${x^*}^i=(x_i)^*\circ(\phi_{W_{gh}}\otimes\Id_{P^*\otimes {W_g}^*})$
and
${x^*}_i=(\phi_{W_{gh}}^{-1}\otimes\Id_{P^*\otimes
  {W_g}^*})\circ(x^i)^*$ (where we recall that $\phi_X:X\to X^{**}$ is
the pivotal structure).  Next, consider the general case where we want
to slide a strand colored by $V \in \cat$ over a red curve. Applying a
skein relation as in Figure \ref{fig:V-appear},%\BPm{modified} 
we can assume there is
% the procedure explained in the proof of Proposition~\ref{prop:ddd}, we
% can push
a strand colored by $P \in \Proj_\cat$ next to the
$V$-colored strand. Inserting coupons colored by identities, we
replace the $V$-colored arc we want to slide by an arc colored by
$V\otimes P\in\Proj_\cat$ which we then slide over the red curve. By
removing then the inserted coupons and sliding back the $P$-colored
strand we obtain the desired result.
\end{proof}

\begin{figure}
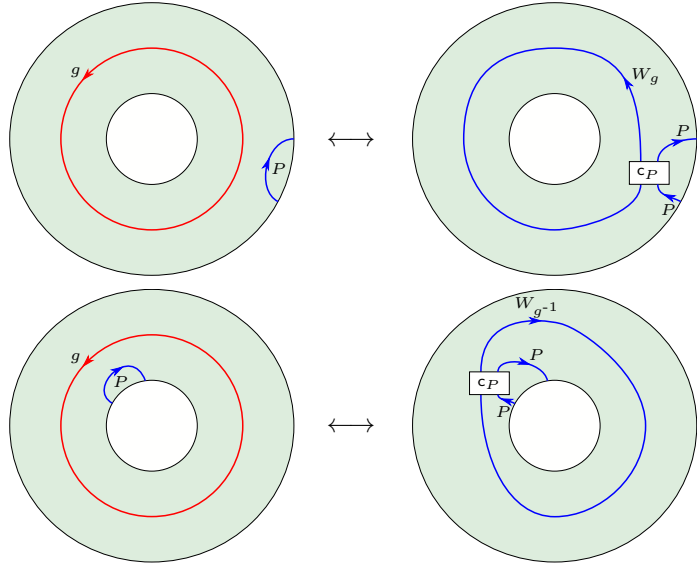

  \centering
\[
  \epsh{fig33n.pdf}{24ex}
  \putnw{25}{74}{$\ms{g}$}
  \putse{92}{41}{$\ms{P}$}
  \quad\longleftrightarrow\quad
  \epsh{fig33q.pdf}{24ex}
  \putne{77}{72}{$\ms{W_g}$}
  \putn{95}{51}{$\ms{P}$}
  \puts{90}{26}{$\ms{P}$}
  \putc{83}{38}{$\ms{\chr_P}$}
\]
\[
  \epsh{fig33p.pdf}{24ex}
  \puts{39}{68}{$\ms{P}$}
  \putnw{25}{74}{$\ms{g}$}
  \quad\longleftrightarrow\quad
  \epsh{fig33r.pdf}{24ex}
  \putn{44}{93}{$\ms{W_{g^{\text-1}}}$}
  \putne{41}{74}{$\ms{P}$}
  \puts{32}{57}{$\ms{P}$}
  \putc{27}{66}{$\ms{\chr_P}$}
\]
  \caption{Chromatic modification}
  \label{fig:chrom-mod}
\end{figure}

\begin{figure}
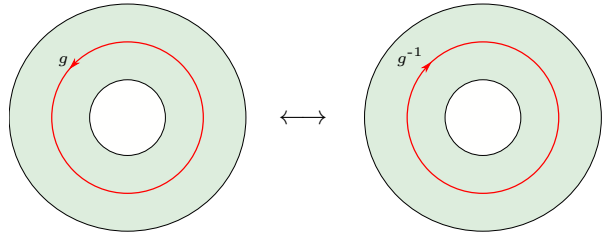

  \centering
\[
  \epsh{fig33s.pdf}{20ex}
  \putnw{25}{74}{$\ms{g}$}
  \quad\longleftrightarrow\quad
  \epsh{fig33t.pdf}{20ex}
  \putnw{25}{74}{$\ms{g^{\text-1}}$}
\]
  \caption{Reversing the orientation of a red circle}
  \label{fig:change-orient}
\end{figure}
\begin{figure}
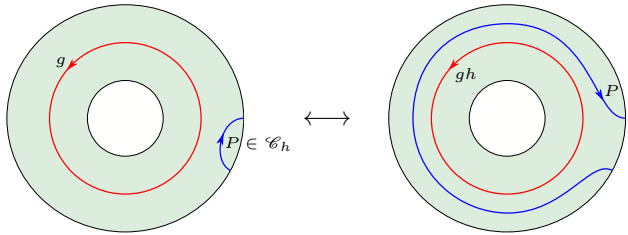

  \centering
  \[
  \epsh{fig33n.pdf}{20ex}
  \putnw{25}{74}{$\ms{g}$}
  \putse{92}{42}{$\ms{P\in\cat_h}$}
  \qquad\longleftrightarrow\quad
  \epsh{fig33o.pdf}{20ex}
  \putse{28}{72}{$\ms{gh}$}
  \pute{91}{62}{$\ms{P}$}
\]
  \caption{Slidding on a red circle}
  \label{fig:slide}
\end{figure}

\section{The 3-dimensional $\Gr$-HQFT} \label{S:GHQFT}
%\BP{No general setting, just write some properties.}

\subsection{Juh\'asz's presentation of the cobordism category.}\label{SS:Juhasz}
% The lift of a generator is the product of gauge/restriction cylinders with a
% standard lift (\ie one with base points $\bs{Y\times0,Y\times 1}$
% where $Y$ is away from the framed sphere and $\rho(*\times[0,1])=1$
% for any $*\in Y$).
% \subsection{Generators of the (decorated) cobordism category}
In this section, following Juh\'asz \cite{Juh18}, we present
elementary cobordisms that generate the cobordism category $\cob$.
%  and its decorated version.
Let $n\in\N^*$ (we will in particular focus on the
$n=2$ case but we will prove Proposition \ref{P:diffeo-are-rel} for
any $n$).  Recall that the objects of $\cob$ are closed oriented
$n$-manifolds and the morphisms are equivalence classes of
$(n+1)$-dimensional cobordisms.  We write $C\simeq C'$ when two
cobordisms are equivalent, meaning that they are diffeomorphic via a
diffeomorphism which is the identity on the boundary.

{\bf Handle cobordisms $\handle(\Sp):\Sigma\to \Sigma(\Sp)$.}\\
We recall the cobordisms obtained by attaching an
$(n+1)$-dimensional index-$k$ handle for
$k\in\bs{0,1,\ldots,n+1}$. Let $\Sigma$ be a
closed oriented $n$-manifold. A \emph{framed $(k-1)$-sphere} in
$\Sigma$ is an orientation-reversing embedding
$$\Sp=\Sp^{k-1}: S^{k-1} \times D^{n-k+1} \hookrightarrow \Sigma\ .$$
Then we can
perform surgery on $\Sigma$ along $\Sp$ by removing the interior of
the image of $\Sp$ and gluing in $D^{k} \times S^{n-k}$, obtaining a
well-defined topological manifold $\Sigma(\Sp)$ which, using the
framing of the sphere, can be endowed with a canonical smooth
structure.

Given a
framed $(k-1)$-sphere $\Sp$ we denote by $\handle(\Sp)$ the associated smooth
cobordism from $\Sigma$ to $\Sigma(\Sp)$ (unique up to diffeomorphism
relative to the boundary; see for instance \cite[Theorem 3.13]{Mil65b})
which is topologically the space
\begin{equation}
  \label{eq:handle-cob}
  \handle(\Sp):= (\Sigma\times[-1,1])\cup_{\Sp\times\bs1} (D^{k} \times D^{n-k+1})\ :\Sigma \to \Sigma(\Sp).
\end{equation}
Note that in the case $k=0$, a $(-1)$-framed sphere $\Sp$ in $\Sigma$
is the unique map $\emptyset\to \Sigma$.  In this case
$\Sigma(\Sp)=\Sigma\sqcup S^n$ and
$\handle(\Sp)=\Sigma\times[-1,1]\sqcup D^{n+1}$.
\\

{\bf Mapping cylinder $\cyl(\psi):\Sigma\to \Sigma'$.}\\
For any orientation-preserving diffeomorphism $\psi:\Sigma\to\Sigma'$
between two closed oriented $n$-manifolds, the mapping cylinder of $\psi$
is the cobordism
\begin{equation}
  \label{eq:mapping-cylinder}
  \cyl(\psi):= \Sigma\times[0,1] :\Sigma \to \Sigma',
\end{equation}
where $\Sigma\times\bs{0}$ is identified with $\Sigma$ and
$\Sigma\times\bs{1}$ is identified with $\Sigma'$ by $\psi$.

Juh\'asz shows that the cobordisms $\bs{\handle(\Sp)}_\Sp$ indexed by the
framed spheres in closed oriented $n$-manifolds together with the
mapping cylinders $\bs{\cyl(\psi)}_\psi$ indexed by the orientation-preserving
diffeomorphisms generate the category $\cob$, and he gives a complete
set of relations:
\begin{enumerate}
\item[(R1)] For composable diffeomorphisms $\psi$ and $\psi'$ between closed
  oriented $n$-mani\-folds, we have the relation
  $\cyl({\psi \circ \psi'}) \simeq \cyl(\psi)\circ \cyl({\psi'})$.  We also have the
  relations % $e_{\Sigma,\emptyset}\simeq \cyl({\Id_\Sigma}$ and
  $\cyl(\psi)\simeq \Id_\Sigma\in \End_{\cob}(\Sigma)$ if $\psi\co \Sigma \to \Sigma$ is a
  diffeomorphism isotopic to the identity.
\item[(R2)] Let $\psi \co \Sigma \to \Sigma'$ be an orientation
  preserving diffeomorphism between closed oriented $n$-manifolds and $\Sp$
  be a framed sphere in $\Sigma$.  Consider the framed sphere
  $\Sp'=\psi\circ \Sp$ in $\Sigma'$ and denote by
  $\psi^\Sp\co \Sigma(\Sp)\to \Sigma'(\Sp')$ the induced diffeomorphism.
  Then the commutativity of the following diagram defines a relation:
$$\xymatrix{
\Sigma \ar[d]_{\cyl({\psi})} \ar[r]^{\handle({\Sp})} & \Sigma(\Sp) \ar[d]^{\cyl({\psi^\Sp})} \\
\Sigma' \ar[r]^{\handle({\Sp'})} & \Sigma'(\Sp')}$$
\item[(R3)] Let $\Sp, \Sp'$ be disjoint framed sphere in an oriented
  $n$-manifold $\Sigma$.  Then $\Sp$ (resp. $\Sp'$) also defines a framed
  sphere $\Sp_{\Sp'}$ in $\Sigma(\Sp')$ (resp. $\Sp'_{\Sp}$ in
  $\Sigma(\Sp)$). Notice that
  $\Sigma(\Sp)(\Sp'_{\Sp})= \Sigma(\Sp')(\Sp_{\Sp'})$ and denote this
  $n$-manifold by $\Sigma(\Sp,\Sp')$.  The commutativity of the following
  diagram defines a relation:
$$\xymatrix{
\Sigma \ar[d]_{\handle(\Sp')} \ar[r]^{\handle({\Sp})} & \Sigma(\Sp) \ar[d]^{\handle(\Sp'_\Sp)} \\
 \Sigma(\Sp')\ar[r]^{\handle(\Sp_{\Sp'})} & \Sigma(\Sp,\Sp')}$$

\item[(R4)] Let $\Sp$ be a framed $k$-sphere in an oriented $n$-manifold
  $\Sigma$ and $\Sp'$ a framed $k'$-sphere in $\Sigma(\Sp)$.  If the
  attaching sphere $\Sp'(S^{k'}\times \{0\})\subset \Sigma(\Sp)$
  intersects the belt sphere
  $\{0\}\times S^{-k+1} \subset \Sigma(\Sp)$ once transversely, then
  there is a diffeomorphism (well defined up to isotopy)
  $\phi \co \Sigma \to \Sigma(\Sp)(\Sp')$ (see \cite[Definition
  2.17]{Juh18}) and the following is a relation:
 $$
 \handle(\Sp') \circ  \handle(\Sp) \simeq \cyl({\phi}).
 $$
\item[(R5)] For each framed $k$-sphere $\Sp$ in an oriented
  $n$-manifold $\Sigma$, there is a relation
  $\handle(\Sp) \sim \handle(\bar\Sp) $, where the framed
  $k$-sphere $\bar \Sp \co S^k \times D^{2-k} \hookrightarrow \Sigma $
  is defined by $\bar\Sp(x,y)= \Sp(r_{k+1}(x),r_{2-k}(y))$ for any
  $x\in S^k \subset \R^{k+1}$ and $y\in D^{2-k}\subset \R^{2-k}$, with
  $r_m(x_1,x_2, \dots, x_m)=(-x_1,x_2, \dots, x_m)$.
\end{enumerate}

One can modify this presentation to get a presentation of the
subcategory $\cob'$ of cobordisms such that each component of every
cobordism has a non-empty source and non-empty target (see
\cite{Juh18}): For this, one only has to remove the generators
$\handle(\Sp)$ where $\Sp$ is a framed attaching sphere of index $0$ or
$n+1$ and ignore the relations that involve them.

Similarly, a presention of the category the ``non-compact'' cobordism
category $\cobnc$ such that each component of every cobordism has a
non-empty source is obtained by removing the generators $\handle(\Sp)$ where
$\Sp$ is a framed attaching sphere of index $0$ and ignore the
relations that involve them (see \cite{CGPV23}).

\subsection{$\Gr$-Decorated handles.}
In this subsection, we assume $n=2$ and we describe generators of the
category $\wt\cob$ of $\Gr$-decorated $(2+1)$-cobordisms.

{\bf Mapping cylinder $\wt\cyl(\psi):\wt\Sigma\to \wt\Sigma'$.}\\
% If $\psi:\Sigma\to \Sigma'$ is a diffeomorphism the mapping cylinder of $\psi$ is the
% cobordism $\cyl(\psi)=\Sigma\times[0,1]$ where $\Sigma\times\bs0\subset\partial \cyl(\psi)$
% is identified to $-\Sigma$ (which is $\Sigma$ with the opposite orientation) and
% $\Sigma\times\bs1\subset\partial \cyl(\psi)$ is identified to $\Sigma'$ using $\psi$.
If $\wt\Sigma$ and $\wt\Sigma'$ are $\Gr$-decorated surfaces, and $\psi$ is an isomorphism of $\Gr$-decorated surfaces, so $\psi(Y)=Y'$ and $\psi_*\rho = \rho'$, then it induces a $\Gr$-decorated cobordism $\wt \cyl(\psi)$ given by $\cyl(\psi)$ equipped with the unique $\Gr$-representation $\rho_\cyl$ compatible with the boundary and satisfying $\rho_\cyl(\{y\}\times [0,1]) = 1$ for every $y\in Y$.

{\bf Standard $\Gr$-decorated handle cobordisms.}\\
We now turn to define $\Gr$-decorated handle attachments.  A $\Gr$-decorated
handle cobordism $\wt C$ is a $\Gr$-decorated cobordism whose
underlying cobordism is a handle cobordism in $\cob$.  We distinguish
among them a subclass of standard $\Gr$-decorated handle cobordisms
with some compatiblity conditions for the base points in their source
and target.  Let us denote by $1\in S^{k-1}\subset D^{k}$ the first
unit vector of the canonical bases of $\R^{k}$ and $-1$ its antipodal
element, for $k\geq 1$.

% \BP{maybe insist that a decorated handle attachments is just a handle attachment with any decoration and only restriction on the set of base points}
\begin{definition} \label{D:StdDecHandles}
  Assume $n=2$ and let $\wt \Sigma=(\Sigma,Y,\rho)$ be a $\Gr$-decorated
  surface.  Let $\Sp$ be a framed $(k-1)$-sphere in $\Sigma$ and for $\varepsilon,\eta \in \{\pm\}$ let
  $y_{\varepsilon\eta}=\Sp(\varepsilon\times \eta)\in\Sigma$.  We say that $\Sp$ is a \emph{standard framed
    $(k-1)$-sphere} in $\wt \Sigma$ if the conditions below are
  met. In this case, we define the \emph{surgered $\Gr$-decorated surface}
  $\wt\Sigma(\Sp)=(\Sigma(\Sp), Y(\Sp), \rho(\Sp))$. For $k=1$, these
  require some extra data $g\in \Gr$.  % \BHm{removed: "if no base point
    % of $Y$ is in the interior of the framed sphere $\Sp$" This cannot
    % be ensured for attaching a 3-handle. Indeed, we're attaching on a
    % 2-sphere, and there must be at least one marked point in this
    % connected component 2-sphere. I replace with the conditions
    % below.}
\begin{description}
\item[If $k=0$] The attaching sphere is empty. We ask no condition. We set 
$$\wt\Sigma(\Sp) := (\Sigma\sqcup S^2, Y\sqcup \{1\}, \rho)$$ where we have identified $\Pi_1(S^2,1) \simeq \ast$ as the trivial groupoid.

\item[If $k=1$] The attaching sphere is two disks. We ask that $Y \cap \Sp = \{y_{++},y_{-+}\}$ is one point on each disk. For an arbitrary element $g\in\Gr$, we set % \BHm{What do you think of this notation?} \FCm{OK for me: thanks !}
$$\wt\Sigma(\Sp,g) := (\Sigma(\Sp), Y, \rho(\Sp,g))$$
where $\rho(\Sp,g)$ is the unique $\Gr$-representation which agrees with $\rho$ on the exterior of the two disks $\Pi_1(\Sigma,Y)\simeq \Pi_1(\Sigma\smallsetminus \Sp,Y)\to \Pi_1(\Sigma(\Sp),Y)$ and evaluates to $g$ on the path $\gamma = D^1\times \{1\}\subseteq D^1\times S^1\subseteq \Sigma(\Sp)$. 

\item[If $k=2$] The attaching sphere is an annulus. We ask that $Y \cap \Sp = \{y_{++},y_{+-}\}$ and that the core $\gamma = \Sp(S^1\times\{1\}) \in \Pi_1(\Sigma,Y)$ evaluates to $\rho(\gamma)=1 \in \Gr$. We set 
$$\wt\Sigma(\Sp) := (\Sigma(\Sp), Y, \overline\rho)$$
where $\overline\rho$ is the unique $\Gr$-representation on $\Sigma(\Sp)$ which agrees with $\rho$ on $\Sigma\smallsetminus \Sp$. It is well-defined because $\rho(\gamma)=1$. 

\item[If $k=3$] The attaching sphere is a 2-sphere, so $\Sigma = \Sigma'\sqcup S^2$. We ask that $Y \cap \Sp = \{y_+\}$. We set 
$$\wt\Sigma(\Sp) := (\Sigma', Y\cap \Sigma', \rho\vert_{\Sigma'})\ . $$
\end{description}
In all of these cases, we define the \emph{$\Gr$-decorated $k$-handle attachment 3-cobordism} $$\wt \handle(\Sp) := (\handle(\Sp), \rho_\handle) : \wt\Sigma\to \wt\Sigma(\Sp)$$ where $\rho_\handle$ is the unique $\Gr$-representation compatible with the boundary and satisfying $\rho_\handle(\{y\}\times [0,1]) = 1$ for every $y\in Y\cap (\Sigma\smallsetminus \text{interior}(\Sp))$. For $k=1$, we denote it $\wt \handle(\Sp,g)$ when we need to emphasize the dependence on the choice of $g\in\Gr$.
%\BP{Not finished**************
%2) let $y_\pm=\Sp(1\times(\pm1))\in\Sigma$, then $y_+,y_-\in Y$ (this condition is void if $k=-1$ and $\Sp=0$) 3) If $k=1$, $\rho(\Sp(S^1\times\bs1))=1_\Gr$ where $\Sp(S^1\times\bs1)$ is seen as a loop $y_+\leadsto y_+$.
%.  Let $\rho\in\Rep_\Gr(\Sigma,Y)$, then we say that $\Sp$ is a standard framed sphere for the decorated surface $(\Sigma,Y,\rho)$}
\end{definition}

We will see that mapping cylinders, standard $\Gr$-decorated handle
cobordisms and equivalences of $\Gr$-decorated surfaces generate $\wt\cob$.

\subsection{Handle maps and handle cancellation}\label{S:handlemaps}
%\BP{We only consider standard decorated handle cobordism.}
% \begin{itemize}
% \item 3-handle $\to \F'$
% \item 2-handle along a curve with trivial holonomy $\to$ cutting map
% \item degree $g$ 1-handle $\to$ assume there are no base points in the
%   attaching framed sphere and two base points $*_\pm$ at the end of
%   the path going throught the handle
%   $\gamma=(1,0)\times[0,1]\subset S^1\times[0,1]$.  Then the
%   representation $\rho$ of the decorated cobordism is determined by
%   its restriction on the source surface and by $\rho(\gamma)=g$.
%   Then we add a red meridian colored with $g$ at the belt circle.
% \end{itemize}
\begin{definition}
    Let $\wt\Sigma$ be a $\Gr$-decorated surface, $\Sp:S^2\hookrightarrow\Sigma$ a standard attaching 2-sphere and $\wt \handle(\Sp): \wt\Sigma\to \wt\Sigma(\Sp)$ the standard $\Gr$-decorated 3-handle attachment of Definition \ref{D:StdDecHandles}. We write $\wt\Sigma = \wt\Sigma'\sqcup S^2$. 
    
    The \emph{3-handle map} is the linear map:
    \begin{equation*}
        \Skein(\wt \handle(\Sp)) : \begin{array}{cll}
         \Skein(\wt\Sigma'\sqcup S^2) &\to & \Skein(\wt\Sigma')\\
         \Graph\sqcup \Graph' &\mapsto & \F'(\Graph')\cdot  \Graph
        \end{array}
    \end{equation*}
    given by applying the invariant of admissible skeins in $S^2$ from \eqref{E:DefF'}.
\end{definition}
\begin{definition}
    Let $\wt\Sigma$ be a $\Gr$-decorated surface, $\Sp:S^1\times D^1\hookrightarrow\Sigma$ a standard framed 1-sphere and $\wt \handle(\Sp): \wt\Sigma\to \wt\Sigma(\Sp)$ the standard $\Gr$-decorated 2-handle attachment of Definition \ref{D:StdDecHandles}. We write $\wt\Sigma = \wt\Sigma'\underset{S^1\times S^0}{\cup} S^1\times D^1$ and $\wt\Sigma(\Sp) = \wt\Sigma'\underset{S^1\times S^0}{\cup} D^2\times S^0$. 
    
    The \emph{2-handle map} is the linear map:
    \begin{equation*}
        \Skein(\wt \handle(\Sp)) : \begin{array}{cll}
         \Skein(\wt\Sigma) &\to & \Skein(\wt\Sigma(\Sp))\\
         \Graph &\mapsto & \Graph'
        \end{array}
    \end{equation*}
where $\Graph$ is assumed to intersect the core of $\Sp$ transversely and have standard form below, and $\Graph'$ is obtained from $\Graph$ by removing $\Graph \cap S^1\times D^1$ and replacing it with the skein in $D^2\times S^0$ below.
\[
\Graph=\epsh{fig6c}{10ex}\quad \mapsto\quad
\Graph'=\sum_i\epsh{fig6d}{10ex}\putc{40}{65}{${x_i}$}
\putc{80}{37}{${x^i}$}\;\;.
\]
where $\sum_i x_i\otimes x^i$ is the copairing \eqref{E:copairing}.
\end{definition}
A priori, this map depends on how we isotoped $\Graph$ to be transverse to the core of $\Sp$, and chose an ordering on the intersection points. However, by Lemma \ref{P:Omega-nat} (2) and (3), one can slide any coupon through the cutting morphism and change the linear ordering inducing the same cyclic ordering. So this map is well-defined. Moreover, by Lemma \ref{P:Omega-nat}(1), it is invariant under reversing the attaching sphere.

\begin{definition}
  Let $\wt\Sigma$ be a $\Gr$-decorated surface,
  $\Sp:S^0\times D^2\hookrightarrow\Sigma$ a standard framed 0-sphere,
  $g \in \Gr$ any element and
  $\wt \handle(\Sp,g): \wt\Sigma\to \wt\Sigma(\Sp,g)$ the standard
  $\Gr$-decorated 1-handle attachment of Definition
  \ref{D:StdDecHandles}. We write
  $\wt\Sigma = \wt\Sigma'\underset{S^1\times S^0}{\cup} D^2\times S^0$
  and
  $\wt\Sigma(\Sp,g) = \wt\Sigma'\underset{S^1\times S^0}{\cup}
  S^1\times D^1$. We denote
  $\gamma = S^1\times\{0\} \subseteq \wt\Sigma(\Sp,g)$ the cocore and
  by $\gamma^g$ the curve $\gamma$ in red colored by $g$.
    
    The \emph{1-handle map} is the linear map:
    \begin{equation*}
        \Skein(\wt \handle(\Sp)) : \begin{array}{cll}
         \Skein(\wt\Sigma) &\to & \Skein(\wt\Sigma(\Sp))\\
         \Graph &\mapsto & \Graph \cup \textcolor{red!80!black}{\gamma^g}
        \end{array}
    \end{equation*}
where $\Graph$ is assumed to be disjoint from $\Sp$ hence give a skein in $\wt\Sigma' \subseteq \wt\Sigma(\Sp,g)$, and $\gamma$ is a $g$-colored red circle which is interpreted as a skein using Figure \eqref{fig:chrom-mod}. Graphically, we get:
\begin{multline*}
        \Graph=\epsh{fig9e}{10ex}
    \\
    % \quad \rightsquigarrow\quad \Graph'= \epsh{fig9b}{10ex}\\
\mapsto \Graph'\cup \textcolor{red!80!black}{\gamma^g} = \epsh{fig9c}{16ex}\puts{51}{82}{$\textcolor{red!80!black}{\gamma^g}$}
\end{multline*}
\end{definition}
Again, this map a priori depends on how we turned $\gamma$ blue and how to isotope $\Graph$ to be disjoint from $\Sp$. Two such isotopies are related by a sliding relation as in Figure \ref{fig:change-orient} which give equivalent results by Proposition \ref{P:slide}(3). Two ways of turning $\gamma$ blue give equivalent results by Proposition \ref{prop:bichrome_welldef}. Hence this map is well-defined. Moreover, it is invariant under reversing the attaching sphere and trading $g$ to $g^{-1}$ as in Figure \ref{fig:slide} by Proposition \ref{P:slide}(1).

% \paragraph
{\textbf{Cancellation of 1 and 2-handles}}
Let $\wt\Sigma$ be a $\Gr$-decorated surface, $\Sp^0$ a standard framed 0-sphere in $\wt\Sigma$ and $\Sp^1$ a standard framed 1-sphere in $\wt\Sigma(\Sp^0)$ which cancels with $\Sp^0$. 

The undecorated handle attachments cancel in the sense that they compose to a mapping cylinder for some diffeomorphism $\psi : \Sigma \to \Sigma(\Sp^0)(\Sp^1)$ which is the identity outside a ball in $\Sigma$.
By our requirements on standard handle attachments for $\Gr$-decorated surfaces, this also holds at the level of $\Gr$-decorated cobordisms: 
$$\wt \handle(\Sp^1)\circ \wt \handle(\Sp^0,g) \simeq \wt M_\psi\ .$$

\begin{proposition}\label{P:hm-12-cancel}
  For a cancelling pair of 1 and 2-handles as above, the skein handle maps satisfy the handle cancellation:
  $$\Skein(\wt \handle(\Sp^1))\circ\Skein(\wt \handle(\Sp^{0},g))=\Skein(\wt M_\psi)$$
  agree as linear map $\Skein(\wt\Sigma)\to \Skein(\wt\Sigma(\Sp^0)(\Sp^1))$.
\end{proposition}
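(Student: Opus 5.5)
The plan is to reduce everything to a local computation in the region where the handles are attached, and there to use the chromatic identity \eqref{eq:chrP}.

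\textbf{Step 1: localize.} Since $\psi$ is the identity outside a ball, we may isotope any skein $\Graph$ in $\Skein(\wt\Sigma)$ away from the attaching disks of $\Sp^0$. We may also keep it away from the ball containing the support of $\psi$, except for one strand that we choose to pass through it. By admissibility and Figure \ref{fig:V-appear}, we can assume some edge colored by a projective object $P$ runs near the attaching region. The composite $\Sigma(\Sp^0)(\Sp^1)$ is obtained from $\Sigma$ by cutting out two disks and gluing a tube, then compressing that tube along a curve which meets the cocore $\gamma$ of the 1-handle exactly once.

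\textbf{Step 2: compute the 1-handle map.} The 1-handle map adds the red circle $\gamma^g$. Following Definition \ref{def:bichrome}, we turn it blue using the $P$-colored strand: we apply the red-to-blue move of Figure \ref{fig:chrom-mod}. This produces an edge colored by $W_g$ running along $\gamma$, fused with $P$ through a coupon $\chr_P$. By Proposition \ref{prop:bichrome_welldef}, this choice does not affect the class. Before doing this, we first slide the $P$-strand, using Proposition \ref{P:slide}(3), so that the fusion takes place across the attaching circle of $\Sp^1$.

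\textbf{Step 3: compute the 2-handle map.} The attaching circle of $\Sp^1$ meets this graph in the $W_g$-strand, which crosses once because the handles cancel, together with the $P$-strand. Applying the 2-handle map cuts along the core and inserts $\sum_i x_i\otimes x^i$, the copairing of $V\otimes W_g^*$ with $V=P$, in the two disks. After the diffeomorphism $\psi$ identifies $\Sigma(\Sp^0)(\Sp^1)$ with $\Sigma$, the resulting local picture is exactly the left-hand side of \eqref{eq:chrP}: the $\Lambda_{P\otimes W_g^*}$ piece composed with $\chr_P$. The standardness conditions guarantee this. The path through the 1-handle evaluates to $g$ and the core of the 2-handle has trivial holonomy, so $W_g$ is paired with a strand of matching degree, and $P\otimes W_g^*$ has the right degree for the copairing to be non-trivial.

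\textbf{Step 4: conclude.} The chromatic identity then replaces this local picture by $\Id_P$. What remains is $\Graph$ transported by $\psi$, which is $\Skein(\wt M_\psi)(\Graph)$ as a mapping cylinder map. The statement needs \eqref{eq:chrP} for $V\in\cat_g$, whereas our strand $P$ may have another degree $h$. To handle this, we either fuse an auxiliary strand, or apply Lemma \ref{L:chr-based} to re-base the chromatic map on an object of degree $g$ built from $W_g$ and $P$.

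The main obstacle is the bookkeeping in Steps 3 and 4. One must check that, after the surgery, the strand arrangement is isotopic to precisely the configuration in \eqref{eq:chrP}, with correct orientations and without a stray dual or pivotal twist. One must also check that base points and the transparency relations are respected, so that the identification through $\psi$ holds in $\Skein$ of the decorated surface. I would first draw the configuration for the case $k=1,2$ in a genus-one neighborhood. I would then fix the orientation of the red circle using the inversion relation, Proposition \ref{P:slide}(1), so that the copairing appears exactly on the $V\otimes W_g^*$ side.
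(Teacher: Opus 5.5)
Your overall strategy, a local computation that reduces to the chromatic identity \eqref{eq:chrP}, is the right one. However, Steps 3--4 contain a genuine error.

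\textbf{The gap: you identify the cut strand with $P$.} You take the strand cut by the attaching circle of $\Sp^1$ to be the projective strand $P$ used in the red-to-blue move (``$V=P$''). In \eqref{eq:chrP} these are two distinct, parallel strands. The cut, the $\Lambda_{V\otimes W_g^*}$ box, passes only through $V$ and the returning $W_g$ strand. The strand $P$ passes only through the coupon $\chr_P$ and is never cut. If one strand is both cut and then fused into $\chr_P$, the resulting morphism is not the left-hand side of \eqref{eq:chrP}, so your claim that the local picture is ``exactly'' that configuration fails.

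\textbf{The Step 4 remedies do not repair this.} The degree worry there is an artifact of the same conflation. Lemma \ref{L:chr-based} only changes the base object of the chromatic map, and that object is unconstrained; the condition $V\in\cat_g$ in \eqref{eq:chrP} concerns the cut strand. The slide in Step 2 is also not a harmless repositioning: by Figure \ref{fig:slide} it recolors the red circle from $g$ to $gh$.

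\textbf{The missing observation: the degree of the cut strand is forced.} Standardness of $\Sp^1$ gives trivial holonomy along its core. The part of the core running through the 1-handle evaluates to $g$. So the remaining arc, which runs in $\Sigma$ between $y_{-+}$ and $y_{++}$, has holonomy $g^{\mp1}$. By Proposition \ref{P:H1GisRepG}, this holonomy is read off from the intersection of the original skein with that arc. Hence, after fusing, the skein meets the attaching circle outside the handle in a single edge colored by some $V\in\cat_g$, and there is no degree issue at all.

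\textbf{How the argument then goes (this is the paper's proof).}
\begin{enumerate}
\item Take a separate projective strand $P$ near $y_{++}$ that does not meet $\Sp^1$; its degree can be arbitrary.
\item Use it to turn $\gamma^g$ blue. No slide is needed.
\item The 2-handle cut now meets exactly $V$ and the $W_g$ strand. It inserts the copairing of $V\otimes W_g^*$, which yields literally the left-hand side of \eqref{eq:chrP}.
\item That expression equals $\Id_{V\otimes P}$. What remains is $\Graph$ transported by $\psi$, that is, $\Skein(\wt M_\psi)(\Graph)$.
\end{enumerate}
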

\begin{proof}
  Given a skein in $\Skein(\Sigma\setminus (Y\cup\Sp^{0}))$, we can
  assume it intersects the circle of $\Sp^1$ at a unique edge colored
  by some $V\in\cat_g$. Note that the degree of $V$ is imposed by the condition tat $\Sp^1$ is a standard $\Gr$-decorated handle attachment, and attaches on a circle which evaluates to 1 under $\rho$. Then the claim follows by the defining relation of chromatic maps, i.e:
    \begin{align*}
      \epsh{fig29d.pdf}{5ex}\putw{42}{53}{$\ms{V}$}\putw{29}{56}{$\ms{y_{\!-\!+}}\!{\cdot }$}
\putc{65}{55}{${\cdot }\ms{y_{\!+\!+}}$}
      \putsw{76}{23}{$\ms{P}$}\tto{\Skein(\wt M(\Sp^0))}% \Skein_\cat(e_{\Sigma, \Sp^0})((\Sigma,\GammaT))=
      &
        \epsh{fig29a.pdf}{10ex} \putse{42}{30}{$\ms{V}$}
        \putse{78}{22}{$\ms{P}$} \pute{48}{86}{$\ms{g}$}
= \\
      \epsh{fig29b.pdf}{12ex} \putw{40}{22}{$\ms{V}$}
\putw{35}{53}{$\ms{\Sp^1}$}
%\putc{48}{83}{$\ms{C}$}
\putne{47}{97}{$\ms{W_g}$}
      %\putne{76}{25}{$\ms{E}$}
      \putne{76}{22}{$\ms{P}$}
      \putc{49}{84}{$\ms{{\chr}_P}$} %\puts{45}{27}{$\ms{\Sp^1}$}
      \!\!\tto{\Skein(\wt \handle(\Sp^1))}\!\!%{\Skein_\cat(e_{\Sigma, \Sp^1})}
      &\epsh{fig29p.pdf}{12ex}
        \putne{76}{22}{$\ms{P}$} \putc{47}{65}{$\ms{\chr_P}$}
        \putc{41}{84}{$\ms{x_i}$}
        \putc{40}{43.5}{$\ms{x^i}$} \putw{40}{22}{$\ms{V}$}
        % \epsh{fig29c.pdf}{10ex} \putlc{76}{22}{$\ms{P}$}
      % \putc{43}{85}{$\ms{x_i}$}
      % \putc{47}{65}{$\ms{{\chr}_P}$} \putc{43}{44}{$\ms{x^i}$}
      \\[2ex]
      \overset{\eqref{eq:chrP}}= \epsh{fig29d.pdf}{5ex}
      \putsw{76}{23}{$\ms{P}$}\putw{42}{53}{$\ms{V}$}
\ .\quad% =&(\Sigma,\GammaT)
    \end{align*}
    where we have left the diffeomorphism $\psi$ implicit in the last equality.
\end{proof}
% \epsh{fig29a.pdf}{10ex}
% \putse{42}{30}{$\ms{A}$}
% \pute{48}{86}{$\ms{B}$}
% \pute{78}{30}{$\ms{C}$}
% \epsh{fig29b.pdf}{10ex}
% \putw{40}{22}{$\ms{A}$}
% \putw{35}{53}{$\ms{B}$}
% \putc{48}{83}{$\ms{C}$}
% \putne{47}{97}{$\ms{D}$}
% \putne{76}{25}{$\ms{E}$}
% \epsh{fig29d.pdf}{10ex}
% \putw{42}{53}{$\ms{A}$}
% \pute{78}{49}{$\ms{B}$}
% \putc{29}{56}{$\ms{C}$}
% \putc{65}{55}{$\ms{D}$}
% \epsh{fig29p.pdf}{10ex}
% \putc{41}{83}{$\ms{A}$}
% \putc{46}{65}{$\ms{B}$}
% \putc{40}{44}{$\ms{C}$}
% \putw{40}{22}{$\ms{D}$}
% \pute{76}{25}{$\ms{E}$}
% \paragraph
{\textbf{Cancellation of 2 and 3-handles}}
Let $\wt\Sigma$ be a $\Gr$-decorated surface, $\Sp^1$ a standard framed 1-sphere in $\wt\Sigma$ and $\Sp^2$ a standard framed 2-sphere in $\wt\Sigma(\Sp^1)$ which cancels with $\Sp^1$. 

The undecorated handle attachments cancel in the sense that they compose to a mapping cylinder for some diffeomorphism $\psi : \Sigma \to \Sigma(\Sp^1)(\Sp^2)$ which is the identity outside a ball in $\Sigma$.
By our requirements on standard handle attachments for $\Gr$-decorated surfaces, the $\Gr$-decorated cobordisms compose to this mapping cylinder composed with a restriction cylinder $e$ forgetting the basepoint in the attaching 2-sphere:
$$\wt \handle(\Sp^2)\circ \wt \handle(\Sp^1) \simeq \wt M_\psi \circ e\ .$$

\begin{proposition}\label{P:hm-23-cancel}
  For a cancelling pair of standard 2 and 3-handle cobordisms as above,
  the skein handle maps satisfy the handle cancellation:
  $$\Skein(\wt \handle(\Sp^2))\circ\Skein(\wt \handle(\Sp^{1}))=\Skein(\wt M_\psi)\circ \Skein(e)$$
  as linear map $\Skein(\wt\Sigma)\to \Skein(\wt\Sigma(\Sp^1)(\Sp^2))$.
\end{proposition}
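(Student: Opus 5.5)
The plan is to mimic the proof of Proposition \ref{P:hm-12-cancel}: evaluate both sides on a generating family of skeins, reduced to a normal form near the surgery region, and then use a local skein computation. Since everything happens in a neighbourhood $N$ of $\Sp^1\cup\Sp^2$, I will first isotope a given skein $\Graph$ in $\Sigma\setminus Y$ away from the disk $D\subset\Sigma$ that becomes one of the two disks capping the belt circle after the 2-surgery. The 2-sphere $\Sp^2$ in $\Sigma(\Sp^1)$ is then this capping disk $D^2\times\{\pm\}$ together with a disk $D'\subset\Sigma$ bounded by the attaching circle of $\Sp^1$. By the cancellation hypothesis, $\Sigma\setminus\Sp^1$ splits, and one component is the disk $D'$ carrying exactly the base point $y_{+-}$ or $y_{++}$ that is forgotten by the restriction cylinder $e$. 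I will first use Proposition \ref{P:restr-iso} and the gauge and restriction maps of Theorem \ref{P:SkeinRepCobe} to compute $\Skein(e)$: it simply forgets that base point, and the skein is first moved off it using transparency.

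Next, by Proposition \ref{prop:ddd} and fusion I will arrange that $\Graph$ crosses the core of $\Sp^1$ in a single edge colored by a projective object $P$. Because the core circle has trivial holonomy, $P\in\cat_1$. Two situations then arise. In the first, $\Graph$ has a component lying in $D'$ that meets the core; after fusing, $\Graph\cap D'$ is a coupon $f\in\Hom(\unit,P)$ or $\Hom(P,\unit)$ depending on orientation. In the second, $\Graph$ avoids $D'$ and only crosses the annulus. In the first situation, applying the 2-handle map replaces the crossing with $\sum_i x_i\otimes x^i$. The half lying on the sphere $\Sp^2$ becomes a closed admissible graph containing $f$ composed with $x^i$ (or $x_i$), and the 3-handle map evaluates it through $\F'$ as $\mt_P(x_i\circ f)$ or the analogous expression. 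Summing over $i$ with the dual-basis property gives $\sum_i \mt_P(f\circ x^i)\,x_i=f$. This reconstructs exactly the original coupon $f$ on the remaining side, which is $\Skein(\wt M_\psi)\circ\Skein(e)(\Graph)$ after the diffeomorphism $\psi$, which is the identity outside a ball. The second situation reduces to the first: we can always push a small admissible piece into $D'$ by applying the skein relation of Figure \ref{fig:V-appear}, or by isotoping an edge across the disk, so that some $\ideal$-colored piece is present.

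The main obstacle I expect is bookkeeping rather than algebra. I need to check that the representations and base points match, namely that the forgotten base point sits in $D'$ and that the composite of decorated cobordisms is really $\wt M_\psi\circ e$ with trivial holonomy on the paths $y\times[0,1]$, so that no gauge map appears. I also need to check that the reduced skein is genuinely admissible on each side, so that both the 3-handle map and the cutting are applied to admissible graphs. The remaining point is to justify that a general skein can be put into the normal form above without changing either side. This should follow from the well-definedness of the 2-handle map (via Lemma \ref{P:Omega-nat}), from naturality of the copairing, and from the fact that skein relations supported away from $N$ commute with all the maps involved.
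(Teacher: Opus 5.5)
Your proposal is correct and follows the paper's argument. After fusing, a single projective strand of trivial degree crosses the core, and the part on the side that becomes the 2-sphere is a coupon $f$. The 2-handle cut inserts the copairing, and evaluating with $\F'$ gives $\sum_i \mt_P(f\circ x^i)\,x_i=f$ because the bases are dual. Your extra bookkeeping is what the paper's one-line proof leaves implicit: the restriction $e$ forgets the base point lying on that sphere (which $\F'$ ignores anyway), and you create a crossing when none exists.
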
 
\begin{proof}
  This follows by defining relation of
  $\Omega_P = \sum_ix^i\otimes x_i$ as a copairing for the modified
  trace $\mt_P$, i.e. a skein which in the disk where the handle
  cancellation happens is a single strand with a unique coupon colored
  by $f \in \Hom_\cat(\unit,P)$ will map to the same skein with coupon
  colored by $\sum_i \mt_P(f\circ x^i) x_i$ which is $f$ as
  $\sum_ix^i\otimes x_i$ is a copairing.
\end{proof}

\subsection{Main result}
Let $\wt\cobnc$ be the subcategory of $\wt\cob$ formed by
$\Gr$-decorated cobordisms whose underlying cobordism is in $\cobnc$.
\begin{theorem}\label{T:main}
  Let $\cat$ be a $\Gr$-chromatic category. Then there is a unique
  non-compact $\Gr$-HQFT
  $$\Skein:\wt{\cobnc} \to \operatorname{Vect}$$
  which sends % associate
  a $\Gr$-decorated surface $\wt\Sigma$ to $\Skein(\wt\Sigma)$, a
  gauge cylinder $C_{\Sigma,Y,\rho}^\vp$ to $\vp_*$, a restriction
  cylinder $R_{\Sigma,Y_1,Y_2,\rho}$ to the restriction map
  $r(\Sigma,Y_1,Y_2,\rho)$, a mapping cylinder $C_\psi$ to the map
  $\psi_*$, and each standard handle attachment to the handle maps of
  Section \ref{S:handlemaps}.
\end{theorem}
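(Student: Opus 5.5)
The plan is to follow the strategy of \cite{CGPV23}: first give a presentation of $\wt\cobnc$ by generators and relations lifting Juh\'asz's presentation of $\cobnc$, then check that the assignments in the statement satisfy every relation.

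\textbf{Step 1: generators.} The generators are the equivalences of $\Gr$-decorated surfaces (the groupoid $\wt\cob_e$), the decorated mapping cylinders $\wt\cyl(\psi)$, and the standard decorated handles $\wt\handle(\Sp)$ of index $1,2,3$, with $\wt\handle(\Sp,g)$ for index $1$. Given a decorated cobordism $\wt M$, choose a handle decomposition of the underlying cobordism with no $0$-handles; this is possible by the non-compactness assumption.

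At each intermediate level surface:
\begin{enumerate}
\item use Proposition \ref{P:eq-surf} and Lemma \ref{L:unique-eq} to insert equivalences, so that the next handle becomes standard;
\item for index $1$, read off $g$ as the holonomy of $\rho$ along the core;
\item for index $2$, the triviality condition $\rho(\gamma)=1$ holds automatically, since the attaching circle bounds in $M$;
\item for index $3$, add a base point on the sphere via a restriction cylinder.
\end{enumerate}
Lemma \ref{L:unique-eq} guarantees that each inserted equivalence is uniquely determined once the decorated pieces are fixed. This writes $\wt M$ as a word in the generators.

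\textbf{Step 2: relations.} Any two such words are related by:
\begin{enumerate}
\item Juh\'asz's relations (R1)--(R5), restricted to indices $\ge1$, each lifted to the decorated setting;
\item the relations of $\wt\cob_e$ from Lemma \ref{L:rel_eq};
\item naturality relations expressing how equivalences commute past handles and mapping cylinders.
\end{enumerate}
The main naturality relations are:
\begin{itemize}
\item a gauge cylinder supported away from $\Sp$ commutes with $\wt\handle(\Sp)$;
\item a gauge at a base point $y_{\pm+}$ of a $1$-handle changes $g$ to $\vp(y_{++})g\vp(y_{-+})^{-1}$;
\item restriction cylinders commute with handles away from $\Sp$.
\end{itemize}
This step is mostly bookkeeping with Lemma \ref{L:Rep}: each decorated cobordism with fixed underlying cobordism is determined by boundary data plus holonomies along a tree.

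\textbf{Step 3: checking the relations on skeins.} Functoriality on $\wt\cob_e$ is Theorem \ref{P:SkeinRepCobe}. The remaining checks are:
\begin{itemize}
\item (R1) and (R2) hold by naturality of skeins under diffeomorphisms (Proposition \ref{P:MappingClassActs}), since every handle map is defined locally.
\item (R3) holds for disjoint spheres because the local modifications commute. When red circles are involved, the chromatic moves can be performed away from the other handle (Proposition \ref{prop:bichrome_welldef}).
\item (R5) holds by the reversal invariances noted after each handle-map definition: Lemma \ref{P:Omega-nat}(1) and Proposition \ref{P:slide}(1).
\item (R4), handle cancellation, is Propositions \ref{P:hm-12-cancel} and \ref{P:hm-23-cancel}. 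The $0$--$1$ cancellation does not arise in $\cobnc$.
\item Naturality of handle maps under gauge maps: a gauge at $y_{++}$ acts by passing a degree-$\vp$ edge through the point. Near a $1$-handle, this edge can be slid over the red cocore using Proposition \ref{P:slide}(3), which recolors the red circle from $g$ to the conjugated element. For $2$- and $3$-handles, the gauge edge can be pushed off the attaching region.
\end{itemize}
Uniqueness is immediate, since the generators are prescribed.

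\textbf{Main obstacle.} I expect the hardest step to be completeness of the decorated presentation in Step 2. The difficulty is showing that two decompositions of the same decorated cobordism are related by Juh\'asz moves that can be lifted compatibly with the choices of base points and equivalences. I would handle this by proving that any lift of a fixed undecorated word is unique up to inserted equivalences, via Lemma \ref{L:unique-eq}. Then I would verify that each undecorated Juh\'asz relation lifts to a decorated relation modulo the naturality relations, checking it on holonomies along trees.
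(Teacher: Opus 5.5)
Your architecture matches the paper's. You write $\wt C=e_n\circ\wt C_n\circ\cdots\circ\wt C_1\circ e_0$ with standard decorated generators, and remove the dependence on intermediate base points and lifts by an inductive use of Lemma \ref{L:unique-eq} together with the commutation of equivalence maps with generator maps. Your ``naturality relations'' are Lemma \ref{L:[eq,gen]}, including the slide over the red cocore at $y_{\pm+}$, and (R4) comes from Propositions \ref{P:hm-12-cancel} and \ref{P:hm-23-cancel}.

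The gap is in Step 2, and your proposed resolution of the main obstacle does not close it. Morphisms of $\wt\cobnc$ are classes under diffeomorphisms that pull back the representation. Let $f:\wt C_1\to\wt C_2$ be such a diffeomorphism and $h_i:\abs{w_i}\to C_i$ the identifications with realizations of words. It is then not enough to know that $w_1$ and $w_2$ are connected by some chain of Juh\'asz relations, which is all the presentation of $\cobnc$ asserts. Lifting that chain from the decoration of $\wt C_1$ transports the representation along the composite $R:\abs{w_1}\to\abs{w_2}$ of the relation diffeomorphisms. You land on the decoration of $\wt C_2$ only if $R$ and $h_2^{-1}fh_1$ push the representation of $\wt C_1$ to the same element of $\Rep_\Gr(\abs{w_2},Y)$. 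Lemma \ref{L:unique-eq} cannot absorb a discrepancy, since it assumes the two decorations have the same restriction to the boundary base points.

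Such discrepancies do occur. Take $M=(S^1\times S^2)\setminus B^3:S^2\to\emptyset$ and the orientation preserving map $(\theta,x)\mapsto(-\theta,r(x))$, with $r$ a reflection of $S^2$. Its differential at a fixed point is a rotation, so it can be isotoped to be the identity near a ball, and it acts by inversion on $\pi_1(M,y)\cong\Z$. For $\Gr$ abelian and $g\neq g^{-1}$, the decorations $t\mapsto g$ and $t\mapsto g^{-1}$ are distinct elements of $\Rep_\Gr(M,\{y\})$. They give isomorphic decorated cobordisms with the same underlying word (a $1$-, a $2$- and a $3$-handle). The trivial chain relates this word to itself, and nothing in your argument identifies the two images.

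The missing ingredient is that every diffeomorphism $D:\abs{w_1}\to\abs{w_2}$ is, up to isotopy rel boundary, a composite of relation diffeomorphisms of $\cobnc$ (Proposition \ref{P:diffeo-are-rel}). This comes from the proof of Juh\'asz's theorem, not its statement. First, $D$ is a Morse-datum preserving diffeomorphism $(W_1,\mathcal{F}_1)\to(W_2,D_*\mathcal{F}_1)$, which decomposes into (R1) and (R2) moves. It is followed by a path of Morse data from $D_*\mathcal{F}_1$ to $\mathcal{F}_2$, whose relation diffeomorphism is isotopic to the identity, and this path can avoid index-$0$ critical points. With this, you lift the chain realizing $h_2^{-1}fh_1$ relation by relation, so the representation is carried along correctly. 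Only then do you use Lemma \ref{L:unique-eq} and your naturality relations to absorb the choices of base points and lifts.
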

\begin{corollary}\label{cor:invariant}
  Assume that the modified trace $\mt$ on $\cat$ is unique up to a
  scalar. Let $M$ be a closed $3$-manifold and $\rho:\pi_1(M)\to \Gr$
  be a representation up to conjugation.  Define
  $\hat{M}=M\setminus B^3$, fix $*\in \partial \hat{M}$ and extend
  arbitrarily $\rho$ to $\hat{\rho}:\pi_1(\hat{M},*)\to \Gr$. Then
  $\Skein(\hat{M},\hat{\rho},*)=\Skein({M},{\rho} ) \F'$ for some
  scalar $\Skein({M},{\rho})\in \kk$ independent on the choices of
  $\hat{\rho}$ and $*$.
\end{corollary}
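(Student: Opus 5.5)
The plan is to reduce everything to the state space of the sphere with one base point. The cobordism $\hat M=M\setminus B^3$ goes from $(S^2,\{*\},\text{triv})$ to $\emptyset$. By Theorem \ref{T:main} it therefore defines a linear form $\Skein(\hat M,\hat\rho,*)$ on $\Skein(S^2,\{*\})$, which is just $\Skein_{\Proj}(S^2)$ up to the transparency relations. Every edge has degree $1$ there, since $\pi_1(S^2,*)$ is trivial. By Theorem \ref{T:DiskRmt}, $\Skein_\Proj(S^2)^*$ is the space of m-traces on $\Proj$. The transparency quotient only imposes relations, so the dual of $\Skein(S^2,\{*\})$ embeds in that space. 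If $\mt$ is unique up to scalar, this dual is at most one-dimensional. It contains $\F'$, and $\F'$ is nonzero because $\mt$ is nondegenerate. Hence $\Skein(\hat M,\hat\rho,*)=\lambda\F'$ for a unique $\lambda\in\kk$, and we set $\Skein(M,\rho):=\lambda$.

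Next I show $\lambda$ does not depend on the choices. First take two extensions $\hat\rho,\hat\rho'$ of the same conjugacy class, with the same $*$. The restriction of either to $\pi_1(\hat M,\emptyset)$ is the class of $\rho$, because $\pi_1(\hat M)\cong\pi_1(M)$. Two representations in $\Rep_\Gr(\hat M,\{*\})$ that agree up to conjugation differ by a gauge $\vp\in\Gr^{\{*\}}$. This gives $(\hat M,\hat\rho')=(\hat M,\hat\rho)\circ C^{\vp}$ for a gauge cylinder on $(S^2,*)$; the argument is Lemma \ref{L:unique-eq} applied with reversed roles, or a direct check on a tree of paths as in Lemma \ref{L:Rep}. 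Functoriality then gives $\Skein(\hat M,\hat\rho')=\Skein(\hat M,\hat\rho)\circ\vp_*$. So it suffices to check that $\F'\circ\vp_*=\F'$ on $\Skein(S^2,*)$. On $S^2$, the gauge map passes an edge of degree $g$ through $*$. But $*$ is a single point on a sphere, and pushing the edge around the rest of the sphere is an isotopy in $S^2$. Since $\F'$ is an isotopy invariant of graphs in $S^2$, it does not notice the base point, and the equality holds.

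Changing $*$ to another point $*'$ is handled by restriction cylinders. Compose with $R(\{*\},\{*,*'\})$ and $R(\{*,*'\},\{*'\})$ (Definition \ref{D:restr-map}). These maps are induced by inclusions of punctured spheres, so they commute with $\F'$, which forgets punctures. Alternatively one can use a mapping cylinder of a diffeomorphism of $S^2$ isotopic to the identity carrying $*$ to $*'$, via (R2) and (R1). The choice of removed ball is absorbed the same way, since any two balls are isotopic in $M$.

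The main obstacle is the identification of the source surface and the precise form of the gauge relation for cobordisms into $\emptyset$. One must check that conjugation classes of extensions are exactly the gauge orbits, and that Lemma \ref{L:unique-eq} has the needed analogue in which the equivalence sits on the source side. That adaptation is where I would spend the most care. Checking $\F'\circ\vp_*=\F'$ is simple by comparison, once one notes that the transparency and gauge moves on $S^2$ minus a point are isotopies in $S^2$.
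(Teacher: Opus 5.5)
Your two-step structure matches the paper's proof. First you get $\dim\Skein(S^2,\{*\})\le 1$ from uniqueness of the trace via Theorem \ref{T:DiskRmt}. Then you show $\F'\circ\Skein(e)=\F'$ for every equivalence $e$ of the decorated sphere, because pushing an edge over $*$ is an isotopy in $S^2$. Your handling of gauge and restriction cylinders is fine. The problem is how you justify the first step.

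\textbf{The error.} By Definition \ref{D:sk-dec-surface}, $\Skein(S^2,\{*\})$ is a quotient of $\Skein_\Proj(S^2\setminus\{*\})$, the skein module of a disk. It is not a quotient of $\Skein_\Proj(S^2)$. It is also false that every edge has degree $1$. Only the class in $H_1(S^2\setminus\{*\};\Gr)$ is trivial; individual edges can carry any degree. For example, a circle colored by a non-null projective of degree $g\neq 1$ is a skein there, and your own second paragraph pushes a degree-$g$ edge through $*$.

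Transparency lets only degree-$1$ edges cross $*$. So the inclusion $S^2\setminus\{*\}\subset S^2$ induces a surjection $\Skein(S^2,\{*\})\twoheadrightarrow\Skein_\Proj(S^2)$. Crossings of $*$ by edges of nontrivial degree are relations in $S^2$, but in $\Skein(S^2,\{*\})$ they are the gauge maps $\vp_*$. Dualizing gives $\Skein_\Proj(S^2)^*\hookrightarrow\Skein(S^2,\{*\})^*$. That is a lower bound on $\dim\Skein(S^2,\{*\})^*$, the opposite of what you need.

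Consequently, uniqueness of two-sided m-traces does not by itself bound $\dim\Skein(S^2,\{*\})$. Your gauge-invariance check cannot close the gap either. It applies to $\F'$, not to the unknown functional $\Skein(\hat M,\hat\rho,*)$. Indeed, $\Skein(\hat M,\hat\rho,*)\circ\vp_*=\Skein(\hat M,\vp^{\pm1}.\hat\rho,*)$, and independence of $\hat\rho$ is what you are trying to prove.

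\textbf{The fix.} Use the disk half of Theorem \ref{T:DiskRmt}. Since $\Skein(S^2,\{*\})$ is a quotient of $\Skein_\Proj(S^2\setminus\{*\})\cong\Skein_\Proj(D^2)$, its dual embeds in the space of right m-traces on $\Proj$. So the hypothesis must be used in the form ``right m-traces on $\Proj$ are unique up to scalar''; the unique one is then $\mt$, which is two-sided.

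With that reading, $\dim\Skein(S^2,\{*\})\le 1$ and the nonzero functional $\F'$ spans the dual. Hence $\Skein(\hat M,\hat\rho,*)=\lambda\F'$. The rest of your argument then goes through unchanged: $\F'\circ\vp_*=\F'$ and $\F'\circ r=\F'$ give independence of $\hat\rho$ and of $*$.
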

\begin{proof}
Since $\mt$ is unique by Theorem \ref{T:DiskRmt} we have $\dim_\kk(\Skein(S^2,\hat{\rho},*))=1$. Furthermore if $e: (S^2,\hat{\rho},*)\to (S^2,\hat{\rho}',*)$ is an equivalence then $\F'\circ \Skein(e)=\F'$ because by the standard properties of the modified traces, if $\Graph$ and $\Graph'\subset S^2$ differ by an isotopy of an edge passing over $*$ then  $\F'(\Graph)=\F'(\Graph')$. 
\end{proof}
We prove Theorem \ref{T:main} in the rest of this subsection.
%We prove this result in the next subsection.
%To prove this result, we will need a presentation of the decorated cobordism category. 

%\subsection{Existence of the HQFT}
The following statement essentially follows from \cite{Juh18}, though
it is not stated as such. We recall the proof for the reader's
convenience, see also \cite{Syt25} for a detailed proof of a stronger
statement.

Recall the presentation of the category $\cob$ (resp. $\cob'$,
resp. $\cobnc$) of $(n+1)$-cobordims in Subsection \ref{SS:Juhasz}. If
$w$ is a word in the generators, we denote by $|w|$ the corresponding
$n+1$-cobordism.  Then each of the relation (R1)---(R5) of the form
$w\simeq w'$ defines a diffeomorphism which is an equivalence of
cobordisms $|w|\overset\sim\to |w'|$.  We call such a diffeomorphism a
\emph{relation diffeomorhism} of $\cob$ (resp. $\cob'$,
resp. $\cobnc$).

% \BP{Ben I realize we would like the next proposition for $\cobnc$.
%   Does it make a difference in the proof?}
\begin{proposition}\label{P:diffeo-are-rel}
  Let $n\in\N^*$ and consider the presentation of the category $\cob$
  (resp. $\cob'$, resp. $\cobnc$) of $(n+1)$-cobordims in Subsection
  \ref{SS:Juhasz} (we stress that in this proposition the cobordism are not $\Gr$-decorated).  Let $w_1, w_2$ be words of generators and
  $\abs{w_1},\abs{w_2}$ the corresponding $n+1$-cobordisms. Any
  diffeomorphism $D:\abs{w_1}\to\abs{w_2}$ can be realized, up to an
  isotopy which is the identity on the boundary, as a sequence of
  relation diffeomorphisms of $\cob$ (resp. $\cob'$, resp. $\cobnc$).
\end{proposition}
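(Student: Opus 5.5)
The strategy is to reduce the statement to Juh\'asz's theorem that the generators and relations (R1)--(R5) present $\cob$ (and its variants $\cob'$, $\cobnc$), read at the level of diffeomorphisms rather than equivalence classes. I would use the Cerf-theoretic description underlying \cite{Juh18}. A word $w$ in the generators determines a cobordism $\abs{w}$ together with a Morse datum: a Morse function $f_w$ with critical points ordered by the handles of $w$, a gradient-like vector field, and identifications of the regular levels with the intermediate surfaces. Given a diffeomorphism $D:\abs{w_1}\to\abs{w_2}$, the pullback $f_{w_2}\circ D$ is a second Morse datum on $\abs{w_1}$. The key observation is that $D$ is tautologically a diffeomorphism between $(\abs{w_1}, f_{w_2}\circ D)$ and $(\abs{w_2}, f_{w_2})$ which preserves all the Morse data. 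Such a diffeomorphism is, level by level, a family of diffeomorphisms of the regular level sets intertwining the attaching spheres. By (R1) and (R2) it is therefore a composite of relation diffeomorphisms, up to isotopy rel boundary, namely the diffeomorphisms obtained by commuting mapping cylinders past handle attachments.

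This reduces the problem to connecting the two Morse data $f_{w_1}$ and $f_{w_2}\circ D$ on the fixed manifold $\abs{w_1}$. I would choose a generic path $(f_t,v_t)$ of Morse data between them, relative to the boundary. Cerf theory says that along such a path only finitely many codimension-one events occur:
\begin{itemize}
\item passing two critical values past each other, handled by (R3);
\item handle slides, handled by (R2), since they change the attaching sphere by an isotopy or diffeomorphism of the level surface;
\item birth or death of a cancelling pair, handled by (R4);
\item changes of parametrization and orientation of attaching spheres, handled by (R5) and (R1).
\end{itemize}
Between events, the isotopy of Morse data integrates, via the flow of the vector field, to an ambient isotopy rel boundary. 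That isotopy is exactly the relation diffeomorphism from (R1)/(R2) induced by the isotopic diffeomorphisms of the level sets. Concatenating these pieces realizes $D$ as a sequence of relation diffeomorphisms up to isotopy fixing the boundary.

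For $\cob'$ and $\cobnc$ the same argument goes through provided the Cerf path can be chosen to avoid index-$0$ critical points (and index-$(n+1)$ ones for $\cob'$). Components of these cobordisms have nonempty incoming boundary (and outgoing boundary for $\cob'$), so one can use the standard argument as in \cite{Juh18,CGPV23}: minima (resp. maxima) can be cancelled generically in one-parameter families. Concretely, I would first cancel each minimum against an index-$1$ critical point connected to the incoming boundary, doing this uniformly along the path.

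The main obstacle is the bookkeeping showing that each Cerf event yields precisely the \emph{specified} relation diffeomorphism, including its identity-on-boundary normalization, rather than merely some diffeomorphism. In particular, the death event must produce the diffeomorphism $\phi$ of (R4), and this is where I would rely most heavily on Juh\'asz's Definition 2.17 and his parametrized Cerf analysis, or alternatively cite \cite{Syt25}. A second delicate point is the index-$0$ avoidance in one-parameter families for $\cobnc$. There I would try to pair each potential minimum with an index-$1$ point and apply the parametrized handle cancellation lemma.
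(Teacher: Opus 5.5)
Your proposal is correct and follows essentially the same route as the paper. It splits $D$ into a Morse-datum preserving diffeomorphism, realized by (R2) and the first case of (R1), and a path of Morse data on a fixed cobordism whose Cerf events give relation diffeomorphisms isotopic to the identity. For $\cobnc$ and $\cob'$ it likewise takes the avoidance of index-$0$ (and index-$(n+1)$) critical points along the path from Juh\'asz \cite{Juh18}.

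The differences are cosmetic. You pull $\FF_2$ back to $\abs{w_1}$ via $D$, whereas the paper pushes $\FF_1$ forward to $\abs{w_2}$. You also list the individual Cerf events and flag the bookkeeping explicitly, while the paper simply defers these to \cite{Juh18}.
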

\begin{proof}
  The main idea used in the presentation of the cobordism category is
  that writing a cobordism $W$ as a composition of handle attachments
  is similar to equipping this cobordism with a Morse function
  $f$. The handle attachments correspond to the critical
  points. Therefore, one can understand changing the handle
  decomposition as changing the Morse function, which itself is
  well-understood via Cerf theory.

  Actually, a Morse function is not really enough data, and in
  \cite{Juh18} the author talks of Morse datum, which is a Morse
  function $f$ together with a pseudo gradient vector field $v$ and a
  set of cutting values $\underline b$. To fully have an equivalence
  between Morse data and handle decomposition, one must actually add a
  last piece of data: a chart $\phi$ near each critical point where
  $f$ is in standard form
  $f = \sum_{i=1}^k -x_i^2+\sum_{i=k+1}^n x_i^2$, and
  $v = \operatorname{grad } f$.

  Being obtained as a composition of generators, the cobordisms
  $W_i:=\abs{w_i}$ come equipped with such Morse data, i.e. a Morse
  function $f_i$ together with a pseudo gradient vector field $v_i$, a
  set of cutting values $\underline b_i$ and standard chart near
  critical points $\underline \phi_i$. Let us denote these Morse data
  $\FF_1$ and $\FF_2$ respectively.

%We will consider three cobordisms equipped with Morse data:
%$$ (W_1, f_1, v_1,\underline b_1)\quad , \quad (W_2, D^{-1}\circ f_1, D_*v_1,\underline b_1) \quad \text{and} \quad (W_2, f_2, v_2,\underline b_2)$$
%To lighten notations, let us denote these Morse data $\FF_1, D_*\FF_1$ and $\FF_2$ respectively.
%We can now think of the diffeomorphism $D:\abs{w_1}\to\abs{w_2}$ as a two step process. First we have a Morse-datum preserving diffeomorphism $D:(W_1,\FF_1)\to (W_2,D_*\FF_1)$. Then, we have a path of Morse data $(W_2,D_*\FF_1) \to (W_2,\FF_2)$. 
 
  Now, consider the diffeomorphism $D:\abs{w_1}\to\abs{w_2}$. We can
  think of $D$ as a two-step process. First we have a Morse-datum
  preserving diffeomorphism $D:(W_1,\FF_1)\to (W_2,D_*\FF_1)$, where
  $D_*\FF_1 = (f_1\circ D^{-1}, D_*v_1,\underline b_1, D\circ
  \underline{\phi}_1)$ is the Morse datum $\FF_1$ transported under
  $D$. Then, we have a path of Morse data
  $(W_2,D_*\FF_1) \to (W_2,\FF_2)$.  The relations (R1)--(R5) of
  \cite{Juh18} (see Subsection \ref{SS:Juhasz}) are actually of two
  types: those induced by a Morse-datum preserving diffeomorphism, and
  those induced by a path of Morse data.

  Following the proof of \cite[Thm. 1.7]{Juh18}, we see that the
  Morse-datum preserving diffeomorphism
  $D:(W_1,\FF_1)\to (W_2,D_*\FF_1)$ is induced by repeatedly applying
  relation (R2) and the first case of relation (R1). On the other
  hand, by construction the relations coming from the path of Morse
  data induce a diffeomorphism of $W_2$ which is isotopic to the
  identity.

  As argued in \cite{Juh18}, a path between Mose data which do not
  have $0$-handles can avoid $0$-handles throughout, and the
  diffeomorphism is induced by non-compact relations diffeomorphisms.
  Similarly, avoiding $0$ and $n+1$-handles we get the result for
  $\cob'$.
%In order to understand the relation between any two Morse data, we proceed step by step by modifying only one piece of data.
%
%If two Morse data $\FF$ and $\FF'$ on the same manifold $W$ have the same $f,v$ and $\underline b$ but different standard charts near critical points $\underline \phi$, then by \cite[Rem. 2.13]{Juh18}, the induced handle decompositions are related by reversing the attaching sphere, relation (5), and isotopies, second case of relation (1) and relations (1) and (2) for diffeomorphism isotopic to the identity. So all of these relations indeed induce self-diffeomorphisms of $W$ which are isotopic to the identity. 
%
%If $\FF$ and $\FF'$ differ only by changing $\underline b$, 
%
%Indeed, following the proof of \cite[Thm. 2.24]{Juh18}, we see that the path of Morse data is induced by a sequence of critical point creations and cancelations (4), critical value crossings (3), isotopies of the gradient (1), adding or removing regular values (1), and left-right equivalences (2). All these relations induce a diffeomorphism of $W_2$ that is isotopic to the identity. The only non-trivial one to show is the left-right equivalences
\end{proof}
\begin{lemma}\label{L:[eq,gen]}
  % \BP{equivalence maps commute with standard handle maps and mapping
  %   cylinder}
  If $\wt C_1$, $\wt C_2$ are standard $\Gr$-decorated handle attachment of
  index $k\in\bs{1,2,3}$ with $\abs{\wt C_1}=\abs{\wt C_2}$ or if they are
  mapping cylinders associated to the same diffeomorphisms, and if
  $e,e'$ are equivalences of $\Gr$-decorated surfaces such that
  $\wt C_2=e'\circ\wt C_1\circ e$, then
  $\Skein(\wt C_2)=\Skein(e')\circ\Skein(\wt C_1)\circ \Skein(e)$.
\end{lemma}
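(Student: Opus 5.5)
We reduce to generators and then verify commutation case by case. By Proposition \ref{P:JR} and Lemma \ref{L:rel_eq}, every equivalence is a composite of gauge cylinders $J_\vp$ and restriction cylinders $R(Y_1,Y_2,\rho)$. By Theorem \ref{P:SkeinRepCobe}, $\Skein$ is a functor on $\wt\cob_e$. Hence it suffices to understand how $e$ and $e'$ can be related given $\wt C_1$ and $\wt C_2$.

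First we pin down $e'$ in terms of $e$. Write $\wt C_i=(|\wt C|,\wb Y_i,\wb\rho_i)$. The equality $\wt C_2=e'\circ \wt C_1\circ e$ says that $\wt C_1\circ e$ and $(e')^{-1}\circ\wt C_2$ are decorations of the same underlying cobordism with the same source, and with the same restriction to $(M,Y_0)$. Lemma \ref{L:unique-eq} then determines $e'$ uniquely once $e$ is fixed. So the statement reduces to the following: for each generator $e$ of $\wt\cob_e$ acting on the source (resp. target), exhibit an equivalence $e''$ with $\Skein(\wt C_1\circ e)=\Skein(e'')\circ\Skein(\wt C')$, where $\wt C'$ is another standard handle or mapping cylinder over the same underlying cobordism. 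Compositions of such identities then give the general case.

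Concretely, we can move basepoints and gauges outside the support of the handle, that is, outside $\Sp$ and the points $y_{\pm\pm}$. Doing so reduces to $e$ being either a gauge cylinder at a point $y\notin \Sp$, or a restriction adding or removing a point away from $\Sp$. For these, the skein operations (passing an edge through $y$, or erasing $y$) are supported in a neighbourhood of $y$. The handle maps are supported near $\Sp$: cutting with $\Omega_P$, adding a red circle, or evaluating $\F'$ on a sphere component. So the two kinds of operations commute, and the corresponding $e'$ is the same gauge or restriction transported to the target by Lemma \ref{L:unique-eq}. For mapping cylinders the argument is the same, with $\psi_*$ transporting the basepoint moves.

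The main obstacle is the gauge action at the basepoints $y_{\pm\pm}$ that lie on $\Sp$ itself, since there the operations do not have disjoint support. The cases are as follows.
\begin{itemize}
\item For $k=1$, a gauge $g_1^{\delta_{y_{++}}}$ changes the label of the 1-handle from $g$ to $g_1 g$ or a similar product. We must show that $\Skein(\wt\handle(\Sp,g))$ followed by the gauge equals $\Skein(\wt\handle(\Sp,g'))$ precomposed with the gauge. The plan is to realize the gauge by sliding the edge created near $y_{++}$ over the red circle $\gamma^g$, using the slide relation of Proposition \ref{P:slide}(3), which turns $g$ into $g_1g$.
\item For $k=2$, the cutting map with $\Omega_P$ should commute with passing an edge through $y_{++}$ on the core. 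Here we use naturality and rotation in Lemma \ref{P:Omega-nat}: pass the strand first, then fuse it into $P$.
\item For $k=3$, invariance of $\F'$ under passing an edge over the unique basepoint of $S^2$ follows from cyclicity of $\mt$, as in Corollary \ref{cor:invariant}.
\end{itemize}
Finally, the transparency relations are respected in all cases, since degree-one edges pass through these points freely.
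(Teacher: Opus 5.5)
Your proposal is correct and follows essentially the paper's proof. Lemma \ref{L:unique-eq} pins down $e'$ once $e$ is fixed; mapping cylinders are handled by naturality of gauge and restriction maps under $\psi$; the index-$3$ case holds because $\F'$ factors through $\Skein(S^2)$; and for indices $1$ and $2$ the handle maps commute with base-point suppression away from $\Sp$ and with gauges, the only nontrivial case being a gauge at $y_\pm$ for $k=1$, which is settled by sliding over the red circle (Figure \ref{fig:slide}) exactly as you propose. The one small difference is that for $k=2$ the paper treats the gauge at $y_{++}$ as immediate, since $y_{++}$ lies on a boundary circle of the annulus and not on the cutting core, so your appeal to naturality and rotation of $\Omega_P$ is more than is needed.
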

\begin{proof}
  It is enough to consider the case where $\abs{\wt C_1}$ is
  connected.  First one can see $\wt C_2$ is uniquely determined by
  $\wt C_1$ and $e$, then by Lemma \ref{L:unique-eq}, $e'$ is uniquely
  determined by $e$.  In the case
  $\wt C_1:(\Sigma,Y,\rho)\to (\Sigma',\psi(Y),\rho\circ \psi^{-1})$ is the
  mapping cylinder of $\psi$ and $e=J_\vp\circ R(\Sigma,Z,Y,\wb\rho)$, then
  $\wt C_2$ has to be the mapping cylinder of $\psi$ from
  $(\Sigma,Z,\vp^{-1}.\wb\rho_{|Z})$ and $e'$ is determined by
  the relation $\wt C_1\circ J_\vp \circ R(\Sigma,Z,Y,\wb\rho)=J_{\vp\circ
    \psi^{-1}}\circ R(\Sigma',\psi(Z),\psi(Y),\wb\rho\circ \psi^{-1})\circ \wt C_2$ and the
  result follows because
  $\bp{\vp\circ \psi^{-1}}_*\circ r(\Sigma',\psi(Z),\psi(Y),\wb\rho\circ
  \psi^{-1})=\Skein(\psi)\circ{\vp}_*\circ
  r(\Sigma,Z,Y,\wb\rho)\circ\Skein(\psi^{-1})$.

  If $\wt C_1$ is an index $3$ handle, $e'$ is trivial and this
  follows since the map $\Skein(\wt C_1)=\F':\Skein(S^2,Y,\rho)\to\kk$
  factors through $\F':\Skein(S^2)\to\kk$ and is not sensible to the
  maps associated to change of the decoration.

  When $\wt C_1$ is an index $1$ or $2$ handle, we can identify the
  set of base points $Y$ in its source and target the result follows
  since $\Skein(\wt C_1)$ commute with the supression of a base point
  outside the attaching sphere and commute with $(g^{\delta_y})_*$ for
  any $(y,g)\in Y\times\Gr$.  This last point is clear if the point is
  outside the attaching sphere or if $k=2$.  For $k=1$ and $y=y_\pm$
  this is a consequence of the slidding of a degree $g$ colored strand
  of a red circle (see Figure \ref{fig:slide}).
\end{proof}
\begin{proof}[Proof of Theorem \ref{T:main}]
Let $\wt C$ be a $\Gr$-decorated cobordism with $\abs{\wt C}=C$.  Choose a
decomposition of $C$ as a product of generators of $\cob$. Insert identity
cylinders between generators and put some base points on the slicing
surfaces such that each generator is standard. Choose a lift of the
representation of $\wt C$ to this new set of base points.  This gives a
decomposition of $\wt C$ into $\Gr$-decorated generators.
$$\wt C=e_n\circ \wt C_{n}\circ e_{n-1}\circ \cdots\circ e_{1}\circ \wt C_1\circ e_0$$
where the $e_i$ are equivalences of $\Gr$-decorated surfaces and the
$\wt C_i$ are standard $\Gr$-decorated generators.

We first claim that
$\Skein(e_n)\circ\Skein(\wt C_n)\cdots \Skein(\wt C_1)\circ
\Skein(e_0)$ is independant of the choice of the base points and of
the lift of the representation of $\wt C$.  Indeed, suppose
$\wt C=e'_n\circ \wt C'_{n}\circ e'_{n-1}\circ \cdots\circ e'_{1}\circ
\wt C'_1\circ e'_0$ is an other decomposition. % Let
% $\wt C_{\le k}=e_k\circ \wt C_{k}\circ e_{k-1}\circ \cdots\circ
% e_{1}\circ \wt C_1\circ e_0$.  Let $e''_0=e_0\circ (e'_0)^{-1}$.
By Lemma \ref{L:unique-eq}, and by induction on $k=1,\ldots,n$ there
exists a unique equivalence of $\Gr$-decorated surfaces $e''_k$ such that
$e_k\circ\wt C_{k}=e''_k\circ(e'_k\circ \wt C'_{k}\circ
(e''_{k-1})^{-1})$.  In particular, the unicity of Lemma
\ref{L:unique-eq} applied to $\wt C$ implies that $e''_n=\Id$.  Then by
Lemma \ref{L:[eq,gen]} we have
$\Skein(\wt C_{k})=\Skein(e_k^{-1}\circ e''_k\circ e'_k)\circ
\Skein(\wt C'_k)\circ \Skein((e''_{k-1})^{-1})$.  Since $\Skein$ is
functorial on $\wt\cob_e$, this implies that
$$\Skein(e_k)\circ\Skein(\wt C_{k})=\Skein(e''_k)
\circ(\Skein(e'_k)\circ \Skein(\wt C'_k))\circ
\Skein(e''_{k-1})^{-1},$$ and by composition,
$\bp{\prod_k\Skein(e_k)\circ\Skein(\wt
C_{k})}\circ\Skein(e_0)=\bp{\prod_k\Skein(e'_k)\circ\Skein(\wt
C'_{k})}\circ\Skein(e'_0)$.

If $f:\wt{C_1}\tto\sim \wt{C_2}$ with $h_i:\abs{w_i}\to C_i$ be the
decomposition of $C_i$ as the realization of a word $w_i$ in the
generators of $\cobnc$, then
$h_2^{-1}\circ f\circ h_1:\abs{w_1} \tto\sim \abs{w_2}$ can be
realized as a sequence of relation in $\cobnc$.  Suppose that the
relation is a handle cancellation between $C_i$ and $C_{i+1}$.  Using
mapping cylinder of isotopies, and Juhász relation (1) and (2), we can
assumme that the relative positions of the framed attaching spheres of
handle generators allow to choose base points and representations such
that $e_i$ is the identity and $\wt C_{i+1},\wt C_i$ are standard
$\Gr$-decorated handle attachments.  Then by Propositions
\ref{P:hm-12-cancel},\ref{P:hm-23-cancel},
$\Skein(\wt C_{i+1})\circ\Skein(\wt C_{i})$ is the map associated to
the equivalence of $\Gr$-decorated surfaces
$\wt C_{i+1}\circ\wt C_{i}$.

Hence $\Skein(\wt C)$ only depends on the equivalence class of
the $\Gr$-decorated cobordism $\wt C$.
\end{proof}
\section{Relation with the modified Turaev-Viro invariant}
Let $\cat$ be a $\Gr$-finite spherical category relative to $\XX$ as in Definition \ref{def:olddef}.
\subsection{An invariant of TV-manifolds from the skein $\Gr$-HQFT}
Let $\Gamma$ be a finite graph, with a non empty set of vertices
$\Gamma_0$, and which is embedded in a closed connected 3-manifold
$M$.  Let $\rho\in\Rep_\Gr(M,\Gamma_0)$.  We call the triple
$(M,\Gamma,\rho)$ a TV-manifold.  In \cite{GPT09,GP13}, a
diffeomorphism invariant of TV-manifold is defined from $\cat$.

Otherwise, using the graded skein $\Gr$-HQFT, we define an invariant of
TV-manifold as follows:

First, if $g\in\Gr\setminus\XX$, define the formal linear combination of object
$$\mb_{g}=\sum_{V_i}\mb(V_i)V_i,$$
where the sum is over a set of representatives of isomorphism classes
of simple objects of $\cat_{g}$.  Then, whenever
$g_1,g_2,g_1g_2\not\in\XX$, we have the following $\Proj$-skein
relations in an annulus:
\begin{equation}
  \label{eq:gr-b}
  \epsh{fig33a.pdf}{16ex}
  \putw{11}{49}{$\ms{\mb_{g_1}}$}
  \pute{24}{49}{$\ms{\mb_{g_2}}$}
  \ \skeq\ \ \ 
  \epsh{fig33b.pdf}{16ex}
  \putw{16}{49}{$\ms{\mb_{g_1g_2}}$}
  \et
  \epsh{fig33c.pdf}{16ex}
  \putw{16}{49}{$\ms{\mb_{g_1}}$}
  \ \skeq\ \ 
  \epsh{fig33b.pdf}{16ex}
  \putw{16}{49}{$\ms{\mb_{g_1^{\text-1}}}$}
\end{equation}
% \[\epsh{fig33a.pdf}{22ex}
%   \putw{11}{49}{$\ms{\mb_{g_1}}$}
%   \pute{24}{49}{$\ms{\mb_{g_2}}$}
%   \ \skeq\ \ 
%   \epsh{fig33b.pdf}{22ex}
%   \putw{15}{49}{$\ms{\mb_{g_1g_2}}$}
% \]

% in the Grothendieck ring of $\cat$, whenever
% $g_1,g_2,g_1g_2\not\in\XX$,
%$$\mb_{g_1}\otimes\mb_{g_2}=\mb_{g_1g_2}.$$
This gives a way to extend the definition of $\mb_g$ for $g\in\XX$ by
$\mb_{h}\otimes\mb_{h^{-1}g}$ which is independant of
$h\in\Gr\setminus\bp{\XX\cup g\XX}$.

Next, let $(M,\Gamma,\rho)$ be a TV-manifold and $\Sigma$ be the
boundary of a closed tubular neighborhood
$N(\Gamma)\simeq \Gamma\times D^2$ of $\Gamma$ in $M$.  We fix the set
of base points $Y=\Gamma_0\times\bs1$ on $\Sigma$.  Then, by
restricting the representation $\rho$, we get a $\Gr$-decorated
surface with a cobordism to the empty set:
$$(M\setminus N(\Gamma),\rho):(\Sigma,Y,\rho)\to\wt\emptyset.$$
For each edge $e$ of $\Gamma$ equipped with an arbitrary orientation,
we get an element of $\Gr$ defined by $g_e=\rho(e)$ and we get a
simple curve $m_e\subset\Sigma$ which is an oriented meridian of $e$
that we color with $\mb_{g_e}$. This gives a skein
$\Gamma_\mb\in\Skein(\Sigma,Y,\rho)$.  Remark that by Equation
\eqref{eq:gr-b}, changing the orientation of the skein does not affect
the class of $\Gamma_\mb$ in $\Skein(\Sigma,Y,\rho)$.
\begin{theorem}\label{T:TV=Skein}
  Let $(M,\Gamma,\rho)$ be a TV-manifold, and
  $\Gamma_\mb\in\Skein(\Sigma,Y,\rho)$ be as above. Then we have
  $$\TV(M,\Gamma,\rho)=\Skein(M\setminus N(\Gamma))(\Gamma_\mb).$$
\end{theorem}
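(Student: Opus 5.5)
Since both sides are diffeomorphism invariants, I would compute them on the same cellular data and compare the resulting formulas term by term. Concretely, I would choose an $\XX$-admissible (Hamiltonian) triangulation $\mathcal T$ of $M$ whose vertex set contains $\Gamma_0$ and whose 1-skeleton contains $\Gamma$, as used in \cite{GPT09,GP13}. The representation $\rho$ is extended to the vertices of $\mathcal T$ so that every edge of $\mathcal T$ gets a generic label $g_e\notin\XX$. In that setting $\TV(M,\Gamma,\rho)$ is a state sum over simple colorings of the edges: it is a product of $\mb$-weights of edges and modified $6j$-symbols of tetrahedra. The edges of $\Gamma$ carry the $\mb$-weight twice, or with the modified normalization as in \cite{GP13}.

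\textbf{Step 1: reduce to a handle decomposition of the complement.} By the restriction maps (Proposition \ref{P:restr-iso}) and Theorem \ref{P:SkeinRepCobe}, adding the remaining vertices of $\mathcal T$ as extra base points changes nothing. Using the analogue of \eqref{eq:gr-b}, I can also add meridians colored $\mb_{g_e}$ around every edge of the 1-skeleton $\mathcal T^1$. By the chromatic/red-circle formalism, an edge meridian colored $\mb_g$ with $g$ generic is exactly a $g$-colored red circle, via Proposition \ref{rel-spherical-is-chromatic} and the explicit chromatic map $\oplus_i\qd(S_i)\Id_{S_i}\otimes\Id_P$. This meridian is therefore the image of a 1-handle map. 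Hence $\Skein(M\setminus N(\Gamma))(\Gamma_\mb)$ equals $\Skein(M\setminus N(\mathcal T^1))$ applied to the skein of red meridians on all edges. Reading $N(\mathcal T^1)$ as built from 0- and 1-handles, this is the evaluation of the HQFT on the dual handle decomposition of $M$ with one 0-handle removed. The corrections for edges of $\mathcal T$ not in $\Gamma$ should match exactly the normalization difference in $\TV$. I would check this with the slide relations of Proposition \ref{P:slide} and the handle cancellation Propositions \ref{P:hm-12-cancel} and \ref{P:hm-23-cancel}.

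\textbf{Step 2: evaluate through 2- and 3-handles.} The complement $M\setminus N(\mathcal T^1)$ consists of 2-handles dual to the faces of $\mathcal T$ and 3-handles dual to the tetrahedra. First I expand each red meridian in blue as $\sum_i\mb(S_i)S_i$. Then I apply each 2-handle map along a face, which is a triangle meeting three meridians. This inserts the copairing $\Omega$ for $\Hom(\unit,S_{1}\otimes S_{2}\otimes S_{3})$, so the colorings become dual bases of multiplicity spaces attached to faces. Finally the 3-handle maps apply $\F'$ to the spherical graph in each tetrahedron's boundary sphere. That graph is the tetrahedral network, whose $\F'$-value is the modified $6j$-symbol. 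Contracting the multiplicity indices gives precisely the state sum defining $\TV$.

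\textbf{Main obstacle.} The delicate step is matching normalizations. The precise $\mb$-weight assignment on edges of $\Gamma$ versus the other edges, together with the factors from removed 0-/3-handles and base points, must agree with the conventions of \cite{GP13}. I also need the tetrahedral evaluations to be unaffected by edges that are nongeneric after the gauge choices. For the second issue I would first invoke gauge invariance (Proposition \ref{P:gauge-map}) to push all labels into $\Gr\setminus\XX$, which is possible by condition (2) of Definition \ref{def:olddef}. For the first, I would then compare the two formulas on the simplest TV-manifold, $S^3$ with $\Gamma$ a single vertex or edge, to fix the constants.
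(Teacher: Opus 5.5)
Your architecture is the paper's. You choose an $H$-triangulation $(\T,\YY,\Phi)$ and factor $M\setminus N(\Gamma)$ as a restriction cylinder adding base points near the vertices of $\T$, then one $1$-handle per edge of $\T^1\setminus\YY$, one $2$-handle per face and one $3$-handle per tetrahedron. Finally you read off face copairings and $\F'$-evaluations of tetrahedral graphs.

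However, the weight bookkeeping, which is the actual content of the equality, is wrong, and the computation as you describe it would not produce $\TV$.

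First, \eqref{eq:mTVss} does not put $\mb$ on every edge, nor $\mb$ twice on $\Gamma$. Edges of $\YY$ carry $\mb(V_{\vp(\vec e)})$ and all other edges carry $\qd(V_{\vp(\vec e)})$.

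Second, a $g$-colored red circle with $g\notin\XX$ is not an $\mb_g$-colored meridian. With the chromatic map $\oplus_i\qd(S_i)\Id_{S_i}\otimes\Id_P$ that you yourself quote, the red-to-blue move replaces it by a blue circle colored with the modified Kirby color $\sum_{i\in I_g}\qd(V_i)V_i$. So expanding red meridians as $\sum_i\mb(S_i)S_i$ is incorrect. Moreover, ``adding $\mb_{g_e}$-meridians around every edge of $\T^1$'' is not a legitimate move: the edges of $\T^1\setminus\YY$ do not lie on $\Sigma=\partial N(\Gamma)$, and \eqref{eq:gr-b} only rewrites $\mb$-colored curves already present in an annulus of the surface.

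The missing idea is to let the handle structure do the bookkeeping, with no correction term. Edges in $\YY$ receive no handle. Their $\mb$-weights come from the input skein, after using \eqref{eq:gr-b}, together with the added base points, to replace the meridian of each edge of $\Gamma$ by meridians of its constituent $\T$-edges colored $\mb_{\Phi(\vec e)}$. Each edge of $\T^1\setminus\YY$ gets exactly one $1$-handle, whose belt red circle becomes the $\qd$-Kirby color. Multilinear expansion then gives exactly
\[\textstyle\prod_{e\in\YY}\mb(V_{\vp(\vec e)})\prod_{e\in\T^1\setminus\YY}\qd(V_{\vp(\vec e)})\]
times the $6j$-symbols contracted with the face copairings, which is \eqref{eq:mTVss} with no constant. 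No slides or handle cancellations are needed.

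Note also that the Hamiltonian condition puts every vertex of $\T$ on $\Gamma$. Hence no $0$-handle occurs, which is essential because the non-compact $\Gr$-HQFT has no $0$-handle maps; your ``one $0$-handle removed'' is a misreading. Finally, fixing normalizations by comparing both sides on $S^3$ cannot prove an exact identity between two already-defined quantities. With your weights the discrepancy ($\mb$ versus $\qd$ on edges outside $\Gamma$) is not even a global constant, since it varies from state to state.
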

The proof of this theorem is given in the next subsection.
\subsection{The modified Turaev-Viro invariant}
In \cite{GPT11,GP13}, an invariant of TV-manifolds is defined from
$\cat$, using an extra choice of a basic data: A basic data for $\cat$
consists in
\begin{enumerate}
\item a family $\bs{I_g}_{g\in\Gr}$ of finite sets and for each
  $g\in\Gr\setminus\XX$, $\bs{V_i}_{i\in I_g}$ is a family of
  representatives of isomorphic classes of simple objects of $\cat_g$ ;
\item an involution $i\mapsto i^\star$ on
  $I=\bigcup_{\Gr\setminus\XX}I_g$ and a family of isomorphisms in
  $\cat$ $\bs{w_i:V_i\to (V_{i^\star})^*}_{i\in I}$ which satisfies:
  for any $i\in I$, $w_{i^\star}= (w_i)^*\circ\phi_{V_{i^\star}}$,
  where $\phi_V:V\to V^{**}$ is the pivotal structure.
\end{enumerate}
Let us denote by
$H_{i_1,i_2,\ldots,i_n}^{j_1,j_2,\ldots,j_n}=\Hom_\cat(V_{i_1}\otimes
V_{i_2}\otimes \cdots\otimes V_{i_n},V_{j_1}\otimes
V_{j_2}\otimes\ldots\otimes V_{j_n})$.  Then using the pivotal
structure of $\cat$ and the isomorphisms $\bs{w_i}_{i\in I}$, there
are canonical isomorphisms compatible with the pairings given by the
modified trace:
\begin{align*}
  H_{i,j,k}\simeq H_{k,i,j}\simeq H_{j,k,i}&\simeq H^{k^\star,j^\star,i^\star}\simeq H^{j^\star,i^\star,k^\star}\simeq H^{i^\star,k^\star,j^\star}
  \\
  \simeq H_{i,j}^{k^\star}\simeq H_{k,i}^{j^\star}\simeq H_{j,k}^{i^\star}&\simeq H_{i}^{k^\star,j^\star}\simeq H_{k}^{j^\star,i^\star}\simeq H_{j}^{i^\star,k^\star}
\end{align*}
Identifying all these spaces through this inductive system of
isomorphisms, allow to define the multiplicity space $H(i,j,k)$ which
only depends on $(i,j,k)$ up to cyclic permutation.  The pairing given by the
modified trace induces a non degenerate pairing $H(i,j,k)\otimes H(k^\star,j^\star,i^\star)\to\kk$

\newcommand{\T}{{\mathcal T}}
\newcommand{\St}{{\operatorname{St}}}
\newcommand{\cntr}{{\operatorname{cntr}}}
\newcommand{\YY}{{\T^1_\Gamma}}

The invariant $\TV(M,\Gamma,\rho)$ is computed using an
$H$-triangulation of the TV-manifold $(M,\Gamma,\rho)$: By a
triangulation $\T$ of $M$, we mean a smooth $\Delta$-complex structure
on $M$ (as in \cite{Hat02}).  Loosely speaking, a $\Delta$-complex
structure is a quotient space of a collection of disjoint simplices
obtained by identifying certain of their faces.  In particular, the
interior of the simplices of $\T$ are embedded in $M$ but their faces
are not necessarily distincts and two different simplices might meet
on several faces.  We say that $\T$ is \emph{quasi-regular} if any
simplex of $\T$ is embedded in $M$.  This is equivalent to requiring
that the two endpoints of any edge of $\T$ are distinct vertices of
$\T$.  Let $\YY$ be a set of unoriented edges of $\T$.  Let
$\Phi:\vec\T^1\to\Gr$ be a map on oriented edges. %  satisfying
% $\Phi(-e)=\Phi(e)^{-1}$. 
We say that $\Phi$ is admissible if % it takes
% values in
$\Phi(\vec\T^1)\subset\Gr\setminus\XX$.  Then we say that $(\T,\YY,\Phi)$ is an
$H$-triangulation of $(M,\Gamma,\rho)$ if
\begin{enumerate}
\item $\T$ is a quasi-regular triangulation of $M$,
\item the union of the edges of $\YY$ is $\Gamma$,
\item the set $\Gamma_0$ of vertices of $\Gamma$ is a subset of the
  set $\T^0$ of vertices of $\T$,
\item the graph $\Gamma$ is Hamiltonian, \ie every vertex of $\T^0$ is
  adjacent to an edge of $\YY$,
\item the function $\Phi$ extend to a representation of
  $\Rep_\Gr(M,\T^0)$ whose restriction to $\Rep_\Gr(M,\Gamma^0)$ is
  equal to $\rho$.
\end{enumerate}
In particular, the last point implies that  $\Phi(\wa{AB})\Phi(\wa{BA})=1$ if $\wa{AB}$ and $\wa{BA}$ in $\vec\T^1$ are the same edge with opposite orientation and $\Phi(\wa{AB})\Phi(\wa{BC})\Phi(\wa{CA})=1$ for any triangle $ABC$ in $\T^2$ with sides $\wa{AB},\wa{BC},\wa{CA}$.

If $(\T,\YY,\Phi)$ is an $H$-triangulation of $(M,\Gamma,\rho)$, a
state $\vp$ of $\Phi$ is a map $\vp:\vec\T^1\to I$ from the oriented
edges of $\T$ to $I$ satisfying $\vp(e)\in I_{\Phi(e)}$ and
$\vp(-e)=\vp(e)^{\star}$.  The set $\St(\Phi)$ of states of $\Phi$ is
finite.

If $\vec f=ABC$ is an oriented face of the triangulation, we associate
to it the multiplicity module $H(\vec f,\vp)=H(i,j,k)$ where
$i=\vp(\wa{AB})$, $j=\vp(\wa{BC})$, $k=\vp(\wa{CA})$.  The copairing
of $(f,\vp)$ is the element
$\Omega_{f,\vp}\in H(i,j,k)\otimes H(k^\star,j^\star,i^\star)$ dual
to the pairing from the m-trace.  Remark that
$H(k^\star,j^\star,i^\star)=H(-\vec f,\vp)$.

If $T=ABCD$ is a 3-simplex of $\T$ positively oriented in $M$, its
four faces $\vec f_1=BCD,\vec f_2=ADC,\vec f_3=ABD,\vec f_4=ACB$ inherit an
orientation by the boundary rule.  We associate to $(T,\vp)$ the
element $\abs{T}_\vp$ of
$\bigotimes_{i=1}^4H(\vec f_i,\vp)^*=\bp{\bigotimes_{i=1}^4H(\vec
  f_i,\vp)}^*$ given by the image of the following graph $G$ drawn in the
boundary of $T$ which topologically is an oriented 2-sphere:
$$\abs{T}_\vp:(x_1\otimes x_2\otimes x_3\otimes x_4)\mapsto\F'\bp{
  \epsh{fig21y.pdf}{28ex}\putc{10}{80}{${\circlearrowleft}$}
  \putsw{10}{11}{$\ms{A}$}
  \putn{30}{64}{$\ms{B}$}
  \putse{56}{34}{$\ms{C}$}
  \pute{85}{51}{$\ms{D}$}
  \putc{34}{39}{${x_4}$}
  \putc{52.5}{17}{${x_2}$}
  \putc{38.5}{86}{${x_3}$} \putc{52.5}{61}{${x_1}$}}$$ where the
colors of the edges are given by
$[\vec e\cap G]_\cat=V_{\vp(\vec e)}$.  This scalar is independant of
the choice of the order of the four vertices $A,B,C,D$ of $T$ up to
even permutation.

The modified Turaev-Viro invariant of $(M,\Gamma,\rho)$ is the
following scalar $\TV(M,\Gamma,\rho)=\TV(\T,\YY,\Phi)$:
\begin{equation}
  \label{eq:mTVss}
 % \TV(\T,\YY,\Phi)=
\sum_{\vp\in\St(\Phi)} \bp{\prod_{e\in\YY}\mb(V_{\vp(\vec e)})}\times
\bp{\prod_{e\in\T^1\setminus\YY}\qd(V_{\vp(\vec e)})}\times
\bp{\bigotimes_{T\in\T^3}\abs
  T_\vp}\bp{\bigotimes_{f\in\T^2}\Omega_{\vec f,\vp}}, 
\end{equation}
where $\T^1$ is the set of edges of $\T$, $\T^2$ is its set of
2-simplices and $\T^3$ is its set of 3-simplices; the elements
$\vec e$ and $\vec f$ in the sum represent the edge $e$ and the face
$f$ equipped with any orientation.
The following is \cite[Theorems 22,24,25]{GPT11} and \cite[Theorems 14,15,16]{GP13}
\begin{theorem}
  Any TV-manifold $(M,\Gamma,\rho)$ admits an H-triangulation
  $(\T,\YY,\Phi)$ and the scalar $\TV(\T,\YY,\Phi)$ given by Equation
  \eqref{eq:mTVss} only depends of the diffeomorphism class of
  $(M,\Gamma,\rho)$.
\end{theorem}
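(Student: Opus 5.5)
The plan is the one of \cite{GPT11,GP13}: build an $H$-triangulation from an arbitrary triangulation, then reduce invariance to finitely many local moves, each checked by a local identity in $\cat$.

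\textbf{Existence.} Start from any smooth triangulation of $M$ in which $\Gamma$ is a subcomplex; after one barycentric subdivision it is quasi-regular. Hamiltonicity of $\Gamma$ is then forced by adding, for each vertex not on $\Gamma$, an edge to $\Gamma$. This is done through further subdivision and by enlarging $\YY$ to a graph containing $\Gamma$. I expect this to need a small extra argument, because condition (2) requires the union of $\YY$ to equal $\Gamma$. I would therefore first prove invariance for the enlarged graph and then remove the spurious edges using the ``$\mb$ versus $\qd$'' relation \eqref{eq:gr-b}, exactly as in \cite{GPT11}.

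For $\Phi$, first extend $\rho$ arbitrarily to some $\Phi_0\in\Rep_\Gr(M,\T^0)$ using Lemma~\ref{L:Rep}. Then act by a gauge $\vp\in\Gr^{\T^0}$ that is trivial on $\Gamma_0$. An edge $AB$ has value $\vp(A)\Phi_0(AB)\vp(B)^{-1}$, so we choose $\vp$ vertex by vertex. At each step only finitely many conditions $\vp(A)\notin \bigcup_i g_i\XX$ must hold. Axiom (2) of Definition~\ref{def:olddef} says that $\Gr$ is not covered by finitely many translates of $\XX$, so such a choice always exists. One point needs care: an edge joining two vertices of $\Gamma_0$ must already have generic holonomy. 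This is arranged beforehand by subdividing such edges, which creates a new vertex.

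\textbf{Invariance.} Any two $H$-triangulations of diffeomorphic TV-manifolds are related by three kinds of changes:
\begin{itemize}
\item[(a)] admissible gauge changes at vertices outside $\Gamma_0$;
\item[(b)] $H$-Pachner moves: $2$--$3$ moves, and lune/bubble moves, performed relative to $\Gamma$;
\item[(c)] diffeomorphisms.
\end{itemize}
Invariance under (c) is tautological. For (b), each move is checked by a local identity. The $2$--$3$ move follows from the Biedenharn--Elliott type identity. That identity is obtained by evaluating a spherical graph through $\F'$ in two ways, using the partial trace and cyclicity properties of $\mt$, the copairing $\Omega$, and the semisimplicity of $\cat_g$ for generic $g$: the decomposition $\Id_{V_i\otimes V_j}=\sum$ (over simple summands) is weighted by $\qd$. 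The lune and bubble moves use orthonormality of the copairing together with the defining relation of $\mb$ from Definition~\ref{def:olddef}(5). That relation is precisely what makes the extra $\mb$-weighted edge of $\YY$ disappear.

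\textbf{Main obstacle.} The hardest step is making sure that every intermediate triangulation in a chain of moves remains admissible, with all values of $\Phi$ outside $\XX$. My first approach is the same genericity argument as above. Before each move, apply a gauge at the vertices involved, chosen to avoid the finitely many bad translates of $\XX$; this is again possible by Definition~\ref{def:olddef}(2). Any two admissible labelings on a fixed $\T$ are connected by such gauge moves, and invariance under a single-vertex gauge change is itself checked by realising it as a bubble move followed by a $2$--$3$ sequence. Connecting two arbitrary quasi-regular triangulations by moves that preserve quasi-regularity and Hamiltonicity of $\Gamma$ relies on the topological results of \cite{GPT09,GP13}, which I would cite rather than reprove.
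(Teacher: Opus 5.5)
The paper gives no proof of this statement; it quotes \cite[Theorems 22, 24, 25]{GPT11} and \cite[Theorems 14, 15, 16]{GP13}. Your overall plan follows those references: build an $H$-triangulation, make $\Phi$ admissible by gauging with axiom (2) of Definition~\ref{def:olddef}, then prove invariance under gauge changes and $H$-moves through local identities in $\cat$.

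There is, however, a genuine gap in your existence step, at Hamiltonicity. If you enlarge $\YY$ to a graph $\Gamma'\supsetneq\Gamma$, you get an $H$-triangulation of a \emph{different} TV-manifold $(M,\Gamma',\rho')$. Nothing then lets you return to $(M,\Gamma,\rho)$:
\begin{itemize}
\item Relation \eqref{eq:gr-b} only says $\mb_{g_1}\otimes\mb_{g_2}\simeq\mb_{g_1g_2}$ and that reversing an $\mb_g$-colored meridian gives $\mb_{g^{-1}}$.
\item Relation (5) of Definition~\ref{def:olddef} only relates $\mb$-weights to $\mb$-weights.
\end{itemize}
There is no ``$\mb$ versus $\qd$'' relation. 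In fact the two weights encode different data. The computation in the proof of Theorem~\ref{T:TV=Skein} shows that an edge outside $\YY$ is accounted for by a $1$-handle whose belt circle carries the modified Kirby color $\sum_i\qd(V_i)V_i$. An edge of $\Gamma$ instead carries a meridian colored $\mb_g=\sum_i\mb(V_i)V_i$. Since $\mb\neq\qd$ in general, $\TV(M,\Gamma\cup e,\rho')$ and $\TV(M,\Gamma,\rho)$ differ in general. So your route never produces an $H$-triangulation whose union of edges in $\YY$ is $\Gamma$.

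What is needed instead is to change $\Gamma$ only by ambient isotopy. After subdividing, whenever a vertex $v\notin\Gamma$ spans a triangle $abv$ with an edge $ab$ of $\Gamma$, push $ab$ across this triangle to $av\cup vb$. This has three effects:
\begin{itemize}
\item it preserves the diffeomorphism class of $(M,\Gamma,\rho)$;
\item it keeps $\Gamma$ a subcomplex;
\item it turns $v$ into a $2$-valent point of $\Gamma$ lying in $\T^0\setminus\Gamma_0$, which condition (3) allows.
\end{itemize}
Iterating, with further subdivisions to create the needed triangles, gives Hamiltonicity with $\YY$ exactly equal to $\Gamma$. Only then should your gauge argument be run.

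A smaller point of the same kind affects your invariance step. A gauge applied ``before each move'' cannot repair an edge that a move creates between two vertices of $\Gamma_0$, because its holonomy is frozen by $\rho$ and you only allow gauges outside $\Gamma_0$. Such configurations must be avoided when choosing the sequence of moves, for instance by first creating free vertices, as you already did for existence.
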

% , $\abs T_\vp$ is the 6j-symbol described below which
% is an element of the tensor product of four multiplicity spaces and
% $\cntr$ is the tensor product of the pairings associated to the faces
% of $\T$ colored by $\vp$.

In the following we need to orient coherently the simplices of $\T$.
Then remark that if $T^-=ABCD$ is a 3-simplex of $\T$ negatively
oriented (\ie $ACBD$ is positively oriented in $M$), and $\vp$ is a
state, then $\abs {T^-}_\vp$ is (up to permutation of the factors) an
element of the dual of $\bigotimes_{i=1}^4H(\vec f_i,\vp)$ where
$\vec f_1=BDC,\vec f_2=ACD,\vec f_3=ADB,\vec f_4=ABC$ given by the
following graph $G$:
$$\abs {T^-}_\vp:(x_1\otimes x_2\otimes x_3\otimes x_4)\mapsto=\F'\bp{\epsh{fig21z.pdf}{28ex}\putc{10}{80}{${\circlearrowleft}$}
\putw{5}{55}{$\ms{A}$}
\putw{33}{37}{$\ms{B}$}
\pute{58}{69}{$\ms{C}$}
\putn{74}{20}{$\ms{D}$}
\putc{36.5}{66}{${x_4}$}
\putc{54.5}{89}{${x_2}$}
\putc{40.5}{16}{${x_3}$}
\putc{54.5}{43}{${x_1}$}}
$$
where the colors of the edges are given by $[\vec e\cap G]_\cat=V_{\vp(\vec e)}$.

Let us fix a total order on the set $\T^0$ of vertices of $\T$.  This
induces a canonical orientation of any simplex of $\T$, and in
particular of the edges of $\T$.  Let $n_3=\card(\T_3)$
be the number of tetraedra in $\T$.

Let $N(\T_1)\subset M$ be a open tubular neighborhood of the
1-skeleton of $\T$.  Let $\Sigma'$ be its boundary and $Y'$ be a set
of base points, one near each vertex of $\T$.   We can factor $(M\setminus N(\Gamma),\rho):(\Sigma,Y,\rho)\to\emptyset$ as  
$$(\Sigma,Y,\rho)\tto{\wt C_0}(\Sigma,Y',\Phi)\tto{\wt C_1}(\Sigma',Y',\Phi)
\tto{\wt C_2}((S^2)^{\sqcup n_3},Y'')\tto{\wt C_3}\emptyset.$$ In the
above composition, the first $\Gr$-decorated cobordism $\wt C_0$ is a
restriction cylinder obtained by forgetting base points of
$C_{\Sigma,Y',\Phi}$.  The second $\Gr$-decorated cobordism $\wt C_1$ is
obtained by gluing on $\Sigma$ one index 1 handle for each edge
$e\in\T^1\setminus\T^1_\Gamma$.  The index 1 handle is a tubular
neighborhood of the edge $e$, oriented with the canonical orientation
above.  The third $\Gr$-decorated cobordism $\wt C_2$ is obtained by gluing
on $\Sigma'$ an index 2 handle for each face of the triangulation.
Its target is the $n_3$-components surface made of spheres parallel
with the boundaries of the tetraedra of $\T$ with 4 base points near
the vertices of each tetraedron. The last $\Gr$-decorated cobordism
$\wt C_3$ is filling these $n_3$ spheres with index 3 handles (\ie
3-balls).

We now apply the $\Gr$-HQFT $\Skein$ to this composition of $\Gr$-decorated
cobordisms to compute the image of $\Gamma_\mb$.

We first use the skein equivalences of \eqref{eq:gr-b} to modify
$\Gamma_\mb$ to a skein equivalent graph $\Gamma'_\mb$ in
$\Skein(\Sigma,Y,\rho)$ which consist for each edge $e$ of
$\T^1_\Gamma$ to an oriented meridian colored with
$\mb_{\Phi(\vec e)}$ (here we equip $e$ with its canonical orientation $\vec e$
using the ordering of vertices).  The skein $\Gamma'_\mb$ also
represent an element of $\Skein(\Sigma,Y',\Phi)$ which is the image of
$\Gamma_\mb$ by $\Skein(\wt C_0)$.

The map $\Skein(\wt C_1)$ is adding a red circle at the belt sphere of
the index 1 handle associated to $e$, colored with $\Phi(e)$.  Since
$\Phi(e)\notin\XX$ the chromatic move to turn this circle blue just
consists in replacing this red circle with a blue circle colored with
the modified Kirby color $\sum_{i\in I_{\Phi(e)}}\qd(V_i)V_i$.
Expanding multilinearly the formal $\mb$ colors and Kirby colors, we
get that
$$\Skein(\wt
C_1)(\Gamma'_\mb)=\sum_{\vp\in\St(\Phi)}\bp{\prod_{e\in\YY}\mb(V_{\vp(\vec
    e)})}\times \bp{\prod_{e\in\T^1\setminus\YY}\qd(V_{\vp(\vec
    e)})}\times\Gamma_\vp,$$ where $\Gamma_\vp$ is the graph formed by
the oriented meridian of the edges of $\T$ colored with
$V_{\vp(\any)}$ in $\Skein(\Sigma,Y',\Phi)$.

The map $\Skein(\wt C_2)$ is cutting the surface along each face $\vec f_i$, putting in both side the dual bases of $H(\vec f_i,\vp)$.  Finally $\Skein(\wt C_3)$ evaluate these tetraedral graphs with $\F'$.  Hence $\Skein(M\setminus N(\Gamma))(\Gamma_\mb)=\Skein(\wt C_3)\Skein(\wt C_2)\Skein(\wt C_1)\Skein(\wt C_0)(\Gamma_\mb)$ is given by the state sum of Equation \eqref{eq:mTVss}, which proves Theorem \ref{T:TV=Skein}.
%\singlespacing
\footnotesize
\bibliographystyle{halpha-modified}% modified to use the bibkey as bibitem
%\bibliographystyle{abstract}
%\bibliography{References,back-ref,Last-ref}
\bibliography{References-Book}

\end{document}